\newtheorem{theorem}{Theorem}[section]
\newtheorem{lemma}[theorem]{Lemma}
\newtheorem{corollary}[theorem]{Corollary}
\newtheorem{proposition}[theorem]{Proposition}
\newtheorem{conjecture}[theorem]{Conjecture}
\theoremstyle{definition}
\newtheorem{definition}[theorem]{Definition}
\newtheorem{example}[theorem]{Example}
\newtheorem{question}[theorem]{Question}
\theoremstyle{remark}
\newtheorem{remark}[theorem]{Remark}
\numberwithin{equation}{section}
\newcommand{\Q}{\mathbb{Q}}
\newcommand{\Z}{\mathbb{Z}}
\newcommand{\N}{\mathbb{N}}
\newcommand{\R}{\mathbb{R}}
\newcommand{\C}{\mathbb{C}}
\newcommand{\K}{\mathcal{K}}
\newcommand{\ep}{\varepsilon}
\DeclareMathOperator{\supp}{supp}
\DeclareMathOperator{\prop}{prop}
\DeclareMathOperator{\Ind}{Ind}
\title{The Baum--Connes and the Mishchenko--Kasparov assembly maps for group extensions}
\begin{document}

\author{Jianguo Zhang}
\address{School of Mathematics and Statistics, Shaanxi Normal University, Xi’an 710119, China.}
\email{jgzhang@snnu.edu.cn}


\begin{abstract}
	In this paper, we investigate the injectivity, surjectivity and isomorphism of the Baum--Connes assembly map $e_{\ast}$ with coefficients, and the injectivity of the Mishchenko--Kasparov assembly map $\mu_{\ast}$ with coefficients for group extensions $1\rightarrow N \rightarrow \Gamma \xrightarrow{q} \Gamma/ N \rightarrow 1$. The main results are as follows.
	\begin{enumerate}
		\item Under the assumption that $e_{\ast}$ is isomorphic for $q^{-1}(F)$ for any finite subgroup $F$ of $\Gamma/N$, we prove that $e_{\ast}$ is injective, surjective and isomorphic for $\Gamma$ if they are also true for $\Gamma/N$, respectively. 
		\item Under the assumption that $e_{\ast}$ is rationally isomorphic for $N$, we verify that $\mu_{\ast}$ is rationally injective for $\Gamma$ if it is also rationally injective for $\Gamma/N$.
		\item When $\Gamma$ is an isometric semi-direct product $N\rtimes G$, we confirm that $e_{\ast}$ is injective, surjective and isomorphic for $\Gamma$ if they also hold for $G$ and $\Gamma$ satisfies three partial conjectures along $N$, respectively.
	\end{enumerate}
	As applications, we show that the strong Novikov conjecture, the surjective assembly conjecture and the Baum--Connes conjecture with coefficients are closed under direct products, central extensions of groups and extensions by finite groups. Meanwhile, we also show that the rational analytic Novikov conjecture with coefficients is preserved under extensions of finite groups. Besides, we employ these results to obtain some new examples for the rational analytic and the strong Novikov conjecture beyond the class of coarsely embeddable groups.
\end{abstract}

\pagestyle{plain}
\maketitle


\tableofcontents

\section{Introduction}

For a countable discrete group $\Gamma$ and a $C^{\ast}$-algebra $A$ equipped with a $\Gamma$-action, there exists an associated $C^{\ast}$-algebra $A\rtimes_{r} \Gamma$, called the \textit{reduced crossed product}. It is a central question to compute the $K$-theory of reduced crossed products in the areas of operator algebras and noncommutative geometry. The Baum--Connes and the Mishchenko--Kasparov assembly maps with coefficients provide two powerful routes to attack this question by the equivariant $K$-homology which is computable. More precisely, the \textit{Baum--Connes assembly map with coefficients} in $A$ for $\Gamma$ is a group homomorphism (see Definition \ref{Def-BC-assembly}):
  $$e_{\ast}: K^{\Gamma}_{\ast}(\underline{E}\Gamma; A) \rightarrow K_{\ast}(A\rtimes_{r} \Gamma),$$
and the \textit{Mishchenko--Kasparov assembly map with coefficients} in $A$ for $\Gamma$ is a group homomorphism (see Definition \ref{Def-MK-assembly}):
$$\mu_{\ast}: K^{\Gamma}_{\ast}(E\Gamma; A) \rightarrow K_{\ast}(A\rtimes_{r} \Gamma),$$
where the left-hand sides of $e_{\ast}$ and $\mu_{\ast}$ are the equivariant $K$-homology with coefficients of the classifying spaces $\underline{E}\Gamma$ for proper $\Gamma$-actions and $E\Gamma$ for proper and free $\Gamma$-actions, respectively. 

Inspired by the celebrated Atiyah--Singer index theorem, the \textit{strong Novikov conjecture} (SNC), the \textit{surjective assembly conjecture} (SAC) and the \textit{Baum--Connes conjecture} (BCC) with coefficients in $A$ for $\Gamma$ assert that the Baum--Connes assembly map $e_{\ast}$ is injective, surjective and isomorphic, respectively (see Conjecture \ref{Conj}). Moreover, the \textit{rational analytic Novikov conjecture} (RANC) with coefficients in $A$ for $\Gamma$ asserts that the Mishchenko--Kasparov assembly map $\mu_{\ast}$ is rationally injective (see Conjecture \ref{Conj-rational-analy-Novikov}). The benefits to investigate these conjectures are two-fold. Firstly, the Baum--Connes conjecture furnishes a topological algorithm for higher indices of equivariant elliptic differential operators on Riemannian manifolds which lie in the $K$-theory of reduced crossed products. Secondly, these conjectures have some significant applications to geometric topology and operator algebras. On the one hand, the strong and rational analytic Novikov conjectures imply the Novikov conjecture and the Gromov--Lawson--Rosenberg conjecture. On the other hand, the Kadison--Kaplansky conjecture follows from the surjective assembly conjecture (see \cite{BCH-1994}\cite{Survey-BCC}\cite{Rosenberg-IHES}\cite{WillettYu-Book}). 

In the last four decades, the above conjectures have been verified for a large class of groups. For examples, the strong Novikov conjecture with coefficients holds for groups acting properly and isometrically on bolic weakly geodesic metric spaces of bounded geometry (see \cite{KS-bolic}) and for groups coarsely embeddable into Hilbert spaces or Banach spaces with property (H) (see \cite{Yu2000}\cite{Kasparov-Yu-NovikovConj}). In addition, the rational analytic Novikov conjecture holds for groups acting properly and isometrically on simply connected, non-positively curved manifolds or on admissible Hilbert-Hadamard spaces (see \cite{Kasparov-equi-KK}\cite{GWY-Novikov}). Moreover, the Baum--Connes conjecture with coefficients holds for groups acting properly and isometrically on Hilbert spaces (see \cite{Higson-Kasparov}) and for hyperbolic groups (see \cite{Lafforgue-2012}\cite{Lafforgue-2002}\cite{MY-BC-hyperbolic}). Besides, there are several counterexamples for the above conjectures with certain non-trivial coefficients (see \cite{HLS-2002}). In addition, it is unknown whether $SL_{3}(\Z)$ satisfies the surjective assembly conjecture, and whether the infinite free Burnside group $B(d,n)$ satisfies the strong Novikov conjecture or the surjective assembly conjecture (see \cite{Survey-BCC}). 

A typical construction for producing new groups is via a group extension of a group $\Gamma/N$ by a group $N$:
$$1\rightarrow N \rightarrow \Gamma \xrightarrow{q} \Gamma/ N \rightarrow 1.$$
In order to enlarge the class of groups satisfying the above conjectures, we have the following natural question.
\begin{question}\label{Question}
Whether or not SNC, SAC, BCC and RANC with coefficients are closed under group extensions?
\end{question}
For the Baum--Connes conjecture with coefficients, this question has been explored by J. Chabert, S. Echterhoff and H. Oyono-Oyono who proved that BCC with coefficients holds for $\Gamma$ if both $\Gamma/N$ and $q^{-1}(F)$ satisfy BCC with coefficients for any finite subgroup $F$ of $\Gamma/N$ (see \cite{CEO-2004}\cite{Oyono-BC-extensions}\cite{MN-2006}\cite{Arano-Kubota-BCextensions}). T. Schick verified that the Baum-Connes conjecture with coefficients holds for extensions with finite quotients by certain geometric groups, containing full braid groups and fundamental groups of certain link complements in $S^3$ (see \cite{Schick-BCextensions}). More recently, R. Meyer constructed an example to show that BCC with certain coefficients is not true for $\Gamma=N\times \Gamma/N$, even when $\Gamma/N$ is finite and $N$ satisfies BCC with the same coefficients (see \cite{Meyer-BC-counterex}). However, there are just a few results of this question on the strong and the rational analytic Novikov conjectures. J. Deng proved that SNC with coefficients holds for $\Gamma$ if both $N$ and $\Gamma/N$ admit a coarse embedding into Hilbert spaces (see \cite{Deng-Novikov-extensions}) and J. Rosenberg proved that RANC holds for the direct product $\Gamma=N\times \Gamma/N$ if $N$ and $\Gamma/N$ satisfy RANC and $C^{\ast}_{r}(N)$ satisfies the K\"unneth formula (see \cite{Rosenberg-IHES}). In addition, it is unknown whether the strong Novikov conjecture and the surjective assembly conjecture with coefficients are closed even for direct products.

\subsection{Main results}

In this paper, we continued to investigate Question \ref{Question}, especially on the strong and the rational analytic Novikov conjectures as well as the surjective assembly conjecture with coefficients. Our first main result is the following theorem.

\begin{theorem}[see Theorem \ref{first-main-thm}]\label{first-main-thm-intro}
	Let $A$ be a $\Gamma$-$C^{\ast}$-algebra. If the following two conditions hold:
	\begin{enumerate}
		\item $q^{-1}(F)$ satisfies the Baum--Connes conjecture with coefficients in $A$ for any finite subgroup $F$ of $\Gamma/N$;
		\item $\Gamma/N$ satisfies SNC, SAC and BCC with coefficients in $C_0(\Gamma/N, A) \rtimes_r \Gamma$, respectively.
	\end{enumerate}
	Then $\Gamma$ satisfies SNC, SAC and BCC with coefficients in $A$, respectively.
\end{theorem}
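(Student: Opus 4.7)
The plan is to factor the Baum--Connes assembly map $e_{\ast}^{\Gamma}$ with coefficients $A$ through the corresponding assembly map for $\Gamma/N$ with coefficients $B := C_0(\Gamma/N, A) \rtimes_r \Gamma$, following the ``going-down'' strategy of Chabert--Echterhoff--Oyono-Oyono and adapting it to handle the three statements SNC, SAC, and BCC simultaneously. The algebra $B$ carries a canonical $\Gamma/N$-action, inherited from the right-translation action of $\Gamma/N$ on itself, which commutes with the left $\Gamma$-action on $C_0(\Gamma/N, A)$ used in forming the crossed product.

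First, via Green's imprimitivity theorem I would verify that
\[
B \rtimes_r (\Gamma/N) \;=\; C_0(\Gamma/N, A) \rtimes_r (\Gamma \times \Gamma/N)
\]
is Morita equivalent to $A \rtimes_r \Gamma$. This is because $\Gamma \times \Gamma/N$ acts transitively on $\Gamma/N$ with stabilizer of $eN$ a diagonally embedded copy of $\Gamma$, and the restriction of the $(\Gamma \times \Gamma/N)$-action on $A$ to this stabilizer recovers the original $\Gamma$-action. Consequently $K_{\ast}(B \rtimes_r (\Gamma/N)) \cong K_{\ast}(A \rtimes_r \Gamma)$, so the target of $e_{\ast}^{\Gamma/N}$ with coefficients $B$ naturally matches the target of $e_{\ast}^{\Gamma}$.

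Next, I would construct a \emph{partial assembly map along $N$},
\[
\alpha : K^{\Gamma}_{\ast}(\underline{E}\Gamma; A) \longrightarrow K^{\Gamma/N}_{\ast}(\underline{E}(\Gamma/N); B),
\]
built from the natural $\Gamma$-equivariant map $\underline{E}\Gamma \to \underline{E}(\Gamma/N)$ by descending the $\Gamma$-equivariance to $\Gamma/N$-equivariance at the cost of absorbing the remaining $N$-equivariance into the coefficient algebra $B$. The defining feature of $\alpha$ must be that, upon composition with $e_{\ast}^{\Gamma/N}$ and the identification from the previous step, it recovers $e_{\ast}^{\Gamma}$.

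The main obstacle is showing that $\alpha$ is an isomorphism under hypothesis~(1). I plan to argue this by induction over the skeleta of a $\Gamma/N$-CW structure on $\underline{E}(\Gamma/N)$, together with Mayer--Vietoris sequences and the Five Lemma. On an equivariant cell of the form $(\Gamma/N) \times_F \Delta^n$, where $F$ is a finite subgroup of $\Gamma/N$, the map $\alpha$ should localize --- via a further Morita identification of coefficients --- to the ordinary Baum--Connes assembly map for $q^{-1}(F)$ with coefficients $A$; hypothesis~(1) makes the latter an isomorphism. Propagating this through the CW filtration identifies $\alpha$ as an isomorphism on all of $\underline{E}(\Gamma/N)$. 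Combined with hypothesis~(2), which supplies the appropriate property (injectivity, surjectivity, or bijectivity) for $e_{\ast}^{\Gamma/N}$ with coefficients $B$, the factorization $e_{\ast}^{\Gamma} = e_{\ast}^{\Gamma/N} \circ \alpha$ transfers each property to $e_{\ast}^{\Gamma}$, establishing SNC, SAC, and BCC for $\Gamma$ all at once.
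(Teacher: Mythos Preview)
Your outline is correct and is essentially the Chabert--Echterhoff--Oyono-Oyono strategy applied uniformly to injectivity, surjectivity, and bijectivity. The paper reaches the same conclusion by a closely related but organizationally different route: rather than constructing the partial assembly map $\alpha$ directly for the extension, it first applies imprimitivity on \emph{both} sides of the assembly map to translate the problem for $(\Gamma, A)$ into the problem for the direct product $(\Gamma \times \Gamma/N,\, C_0(\Gamma/N,A))$ (Lemma~\ref{Lem-BC-sub-group}), and then proves the direct product case separately (Theorem~\ref{main-thm1}). For a direct product $N'\times G'$, the factorization of the assembly map is realized concretely via an ``equivariant localization algebra along $P_k(N')$'' (Definition~\ref{Def-loc-along}), and the analogue of your map $\alpha$ is shown to be an isomorphism by a Mayer--Vietoris induction over the skeleta of the Rips complex $P_k(N')$ (Proposition~\ref{Lem-Key}); on cells with finite stabilizer $F$ this reduces to BCC for $F\times G'$, which after another imprimitivity step (Lemma~\ref{Lem-BC-N-Gamma}) becomes BCC for $q^{-1}(F)$. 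So both arguments hinge on the same two ingredients --- imprimitivity and a Mayer--Vietoris reduction to finite stabilizers --- but the paper's detour through direct products keeps everything in the Roe/localization-algebra formalism and produces intermediate machinery that it reuses later for isometric semi-direct products, whereas your direct $KK$-theoretic route is more economical if one is content to cite the CEO partial-assembly construction. One point to tighten in your write-up: the phrase ``natural $\Gamma$-equivariant map $\underline{E}\Gamma \to \underline{E}(\Gamma/N)$'' and the passage ``absorbing the remaining $N$-equivariance into the coefficient algebra $B$'' hide exactly the construction that needs care; you should make explicit that $\alpha$ is the partial assembly map (a descent/Kasparov-product construction) and verify the commutativity $e_\ast^{\Gamma} = e_\ast^{\Gamma/N}\circ\alpha$ rather than assert it.
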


Theorem \ref{first-main-thm-intro} not only recovers J. Chabert, S. Echterhoff and H. Oyono-Oyono's result on the Baum--Connes conjecture with coefficients for extensions in \cite{CEO-2004}, but also generalizes it to the strong Novikov conjecture and the surjective assembly conjecture.

The first condition in Theorem \ref{first-main-thm-intro} is strictly stronger than the condition stating that $N$ satisfies the Baum--Connes conjecture with coefficients in $A$ by R. Meyer's example in \cite{Meyer-BC-counterex}. However, for the rational analytic Novikov conjecture, this weaker condition is sufficient. Thus, we have the following second main result of this paper.

\begin{theorem}[see Theorem \ref{second-main-thm}]\label{second-main-thm-intro}
	Let $A$ be a $\Gamma$-$C^{\ast}$-algebra satisfying the K\"unneth formula. If the following two conditions hold:
	\begin{enumerate}
		\item $N$ satisfies the rational Baum--Connes conjecture with coefficients in $A$;
		\item $\Gamma/N$ satisfies the rational analytic Novikov conjecture with coefficients in $C_0(\Gamma/N, A) \rtimes_r \Gamma$.
	\end{enumerate}
	Then $\Gamma$ satisfies the rational analytic Novikov conjecture with coefficients in $A$.
\end{theorem}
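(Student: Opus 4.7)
The plan is to factor the Mishchenko--Kasparov assembly map $\mu^\Gamma_\ast$ for $\Gamma$ through an intermediate ``partial'' assembly $\nu_\ast$ along $N$ followed by the assembly map $\mu_\ast^{\Gamma/N}$ for $\Gamma/N$ with coefficients in $B := C_0(\Gamma/N,A)\rtimes_r \Gamma$, in the spirit of the Chabert--Echterhoff--Oyono-Oyono strategy but adapted to the free classifying space $E\Gamma$ rather than $\underline{E}\Gamma$. Concretely, I would establish a commutative square
\begin{equation*}
\xymatrix{
K^{\Gamma}_{\ast}(E\Gamma;A) \ar[r]^-{\mu_\ast^\Gamma} \ar[d]_-{\nu_\ast} & K_\ast(A\rtimes_r \Gamma) \ar[d]^-{\cong}\\
K^{\Gamma/N}_{\ast}(E(\Gamma/N);B) \ar[r]^-{\mu_\ast^{\Gamma/N}} & K_\ast(B\rtimes_r (\Gamma/N))
}
\end{equation*}
in which the right vertical identification combines Green's imprimitivity theorem (Morita equivalence of $B$ with $A\rtimes_r N$ as $\Gamma/N$-algebras, up to a cocycle twist) with the iterated-crossed-product decomposition $A\rtimes_r \Gamma \cong (A\rtimes_r N)\rtimes_{r,\tau}(\Gamma/N)$, and $\nu_\ast$ is obtained by applying the $N$-equivariant Mishchenko--Kasparov construction fiberwise along the fibration $E\Gamma \to E(\Gamma/N)$, whose homotopy fibers are models for $EN$.

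With the square in place, hypothesis~(2) delivers rational injectivity of $\mu_\ast^{\Gamma/N}$ directly, so the content of the argument is to show that $\nu_\ast$ is rationally injective (indeed, rationally an isomorphism). For this step I would work over finite $\Gamma/N$-CW-subcomplexes $Y \subset E(\Gamma/N)$: the preimage $q^{-1}(Y) \subset E\Gamma$ is a free proper $N$-space, and its $N$-equivariant K-homology with coefficients in $A$ is computed rationally by the Baum--Connes map for $N$ via hypothesis~(1). The Künneth formula for $A$ then lets one identify $K_\ast(C_0(Y,A)\rtimes_r N)$ rationally with the appropriate tensor product and match it against $K^{\Gamma/N}_\ast(Y;B)$; passing to the colimit in $Y$ via a Mayer--Vietoris / five-lemma argument propagates this to the global rational isomorphism $\nu_\ast\otimes\Q$. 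Combining rational injectivity of $\nu_\ast$ and $\mu_\ast^{\Gamma/N}$ yields the desired rational injectivity of $\mu_\ast^\Gamma$.

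The principal obstacle is the rational injectivity of $\nu_\ast$. Three delicate points arise: (i) the extension generally does not split, so $B$ carries only a cocycle/twisted $\Gamma/N$-action and the partial assembly must be set up in a twisted framework; (ii) Green's Morita equivalence must be tracked $\Gamma/N$-equivariantly through K-theory so that the right vertical identification is truly compatible with the two assembly maps; and (iii) the Künneth formula must be invoked to transfer the rational Baum--Connes hypothesis for $N$ with coefficients in $A$ into a statement about $B$-valued $\Gamma/N$-equivariant K-homology over finite subcomplexes of $E(\Gamma/N)$. It is precisely this last ingredient that allows the weaker hypothesis on $N$ alone---rather than on every $q^{-1}(F)$ as in Theorem~\ref{first-main-thm-intro}---to suffice for the rational analytic Novikov conjecture, and it explains the appearance of the Künneth assumption on $A$ in the statement.
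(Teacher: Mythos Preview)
Your high-level factorization picture is sound, but the proposal leaves unresolved precisely the places where the paper does real work, and your identification of the role of the K\"unneth hypothesis is off target.

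First, on point (i): you correctly flag the twisted $\Gamma/N$-action as an obstacle, but you do not resolve it. The paper's device is to avoid twisting entirely by embedding $\Gamma$ diagonally into the genuine direct product $\Gamma\times(\Gamma/N)$ via $\gamma\mapsto(\gamma,[\gamma])$ and applying Green imprimitivity \emph{twice}: once to identify RANC for $\Gamma$ in $A$ with RANC for $\Gamma\times(\Gamma/N)$ in $C_0(\Gamma/N,A)$ (Lemma~\ref{Lem-BC-sub-group-free}), and once to identify rational BCC for $N$ in $A$ with rational BCC for $\Gamma$ in $C_0(\Gamma/N,A)$ (Lemma~\ref{Lem-BC-N-Gamma-free}). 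After this reduction everything takes place for an honest direct product with untwisted actions, and your obstacle simply disappears.

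Second, and more substantively, your use of the K\"unneth formula does not match its actual role. You propose using K\"unneth to identify $K_\ast(C_0(Y,A)\rtimes_r N)$ with a tensor product over finite subcomplexes $Y\subset E(\Gamma/N)$; but hypothesis~(1) concerns the Baum--Connes map for $N$ out of $\underline{E}N$, not $EN$, and your cell-by-cell argument over $E(\Gamma/N)$ only sees free $N$-spaces modeling $EN$. The gap between $EN$ and $\underline{E}N$ is exactly where the paper invokes K\"unneth, in Lemma~\ref{Lem-torsion-to-free}: one shows that the comparison map $\lambda_\ast$ from the Milnor--Rips model to the Rips model (in one factor of the product) is \emph{rationally injective} by tensoring with the crossed product $C(\Omega_G)\rtimes_r G$ built from the compact $G$-space $\Omega_G$ of linear orders on $G$, and then using a canonical trace together with K\"unneth to split the image. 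This is the sole source of the ``rational'' in the conclusion and the sole reason the K\"unneth hypothesis on $A$ appears; it is not used in the tensor-decomposition manner you sketch. Once this rational injectivity is in hand, hypothesis~(1) is fed in through a Mayer--Vietoris argument over the \emph{free} Milnor--Rips complex of $N$ (Proposition~\ref{Prop-prod-key}), where the freeness ensures all stabilizers are trivial and only BCC for the single coefficient $A$ is required.
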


In Theorem \ref{first-main-thm-intro} and Theorem \ref{second-main-thm-intro}, we always require the condition stating that the (rational) Baum--Connes conjecture with coefficients holds, even for the strong Novikov conjecture and the surjective assembly conjecture. Next, we weaken this condition for a class of group extensions defined below.
\begin{definition}[see Definition \ref{Def-semi-pro}]\label{Def-semi-pro-intro}
	Let $G$ and $N$ be two countable discrete groups. Equipped $N$ with a $G$-action. If there exists a left $N$-invariant proper metric on $N$ such that the $G$-action on $N$ is isometric, then we call $N\rtimes G$ an \textit{isometric semi-direct product}.
\end{definition}

For an isometric semi-direct product $N\rtimes G$ and an $(N\rtimes G)$-$C^{\ast}$-algebra $A$, we introduce the following partial Baum--Connes assembly map (see Proposition \ref{Prop-Condition2}):
\begin{equation*}
	e_{\ast}: \lim_{k\rightarrow \infty}K_{\ast}\left( C^{\ast}_{L, P_{k}(N)}(P_{k}(N)\times G, A)^{N\rtimes G} \right) \rightarrow \lim_{k\rightarrow \infty}K_{\ast}\left( C^{\ast}(P_{k}(N)\times G, A)^{N\rtimes G} \right).
\end{equation*}
The left-hand side of $e_{\ast}$ is the $K$-theory of the equivariant localization algebra along the Rips complex $P_{k}(N)$ with coefficients in $A$ of $P_{k}(N)\times G$ (see Definition \ref{Def-loc-along}) which can be regarded as a partial equivariant $K$-homology, and the right-hand side is the $K$-theory of the equivariant Roe algebra with coefficients in $A$ of $P_{k}(N)\times G$ (see Definition \ref{Def-eqRoecoe}) which is isomorphic to the $K$-theory of the reduced crossed product $A\rtimes_{r} (N\rtimes G)$. Similarly, we can define a partial Mishchenko--Kasparov assembly map $\mu_{\ast}$ by replacing the Rips complex $P_{k}(N)$ by the Milnor--Rips complex $\widetilde{P_{k, m}}(N)$ as above.

Then the \textit{strong Novikov conjecture along $N$}, the \textit{surjective assembly conjecture along $N$} and the \textit{Baum--Connes conjecture along $N$} with coefficients in $A$ for $N\rtimes G$ assert that the partial Baum--Connes assembly map $e_{\ast}$ is injective, surjective and isomorphic, respectively (see Conjecture \ref{partial-conj}). In addition, the \textit{rational analytic Novikov conjecture along $N$} with coefficients in $A$ for $N\rtimes G$ asserts that the partial Mishchenko--Kasparov assembly map $\mu_{\ast}$ is rationally injective.

Now, we can state the third main result of this paper as below.

\begin{theorem}[see Theorem \ref{main-THM} and Theorem \ref{main-THM-rational}]\label{third-main-thm-intro}
	Let $N\rtimes G$ be an isometric semi-direct product and $A$ be an $(N\rtimes G)$-$C^{\ast}$-algebra. If
	\begin{enumerate}
		\item there exists a non-negative sequence $\{k_{i}\}_{i\in \N}$ with $\lim_{i\rightarrow \infty}k_{i}=\infty$ such that $G$ satisfies SNC, SAC, BCC and RANC with coefficients in $C^{\ast}_{L}(P_{k_{i}}(N), A)^{N}$ for all $i\in \N$, respectively;
		\item $N\rtimes G$ satisfies SNC along $N$, SAC along $N$, BCC along $N$ and RANC along $N$ with coefficients in $A$, respectively.
	\end{enumerate}
	Then $N\rtimes G$ satisfies SNC, SAC, BCC and RANC with coefficients in $A$, respectively.
\end{theorem}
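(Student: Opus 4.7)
Because $N\rtimes G$ is an isometric semi-direct product, $G$ acts by simplicial isometries on each Rips complex $P_k(N)$ and on the Milnor--Rips complex $\widetilde{P_{k,m}}(N)$, so the $N$-equivariant localization algebra $C^{\ast}_L(P_k(N), A)^N$ carries a natural $G$-action.  My plan is to factor the full Baum--Connes (resp.\ Mishchenko--Kasparov) assembly map for $N\rtimes G$ as a two-step composition: first the ordinary assembly map \emph{for the quotient $G$} with coefficients in $C^{\ast}_L(P_k(N), A)^N$, and then the partial assembly along $N$ from Conjecture \ref{partial-conj}.  Hypothesis (1) will then control the first factor and hypothesis (2) the second, so injectivity, surjectivity, being an isomorphism, and rational injectivity all transfer from the two factors to the composition.

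\textbf{Key steps.}  First, I would establish the natural $K$-theoretic identification
\[
K_{\ast}\bigl(C^{\ast}_{L,P_k(N)}(P_k(N)\times G, A)^{N\rtimes G}\bigr) \;\cong\; K_{\ast}\bigl(C^{\ast}_L(P_k(N), A)^N \rtimes_r G\bigr),
\]
the ``partial'' analogue of the isomorphism between an equivariant Roe algebra and a reduced crossed product, proved by the same covariant--representation argument adapted to the localization-along-$N$ variant.  Second, I would construct a ``half-localization'' comparison map
\[
\alpha:\ K^{N\rtimes G}_{\ast}(\underline{E}(N\rtimes G); A) \longrightarrow \lim_{k\to\infty} K^{G}_{\ast}\bigl(\underline{E}G;\, C^{\ast}_L(P_k(N), A)^N\bigr),
\]
using a cofinal system of $(N\rtimes G)$-invariant subcomplexes of $P_k(N\rtimes G)$ whose $N$-direction is exactly $P_k(N)$, and show that it is an isomorphism by a Fubini-type argument relating two models of the equivariant $K$-homology of the product.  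Third, I would verify commutativity of the diagram
\[
\begin{matrix}
K^{N\rtimes G}_{\ast}(\underline{E}(N\rtimes G); A) & \xrightarrow{\ \alpha\ } & \lim_k K^{G}_{\ast}(\underline{E}G; C^{\ast}_L(P_k(N), A)^N) \\
\Big\downarrow e_{\ast} & & \Big\downarrow \lim e_{\ast}^{G} \\
K_{\ast}\bigl(A\rtimes_r (N\rtimes G)\bigr) & \xleftarrow{\ e_{\ast}^{\mathrm{partial}}\ } & \lim_k K_{\ast}\bigl(C^{\ast}_L(P_k(N), A)^N \rtimes_r G\bigr),
\end{matrix}
\]
so that $e_{\ast} = e_{\ast}^{\mathrm{partial}} \circ (\lim e_{\ast}^{G}) \circ \alpha$.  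Finally, hypothesis (1) applied to each $k=k_i$, together with $K$-theoretic continuity under the colimit and the cofinality $k_i \to \infty$, yields the desired property of $\lim e_{\ast}^{G}$; hypothesis (2) yields it for $e_{\ast}^{\mathrm{partial}}$; and the composition concludes.  The Mishchenko--Kasparov/RANC statement follows from the same diagram with $\underline{E}$ replaced by $E$ and $P_k(N)$ by $\widetilde{P_{k,m}}(N)$, after tensoring with $\Q$.

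\textbf{Main obstacle.}  The real technical difficulty lies in the compatibility check of step three: one must show that the intrinsic assembly for $N\rtimes G$ genuinely decomposes as ``assemble $G$ with coefficients in an $N$-localization algebra, then do partial assembly along $N$.''  The two factors naturally live in slightly different pictures (a Kasparov-descent picture for the $G$-assembly with $G$-$C^{\ast}$-coefficients, versus a localization-algebra picture for the partial $N$-assembly), and aligning them requires careful bookkeeping of supports and propagations, together with essential use of the isometry of the $G$-action on $(N,d)$ to guarantee that $G$-translation preserves the localization filtration.  A smaller, more routine issue is passing injectivity/surjectivity from the termwise maps indexed by $k_i$ to the colimit map, which is a standard five-lemma/colimit argument in $K$-theory.
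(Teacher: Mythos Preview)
Your high-level factorization---assemble $G$ with coefficients in $C^{\ast}_L(P_k(N),A)^N$, then do partial assembly along $N$---is exactly the paper's strategy, and your diagram is the right one. But you misjudge where the difficulty lies and underestimate your first key step.

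The identification $K_{\ast}\bigl(C^{\ast}_{L,P_k(N)}(P_k(N)\times G, A)^{N\rtimes G}\bigr) \cong K_{\ast}\bigl(C^{\ast}_L(P_k(N), A)^N \rtimes_r G\bigr)$ is \emph{not} a routine covariant-representation argument. An element of $C^{\ast}_{L,P_k(N)}$ has $N$-propagation tending to zero, but as $t\to\infty$ its $G$-propagation may be unbounded; the crossed-product side only sees uniformly bounded $G$-propagation. The paper resolves this by introducing a \emph{uniformly controlled} subalgebra $C^{\ast}_{L,P_k(N),u}$ (Definition~\ref{Def-uniloc-along}), which \emph{does} identify directly with the crossed product via an explicit conjugation (equation~(\ref{equ-uloc-loc})), and then proves that the inclusion $C^{\ast}_{L,P_k(N),u}\hookrightarrow C^{\ast}_{L,P_k(N)}$ is a $K$-isomorphism (Proposition~\ref{Prop-localong-uniforalong}). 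The induction on skeleta of $P_k(N)$ reduces, at the $0$-skeleton, to showing that $C_{ubf}([0,\infty),B)\hookrightarrow C_{ub}([0,\infty),B)$ is a $K$-isomorphism for a filtered $B$, and this is where quantitative $K$-theory (Lemma~\ref{Lem-fil-eva}) is genuinely needed. Your ``Fubini-type'' step for $\alpha$ likewise requires a new gadget---the two-parametric localization algebra $C^{\ast}_{LL}$ (Definition~\ref{Def-two-localg}, Propositions~\ref{Prop-loc-twoloc} and~\ref{Prop-oneloc-twoloc})---with its own homotopy-invariance and Mayer--Vietoris package developed from scratch. By contrast, the commutativity check you flag as the main obstacle is, once these intermediate algebras are in hand, essentially formal.
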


\subsection{Applications}

Firstly, since all a-T-menable groups and all hyperbolic groups are preserved under finite index extensions and satisfy the Baum--Connes conjecture with coefficients (see \cite{Higson-Kasparov}\cite{Lafforgue-2012}), we have the following result by Theorem \ref{first-main-thm-intro}.

\begin{theorem}[see Theorem \ref{Thm-N-special}]\label{Thm-N-special-intro}
	Let $A$ be a $\Gamma$-$C^{\ast}$-algebra. Assume that $N$ is an a-T-menable group or a hyperbolic group. If $\Gamma/N$ satisfies SNC, SAC and BCC with coefficients in $C_0(\Gamma/N, A)\rtimes_{r}\Gamma$, respectively. Then $\Gamma$ satisfies SNC, SAC and BCC with coefficients in $A$, respectively.
\end{theorem}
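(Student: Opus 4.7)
The plan is to derive Theorem \ref{Thm-N-special-intro} as a direct application of Theorem \ref{first-main-thm-intro}. Hypothesis (2) of that theorem is precisely what we have assumed on $\Gamma/N$, so it suffices to verify hypothesis (1): that $q^{-1}(F)$ satisfies the Baum--Connes conjecture with coefficients in $A$ for every finite subgroup $F$ of $\Gamma/N$.

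Fix such an $F$. Restricting the extension $1\to N\to \Gamma \xrightarrow{q} \Gamma/N\to 1$ over $F$ gives a short exact sequence $1\to N\to q^{-1}(F)\to F\to 1$, which exhibits $N$ as a normal subgroup of finite index in $q^{-1}(F)$. The next step is to argue that both the class of a-T-menable groups and the class of hyperbolic groups are closed under such finite-index extensions. For a-T-menability this is standard: a proper conditionally negative-type function on $N$ can be symmetrized over a set of coset representatives of $N$ in $q^{-1}(F)$ to yield a proper conditionally negative-type function on $q^{-1}(F)$. For hyperbolicity it follows from the fact that a finitely generated group is quasi-isometric to any of its finite-index subgroups equipped with word metrics, and Gromov hyperbolicity is a quasi-isometry invariant. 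Thus in both cases $q^{-1}(F)$ belongs to the same permanence class as $N$.

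I would then invoke the two deep theorems already cited in the paper: the Higson--Kasparov theorem \cite{Higson-Kasparov} implies that every a-T-menable group satisfies BCC with arbitrary coefficients, while Lafforgue's theorem \cite{Lafforgue-2012} gives the same conclusion for every hyperbolic group. Applied to $q^{-1}(F)$ with coefficients in $A$ (regarded as a $q^{-1}(F)$-$C^{\ast}$-algebra by restriction of the $\Gamma$-action), this establishes hypothesis (1) of Theorem \ref{first-main-thm-intro} in either case. Theorem \ref{first-main-thm-intro} then delivers SNC, SAC and BCC for $\Gamma$ with coefficients in $A$, respectively.

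There is essentially no obstacle in this argument; it is a matter of combining the main theorem with well-known closure properties of the two classes in question. The only point requiring a small amount of care is that hypothesis (1) of Theorem \ref{first-main-thm-intro} needs the full Baum--Connes conjecture with coefficients for $q^{-1}(F)$ (not merely the strong Novikov or surjective assembly versions), which is exactly what Higson--Kasparov and Lafforgue provide; and that the coefficient algebra appearing in hypothesis (2) matches verbatim between the two statements, so no reinterpretation of coefficients is needed.
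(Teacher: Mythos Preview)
Your proposal is correct and follows exactly the same route as the paper: verify hypothesis (1) of Theorem \ref{first-main-thm-intro} by noting that a-T-menability and hyperbolicity pass to finite-index overgroups, then invoke Higson--Kasparov and Lafforgue. The paper's proof is a one-line appeal to these same facts, so there is nothing to add.
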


The following two corollaries of Theorem \ref{Thm-N-special-intro} indicate that SNC, SAC and BCC with coefficients is closed under group extensions by finite groups and under central extensions. However, it is unknown whether or not coarsely embeddable groups are preserved by central extensions (see \cite[Question 5.2]{AT-NotCoarseEmbed}, \cite{DG-2003}), although they satisfy the strong Novikov conjecture with coefficients (see \cite{Yu2000}).

\begin{corollary}[see Corollary \ref{Cor-finite-extension}]\label{Cor-finite-extension-intro}
	Let $1\rightarrow N \rightarrow \Gamma \rightarrow \Gamma/N \rightarrow 1$ be a group extension by a finite group $N$ and $A$ be a $\Gamma$-$C^{\ast}$-algebra. If $\Gamma/N$ satisfies SNC, SAC and BCC with coefficients in $C_0(\Gamma/N, A)\rtimes_{r}\Gamma$, respectively. Then $\Gamma$ satisfies SNC, SAC and BCC with coefficients in $A$, respectively.
\end{corollary}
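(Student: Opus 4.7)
The plan is to deduce Corollary~\ref{Cor-finite-extension-intro} directly from Theorem~\ref{Thm-N-special-intro} by verifying that any finite group is a-T-menable. Concretely, if $N$ is finite, then $N$ acts properly and isometrically on the zero-dimensional Hilbert space (properness is automatic since $N$ is finite); equivalently, $N$ has the Haagerup property trivially, so $N$ is a-T-menable. This supplies the hypothesis on $N$ in Theorem~\ref{Thm-N-special-intro}.

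The remaining hypothesis of Theorem~\ref{Thm-N-special-intro}, namely that $\Gamma/N$ satisfies SNC, SAC and BCC with coefficients in $C_{0}(\Gamma/N,A)\rtimes_{r}\Gamma$, is identical to the hypothesis in the corollary. Hence Theorem~\ref{Thm-N-special-intro} applies and yields that $\Gamma$ satisfies SNC, SAC and BCC with coefficients in $A$, respectively, which is exactly the desired conclusion.

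There is essentially no obstacle once Theorem~\ref{Thm-N-special-intro} is in hand; the only content is the trivial observation that finite groups are a-T-menable (indeed, they enjoy every property in the standard hierarchy, being compact). The purpose of isolating this statement as a separate corollary is to record the clean structural consequence that SNC, SAC and BCC with coefficients are all stable under extensions by finite kernels, paralleling the classical result for BCC due to Chabert--Echterhoff--Oyono-Oyono and complementing the fact, noted in the introduction, that stability of coarse embeddability under central (and more generally, finite-kernel) extensions is not known in full generality.
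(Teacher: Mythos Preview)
Your proof is correct and matches the paper's approach exactly: the paper deduces Corollary~\ref{Cor-finite-extension} from Theorem~\ref{Thm-N-special} by the one-line observation that finite groups are a-T-menable. One small inaccuracy in your closing commentary: coarse embeddability \emph{is} trivially preserved under finite-kernel extensions (since $\Gamma$ and $\Gamma/N$ are then quasi-isometric); the open question the paper flags concerns central extensions only.
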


\begin{corollary}[see Corollary \ref{Cor-central-extension}]\label{Cor-central-extension-intro}
	Let $1\rightarrow N \rightarrow \Gamma \rightarrow \Gamma/N \rightarrow 1$ be a central extension of groups and $A$ be a $\Gamma$-$C^{\ast}$-algebra. If $\Gamma/N$ satisfies SNC, SAC and BCC with coefficients in $C_0(\Gamma/N, A)\rtimes_{r}\Gamma$, respectively. Then $\Gamma$ satisfies SNC, SAC and BCC with coefficients in $A$, respectively.
\end{corollary}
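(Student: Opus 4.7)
The plan is to deduce this corollary directly from Theorem \ref{Thm-N-special-intro}. The first step would be the standard observation that in a central extension $1 \to N \to \Gamma \to \Gamma/N \to 1$ the subgroup $N$ is by hypothesis contained in the center $Z(\Gamma)$, and in particular $N$ must be abelian.

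Next, I would invoke the classical facts that any countable discrete abelian group is amenable, and that every countable discrete amenable group is a-T-menable; the latter can be verified explicitly by averaging the left regular representation against a F\o lner sequence in $\ell^{2}(N)$ to produce a metrically proper affine isometric action on a Hilbert space (this is essentially Bekka--Cherix--Valette). Hence $N$ satisfies the hypothesis ``$N$ is a-T-menable or hyperbolic'' in Theorem \ref{Thm-N-special-intro}.

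Since the hypothesis on $\Gamma/N$ in the corollary is exactly the hypothesis required by Theorem \ref{Thm-N-special-intro}, all assumptions of that theorem are in place, so applying it immediately yields the conclusion that $\Gamma$ satisfies SNC, SAC and BCC with coefficients in $A$, respectively. There is no substantive obstacle: the corollary is a pure specialization of Theorem \ref{Thm-N-special-intro}, and the only thing to record is the pair of classical observations that centrality forces $N$ to be abelian and that countable discrete abelian (more generally amenable) groups are a-T-menable.
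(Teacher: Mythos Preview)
Your proposal is correct and matches the paper's own argument essentially verbatim: the paper deduces Corollary~\ref{Cor-central-extension} from Theorem~\ref{Thm-N-special} by noting that a central $N$ is abelian and that abelian groups are a-T-menable. Your extra step through amenability and the Bekka--Cherix--Valette construction is fine but unnecessary detail; the paper simply states that abelian groups are a-T-menable.
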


Secondly, as an immediate application of Theorem \ref{second-main-thm-intro} to group extensions of finite groups, we obtain the following theorem.

\begin{theorem}[see Theorem \ref{Thm-rational-Novikov-finite extension}]\label{Thm-rational-Novikov-finite extension-intro}
	Let $1\rightarrow N \rightarrow \Gamma \rightarrow G \rightarrow 1$ be a group extension of a finite group $G$ and $A$ be a $\Gamma$-$C^{\ast}$-algebra satisfying the K\"unneth formula. If $N$ satisfies the rational Baum--Connes conjecture with coefficients in $A$, then $\Gamma$ satisfies the rational analytic Novikov conjecture with coefficients in $A$.
\end{theorem}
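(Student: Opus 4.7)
The plan is to deduce this directly from Theorem \ref{second-main-thm-intro}, applied with quotient $\Gamma/N = G$. Hypothesis (1) of that theorem is exactly the assumption that $N$ satisfies the rational Baum--Connes conjecture with coefficients in $A$. So everything reduces to verifying hypothesis (2): that the finite group $G$ satisfies the rational analytic Novikov conjecture with coefficients in the $G$-$C^{\ast}$-algebra $B := C_0(G, A) \rtimes_r \Gamma$.

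For any finite group $G$ and any $G$-$C^{\ast}$-algebra $B$, I would show that the Mishchenko--Kasparov assembly map $\mu_{\ast}\colon K^G_{\ast}(EG; B) \to K_{\ast}(B \rtimes_r G)$ is in fact a rational isomorphism, which in particular gives rational injectivity. The key observation is that $\underline{E}G$ may be taken to be a single point when $G$ is finite, so $\mu_{\ast}$ factors as
\[K^G_{\ast}(EG; B) \xrightarrow{\iota_{\ast}} K^G_{\ast}(\underline{E}G; B) \xrightarrow{e_{\ast}} K_{\ast}(B \rtimes_r G),\]
where $\iota\colon EG \to \underline{E}G$ is the canonical equivariant collapse map and $e_{\ast}$ is trivially an isomorphism. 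It therefore suffices to verify that $\iota_{\ast}$ is rationally injective. Since $G$ acts freely on $EG$, Kasparov descent together with the Morita equivalence $C_0(EG) \rtimes G \sim C_0(BG)$ identifies $K^G_{\ast}(EG; B)$ with $K_{\ast}(BG; B \rtimes_r G)$, under which $\iota_{\ast}$ becomes the map $K_{\ast}(BG; B \rtimes_r G) \to K_{\ast}(B \rtimes_r G)$ induced by the projection of $BG$ to a point. The Atiyah--Hirzebruch spectral sequence then has $E^2$-page $H_{\ast}(BG; K_{\ast}(B \rtimes_r G))$, and for a finite group $G$ the rational homology $H_{\ast}(BG; \Q)$ is concentrated in degree zero; hence the spectral sequence degenerates rationally and $\iota_{\ast}$ is a rational isomorphism.

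With both hypotheses of Theorem \ref{second-main-thm-intro} verified, the conclusion follows immediately. I do not foresee a serious obstacle: the only input beyond the (already hard) Theorem \ref{second-main-thm-intro} is the classical fact that finite groups satisfy the rational analytic Novikov conjecture with arbitrary coefficients, a standard consequence of the rational triviality of the higher cohomology of finite-group classifying spaces. The only place where one needs to be a little careful is ensuring that the descent and Morita identifications above are natural enough to commute with $\iota_{\ast}$, but this is routine in the Kasparov-theoretic setup already used elsewhere in the paper.
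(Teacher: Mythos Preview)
Your proposal is correct and follows exactly the same route as the paper: both deduce the result from Theorem~\ref{second-main-thm} (Theorem~\ref{second-main-thm-intro} in the introduction), with hypothesis~(1) being the assumption and hypothesis~(2) reducing to the standard fact that finite groups satisfy the rational analytic Novikov conjecture with arbitrary coefficients. The only difference is that the paper simply asserts this fact without proof, whereas you supply the classical Atiyah--Hirzebruch/rational-acyclicity-of-$BG$ justification; your added detail is correct and fills in what the paper leaves implicit.
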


We employ Theorem \ref{Thm-rational-Novikov-finite extension-intro} to show that Meyer's example in \cite{Meyer-BC-counterex} satisfies the rational analytic Novikov conjecture with coefficients (see Example \ref{EX-Meyer}). When $A=\mathbb{C}$, we have the following corollary.
\begin{corollary}[see Theorem \ref{Cor-rational-Novikov-finite extension}]\label{Cor-rational-Novikov-finite extension-intro}
	Let $\Gamma$ be an extension of a finite group $G$ by a group $N$. If $N$ satisfies the rational Baum--Connes conjecture, then $\Gamma$ satisfies the rational analytic Novikov conjecture.
\end{corollary}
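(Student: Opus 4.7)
The plan is to simply specialize Theorem \ref{Thm-rational-Novikov-finite extension-intro} to the case $A=\mathbb{C}$ equipped with the trivial $\Gamma$-action, and then verify that the three hypotheses of that theorem reduce to the two hypotheses of the corollary.

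First I would note that $\mathbb{C}$ satisfies the K\"unneth formula: since $K_{0}(\mathbb{C})=\Z$ and $K_{1}(\mathbb{C})=0$, the K\"unneth short exact sequence for $K_{\ast}(\mathbb{C}\otimes B) \cong K_{\ast}(B)$ is trivially satisfied for every $C^{\ast}$-algebra $B$. Second, I would unwind the definitions: the rational Baum--Connes conjecture for $N$ (without coefficients) is by definition the statement that the Baum--Connes assembly map $e_{\ast}$ for $N$ with coefficients in $\mathbb{C}$ becomes an isomorphism after tensoring with $\Q$, and likewise the rational analytic Novikov conjecture for $\Gamma$ is by definition the rational injectivity of $\mu_{\ast}$ with coefficients in $\mathbb{C}$.

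Once these matching of hypotheses is observed, the corollary follows immediately by invoking Theorem \ref{Thm-rational-Novikov-finite extension-intro} with $A=\mathbb{C}$, $G$ the finite quotient, and $N$ the given normal subgroup.

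There is no substantive obstacle here; the corollary is a direct instantiation of the more general coefficient theorem, and the only content is the observation that the trivial coefficient algebra $\mathbb{C}$ automatically satisfies the K\"unneth formula.
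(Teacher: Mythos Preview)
Your proposal is correct and matches the paper's own approach: the paper derives Corollary~\ref{Cor-rational-Novikov-finite extension} by simply specializing Theorem~\ref{Thm-rational-Novikov-finite extension} to $A=\mathbb{C}$, noting implicitly that $\mathbb{C}$ satisfies the K\"unneth formula. There is no additional content in the paper beyond what you have written.
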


Lastly, we illustrate an application of Theorem \ref{third-main-thm-intro} to direct products of groups which is a starting point of this paper. For direct products, the partial Baum--Connes and the partial Mishchenko--Kasparov assembly maps with coefficients can be translated to the Baum--Connes and the Mishchenko--Kasparov assembly maps with different coefficients (see Lemma \ref{Lem-for-products}). Thus, we have the following consequences by Theorem \ref{third-main-thm-intro}.

\begin{theorem}[see Theorem \ref{Thm-direct-prod} and Theorem \ref{Thm-direct-prod-rational}]\label{Thm-direct-prod-intro}
	Let $N$ and $G$ be two countable discrete groups and $A$ be an $(N\times G)$-$C^{\ast}$-algebra. If
	\begin{enumerate}
		\item there exists a non-negative sequence $\{k_{i}\}_{i\in \N}$ with $\lim_{i\rightarrow \infty}k_{i}=\infty$ such that $G$ satisfies SNC, SAC, BCC and RANC with coefficients in $C^{\ast}_{L}(P_{k_{i}}(N), A)^{N}$ for all $i\in \N$, respectively;
		\item $N$ satisfies SNC, SAC, BCC and RANC with coefficients in $A\rtimes_{r} G$, respectively.
	\end{enumerate}
	Then $N\times G$ satisfies SNC, SAC, BCC and RANC with coefficients in $A$, respectively.
\end{theorem}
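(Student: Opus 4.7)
The plan is to deduce Theorem \ref{Thm-direct-prod-intro} directly from Theorem \ref{third-main-thm-intro} by realising a direct product as an isometric semi-direct product and then translating the ``partial along $N$'' conjectures into ordinary conjectures for $N$ with coefficients in $A\rtimes_r G$.

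First I would observe that for any countable discrete group $N$, a left $N$-invariant proper metric exists (take a proper length function). The direct product $N\times G$ is the semi-direct product $N\rtimes G$ for the trivial $G$-action on $N$, which is trivially isometric with respect to any choice of such metric. Hence $N\times G$ falls into the framework of Definition \ref{Def-semi-pro-intro}, and Theorem \ref{third-main-thm-intro} is applicable once its two hypotheses are verified.

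Condition~(1) of Theorem \ref{third-main-thm-intro} is literally hypothesis~(1) of Theorem \ref{Thm-direct-prod-intro}, so nothing needs to be checked there. The substantive step is to show that hypothesis~(2) of Theorem \ref{Thm-direct-prod-intro} implies condition~(2) of Theorem \ref{third-main-thm-intro}, namely that $N\times G$ satisfies SNC, SAC, BCC and RANC \emph{along $N$} with coefficients in $A$. This is exactly what Lemma \ref{Lem-for-products} is designed to do: because the $G$-action on $N$ is trivial, $G$ acts on $P_k(N)\times G$ only on the right-hand factor by translation (which is free and cocompact), so taking $G$-invariants of the equivariant Roe algebra $C^{\ast}(P_k(N)\times G,A)^{N\rtimes G}$ yields the equivariant Roe algebra $C^{\ast}(P_k(N), A\rtimes_r G)^{N}$, and similarly for the localisation side and for the Milnor--Rips replacement used in the $\mu_\ast$ version. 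Consequently, the partial Baum--Connes (resp.\ Mishchenko--Kasparov) assembly map along $N$ for $N\times G$ with coefficients in $A$ is naturally identified with the ordinary Baum--Connes (resp.\ Mishchenko--Kasparov) assembly map for $N$ with coefficients in $A\rtimes_r G$, after passing to the inductive limit over $k$. Under this identification, hypothesis~(2) of Theorem \ref{Thm-direct-prod-intro} becomes condition~(2) of Theorem \ref{third-main-thm-intro}.

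Having verified both hypotheses, Theorem \ref{third-main-thm-intro} delivers SNC, SAC, BCC and RANC with coefficients in $A$ for $N\times G$, which is the conclusion. I expect the main technical obstacle to be the naturality and functoriality check in the identification used above: one must confirm that the isomorphism between $C^{\ast}(P_k(N)\times G,A)^{N\times G}$ and $C^{\ast}(P_k(N), A\rtimes_r G)^{N}$ (and the analogous statement for the localisation algebras and for Milnor--Rips complexes in the RANC case) intertwines the respective evaluation-at-$1$ maps so that the assembly maps themselves correspond, not merely their source and target groups. This naturality is the content of Lemma \ref{Lem-for-products}, which I would invoke in its full strength; everything else is a bookkeeping exercise over the four conjectures and the inductive limit in $k$.
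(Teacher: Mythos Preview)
Your proposal is correct and follows essentially the same route as the paper: recognise $N\times G$ as an isometric semi-direct product with trivial $G$-action, observe that hypothesis~(1) coincides verbatim with condition~(1) of Theorem~\ref{third-main-thm-intro}, invoke Lemma~\ref{Lem-for-products} to identify the partial assembly maps along $N$ with the ordinary assembly maps for $N$ with coefficients in $A\rtimes_r G$, and conclude via Theorem~\ref{third-main-thm-intro} (and its rational analogue Theorem~\ref{main-THM-rational}). One small slip: the maps you call ``evaluation-at-$1$'' are evaluation-at-zero maps $e:u\mapsto u(0)$; otherwise your outline matches the paper's argument.
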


The first condition of Theorem \ref{Thm-direct-prod-intro} is weaker than the first condition of Theorem \ref{first-main-thm-intro} for the case of direct products (see Remark \ref{remark-new}). Thus, this theorem is new even for the Baum--Connes conjecture with coefficients. The following two corollaries of Theorem \ref{Thm-direct-prod-intro} are straightforward.

\begin{corollary}[see Corollary \ref{Cor-direct-prod} and Corollary \ref{Cor-direct-prod-rational}]\label{Cor-direct-prod-intro}
	If $G$ and $N$ satisfies SNC, SAC, BCC and RANC with coefficients, respectively. Then $N\times G$ satisfies SNC, SAC, BCC and RANC with coefficients, respectively.
\end{corollary}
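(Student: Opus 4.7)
The plan is to derive this corollary as a direct application of Theorem \ref{Thm-direct-prod-intro}. Fix an $(N\times G)$-$C^{\ast}$-algebra $A$ for which we want the four conjectures to hold for $N\times G$. The strategy is simply to verify both hypotheses of Theorem \ref{Thm-direct-prod-intro} from the blanket assumption that $G$ and $N$ individually satisfy SNC, SAC, BCC and RANC with arbitrary coefficients.

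For the first hypothesis, I would pick any sequence $\{k_{i}\}_{i\in\N}$ with $k_{i}\to \infty$ (e.g.\ $k_{i}=i$) and argue that each $C^{\ast}_{L}(P_{k_{i}}(N), A)^{N}$ is naturally a $G$-$C^{\ast}$-algebra: the $G$-action is induced from the $G$-action on the coefficient $A$ (the $G$-action on $N$, and hence on the Rips complex $P_{k_{i}}(N)$, being trivial in the direct-product case), and one checks it descends to the $N$-fixed subalgebra because the $N$- and $G$-actions commute in $N\times G$. The hypothesis that $G$ satisfies each of the four conjectures with coefficients then immediately yields the four conjectures for $G$ with coefficients in $C^{\ast}_{L}(P_{k_{i}}(N), A)^{N}$ for every $i$.

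For the second hypothesis, since the $N$- and $G$-actions on $A$ commute, the $N$-action extends canonically to an action on $A\rtimes_{r} G$ commuting with the internal $G$-action, so $A\rtimes_{r} G$ is an $N$-$C^{\ast}$-algebra. The blanket assumption on $N$ then supplies SNC, SAC, BCC and RANC for $N$ with coefficients in $A\rtimes_{r} G$. Applying Theorem \ref{Thm-direct-prod-intro} with these two verified hypotheses delivers the four conjectures for $N\times G$ with coefficients in $A$.

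The hard technical content has been absorbed into Theorem \ref{Thm-direct-prod-intro} itself (and hence into Theorem \ref{third-main-thm-intro} together with Lemma \ref{Lem-for-products}, which translates the partial assembly maps into ordinary ones for direct products). Consequently, no new obstacle appears in the corollary beyond confirming the correct equivariant structure on the two coefficient algebras $C^{\ast}_{L}(P_{k_{i}}(N),A)^{N}$ and $A\rtimes_{r} G$; both structures are forced by functoriality of the localization algebra and of the reduced crossed product, together with the commutativity of the factor actions in $N\times G$.
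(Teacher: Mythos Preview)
Your proposal is correct and matches the paper's approach: the corollary is stated immediately after Theorem \ref{Thm-direct-prod} (and Theorem \ref{Thm-direct-prod-rational} for RANC) as a direct consequence, and your verification of the two hypotheses---that $C^{\ast}_{L}(P_{k_i}(N),A)^{N}$ carries the $G$-action described in Section \ref{Sec-4} (which for a direct product acts only through the coefficient $A$), and that $A\rtimes_{r}G$ carries the commuting $N$-action---is exactly the routine check the paper leaves implicit.
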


\begin{corollary}[see Corollary \ref{Cor-direct-prod-C} and Corollary \ref{Cor-direct-prod-rational-C}]\label{Cor-direct-prod-C-intro}
	Let $A$ be a $C^{\ast}$-algebra with the trivial $(N\times G)$-action (in particular, $A=\C$). If
	$G$ and $N$ satisfy SNC, SAC, BCC and RANC with coefficients in any $C^{\ast}$-algebra with the trivial action, respectively. Then $N\times G$ satisfies SNC, SAC, BCC and RANC with coefficients in $A$, respectively.
\end{corollary}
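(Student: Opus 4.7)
The plan is to apply Theorem~\ref{Thm-direct-prod-intro} directly by verifying its two hypotheses in the presence of a trivial $(N\times G)$-action on $A$. The crucial observation is that, for a direct product, triviality of the action on $A$ propagates to triviality of the auxiliary group actions on the coefficient algebras appearing in both conditions, which reduces everything to the trivial-action hypothesis on $G$ and on $N$ that we already have.

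To verify condition~(2), I would consider the reduced crossed product $A\rtimes_r G$, which in Theorem~\ref{Thm-direct-prod-intro} is regarded as an $N$-$C^{\ast}$-algebra via the $N$-action induced from the action on $A$ together with the action of $N$ on $G$ inside $N\times G$. In the direct product both of these are trivial --- the former by assumption and the latter because $N$ and $G$ commute in $N\times G$ --- so $A\rtimes_r G$ is a $C^{\ast}$-algebra with trivial $N$-action. The hypothesis on $N$ then immediately gives SNC, SAC, BCC, and RANC with coefficients in $A\rtimes_r G$ for $N$.

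To verify condition~(1), I would fix any sequence $\{k_i\}_{i\in\N}$ with $k_i\to\infty$ and consider the equivariant localization algebra $C^{\ast}_{L}(P_{k_{i}}(N), A)^{N}$. The relevant $G$-action on this algebra is assembled from the $G$-action on $N$ (hence on $P_{k_i}(N)$) together with the $G$-action on $A$. For a direct product, $G$ acts trivially on $N$, so the induced action on $P_{k_i}(N)$ is trivial; combined with the trivial $G$-action on $A$, the induced $G$-action on $C^{\ast}_{L}(P_{k_{i}}(N), A)^{N}$ is trivial as well. The hypothesis on $G$ then yields SNC, SAC, BCC, and RANC with coefficients in this algebra for every $i$, so condition~(1) is satisfied for any such $\{k_i\}$. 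With both conditions verified, Theorem~\ref{Thm-direct-prod-intro} applies and yields the conclusion. The only real issue is the bookkeeping required to confirm that triviality of the action is indeed preserved through the localization and crossed product constructions; once this is checked, the argument is essentially a formal specialization of the third main theorem.
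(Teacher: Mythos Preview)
Your proposal is correct and matches the paper's (implicit) argument: the corollaries are stated in the paper as immediate consequences of Theorem~\ref{Thm-direct-prod-intro} (equivalently Theorem~\ref{Thm-direct-prod} and Theorem~\ref{Thm-direct-prod-rational}), and your verification that triviality of the $(N\times G)$-action on $A$ forces the induced $N$-action on $A\rtimes_r G$ and the induced $G$-action on $C^{\ast}_{L}(P_{k_i}(N),A)^{N}$ to be trivial is exactly the check needed to invoke the hypothesis on $N$ and $G$.
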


G. Arzhantseva and R. Tessera constructed a group that can not be coarsely embedded into a Hilbert space and this group satisfies the Baum-Connes conjecture with coefficients in \cite{AT-NotCoarseEmbed}. Based on this group, we will produce more examples satisfying the (rational analytic) strong Novikov conjecture without coarse embeddability in Section \ref{Sec-6} by using the above results.

\subsection{Strategies of the proofs}

Let us briefly summarize the strategies to proving our main results. The crucial ingredients of the proofs are three new variations of equivariant localization algebras, Mayer--Vietoris six-term exact sequence, the imprimitivity theorem and the quantitative $K$-theory. 

The proofs of Theorem \ref{first-main-thm-intro} and Theorem \ref{second-main-thm-intro} are similar and both of them are inspired by the method of J. Chabert, S. Echterhoff and H. Oyono-Oyono developed in \cite{CEO-2004} and \cite{Oyono-BC-extensions}. Here we just summarize the proof of Theorem \ref{first-main-thm-intro} as the following three steps.
\begin{enumerate}
	\item [\textbf{step} $1_{a}$:] For a direct product $N\times G$, by introducing a notion of \textit{equivariant localization algebra along $P_{k}(N)$ with coefficients} (see Definition \ref{Def-loc-along}), the Baum--Connes assembly map with coefficients for $N\times G$ can be broken into a localized Baum--Connes assembly map with coefficients for $G$ and a Baum--Connes assembly map with coefficients for $N$. Namely, we have the following commutative diagram (see Subsection \ref{Subsection-prod}):
    $$\xymatrix{
    	K_{\ast}\left(C^{\ast}_{L}(P_{k}(N)\times P_{l}(G), A)^{N\times G}\right)\:\:\: \ar[d]_{\text{localized BC for $G$}} \ar[r]^{\text{\qquad\:\:\:\:\:\:BC for $N\times G$}} & \:\:\:K_{\ast}\left(A\rtimes_{r} (N\times G)\right). \\
    	K_{\ast}\left(C^{\ast}_{L, P_{k}(N)}(P_{k}(N)\times P_{l}(G), A)^{N\times G}\right)\ar[ur]_{\text{\:\:\:\:\:\:\:\:\:\: BC for $N$}} & 
   	}$$
	\item [\textbf{step} $2_{a}$:] Under the assumption that $F\times G$ satisfies the Baum--Connes conjecture with coefficients for any finite subgroup $F$ of $N$, we prove that the localized Baum--Connes assembly map with coefficients for $G$ is an isomorphism by a Mayer--Vietoris argument (see Proposition \ref{Lem-Key}). Consequently, Theorem \ref{first-main-thm-intro} is proved for the case of direct products (see Theorem \ref{main-thm1}).
	\item [\textbf{step} $3_{a}$:] By using the imprimitivity theorem (see Proposition \ref{Prop-Green-impri}), we can transform Theorem \ref{first-main-thm-intro} for group extensions to the case of direct products (see Subsection \ref{Subsection-Green}). Thus, we complete the proof of Theorem \ref{first-main-thm-intro} by the above step.  
\end{enumerate}

The strategy of the proof of Theorem \ref{third-main-thm-intro} is very different from the proofs of Theorem \ref{first-main-thm-intro} and Theorem \ref{second-main-thm-intro}, and we sum it up as follows.

\begin{enumerate}
	\item [\textbf{step} $1_{c}$:] First of all, we construct a model of $\underline{E}\Gamma$ for the isometric semi-direct product $\Gamma=N\rtimes G$ by the product $P_{k}(N)\times P_{k}(G)$ of Rips complexes (see Corollary \ref{Cor-red-conj}). 
	\item [\textbf{step} $2_{c}$:] Next, we introduce a notion of \textit{two-parametric equivariant localization algebras with coefficients} (see Definition \ref{Def-two-localg}) and we employ it to establish an isomorphism for the left-hand sides of the Baum--Connes assembly maps with coefficients of $N\rtimes G$ and $G$ (see Proposition \ref{Prop-loc-twoloc} and Proposition \ref{Prop-oneloc-twoloc}). Based on this, we can reduce the Baum--Connes assembly map with coefficients for $N\rtimes G$ to a composition of the Baum--Connes assembly map with coefficients for $G$ and a $G$-equivariant Baum--Connes assembly map with coefficients for $N$ (see Proposition \ref{Prop-reduction-Conj}). 
	$$\xymatrix{
	  K_{\ast}\left(C^{\ast}_{L}(P_{l}(G)\times P_{k}(N), A)^{N\rtimes G}\right) \ar[d]_{\cong} \ar[dr]^{\text{\quad BC for $N\rtimes G$}} & \\
	  K_{\ast}\left(C^{\ast}_{L}(P_{l}(G), C^{\ast}_{L}(P_{k}(N), A)^{N})^{G}\right) \ar[r] \ar[d]_{\text{BC for $G$}} & K_{\ast}\left(A\rtimes_{r} (N\rtimes G)\right). \\
	  K_{\ast}\left(C^{\ast}(P_{l}(G), C^{\ast}_{L}(P_{k}(N), A)^{N})^{G}\right) \ar[ur]_{\text{\qquad\quad\quad $G$-equivariant BC for $N$}} &
	}$$
	\item [\textbf{step} $3_{c}$:] In the end, we introduce a notion of \textit{uniformly controlled equivariant localization algebra along $P_{k}(N)$ with coefficients} (see Definition \ref{Def-uniloc-along}) and we prove that its $K$-theory is isomorphic to the left-hand side of the $G$-equivariant Baum--Connes assembly map with coefficients for $N$ (see Lemma \ref{Lem-trans-loc}). Besides, we verify that two equivariant localization algebras along $P_{k}(N)$ with coefficients are isomorphic on the $K$-theory by using the quantitative $K$-theory and Mayer--Vietoris six-term exact sequence (see Proposition \ref{Prop-localong-uniforalong}). Base on these facts, we complete the proof of Theorem \ref{third-main-thm-intro}.
\end{enumerate}

\subsection{Organization}

The paper is organized as follows. In Section \ref{Sec-1}, we recall the notions of equivariant Roe algebras with coefficients and equivariant localization algebras with coefficients. Then, we use them to formulate the Baum--Connes and the Mishchenko--Kasparov assembly maps. In Section \ref{Sec-2}, we introduce a notion of equivariant localization algebra along one direction with coefficients for direct products and prove Theorem \ref{first-main-thm-intro}. In Section \ref{Sec-3}, we verify Theorem \ref{second-main-thm-intro}. In Section \ref{Sec-4}, we introduce a notion of two-parametric equivariant localization algebras and analyze the left-hand side of the Baum--Connes conjecture with coefficients for isometric semi-direct products. In Section \ref{Sec-5}, we complete the proof of Theorem \ref{third-main-thm-intro}. Finally in Section \ref{Sec-6}, we demonstrate some applications of our main results and show some new examples for the (rational analytic) strong Novikov conjecture with coefficients.

\section{Preliminaries} \label{Sec-1}
  In this section, we will recall the concepts of equivariant Roe algebras, equivariant localization algebras and the formulations of the Baum--Connes and the Mishchenko-Kasparov assembly maps.

\subsection{Equivariant Roe algebras with coefficients}\label{Sec-1-Roe}
Let $\Gamma$ be a countable discrete group and $X$ be a proper metric space (properness means that any bounded closed subset is compact). An isometric action of $\Gamma$ on $X$ is called to be \textit{proper} if the set $\{\gamma\in \Gamma: \gamma K\cap K \neq \emptyset\}$ is finite for any compact subset $K$ in $X$, and the action is called to be \textit{co-compact} if the quotient space $X/\Gamma$ is compact.

\begin{definition}\label{Def-Gamma-C}
	Let $\Gamma$ be a countable discrete group. 
	\begin{enumerate}
		\item A proper metric space $X$ is called to be a \textit{$\Gamma$-space}, if there exists a proper, co-compact and isometric left $\Gamma$-action on $X$.
		\item A $C^{\ast}$-algebra $A$ is called to be a \textit{$\Gamma$-$C^{\ast}$-algebra}, if there exists a $\Gamma$-action on $A$ by $\ast$-automorphisms. 
	\end{enumerate}
\end{definition}

Let $X$ be a $\Gamma$-space and $Z_X$ be a $\Gamma$-invariant countable dense subset in $X$. Let $A$ be a $\Gamma$-$C^{\ast}$-algebra equipped with the action $\alpha$ and $H$ be a separable Hilbert space endowed with the trivial $\Gamma$-action.

Consider a right Hilbert $A$-module
$$_{X}E_A=\ell^2(Z_X) \otimes A \otimes H \otimes \ell^2(\Gamma),$$
equipped with the $A$-action by
$$(\xi \otimes a \otimes h \otimes \eta)\cdot a'=\xi \otimes aa' \otimes h \otimes \eta,$$
and the $A$-valued inner product by
$$\langle \xi_1 \otimes a_1 \otimes h_1 \otimes \eta_1, \xi_2 \otimes a_2 \otimes h_2 \otimes \eta_2 \rangle=\langle \xi_1, \xi_2 \rangle \langle h_1, h_2 \rangle \langle \eta_1, \eta_2 \rangle a_1^{\ast}a_2.$$
Define a $\Gamma$-action on $_{X}E_A$ by 
$$U_{\gamma}: {_{X}E_A} \rightarrow {_{X}E_A}, \:\: \delta_{x}\otimes a\otimes h\otimes \delta_{r'} \mapsto \delta_{\gamma x}\otimes \alpha_{\gamma}(a)\otimes h\otimes \delta_{\gamma'\gamma^{-1}},$$
for any $\gamma\in \Gamma$. In addition, any bounded Borel function $f$ on $X$ can be regarded as an adjointable operator on $_{X}E_A$ by
$$f\cdot (\xi\otimes a\otimes h\otimes \eta)=f\xi\otimes a\otimes h\otimes \eta.$$ 


We denote by $\mathcal{L}(_{X}E_A)$ the algebra of all adjointable operators on $_{X}E_A$ and by $\mathcal{K}(_{X}E_A)$ the algebra of all compact operators on $_{X}E_A$. Then we have an isomorphism $\mathcal{K}(_{X}E_A)\cong \mathcal{K}(\ell^2(Z_X))\otimes \mathcal{K}(H)\otimes \mathcal{K}(\ell^2(\Gamma))\otimes A$. 

\begin{definition}\label{Def-prop}
	Let $\chi_{S}$ be the characteristic function on a Borel subset $S$ of $X$. For an operator $T\in \mathcal{L}(_{X}E_A)$. 
	\begin{enumerate}
		\item The \textit{support} of $T$, denoted by $\supp(T)$, is defined to be
		$$\{(x,x')\in X\times X: \chi_{V} T \chi_{U}\neq 0 \:\: \text{for any open neighborhoods $U$ of $x$, $V$ of $x'$}\}.$$
		\item The \textit{propagation} of $T$, denoted by $\text{prop}(T)$, is defined to be
		$$\text{prop}(T)=\sup\{d(x,x'): (x,x')\in \supp(T)\}.$$  
		\item $T$ is called to be \textit{locally compact}, if $\chi_K T, T\chi_K\in \mathcal{K}(_{X}E_A)$ for any compact subset $K\subseteq X$.
		\item $T$ is called to be \textit{$\Gamma$-invariant}, if $U_{\gamma} T U_{\gamma^{-1}}=T$ for any $\gamma\in \Gamma$.
	\end{enumerate}
\end{definition}

\begin{remark}
	For any $T\in \mathcal{L}(_{X}E_A)$, we can represent $T$ as a matrix $(T_{x,x'})_{x,x'\in Z_X}$, where $T_{x,x'}=\chi_{x} T \chi_{x'}$. Then $T$ has finite propagation if and only if there exists $R>0$ such that $T_{x,x'}=0$ for all $x,x'\in Z_X$ with $d(x,x')>R$. In addition, if $T_{x,x'}=a_{x,x'}\otimes K_{x,x'} \otimes Q_{x,x'}$ for some $a_{x,x'}\in A$, $K_{x,x'}\in \mathcal{K}(H)$ and $Q_{x,x'}\in \mathcal{K}(\ell^2(\Gamma))$. Then $T$ is $\Gamma$-invariant if and only if $a_{\gamma x,\gamma x'}=\alpha_{\gamma}(a_{x, x'})$, $K_{\gamma x,\gamma x'}=K_{x, x'}$ and $Q_{\gamma x,\gamma x'}=W_{\gamma} Q_{x, x'} W_{\gamma^{-1}}$ for any $\gamma\in \Gamma$, where $W_{\gamma}: \ell^2(\Gamma) \rightarrow \ell^2(\Gamma),\: \delta_{\gamma'}\mapsto \delta_{\gamma'\gamma^{-1}}$.
\end{remark}

\begin{definition}\label{Def-eqRoecoe}
	Let $\Gamma$ be a countable discrete group, $X$ be a $\Gamma$-space and $A$ be a $\Gamma$-$C^{\ast}$-algebra. Define $\mathbb{C}[X, A]^{\Gamma}$ to be the $\ast$-algebra consisting of all $\Gamma$-invariant, locally compact operators with finite propagation.
	The \textit{equivariant Roe algebra with coefficients} in $A$ of $X$, denoted by $C^{\ast}(X, A)^{\Gamma}$, is defined to be the norm closure of $\mathbb{C}[X, A]^{\Gamma}$ in $\mathcal{L}(_{X}E_A)$. 
\end{definition}

The Roe algebra was first introduced by J. Roe in \cite{Roe1988}, whose $K$-theory provides a receptacle for higher indices of elliptic differential operators on open manifolds. In the $\ast$-isomorphic sense, the definition of equivariant Roe algebra with coefficients does not depend on the choice of $\Gamma$-invariant dense subsets $Z_X$ (please see \cite[Section 5.2]{WillettYu-Book}).

If $A$ is covariantly represented on a Hilbert space $H_A$. Then $\mathbb{C}[X, A]^{\Gamma}$ can be represented on a Hilbert space
  $$_{X}H_{A}=\ell^2(Z_X)\otimes H_A\otimes H\otimes \ell^2(\Gamma).$$ 
Thus, we have the following characterization for equivariant Roe algebras.  
\begin{lemma}\label{Lem-Roe-repspace}
	The equivariant Roe algebra $C^{\ast}(X, A)^{\Gamma}$ is isomorphic to the norm closure of $\mathbb{C}[X, A]^{\Gamma}$ in $\mathcal{L}(_{X}H_{A})$.
\end{lemma}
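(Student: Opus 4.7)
The plan is to construct a canonical $*$-homomorphism
\[
\pi:\mathcal{L}(_{X}E_A)\longrightarrow\mathcal{L}(_{X}H_{A})
\]
via the interior tensor product of Hilbert modules, verify that it carries the $*$-algebra $\mathbb{C}[X,A]^{\Gamma}$ (defined in the Hilbert-module picture) to the corresponding $*$-algebra defined in the Hilbert-space picture, and finally show that $\pi$ is injective, hence isometric, so that it extends by continuity to the desired $*$-isomorphism of norm closures.

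First I would identify the Hilbert space $_{X}H_{A}$ with the interior tensor product $_{X}E_A\otimes_A H_A$. The covariant representation of $A$ on $H_A$ turns $H_A$ into an $A$\,-$\mathbb{C}$\,-correspondence, and the assignment
\[
(\delta_x\otimes a\otimes h\otimes\delta_{\gamma})\otimes_A v\;\longmapsto\;\delta_x\otimes(a\cdot v)\otimes h\otimes\delta_{\gamma}
\]
extends, by non-degeneracy of the $A$-action on $H_A$, to a natural unitary isomorphism of Hilbert spaces. The formula $T\mapsto T\otimes_A 1_{H_A}$ then defines the desired $*$-homomorphism $\pi$.

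Next I would verify that $\pi$ preserves the three defining conditions of $\mathbb{C}[X,A]^{\Gamma}$. Multiplication by bounded Borel functions on $X$ commutes with $\pi$, so finite propagation and the inclusion $\supp(\pi(T))\subseteq\supp(T)$ are immediate; the $\Gamma$-action $U_{\gamma}$ on $_{X}E_A$ corresponds under the identification to its natural counterpart on $_{X}H_{A}$, so $\Gamma$-invariance is preserved; and local compactness is preserved via the standard identification $\mathcal{K}(_{X}E_A)\cong\mathcal{K}(\ell^2(Z_X)\otimes H\otimes\ell^2(\Gamma))\otimes A$, under which $\pi$ sends compact module operators to the corresponding element of $\mathcal{K}(\ell^2(Z_X)\otimes H\otimes\ell^2(\Gamma))\otimes A\subset\mathcal{L}(_{X}H_{A})$.

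The main point is to show that $\pi$ is injective on all of $\mathcal{L}(_{X}E_A)$: if $\pi(T)=0$, then for every $\xi,\eta\in{}_{X}E_A$ and every $v\in H_A$,
\[
\langle v,\langle\xi,T\eta\rangle_A\, v\rangle=\langle\xi\otimes_A v,\pi(T)(\eta\otimes_A v)\rangle=0.
\]
Since the covariant representation of $A$ on $H_A$ is faithful, this forces the $A$-valued inner product $\langle\xi,T\eta\rangle_A$ to vanish for all $\xi,\eta$, whence $T=0$. Any injective $*$-homomorphism between $C^{\ast}$-algebras is isometric, so $\pi$ restricts to an isometric $*$-isomorphism between the norm closures of $\mathbb{C}[X,A]^{\Gamma}$ in $\mathcal{L}(_{X}E_A)$ and in $\mathcal{L}(_{X}H_{A})$, completing the proof. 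The only mild obstacle is the bookkeeping required to match the two notions of local compactness; once the interior tensor product identification is in place and faithfulness of the representation is invoked, the result follows from standard Hilbert-module theory and $C^{\ast}$-algebraic facts.
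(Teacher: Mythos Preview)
Your proof is correct and follows essentially the same route as the paper: both identify $_{X}H_{A}$ with the interior tensor product $_{X}E_{A}\otimes_{A}H_A$ via the unitary $(\xi\otimes a\otimes h\otimes\eta)\otimes\zeta\mapsto\xi\otimes a\zeta\otimes h\otimes\eta$ and then use the induced map $T\mapsto T\otimes_A 1_{H_A}$ (conjugated by this unitary). The only difference is in level of detail: the paper simply asserts that this map is isometric, whereas you spell out why---namely, injectivity follows from faithfulness of the covariant representation of $A$ on $H_A$---and you also check explicitly that finite propagation, local compactness, and $\Gamma$-invariance are preserved, all of which the paper leaves implicit.
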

\begin{proof}
	Let $_{X}E_{A} \otimes_{A} H_A$ be the interior tensor product of the Hilbert $A$-module $_{X}E_{A}$ and the Hilbert space $H_{A}$. Define
	$$V: {_{X}}E_{A} \otimes_{A} H_A \rightarrow {_{X}}H_{A},\:(\xi\otimes a\otimes h\otimes \eta)\otimes \zeta \mapsto \xi\otimes a\zeta \otimes h \otimes \eta.$$
	Then $V$ is an isometric surjective linear map, and hence is a unitary. Thus, the map defined by $T \mapsto V(T\otimes I_{H_A})V^{\ast}$ is an isometric homomorphism from $\mathcal{L}(_{X}E_{A})$ to $\mathcal{L}(_{X}H_{A})$ and induces a $\ast$-isomorphism from $C^{\ast}(X, A)^{\Gamma}$ to the norm closure of $\mathbb{C}[X, A]^{\Gamma}$ in $\mathcal{L}(_{X}H_{A})$.
\end{proof}

Now, we consider the functoriality property of equivariant Roe algebras.

\begin{definition}\label{Def-equi-coarsemap}
	Let $X$ and $Y$ be two $\Gamma$-spaces. A map $f: X\rightarrow Y$ is called to be a \textit{coarse $\Gamma$-map}, if 
	\begin{itemize}
		\item $f(\gamma x)=\gamma f(x)$ for all $x\in X$ and $\gamma\in \Gamma$;
		\item $f^{-1}(K)$ is pre-compact for any compact subset $K\subseteq Y$;
		\item for any $R>0$, there exists $S>0$ such that $d(f(x), f(x'))\leq S$ for any $x,x'\in X$ with $d(x,x')\leq R$.
	\end{itemize}
	
	$X$ and $Y$ are called to be \textit{coarsely $\Gamma$-equivalent}, if there exist two coarse $\Gamma$-maps $f: X\rightarrow Y$, $g: Y\rightarrow X$ and a constant $C>0$ such that
	$$\max\{d(gf(x), x), d(fg(y), y)\}\leq C,$$
	for all $x\in X$ and $y\in Y$.  
\end{definition}

\begin{lemma}\label{Lem-coveringmap}(\cite[Theorem 5.2.6]{WillettYu-Book})
	Let $X$, $Y$ be two $\Gamma$-spaces and $A$ be a $\Gamma$-$C^{\ast}$-algebra. Assume that $f: X\rightarrow Y$ be a coarse $\Gamma$-map. Then $f$ leads to a $\ast$-homomorphism from $C^{\ast}(X, A)^{\Gamma}$ to $C^{\ast}(Y, A)^{\Gamma}$, and hence induces a homomorphism
	$$f_{\ast}: K_{\ast}\left(C^{\ast}(X, A)^{\Gamma}\right) \rightarrow K_{\ast}\left(C^{\ast}(Y, A)^{\Gamma}\right).$$
\end{lemma}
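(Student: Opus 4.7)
The plan is to follow the standard covering-isometry technique adapted to the Hilbert module setting. The main idea is that a coarse $\Gamma$-map $f: X\to Y$ need not be implemented by an operator between $_{X}E_A$ and $_{Y}E_A$ directly, but we can always build an equivariant isometry whose support lies in a controlled neighborhood of the graph of $f$, and conjugation by this isometry will move $\mathbb{C}[X,A]^\Gamma$ into $\mathbb{C}[Y,A]^\Gamma$.

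First I would choose $\Gamma$-invariant countable dense subsets $Z_X\subseteq X$ and $Z_Y\subseteq Y$ and, using a $\Gamma$-equivariant Borel section of the quotient map, pick for each $\Gamma$-orbit in $Z_Y$ a representative $y_0$ and some $x_0\in Z_X$ with $d(f(x_0),y_0)$ controlled (here we use properness of the $\Gamma$-action plus properness of $f^{-1}(K)$). Extending equivariantly produces a $\Gamma$-equivariant Borel map $\phi: Z_Y\to Z_X$ with $\sup_y d(f(\phi(y)),y)<\infty$. Partitioning $Z_Y$ as $\bigsqcup_{x\in Z_X}\phi^{-1}(x)$ gives a $\Gamma$-equivariant decomposition with $\gamma\cdot \phi^{-1}(x)=\phi^{-1}(\gamma x)$.

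Next I would construct the covering isometry. Because $H$ is separable and infinite-dimensional, for each $\Gamma$-orbit representative $x$ I can pick an isometry $W_x:H\to H\otimes \ell^2(\phi^{-1}(x))$, and extend $\Gamma$-equivariantly to all $x\in Z_X$ using the action of $\Gamma$ on $H$ (trivial) and on $\ell^2(Z_Y)$. Assembling these gives an isometry
\[
V:\ell^2(Z_X)\otimes H\otimes A\otimes \ell^2(\Gamma)\longrightarrow \ell^2(Z_Y)\otimes H\otimes A\otimes \ell^2(\Gamma),
\]
defined on elementary tensors by $\delta_x\otimes h\otimes a\otimes \eta \mapsto (\mathrm{flip})\bigl(W_x h\bigr)\otimes a\otimes \eta$, which is $\Gamma$-equivariant by construction and whose support satisfies $\mathrm{supp}(V)\subseteq\{(x,y)\in Z_X\times Z_Y : d(f(x),y)\leq C\}$ for some $C>0$.

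Then I would define $f_\sharp(T):=VTV^{\ast}$ and check the three required properties. If $T\in \mathbb{C}[X,A]^\Gamma$ has propagation $R$, then any pair in $\mathrm{supp}(f_\sharp(T))$ satisfies $d(y_1,y_2)\leq 2C+S$, where $S$ is the coarse-expansion constant of $f$ at scale $R$, so $f_\sharp(T)$ has finite propagation. Local compactness transfers because $V$ is an isometry and $f^{-1}$ of compact sets is precompact, so $\chi_K V$ has image in a finite sum of $\chi_{\phi^{-1}(x)}$-blocks with $x$ in a precompact set. $\Gamma$-invariance is immediate from $\Gamma$-equivariance of $V$. Hence $f_\sharp$ restricts to a $\ast$-homomorphism $\mathbb{C}[X,A]^\Gamma\to\mathbb{C}[Y,A]^\Gamma$ that is norm-continuous (being a compression by an isometry), so it extends to a $\ast$-homomorphism on the completions and induces the desired map $f_{\ast}$ on $K$-theory. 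Finally, a standard argument shows $f_\ast$ on $K$-theory is independent of the auxiliary choices because any two such covering isometries differ by a partial isometry of uniformly bounded support, giving a homotopy between the two $\ast$-homomorphisms.

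The main obstacle is making the covering isometry $V$ simultaneously $\Gamma$-equivariant and controlled in support; the usual pitfall is that naive choices of $\phi$ or $W_x$ break equivariance on orbits with nontrivial stabilizers, which is handled by making the initial choices only on one representative per orbit and then propagating via the group action.
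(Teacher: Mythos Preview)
The paper does not prove this lemma; it simply states it with a citation to \cite[Theorem~5.2.6]{WillettYu-Book} and moves on. Your sketch is precisely the covering-isometry construction carried out in that reference, so there is nothing substantive to compare.

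One comment on your final paragraph: your worry about stabilizers is real, and ``choose on one representative per orbit and propagate'' does not by itself resolve it. If $y_0\in Z_Y$ has nontrivial finite stabilizer $F\subseteq\Gamma$, then a $\Gamma$-equivariant map $\phi:Z_Y\to Z_X$ forces $\phi(y_0)$ to be $F$-fixed as well, and there is no reason an $F$-fixed point of $Z_X$ exists near the right place. The clean fix, available in the paper's module $_{X}E_A=\ell^2(Z_X)\otimes A\otimes H\otimes\ell^2(\Gamma)$, is to absorb the $\ell^2(\Gamma)$ factor: the combined $\Gamma$-action on $Z_X\times\Gamma$ (translation on $Z_X$, right-inverse on $\Gamma$) is free, so you can instead build a $\Gamma$-equivariant Borel map $Z_Y\times\Gamma\to Z_X\times\Gamma$ with controlled support over the graph of $f$, and the rest of your argument (finite propagation, local compactness, invariance, independence of choices on $K$-theory) goes through verbatim. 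This is exactly how the cited reference handles the equivariant case.
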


\begin{corollary}\label{Cor-coarseequi-K}
	If $X$ is coarsely $\Gamma$-equivalent to $Y$, then $C^{\ast}(X, A)^{\Gamma}$ is naturally isomorphic to $C^{\ast}(Y, A)^{\Gamma}$.
\end{corollary}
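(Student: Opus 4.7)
The plan is to promote the two $\ast$-homomorphisms provided by Lemma \ref{Lem-coveringmap} to mutually inverse isomorphisms on $K$-theory (and, with a bit more care, to mutually inverse $\ast$-isomorphisms of the Roe algebras themselves). Let $f:X\to Y$ and $g:Y\to X$ be coarse $\Gamma$-maps witnessing the equivalence, with $\max\{d(gf(x),x),d(fg(y),y)\}\le C$, and write $f_{\#}, g_{\#}$ for the induced $\ast$-homomorphisms. The construction in Lemma \ref{Lem-coveringmap} is implemented by conjugation with a covering isometry, and different choices of covering isometry for the same coarse map differ by an inner automorphism; so functoriality yields $(fg)_{\#}=f_{\#}\circ g_{\#}$ and $(gf)_{\#}=g_{\#}\circ f_{\#}$ modulo inner automorphisms. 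Thus the statement reduces to proving that whenever $h:Y\to Y$ is a coarse $\Gamma$-map with $\sup_{y}d(h(y),y)<\infty$, the map $h_{\#}$ agrees with the identity on $K_{\ast}\bigl(C^{\ast}(Y,A)^{\Gamma}\bigr)$.

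To implement $h_{\#}$, I would choose a $\Gamma$-equivariant adjointable isometry $V_{h}$ on $_{Y}E_{A}$ of the form
\[ V_{h}(\delta_{y}\otimes a\otimes \xi\otimes \delta_{\gamma}) = \delta_{h(y)}\otimes a\otimes S_{y}\xi\otimes \delta_{\gamma}, \]
where $\{S_{y}\}_{y\in Z_{Y}}$ is a family of isometries on $H$ chosen equivariantly by fixing them on a fundamental domain for the $\Gamma$-action on $Z_{Y}$ and extending via the covariance $U_{\gamma}V_{h}U_{\gamma^{-1}}=V_{h}$; properness and cocompactness of the action guarantee this is well-defined. For the identity map I would pick an analogous $V_{\mathrm{id}}$. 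Then $U:=V_{h}V_{\mathrm{id}}^{\ast}$ has propagation at most $C$, is a partial isometry in the multiplier algebra of $C^{\ast}(Y,A)^{\Gamma}$ whose defect projections sit in the $H$-slot and are Murray--von Neumann equivalent there, and conjugation by $U$ sends $V_{\mathrm{id}}TV_{\mathrm{id}}^{\ast}$ to $V_{h}TV_{h}^{\ast}$. Using the infinite multiplicity of $H$, one connects $U$ to the identity by a norm-continuous path of unitaries in the multiplier algebra via an Eilenberg swindle, so the induced inner automorphism acts trivially on $K$-theory.

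The main obstacle will be making the covering isometries $V_{h}$ and $V_{\mathrm{id}}$ simultaneously $\Gamma$-equivariant, adjointable, and compatible enough that $U=V_{h}V_{\mathrm{id}}^{\ast}$ actually lies in the multiplier algebra of the Roe algebra. Choosing the family $\{S_{y}\}$ equivariantly is straightforward on a fundamental domain but must be extended consistently across the orbit; one must also verify that the resulting $V_{h}$ preserves finite propagation and local compactness under conjugation. Once those checks are in place, the remaining estimates---finite propagation at most $\prop(T)+2C$, local compactness of $V_{h}TV_{h}^{\ast}$, and existence of the unitary path in the multiplier algebra---are routine from the definitions collected in Definition \ref{Def-prop}.
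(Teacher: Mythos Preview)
The paper gives no proof of this corollary; it is stated as an immediate consequence of Lemma~\ref{Lem-coveringmap}, which in turn cites \cite[Theorem~5.2.6]{WillettYu-Book}. Your argument is essentially the standard one that underlies that reference, and it is correct for the $K$-theory statement.

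One point to sharpen: the corollary as written asserts a $\ast$-isomorphism of the Roe algebras themselves, not merely an isomorphism on $K$-theory. Your outline treats the $\ast$-isomorphism as something obtainable ``with a bit more care'' but then only carries out the $K$-theory argument. The cleaner route to the full $\ast$-isomorphism, and the one implicit in the cited reference, is to observe that because the module $_{X}E_{A}$ carries the infinite-multiplicity factor $H$ (i.e.\ is ample), the covering isometry $V_{f}$ associated to a coarse $\Gamma$-equivalence can be chosen to be a \emph{unitary}; then $T\mapsto V_{f}TV_{f}^{\ast}$ is already a $\ast$-isomorphism, with inverse given by $V_{f}^{\ast}(\cdot)V_{f}$. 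This bypasses the need to argue about paths of unitaries in the multiplier algebra. Your Eilenberg-swindle approach would also work, but it is more roundabout for the algebra-level statement.

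For the purposes of the paper, only the $K$-theory consequence is ever used, so your argument is entirely adequate for the applications.
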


In the end of this subsection, we discuss the relationship between the equivariant Roe algebras with coefficients and the reduced crossed product. Let $X$ be a $\Gamma$-space and $A$ be a $\Gamma$-$C^{\ast}$-algebra. Since the action of $\Gamma$ on $X$ is co-compact, the map defined by $\gamma \mapsto \gamma x_0$ is a coarsely $\Gamma$-equivalent map from $\Gamma$ to $X$, where $x_0\in X$ is a fixed point. Hence, by Corollary \ref{Cor-coarseequi-K}, $C^{\ast}(X, A)^{\Gamma}$ is naturally $\ast$-isomorphic to $C^{\ast}(\Gamma, A)^{\Gamma}$ which is naturally $\ast$-isomorphic to $(A \rtimes_r \Gamma) \otimes \mathcal{K}(H)$. Therefore, we have the following lemma.

\begin{lemma}(\cite[Lemma B.12]{GWY-2024} or \cite[Theorem 5.3.2]{WillettYu-Book})\label{Lem-eqRoe-crossprod}
	Let $X$ be a $\Gamma$-space and $A$ be a $\Gamma$-$C^{\ast}$-algebra. Then $C^{\ast}(X, A)^{\Gamma}$ is naturally $\ast$-isomorphic to $(A \rtimes_r \Gamma) \otimes \mathcal{K}(H)$.
\end{lemma}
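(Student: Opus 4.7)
The plan is to follow the two-step sketch given immediately before the statement: first reduce to the case $X = \Gamma$ via Corollary \ref{Cor-coarseequi-K}, and then exhibit an explicit $\ast$-isomorphism $C^{\ast}(\Gamma, A)^{\Gamma} \cong (A \rtimes_{r} \Gamma) \otimes \mathcal{K}(H)$ by decomposing each $\Gamma$-invariant finite-propagation operator in terms of its matrix entries relative to the identity coset.

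For the reduction, I would fix a basepoint $x_{0} \in X$ and set $f : \Gamma \to X$, $\gamma \mapsto \gamma x_{0}$. Properness, cocompactness, and the isometric hypothesis on the $\Gamma$-action let one pick a precompact fundamental domain $D \subseteq X$ and define a coarse $\Gamma$-map $g : X \to \Gamma$ by sending each $x$ to a group element with $g(x)^{-1}x \in D$, equivariantly extended from a Borel section on $D$. A routine check shows that $f$ and $g$ are coarse $\Gamma$-inverses of one another, so Corollary \ref{Cor-coarseequi-K} produces a natural $\ast$-isomorphism $C^{\ast}(X, A)^{\Gamma} \cong C^{\ast}(\Gamma, A)^{\Gamma}$. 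This reduces the problem to showing $C^{\ast}(\Gamma, A)^{\Gamma} \cong (A \rtimes_{r} \Gamma) \otimes \mathcal{K}(H)$.

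For the main identification, take $Z_{\Gamma} = \Gamma$ and consider $T \in \mathbb{C}[\Gamma, A]^{\Gamma}$. Its matrix coefficients $T_{\gamma, \gamma'} = \chi_{\gamma} T \chi_{\gamma'}$ lie in $A \otimes \mathcal{K}(H) \otimes \mathcal{K}(\ell^{2}(\Gamma))$ by local compactness, and vanish when $d(\gamma, \gamma') > \prop(T)$. The $\Gamma$-invariance relation $U_{\gamma} T U_{\gamma^{-1}} = T$ translates, as in the remark after Definition \ref{Def-prop}, to $T_{\gamma, \gamma g} = U_{\gamma} T_{e, g} U_{\gamma}^{-1}$, so the whole operator is determined by the finitely supported collection $(T_{e, g})_{g \in \Gamma}$. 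Writing $T_{e, g}$ as a sum of elementary tensors $a_{g, j} \otimes K_{g, j} \otimes Q_{g, j}$, the assignment $T \mapsto \sum_{g, j} (a_{g, j} u_{g}) \otimes K_{g, j} \otimes Q_{g, j} W_{g}$, where $u_{g} \in A \rtimes_{r} \Gamma$ is the canonical unitary and $W_{g}$ is the right regular representation on the last $\ell^{2}(\Gamma)$ factor, defines the candidate $\ast$-homomorphism into $(A \rtimes_{r} \Gamma) \otimes \mathcal{K}(H \otimes \ell^{2}(\Gamma)) \cong (A \rtimes_{r} \Gamma) \otimes \mathcal{K}(H)$, using separability of $H$.

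Multiplicativity and self-adjointness follow by a direct calculation showing that composition of matrices in $\mathbb{C}[\Gamma, A]^{\Gamma}$ reproduces twisted convolution in the crossed product. The main obstacle, as I see it, is isometry: one must verify that the norm on $C^{\ast}(\Gamma, A)^{\Gamma}$ inherited from its representation on $_{\Gamma} E_{A}$ agrees with the reduced crossed-product norm on the image. This is precisely where the deliberate design of $_{\Gamma} E_{A}$ — with the final $\ell^{2}(\Gamma)$ factor carrying the right regular representation $W_{\gamma}$ freely and transitively — is essential, since a Fell-absorption-type argument on that factor forces the ambient representation of $A \rtimes_{r} \Gamma$ to be unitarily equivalent to the standard reduced representation, regardless of how $A$ was originally represented on its auxiliary Hilbert space. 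Density of the image in $(A \rtimes_{r} \Gamma) \otimes \mathcal{K}(H)$ then follows by approximating algebraic tensors of compactly supported elements of the crossed product with finite-rank operators on $H$, completing the proof.
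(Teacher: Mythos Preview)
Your proposal is correct and follows exactly the two-step outline the paper gives in the paragraph preceding the lemma: reduce to $X=\Gamma$ via the orbit map and Corollary~\ref{Cor-coarseequi-K}, then identify $C^{\ast}(\Gamma,A)^{\Gamma}$ with $(A\rtimes_{r}\Gamma)\otimes\mathcal{K}(H)$. The paper itself does not spell out the second step, simply citing \cite[Lemma~B.12]{GWY-2024} and \cite[Theorem~5.3.2]{WillettYu-Book}, whereas you supply an explicit matrix-coefficient construction together with the Fell-absorption observation that pins down the reduced norm---this is more detail than the paper provides, but entirely in the same spirit.
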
 

\subsection{Equivariant localization algebras with coefficients}\label{Sec-1-loc}
\begin{definition}\label{Def-localg}
	Let $X$ be a $\Gamma$-space and $A$ be a $\Gamma$-$C^{\ast}$-algebra. Define $\C_{L}[X, A]^{\Gamma}$ to be the $\ast$-algebra consisting of all bounded and uniformly continuous functions $u: [0,\infty) \rightarrow C^{\ast}(X, A)^{\Gamma}$ such that
	$$\lim_{t\rightarrow \infty}\prop(u(t))=0.$$
	The \textit{equivariant localization algebra with coefficients} in $A$ of $X$, denoted by $C^{\ast}_{L}(X, A)^{\Gamma}$, is defined to be the norm closure of $\C_{L}[X, A]^{\Gamma}$ in $\mathcal{L}\left(L^2([0, \infty))\otimes {_{X}H_{A}}\right)$.
\end{definition}

The localization algebra was first introduced by G. Yu in \cite{Yu-Localizationalg}, whose $K$-theory provides an analytic model for $K$-homology (please see \cite{QiaoRoe}\cite[Chapter 6]{WillettYu-Book}).

We have the following functoriality and homological properties for $K$-theory of localization algebras, please refer to \cite{Yu-Localizationalg} or \cite[Chapter 6]{WillettYu-Book} for the proofs.
\begin{lemma}\label{Lem-conticovmap}
	Let $X$, $Y$ be two $\Gamma$-spaces and $A$ be a $\Gamma$-$C^{\ast}$-algebra. Assume that $f: X \rightarrow Y$ be a uniformly continuous and coarse $\Gamma$-map. Then $f$ induces a homomorphism
	$$f_{\ast}: K_{\ast}\left(C^{\ast}_{L}(X,A)^{\Gamma}\right) \rightarrow K_{\ast}\left(C^{\ast}_{L}(Y, A)^{\Gamma}\right).$$
\end{lemma}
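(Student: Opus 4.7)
The strategy parallels the proof of Lemma \ref{Lem-coveringmap} on Roe algebras (cf.\ \cite[Theorem 5.2.6]{WillettYu-Book}), with uniform continuity of $f$ being exactly the additional hypothesis needed to preserve the vanishing-propagation condition in the definition of $\C_L[\cdot,A]^\Gamma$. First, I would construct an equivariant covering isometry $V_f\colon {_{X}H_{A}} \to {_{Y}H_{A}}$: choose an equivariant Borel partition $\{U_y\}_{y\in Z_Y}$ of $Y$ of diameter at most some fixed $\delta_0 > 0$, let $\varphi\colon Z_X \to Z_Y$ be the equivariant Borel assignment sending each $x$ to the unique $y$ with $f(x)\in U_y$, and use the auxiliary $H$-factor of ${_{X}H_A}$ and ${_{Y}H_A}$ to absorb any failure of injectivity. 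As in the Roe-algebra case, conjugation $T\mapsto V_f T V_f^*$ maps $\mathbb{C}[X,A]^\Gamma$ into $\mathbb{C}[Y,A]^\Gamma$: local compactness and $\Gamma$-invariance pass through automatically, while the propagation in $Y$ is controlled by the coarse expansion of $f$ (Definition \ref{Def-equi-coarsemap}) plus $2\delta_0$.

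Next, I would extend this construction pointwise in $t$ by setting $(f_*u)(t) := V_f u(t) V_f^*$ for $u \in \C_L[X,A]^\Gamma$. Boundedness and uniform continuity of $f_* u$ in $t$ are inherited from those of $u$, so the one substantive check is $\lim_{t\to\infty}\prop((f_*u)(t)) = 0$. This is exactly where uniform continuity of $f$ enters: given $\varepsilon > 0$, pick $\delta > 0$ so that $d_X(x,x') < \delta$ forces $d_Y(f(x),f(x')) < \varepsilon/2$; then for $t$ large enough that $\prop(u(t)) < \delta$, a routine support computation yields $\prop((f_*u)(t)) \leq \varepsilon/2 + 2\delta_0$, which can be made less than $\varepsilon$ by choosing $\delta_0$ small enough at the outset. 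Hence $f_*$ extends by continuity to a $*$-homomorphism $C^*_L(X,A)^\Gamma \to C^*_L(Y,A)^\Gamma$.

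The main obstacle is the non-canonicity of the covering isometry: the $*$-homomorphism $f_*$ a priori depends on the choice of $V_f$, so one must verify that the induced map on $K$-theory is independent of this choice. I would handle this by the standard uniqueness-up-to-homotopy argument for equivariant covering isometries: any two choices $V_f^{(0)}, V_f^{(1)}$ associated to $f$ can, after stabilization, be connected by a norm-continuous path $\{V_f^{(s)}\}_{s\in[0,1]}$ of equivariant covering isometries with uniformly bounded support-spread. The resulting family $\{V_f^{(s)} u(t) V_f^{(s)*}\}$ is an operator homotopy in $C^*_L(Y,A)^\Gamma$ between the two push-forwards, so they represent the same $K$-theory class. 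The delicate point is that the connecting path must simultaneously respect equivariance, coarse control, and the $t$-decay of propagation; once this is in place, one obtains the desired well-defined homomorphism $f_*\colon K_*(C^*_L(X,A)^\Gamma) \to K_*(C^*_L(Y,A)^\Gamma)$.
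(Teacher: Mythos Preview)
Your proposal has a genuine gap in the propagation estimate. With a \emph{single} covering isometry $V_f$ built from a partition of mesh $\delta_0$, conjugation gives
\[
\prop\bigl(V_f\,u(t)\,V_f^*\bigr)\;\le\;\omega_f\bigl(\prop(u(t))\bigr)+2\delta_0,
\]
where $\omega_f$ is the modulus of uniform continuity of $f$. As $t\to\infty$ the first term goes to $0$, but the second term is a fixed positive constant. Your sentence ``which can be made less than $\varepsilon$ by choosing $\delta_0$ small enough at the outset'' is the error: $\delta_0$ is fixed once and for all before $\varepsilon$ is chosen, so you cannot shrink it after the fact. Consequently $t\mapsto V_f u(t)V_f^*$ does \emph{not} land in $\mathbb{C}_L[Y,A]^\Gamma$, and your map is not defined.

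The standard fix (this is how the cited references \cite{Yu-Localizationalg} and \cite[Chapter 6]{WillettYu-Book} proceed, and the paper itself uses exactly this construction later, just before Lemma~\ref{Lem-Liphomotopy-twoloc}) is to use a \emph{time-dependent} family of covering isometries. One chooses a decreasing sequence $\varepsilon_m\to 0$ and builds a continuous family $t\mapsto V_f(t)$ of equivariant isometries such that $\supp(V_f(t))\subseteq\{(x,y):d(f(x),y)<\varepsilon_m\}$ for $t\in[m,m+1]$; the family can be taken Lipschitz in $t$ by interpolating via rotations. Then
\[
\prop\bigl(V_f(t)\,u(t)\,V_f(t)^*\bigr)\;\le\;\omega_f\bigl(\prop(u(t))\bigr)+2\varepsilon_m\;\longrightarrow\;0,
\]
and the rest of your outline (well-definedness on $K$-theory via homotopy of covering families) goes through. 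Note that with this construction one does not obtain a $*$-homomorphism between the localization algebras themselves, only the induced map on $K$-theory; your last paragraph already anticipates this subtlety.
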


\begin{definition}\label{Def-strLip}
	Let $X$, $Y$ be as above. Two equivariant Lipschitz maps $f,g: X\rightarrow Y$ are called to be \textit{strongly Lipschitz $\Gamma$-homotopic}, if there exists a continuous coarse $\Gamma$-map $F:[0,1]\times X \rightarrow Y$ satisfying that
  \begin{itemize}
	\item there exists $C>0$ such that $d(F(t, x), F(t, x'))\leq C d(x,x')$ for all $x,x'\in X$ and $t\in [0,1]$;
	\item for any $\varepsilon>0$, there exists $\delta>0$ such that $d(F(t,x), F(t',x))<\varepsilon$ for all $x\in X$ and $|t-t'|<\delta$;
	\item $F(0,x)=f(x)$ and $F(1,x)=g(x)$ for all $x\in X$.
  \end{itemize} 
		
	$X$ is call to be \textit{strongly Lipschitz $\Gamma$-homotopy equivalent} to $Y$, if there exist two equivariant Lipschitz maps $f: X\rightarrow Y$ and $h: Y\rightarrow X$ such that $hf$ and $fh$ are strongly Lipschitz $\Gamma$-homotopic to $id_{X}$ and $id_{Y}$, respectively.  
\end{definition}

\begin{lemma}\label{Lem-Liphtp}
	Let $X$, $Y$ and $A$ be as above. If $X$ is strongly Lipschitz $\Gamma$-homotopy equivalent to $Y$, then $K_{\ast}\left(C^{\ast}_{L}(X, A)^{\Gamma}\right)$ is naturally isomorphic to $K_{\ast}\left(C^{\ast}_{L}(Y, A)^{\Gamma}\right)$.
\end{lemma}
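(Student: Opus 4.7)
The plan is to apply Lemma \ref{Lem-conticovmap} to produce the two induced maps on $K$-theory and then reduce the lemma to a homotopy-invariance statement for $K$-theory of equivariant localization algebras: namely, if two equivariant Lipschitz maps $f_0,f_1:X\to Y$ are strongly Lipschitz $\Gamma$-homotopic, then $(f_0)_{\ast}=(f_1)_{\ast}$. Granting this, since an equivariant Lipschitz map is automatically uniformly continuous and coarse, Lemma \ref{Lem-conticovmap} gives $f_{\ast}$ and $h_{\ast}$, and the identities $h_{\ast}f_{\ast}=(hf)_{\ast}=(\mathrm{id}_X)_{\ast}=\mathrm{id}$ and $f_{\ast}h_{\ast}=\mathrm{id}$ follow directly from homotopy invariance applied to the two given strong Lipschitz $\Gamma$-homotopies.

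To prove homotopy invariance, I would start from a strong Lipschitz $\Gamma$-homotopy $F:[0,1]\times X\to Y$ with Lipschitz constant $C$ in $x$ and uniform modulus of continuity in the first variable. For each $s\in[0,1]$, the map $F(s,-):X\to Y$ is a uniformly continuous coarse $\Gamma$-map and, via a choice of $\Gamma$-equivariant Borel cover, is covered by an isometry $V_s:{_X}H_A\to {_Y}H_A$ in the sense that conjugation $T\mapsto V_s T V_s^{\ast}$ sends $\mathbb{C}[X,A]^{\Gamma}$ into operators whose supports lie in a controlled neighborhood of the graph of $F(s,-)$; this is the standard covering-isometry construction underlying Lemma \ref{Lem-conticovmap}. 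The key point is that one can build the family $\{V_s\}_{s\in[0,1]}$ so that $s\mapsto V_s T V_s^{\ast}$ is norm continuous for a fixed $T\in \mathbb{C}[X,A]^{\Gamma}$, uniformly in the propagation scale.

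The main technical step is to convert this family into a homotopy of $\ast$-homomorphisms between the localization algebras. Given a partition $0=s_0(t)<s_1(t)<\dots<s_{N(t)}(t)=1$ of $[0,1]$ whose mesh goes to $0$ as $t\to\infty$, and writing an element $u\in\mathbb{C}_L[X,A]^{\Gamma}$ as a uniformly continuous function $t\mapsto u(t)$ with $\mathrm{prop}(u(t))\to 0$, I would define a path of maps $\Phi_s:C^{\ast}_L(X,A)^{\Gamma}\to C^{\ast}_L(Y,A)^{\Gamma}$ by interpolating $V_{s_i(t)}u(t)V_{s_i(t)}^{\ast}$ between consecutive breakpoints. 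The uniform modulus of continuity in the homotopy variable, combined with $\mathrm{prop}(u(t))\to 0$ and the Lipschitz bound $C$, guarantees that the difference between consecutive conjugations has propagation tending to $0$, so the resulting function of $t$ lies in $C^{\ast}_L(Y,A)^{\Gamma}$ and depends continuously on $s$. Evaluating at $s=0,1$ recovers $(f_0)_{\ast}$ and $(f_1)_{\ast}$, and continuity in $s$ forces equality on $K$-theory.

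The hard part will be managing propagation in the interpolation: one must verify that after conjugating by $V_{s_i(t)}$ for a mesh that refines with $t$, the resulting operator-valued function of $t$ is actually uniformly norm continuous and has propagation vanishing at infinity, using only the strong Lipschitz homotopy hypotheses (uniform Lipschitz constant in $x$ and uniform continuity in $s$). The $\Gamma$-equivariance of the isometries throughout the homotopy comes for free from the equivariance of $F$, but the choice of $V_s$ must be made $\Gamma$-equivariantly and compatibly across $s$, which is the delicate bookkeeping that underlies the argument. This is the localization-algebra analogue of the classical proof that $K$-homology is homotopy invariant, and the details follow the template of \cite{Yu-Localizationalg} and \cite[Chapter~6]{WillettYu-Book}, adapted to the equivariant setting with coefficients in $A$.
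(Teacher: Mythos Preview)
The paper does not actually prove Lemma~\ref{Lem-Liphtp}; it is stated with a pointer to \cite{Yu-Localizationalg} and \cite[Chapter~6]{WillettYu-Book}. The closest place where the paper carries out the argument in full is the two-parametric analogue, Lemma~\ref{Lem-Liphomotopy-twoloc}, and that proof uses a different mechanism from the one you sketch: rather than a continuous path $\Phi_s$ of maps, one fixes a discrete grid $\{t_{i,j}\}$ in the homotopy parameter, builds covering isometries $V_{F(t_{i,j},\cdot)}$ with controlled supports, and then runs an Eilenberg swindle. Concretely, one forms three unitaries $a_1,a_2,a_3$ as infinite direct sums of conjugates of $u$ by the $V_i$, shows $[a_1]=[a_2]$ because consecutive homotopy levels are uniformly close, shows $[a_2]=[a_3]$ via Lemma~\ref{isometryequivalent} and the unilateral shift on $H^\infty$, and reads off $[Ad_{hg}(u)]=[u]$ from $[a_1a_3^{-1}]$. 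This is Yu's original argument.

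Your reduction step (functoriality from Lemma~\ref{Lem-conticovmap}, then reduce to $(f_0)_\ast=(f_1)_\ast$) is correct and is exactly how the argument begins. The gap is in your proposed proof of homotopy invariance itself. You attempt to build a continuous family $s\mapsto\Phi_s$ by linearly interpolating conjugations $V_{s_i(t)}u(t)V_{s_i(t)}^\ast$ along a $t$-dependent partition; but linear interpolation of conjugations is neither a $\ast$-homomorphism nor a covering-isometry conjugation, so it is unclear what ``$\Phi_s$ induces a map on $K$-theory'' would mean, and hence why continuity in $s$ would force $(f_0)_\ast=(f_1)_\ast$. More fundamentally, covering isometries $V_s$ are built from noncanonical Borel choices and there is no reason for $s\mapsto V_s$ to vary continuously, which is precisely why the standard proof avoids any continuous path and uses the swindle instead. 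If you replace your third paragraph by the infinite-direct-sum argument of Lemma~\ref{Lem-Liphomotopy-twoloc} (dropping the second parameter), the proof goes through verbatim.
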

	
\begin{definition}\label{Def-unicover}
	Let $X_1$ and $X_2$ be two $\Gamma$-invariant subsets of $X$. Let $B_{\varepsilon}(Y)$ be the $\varepsilon$-neighborhood of $Y\subseteq X$. The pair $\{X_1, X_2\}$ is called to be a \textit{uniformly excisive $\Gamma$-cover} of $X$, if 
  \begin{itemize}
	\item $X=X_1\cup X_2$;
	\item for any $\varepsilon_t\rightarrow 0\:(t\rightarrow \infty)$, there exists $\delta_t\rightarrow 0\:(t\rightarrow \infty)$ such that 
	$$B_{\varepsilon_t}(X_1) \cap B_{\varepsilon_t}(X_2) \subseteq B_{\delta_t}(X_1\cap X_2),\:\:t>0;$$
	\item there exists $\epsilon>0$ such that $B_{\epsilon}(X_1)$, $B_{\epsilon}(X_2)$ and $B_{\epsilon}(X_1\cap X_2)$ are all $\Gamma$-invariant and strongly Lipschitz $\Gamma$-homotopy equivalent to $X_1$, $X_2$ and $X_1\cap X_2$, respectively.
  \end{itemize}
\end{definition}

\begin{lemma}\label{Lem-equiMV}
	If $\{X_1, X_2\}$ is a uniformly excisive $\Gamma$-cover of $X$, then the following Mayer--Vietoris six-term exact sequence holds
	$$\small\xymatrix{
		K_0(C^{\ast}_{L}(X_1\cap X_2, A)^{\Gamma}) \ar[r] & 
		K_0(C^{\ast}_{L}(X_1, A)^{\Gamma})\oplus K_0(C^{\ast}_{L}(X_2, A)^{\Gamma}) \ar[r] & 
		K_0(C^{\ast}_{L}(X, A)^{\Gamma})\ar[d] \\
		K_1(C^{\ast}_{L}(X, A)^{\Gamma})\ar[u] &
		K_1(C^{\ast}_{L}(X_1, A)^{\Gamma})\oplus K_1(C^{\ast}_{L}(X_2, A)^{\Gamma})\ar[l] &
		K_1(C^{\ast}_{L}(X_1\cap X_2, A)^{\Gamma}).\ar[l] 
	}$$
\end{lemma}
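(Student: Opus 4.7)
The plan is to reduce the Mayer--Vietoris sequence to the standard six-term exact sequence in $K$-theory obtained from a short exact sequence of $C^{\ast}$-algebras of the form
\[
0 \longrightarrow B \cap C \longrightarrow B \oplus C \longrightarrow B + C \longrightarrow 0,
\]
where $B$ and $C$ will be suitable subalgebras of $C^{\ast}_{L}(X, A)^{\Gamma}$ whose union generates the whole algebra and whose intersection is controlled by $X_1 \cap X_2$.

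First, for $i=1,2$, I would define $C^{\ast}_{L}(X; X_{i}, A)^{\Gamma}$ to be the norm closure (inside $C^{\ast}_{L}(X, A)^{\Gamma}$) of the $\ast$-subalgebra of those $u \in \mathbb{C}_{L}[X, A]^{\Gamma}$ such that there exists a sequence $\varepsilon_{t} \to 0$ for which $\supp(u(t)) \subseteq B_{\varepsilon_{t}}(X_{i}) \times B_{\varepsilon_{t}}(X_{i})$ for all large $t$. Using the third clause of Definition \ref{Def-unicover} together with Lemma \ref{Lem-Liphtp} (applied to the strong Lipschitz $\Gamma$-homotopy equivalence between $B_{\epsilon}(X_{i})$ and $X_{i}$, and between $B_{\epsilon}(X_{1} \cap X_{2})$ and $X_{1} \cap X_{2}$), I would check that the inclusion induces natural isomorphisms
\[
K_{\ast}\bigl(C^{\ast}_{L}(X_{i}, A)^{\Gamma}\bigr) \;\cong\; K_{\ast}\bigl(C^{\ast}_{L}(X; X_{i}, A)^{\Gamma}\bigr),
\]
and analogously for $X_{1} \cap X_{2}$ versus the intersection $C^{\ast}_{L}(X; X_{1}, A)^{\Gamma} \cap C^{\ast}_{L}(X; X_{2}, A)^{\Gamma}$, where the \emph{uniform} excisiveness clause ensures that if the support of $u(t)$ lies simultaneously in both $\varepsilon_{t}$-neighborhoods then it lies in a $\delta_{t}$-neighborhood of $X_{1} \cap X_{2}$ with $\delta_{t} \to 0$.

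The main technical step is to prove that $C^{\ast}_{L}(X; X_{1}, A)^{\Gamma} + C^{\ast}_{L}(X; X_{2}, A)^{\Gamma} = C^{\ast}_{L}(X, A)^{\Gamma}$. Given $u \in \mathbb{C}_{L}[X, A]^{\Gamma}$, since $\prop(u(t)) \to 0$, the support of $u(t)$ is eventually contained in an arbitrarily small neighborhood of the diagonal. Using a $\Gamma$-invariant partition of unity $\{\phi_{1}, \phi_{2}\}$ subordinate to $\{B_{\epsilon}(X_{1}), B_{\epsilon}(X_{2})\}$ (which exists by $\sigma$-compactness and properness, after averaging over $\Gamma$), I would split $u(t) = \phi_{1} u(t) \phi_{1} + \bigl(u(t) - \phi_{1} u(t) \phi_{1}\bigr)$ and show that for $t$ large enough each summand has support in the prescribed neighborhood of $X_{i} \times X_{i}$, up to an error that vanishes in norm; this yields the required decomposition after a limiting argument. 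Granted these three inputs, the short exact sequence above combined with the naturality of the inclusion-induced isomorphisms produces the desired Mayer--Vietoris six-term exact sequence by the standard argument.

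The hard part will be verifying the partition-of-unity decomposition cleanly at the algebra level while preserving $\Gamma$-invariance, local compactness, and the vanishing-propagation condition simultaneously, particularly controlling the commutators $[\phi_{i}, u(t)]$: because $\prop(u(t)) \to 0$, these commutators tend to zero in norm uniformly, which is precisely what legitimizes the splitting in the limit $t \to \infty$ and hence at the level of the localization algebra.
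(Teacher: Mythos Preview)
The paper does not actually prove this lemma, referring instead to \cite{Yu-Localizationalg} and \cite[Chapter 6]{WillettYu-Book}. Your outline is correct and is precisely the standard argument from those references; it also matches, essentially verbatim, the paper's own proof of the closely analogous Lemma~\ref{Lem-MV-twoloc}, which defines the same auxiliary subalgebras with asymptotic support near $X_i\times X_i$, identifies their $K$-theory with $K_\ast(C^{\ast}_{L}(X_i,A)^{\Gamma})$ via a time-shift and Lemma~\ref{Lem-Liphtp}, checks that their sum is the whole algebra (using $\prop(u(t))\to 0$) and that their intersection corresponds to $X_1\cap X_2$ (using uniform excisiveness), and then invokes the Mayer--Vietoris sequence in $K$-theory.

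One small addition worth making explicit: your subalgebras $C^{\ast}_{L}(X; X_i, A)^{\Gamma}$ are closed two-sided \emph{ideals} of $C^{\ast}_{L}(X, A)^{\Gamma}$ (again because propagation tends to zero), and it is this ideal property---rather than just the short exact sequence of vector spaces you write down---that licenses the six-term exact sequence. The paper's proof of Lemma~\ref{Lem-MV-twoloc} states this explicitly.
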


\subsection{The Baum--Connes assembly map}\label{Sec-1-conj}
\begin{definition}\label{Def-length}
	Let $\Gamma$ be a countable discrete group. A \textit{length function} on $\Gamma$ is a function $|\cdot|: \Gamma \rightarrow \mathbb{N}$ such that
	\begin{itemize}
		\item $|\gamma|=0$ if and only if $\gamma$ is the identity element;
		\item $|\gamma^{-1}|=|\gamma|$ for any $\gamma\in \Gamma$;
		\item $|\gamma_1 \gamma_2|\leq |\gamma_1|+|\gamma_2|$ for any $\gamma_1, \gamma_2\in \Gamma$.
	\end{itemize}
	A length function $|\cdot|$ on $\Gamma$ is called to be \textit{proper}, if $\{\gamma: |\gamma|\leq R\}$ is finite for any $R\geq 0$.
\end{definition}

There always exists a proper length function on any countable discrete group (see, for example, \cite[Proposition 1.2.2]{Nowak-Yu-Book}). Moreover, every length function $|\cdot|$ on $\Gamma$ induces a left $\Gamma$-invariant metric defined by $d(\gamma_1, \gamma_2)=|\gamma^{-1}_1 \gamma_2|$. 

Let $\Gamma$ be a countable discrete group with a length function $|\cdot|$. For each $k\geq 0$, the \textit{Rips complex} of $\Gamma$ at scale $k$ denoted by $P_{k}(\Gamma)$, is a simplicial complex whose set of vertices is $\Gamma$ and a subset $\{\gamma_0, \cdots, \gamma_n\}$ spans an $n$-simplex if and only if $|\gamma^{-1}_i \gamma_j|\leq k$ for any $i,j=0, \cdots, n$. 

Let $d_{S_k}$ be a path metric on $P_{k}(\Gamma)$ whose restriction to each simplex is the standard spherical metric on the unit sphere by mapping $\sum_{i=0}^n t_i\gamma_i$ to 
$$\left(t_0 \Big/ \sqrt{\sum t^2_i}, \cdots, t_n \Big/\sqrt{\sum t^2_i}\right).$$ 
Then we define a metric $d_{P_{k}}$ on $P_{k}(\Gamma)$ to be 
$$d_{P_{k}}(x,x')=\inf\left\{\sum_{i=0}^n d_{S_k}(\gamma_i, \gamma'_i)+\sum_{i=0}^{n-1} |\gamma^{-1}_{i+1}\gamma'_i|\right\},$$
for all $x,x'\in P_{k}(\Gamma)$, where the infimum is taken over all sequences of the form $x=\gamma_0, \gamma'_0, \gamma_1, \gamma'_1, \cdots, \gamma_n, \gamma'_n=y$ with $\gamma_1, \cdots, \gamma_n, \gamma'_0,\cdots, \gamma'_{n-1}\in \Gamma$. Then $P_{k}(\Gamma)$ is a $\Gamma$-space equipped with a left $\Gamma$-action given by 
$$\gamma: \sum_{i=0}^n t_i \gamma_i \mapsto \sum_{i=0}^n t_i \gamma\gamma_i,$$
for any $\gamma, \gamma_0, \cdots, \gamma_n\in \Gamma$. In addition, the embedding map from $\Gamma$ to $P_{k}(\Gamma)$ induces a coarse $\Gamma$-equivalence (please see \cite[Proposition 7.2.11]{WillettYu-Book}).

For $k_1\leq k_2$, the inclusion map $i_{k_1k_2}: P_{k_{1}}(\Gamma)\rightarrow P_{k_{2}}(\Gamma)$ induces the following two homomorphisms by Lemma \ref{Lem-coveringmap} and Lemma \ref{Lem-conticovmap}:
$$i_{k_1k_2, \ast}: K_{\ast}\left(C^{\ast}(P_{k_1}(\Gamma), A)^{\Gamma}\right)\rightarrow K_{\ast}\left(C^{\ast}(P_{k_2}(\Gamma), A)^{\Gamma}\right);$$
$$i_{k_1k_2, \ast}: K_{\ast}\left(C_L^{\ast}(P_{k_1}(\Gamma), A)^{\Gamma}\right)\rightarrow K_{\ast}\left(C_L^{\ast}(P_{k_2}(\Gamma), A)^{\Gamma}\right).$$

Moreover, the evaluation at zero map:
$$e: C^{\ast}_{L}(P_{k}(\Gamma), A)^{\Gamma}\rightarrow C^{\ast}(P_{k}(\Gamma), A)^{\Gamma}, \:u\mapsto u(0),$$
induces a homomorphism:
$$e_{\ast}: K_{\ast}\left(C^{\ast}_{L}(P_{k}(\Gamma), A)^{\Gamma}\right) \rightarrow K_{\ast}\left(C^{\ast}(P_{k}(\Gamma), A)^{\Gamma}\right),$$
that satisfies $i_{k_1k_2, \ast}\circ e_{\ast}=e_{\ast} \circ i_{k_1k_2, \ast}$ for all $k_1\leq k_2$. 

\begin{definition}\label{Def-BC-assembly}
The \textit{Baum--Connes assembly map with coefficients} in $A$ for $\Gamma$ is defined by 
$$e_{\ast}: \lim_{k\rightarrow \infty} K_{\ast}\left(C^{\ast}_{L}(P_{k}(\Gamma), A)^{\Gamma}\right) \rightarrow \lim_{k\rightarrow \infty} K_{\ast}\left(C^{\ast}(P_{k}(\Gamma), A)^{\Gamma}\right) \cong K_{\ast}\left( A \rtimes_{r} \Gamma \right),$$
where the last isomorphism as above is due to Lemma \ref{Lem-eqRoe-crossprod}.
\end{definition}

\begin{remark}
	The original Baum--Connes assembly map was stated by using of the equivariant $K$-homology and the classifying space $\underline{E}\Gamma$ for proper $\Gamma$-actions (please see \cite{BCH-1994}). Since Rips complexes provide a concrete model for $\underline{E}\Gamma$ and the $K$-theory of equivariant localization algebras gives a picture for the equivariant $K$-homology, Definition \ref{Def-BC-assembly} is identical with the original statement of the Baum--Connes assembly map (please see \cite[Appendix B]{GWY-2024} for the detailed proof).
\end{remark}

\begin{conjecture}\label{Conj}
	Let $\Gamma$ be a countable discrete group and $A$ be a $\Gamma$-$C^{\ast}$-algebra. Then we have the following conjectures.
	\begin{enumerate}
		\item\label{Conj-SN} The \textit{strong Novikov conjecture} (SNC, for short) with coefficients in $A$ for $\Gamma$ asserts that $e_{\ast}$ is injective.
		\item\label{Conj-SA} The \textit{surjective assembly conjecture} (SAC, for short) with coefficients in $A$ for $\Gamma$ asserts that $e_{\ast}$ is surjective.
		\item\label{Conj-BC} The \textit{Baum--Connes conjecture} (BCC, for short) with coefficients in $A$ for $\Gamma$ asserts that $e_{\ast}$ is isomorphic.
	\end{enumerate} 
\end{conjecture}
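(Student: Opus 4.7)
The final statement is actually a trio of well-known open conjectures rather than a provable theorem, so a ``proof proposal'' here must be read as a strategic plan for establishing the injectivity, surjectivity, or bijectivity of $e_{\ast}$ within a specified class of groups, not as a route to settling the conjectures in full generality. The plan I would follow breaks into a structural half (a Dirac/dual-Dirac half) and a permanence half, mirroring the architecture the paper is about to develop.

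For the structural half, the natural starting point is Kasparov's $\gamma$-element technology. Given an isometric proper action of $\Gamma$ on a suitable geometric object (a Hilbert space, a bolic space, a Hilbert--Hadamard space, a $\mathrm{CAT}(0)$ cube complex, or an affine building), one would construct Bott and dual-Bott classes in equivariant $KK$-theory whose Kasparov product $\gamma \in KK^{\Gamma}(\C,\C)$ is an idempotent splitting the assembly map. Via the equivariant localization algebra picture of the left-hand side (Definition \ref{Def-localg}) and Lemma \ref{Lem-eqRoe-crossprod} on the right, this $\gamma$ descends to a splitting of $e_{\ast}$, yielding injectivity directly and, once one verifies $\gamma = 1$ in $KK^{\Gamma}(\C,\C)$, surjectivity as well. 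This is how one handles a-T-menable groups (Higson--Kasparov), hyperbolic groups (Lafforgue, Mineyev--Yu), and groups coarsely embeddable into Hilbert space (Yu).

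For the permanence half — which is the route the paper itself will take — the idea is to leverage an extension $1 \to N \to \Gamma \to \Gamma/N \to 1$ in order to reduce the conjecture for $\Gamma$ to two inputs: the conjecture for $\Gamma/N$ with a suitable induced coefficient algebra, and a ``fibre-wise'' version along $N$. Concretely, the plan is to realise $P_k(\Gamma)$ in terms of $P_k(N) \times P_k(\Gamma/N)$, then factor the left-hand side $\lim_k K_\ast(C^{\ast}_{L}(P_k(\Gamma),A)^\Gamma)$ as an iterated limit using the \emph{equivariant localization algebra along one direction} (to be introduced), and match the right-hand side $K_\ast(A \rtimes_r \Gamma)$ through a Green-type imprimitivity isomorphism $A \rtimes_r \Gamma \sim (C_0(\Gamma/N,A)\rtimes_r \Gamma)\rtimes_r (\Gamma/N)$. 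A Mayer--Vietoris decomposition along the skeleta of the Rips complex (Lemma \ref{Lem-equiMV}), bootstrapped over all finite subgroup preimages $q^{-1}(F)$, would then glue everything into a commutative square in which $e_{\ast}$ for $\Gamma$ inherits injectivity, surjectivity, or bijectivity from $e_{\ast}$ for $\Gamma/N$.

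The main obstacle, and the subtle point that Meyer's counterexample in \cite{Meyer-BC-counterex} makes concrete, is that the fibre $A \rtimes_r N$ does not behave like a well-controlled $\Gamma/N$-algebra when one only assumes BCC for $N$: its $K$-theory can fail to be a ``reasonable'' coefficient for the outer assembly map. Overcoming this is why the hypothesis must be strengthened either to BCC for every preimage $q^{-1}(F)$ of a finite subgroup (for the integral statements) or to the K\"unneth formula and rational injectivity (for the rational Novikov statements). The technical crux of any extension-based attack on Conjecture \ref{Conj} is therefore to identify precisely which auxiliary coefficient $C_0(\Gamma/N,A)\rtimes_r \Gamma$ appears in the hypotheses, and to implement the localisation-along-$N$ machinery with enough control on propagation — quantitative $K$-theory being the natural tool — to keep the Mayer--Vietoris assembly of the pieces compatible with $e_{\ast}$.
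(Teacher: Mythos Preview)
You have correctly recognised that Conjecture~\ref{Conj} is not a theorem but a definition of three open conjectures, so there is no proof in the paper to compare against; the paper simply states the conjectures and then establishes permanence results (Theorems~\ref{first-main-thm}, \ref{second-main-thm}, \ref{main-THM}) for them under extension hypotheses. Your strategic outline of the permanence half---factoring through $P_k(N)\times P_k(\Gamma/N)$, the localization-along-$N$ algebra, Green imprimitivity, Mayer--Vietoris over skeleta, and the role of the $q^{-1}(F)$ hypothesis versus Meyer's obstruction---accurately anticipates the architecture the paper develops in Sections~\ref{Sec-2}--\ref{Sec-5}.
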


In particular, when $A=\mathbb{C}$, the above conjectures (\ref{Conj-SN}) and (\ref{Conj-BC}) are called the \textit{strong Novikov conjecture} and \textit{Baum--Connes conjecture}, respectively. Besides, the \textit{rational Baum--Connes conjecture} (QBCC, for short) with coefficients in $A$ for $\Gamma$ asserts that $e_{\ast}$ is rationally isomorphic. Similarly, the \textit{rational strong Novikov conjecture} and the \textit{rational surjective assembly conjecture} with coefficients in $A$ for $\Gamma$ assert that $e_{\ast}$ is rationally injective and rationally surjective, respectively.

\begin{remark}
	The (rational) strong Novikov conjecture implies the Novikov conjecture on the homotopy invariance of higher signatures for closed manifolds and the Gromov--Lawson--Rosenberg conjecture concerning the existence of Riemannian metrics with positive scalar curvature on a manifold. In addition, the surjective assembly conjecture implies the Kadison--Kaplansky conjecture which asserts that there is no non-trivial idempotent in the reduced group $C^{\ast}$-algebra of any torsion-free group (cf. \cite[Chapter 10]{WillettYu-Book}).
\end{remark}

\subsection{The Mishchenko--Kasparov assembly map}

Now, let us recall the statement of the Mishchenko--Kasparov assembly map using the Milnor--Rips complexes introduced by G. Yu in \cite{Yu-BC-coarse}. 

\begin{definition}\label{Def-MilnorRips}
	Let $\Gamma$ be a countable discrete group equipped with a proper left invariant metric. For any $k \geq 0$ and $m\in \N$, the \textit{Milnor--Rips complex} of $\Gamma$ at scale $k$ and $m$, denoted by $\widetilde{P_{k,m}}(\Gamma)$, is defined to be the set of equivalence classes consisting of all infinite sequences $\langle t,\gamma \rangle=(t_0\gamma_0, t_1\gamma_1, \cdots, t_i\gamma_i, \cdots)$ satisfying that
	\begin{enumerate}
		\item $t_i \geq 0$ for each $i$, $t_i=0$ for $i>m$ and $\sum_{i\in \N} t_i=1$;
		\item $\gamma_i\in \Gamma$ for each $i$ and $\max\{d(\gamma_i,\gamma_j): t_i,t_j \neq 0\}\leq k$;
		\item $\langle t,\gamma \rangle=(t_0\gamma_0, t_1\gamma_1, \cdots, t_i\gamma_i, \cdots)$ and $\langle t',\gamma' \rangle=(t'_0\gamma'_0, t'_1\gamma'_1, \cdots, t'_i\gamma'_i, \cdots)$ are equivalent if 
		\begin{enumerate}
			\item [$(3_a)$] $t_i=t'_i$ for each $i$ and $\gamma_i=\gamma'_i$ for all $i$ with $t_i \neq 0$, or; 
			\item [$(3_b)$] there exists $i_0$ with $t_{i_0}=0$ such that $t_i=t'_i$, $\gamma_i=\gamma'_i$ for all $i<i_0$ and $t_{i+1}=t'_{i}$, $\gamma_{i+1}=\gamma'_{i}$ for all $i\geq i_0$.
		\end{enumerate}
	\end{enumerate}
\end{definition}
$\widetilde{P_{k,m}}(\Gamma)$ is an $n$-dimensional CW-complex and a set $\{\langle t,\gamma \rangle\in \widetilde{P_{k,m}}(\Gamma): t=(t_0, t_1, \cdots, t_m, 0,\cdots),\:\sum_{i\in \N} t_i=1\}$ forms a simplex $\Delta_{\gamma}$ for a fixed sequence $\gamma=(\gamma_0, \gamma_1, \cdots, \gamma_m, \cdots)$ with $\text{diam}(\{\gamma_0, \gamma_1, \cdots, \gamma_m\})\leq k$. Equipped each simplex $\Delta_{\gamma}$ in $\widetilde{P_{k,m}}(\Gamma)$ with the spherical metric $d_{\Delta_{\gamma}}$. Then the metric on $\widetilde{P_{k,m}}(\Gamma)$ is defined to be the largest metric satisfying that
$$d(\langle 1, \gamma_1 \rangle, \langle 1, \gamma_2 \rangle)\leq d(\gamma_1, \gamma_2)\:\text{and}\: d(\langle t, \gamma \rangle, \langle t', \gamma \rangle)\leq d_{\Delta_{\gamma}}(\langle t, \gamma \rangle, \langle t', \gamma \rangle),$$
for $\langle 1, \gamma_j \rangle=(1\gamma_j, 0, 0, \cdots), j=1,2$ and $\langle t, \gamma \rangle=(t_0\gamma_0, t_1\gamma_1,\cdots, t_i\gamma_i, \cdots), \langle t', \gamma \rangle=(t'_0\gamma_0, t'_1\gamma_1,\cdots, t'_i\gamma_i, \cdots)\in \Delta_{\gamma}$ (please see \cite{GWWY-HHspaces-Novikov}). The action of $\Gamma$ on $\widetilde{P_{k,m}}(\Gamma)$ is defined to be
$$\gamma \cdot (t_0\gamma_0, t_1\gamma_1,\cdots, t_i\gamma_i, \cdots)=(t_0\gamma\gamma_0, t_1\gamma\gamma_1,\cdots, t_i\gamma\gamma_i, \cdots),$$
for any $(t_0\gamma_0, t_1\gamma_1,\cdots, t_i\gamma_i, \cdots)\in \widetilde{P_{k,m}}(\Gamma)$. Then this action is a free proper, co-compact isometric action and the canonical embedding map from $\Gamma$ to $\widetilde{P_{k,m}}(\Gamma)$ is a coarse $\Gamma$-equivalence. Moreover, for any $k_1\leq k_2$ and $m_1\leq m_2$, the canonical inclusion 
$$i_{k_1,k_2; m_1, m_2}: \widetilde{P_{k_1, m_1}}(\Gamma) \rightarrow \widetilde{P_{k_2, m_2}}(\Gamma),$$ 
is a uniformly continuous coarse $\Gamma$-map. 

Define a map $\lambda: \widetilde{P_{k,m}}(\Gamma) \rightarrow P_{k}(\Gamma)$ by
\begin{equation}\label{eq-twoRips}
	\lambda(\langle t, \gamma \rangle)= \sum_{i=1}^{m} t_i \gamma_i,
\end{equation}
for any $\langle t, \gamma \rangle=(t_0\gamma_0, t_1\gamma_1, \cdots, t_m\gamma_m,\cdots)\in \widetilde{P_{k,m}}(\Gamma)$. Then $\lambda$ is a uniformly continuous coarse $\Gamma$-map. Thus, $\lambda$ induces a homomorphism 
$$\lambda_{\ast}: K_{\ast}\left( C^{\ast}_{L}(\widetilde{P_{k,m}}(\Gamma), A)^{\Gamma} \right) \rightarrow K_{\ast}\left( C^{\ast}_{L}(P_{k}(\Gamma), A)^{\Gamma} \right),$$
for any $\Gamma$-$C^{\ast}$-algebra $A$ by Lemma \ref{Lem-conticovmap}.

Note that Milnor--Rips complexes provides a concrete model of the classifying space $E\Gamma$ for proper and free $\Gamma$-actions (please see \cite{Yu-BC-coarse}). Then the Mishchenko--Kasparov assembly map with coefficients and the rational analytic Novikov conjecture with coefficients are stated as follows.

\begin{definition}\label{Def-MK-assembly}
	The \textit{Mishchenko--Kasparov assembly map with coefficients} in $A$ for $\Gamma$ is defined to be the following composition map
	$$\mu_{\ast}: \lim_{k,m\rightarrow \infty} K_{\ast}\left( C^{\ast}_{L}(\widetilde{P_{k,m}}(\Gamma), A)^{\Gamma} \right) \xrightarrow{\lambda_{\ast}} 
	\lim_{k\rightarrow \infty} K_{\ast}\left( C^{\ast}_{L}(P_{k}(\Gamma), A)^{\Gamma} \right) \xrightarrow{e_{\ast}} 
	K_{\ast}\left( A \rtimes_{r} \Gamma \right).$$
\end{definition}

\begin{conjecture}[rational analytic Novikov conjecture with coefficients]\label{Conj-rational-analy-Novikov}
	Let $\Gamma$ be a countable discrete group and $A$ be a $\Gamma$-$C^{\ast}$-algebra. Then $\mu_{\ast}$ is rationally injective.
\end{conjecture}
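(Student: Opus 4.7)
The statement is the rational analytic Novikov conjecture with coefficients, which is open in full generality; consequently, my plan is not to attempt the conjecture head-on but to outline how the tools assembled in this paper, combined with the classical dual Dirac machinery, can be marshalled to verify it on as wide a class of groups as possible.

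First, I would iterate Theorem \ref{second-main-thm-intro}. Given an extension $1 \to N \to \Gamma \to \Gamma/N \to 1$ and a coefficient algebra $A$ satisfying the K\"unneth formula, if $N$ satisfies the rational Baum--Connes conjecture with coefficients in $A$, then the RANC for $\Gamma$ with coefficients in $A$ reduces to the RANC for $\Gamma/N$ with coefficients in $C_0(\Gamma/N, A) \rtimes_r \Gamma$. Pursuing a filtration of $\Gamma$ (for instance the derived series, or any composition series whose successive kernels are a-T-menable or hyperbolic, so that Higson--Kasparov and Lafforgue supply the required rational BCC with coefficients) one peels off layers until no further extension structure is available. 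Where a semi-direct product is isometric, Theorem \ref{third-main-thm-intro} offers a finer reduction via the partial RANC along $N$.

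Second, for the residual base groups I would invoke Kasparov's dual Dirac framework: construct an element $\eta \in KK^{\Gamma}(\C, \C)$ whose Kasparov product with the Dirac element implementing $\mu_{\ast}$ equals, rationally, the identity on $\lim_{k,m} K_{\ast}\bigl(C^{\ast}_L(\widetilde{P_{k,m}}(\Gamma), A)^{\Gamma}\bigr)$. For groups admitting a proper isometric action on a Hilbert space, a bolic weakly geodesic metric space of bounded geometry, a simply connected non-positively curved manifold, or an admissible Hilbert--Hadamard space, such an $\eta$ has been constructed by Higson--Kasparov, Kasparov--Skandalis, Kasparov, and Gong--Wu--Yu, covering a large supply of examples to which the extension reductions above can then be applied.

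The main obstacle --- and the reason RANC remains a conjecture --- is precisely this base case: there is no known method to produce a dual Dirac element, even rationally, for an arbitrary countable discrete group, and the extension reduction above always terminates at some irreducible base group for which the conjecture must be verified by genuinely geometric means. A complete proof would therefore require either a new geometric or Banach-space-theoretic invariant permitting such a construction in full generality, or a purely algebraic mechanism bypassing the need for geometric regularity altogether --- both of which lie well beyond the scope of the present paper, whose contribution is to propagate RANC across extensions rather than to settle the residual base-case problem.
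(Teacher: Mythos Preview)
You have correctly identified that this statement is a \emph{conjecture}, not a theorem, and that the paper does not claim to prove it in general; there is accordingly no ``paper's own proof'' to compare against. Your outline of how the paper's extension theorems (Theorems \ref{second-main-thm-intro} and \ref{third-main-thm-intro}) combine with the classical geometric verifications (Higson--Kasparov, Kasparov--Skandalis, Gong--Wu--Yu, etc.) to enlarge the class of groups for which RANC is known is an accurate summary of the paper's actual contribution and strategy, and your diagnosis of the obstruction --- that the reduction always bottoms out at a base group requiring genuine geometric input --- is exactly right.
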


\begin{remark}
	The rational analytic Novikov conjecture also implies the Novikov conjecture and the Gromov--Lawson--Rosenberg conjecture (cf. \cite{Kasparov-signatures}\cite{Rosenberg-IHES}).
\end{remark}

\section{The first main theorem}\label{Sec-2}

In this section, we investigate SNC, SAC and BCC for group extensions $1\rightarrow N \rightarrow \Gamma \rightarrow 
\Gamma/N \rightarrow 1$ under the assumption that any subgroup of $\Gamma$ containing $N$ as a subgroup with finite index satisfies BCC.

\subsection{In the case of direct products}\label{Subsection-prod}
Firstly, we assume that $\Gamma=N\times G$. For any $k\geq 0$, define a metric on $P_{k}(N)\times P_{k}(G)$ by 
$$d\left((x, y),(x',y')\right)=\max\{d_{P_{k}}(x,x'), d_{P_{k}}(y, y')\},$$
for all $x,x'\in P_{k}(N)$ and $y,y'\in P_{k}(G)$. Then $P_{k}(N)\times P_{k}(G)$ is an $(N \times G)$-space equipped with the $(N \times G)$-action defined by 
$$(n, g)\cdot(x,y)=(n x, g y),$$
for any $n\in N$, $g\in G$ and $x\in P_{k}(N)$, $y\in P_{k}(G)$.

Define $\rho: P_{k}(N \times G)\rightarrow P_{k}(N)\times P_{k}(G)$ and $\rho': P_{k}(N)\times P_{k}(G) \rightarrow P_{k}(N \times G)$ by
\begin{equation}\label{eq-Rip-prod}
  \begin{aligned}
  	& \rho\left( \sum_{i,j}t_{ij} (n_i, g_j)\right)=\left(\sum_i\left(\sum_j t_{ij}\right) n_i, \sum_{j}\left(\sum_i t_{ij}\right) g_j\right),\\
  	& \rho'\left(\sum_i t_i n_i, \sum_j s_j g_j\right)=\sum_{i,j} t_i s_j\left(n_i, g_j\right),
  \end{aligned}
\end{equation}
for all $n_i\in N$ and $g_i\in G$.  

Then $\rho$ and $\rho'$ are two strongly Lipschitz $(N\times G)$-homotopy equivalent and coarsely $(N\times G)$-equivalent maps between $P_{k}(N\times G)$ and $P_{k}(N)\times P_{k}(G)$ by \cite[Lemma 4.17]{Zhang-CBCFC}. Thus, we have the following lemma.
\begin{lemma}\label{Lem-reduction-product}
	For any $k\geq 0$, the Rips complex $P_{k}(N \times G)$ is strong Lipschitz $(N \times G)$-homotopy equivalent to and coarsely $(N\times G)$-equivalent to $P_{k}(N)\times P_{k}(G)$.
\end{lemma}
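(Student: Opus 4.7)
The plan is to verify that the maps $\rho$ and $\rho'$ defined in \eqref{eq-Rip-prod} implement both equivalences simultaneously. First I would dispatch the routine items: both maps are $(N\times G)$-equivariant by inspection of the defining formulas; both are Lipschitz with respect to the max-type product metric on $P_{k}(N)\times P_{k}(G)$ and the Rips metric on $P_{k}(N\times G)$; and both are proper, hence coarse $(N\times G)$-maps in the sense of Definition \ref{Def-equi-coarsemap}. A direct computation using $\sum_i t_i = \sum_j s_j = 1$ shows that $\rho\circ\rho' = \mathrm{id}_{P_{k}(N)\times P_{k}(G)}$, so the only non-trivial composition to control is $\rho'\circ\rho$.

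For this composition I would use the straight-line homotopy
\[
F: [0,1]\times P_{k}(N\times G)\rightarrow P_{k}(N\times G), \qquad F(s, x) = (1-s)\,x + s\cdot (\rho'\rho)(x).
\]
The critical technical point --- the one I expect to be the main obstacle --- is checking that the segment from $x$ to $\rho'\rho(x)$ actually lies inside a single simplex of $P_{k}(N\times G)$, so that $F$ is well-defined. For $x = \sum_{i,j} t_{ij}(n_i, g_j)$ supported on a simplex of diameter $\leq k$, the vertices appearing in $\rho'\rho(x)$ are precisely those $(n_i, g_j)$ for which both marginals $\sum_l t_{il}$ and $\sum_p t_{pj}$ are non-zero. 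Assuming a max-type product length function on $N\times G$, one reads off from the support of $x$ that any two such vertices $(n_i, g_j)$ and $(n_{i'}, g_{j'})$ satisfy $d_N(n_i, n_{i'}) \leq k$ and $d_G(g_j, g_{j'}) \leq k$ separately, and hence lie within distance $k$ in $N\times G$. Thus the enlarged vertex set still spans a single simplex of $P_{k}(N\times G)$ that contains both $x$ and $\rho'\rho(x)$, and the straight-line homotopy makes sense inside that simplex.

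Once $F$ is well-defined, verifying the three conditions of Definition \ref{Def-strLip} is routine: equivariance of $F$ follows because the $(N\times G)$-action on each simplex is affine and both $\rho, \rho'$ are equivariant; the Lipschitz-in-$x$ estimate holds because convex combination within a fixed simplex is bounded-Lipschitz in the spherical path metric, with constants depending only on $k$; and uniform continuity in $s$ is immediate from the affine dependence on the homotopy parameter. Finally, the coarse $(N\times G)$-equivalence is a direct byproduct of what has already been proved: $\rho\rho' = \mathrm{id}$, and $d(x, \rho'\rho(x))$ is bounded by the diameter of a single simplex of $P_{k}(N\times G)$ --- a constant depending only on $k$ --- so $\rho$ and $\rho'$ are mutually inverse up to bounded distance, as required by Definition \ref{Def-equi-coarsemap}.
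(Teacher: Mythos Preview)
Your proposal is correct and follows the same approach as the paper, which simply cites \cite[Lemma 4.17]{Zhang-CBCFC} for the claim that $\rho$ and $\rho'$ are strongly Lipschitz $(N\times G)$-homotopy equivalences and coarse $(N\times G)$-equivalences; your argument unpacks precisely the expected content of that citation, including the key observation that $\rho\circ\rho'=\mathrm{id}$ and that the straight-line homotopy from $\mathrm{id}$ to $\rho'\circ\rho$ stays within a single simplex of $P_k(N\times G)$ under the max-type length function.
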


Thus, Combining Lemma \ref{Lem-reduction-product} with Corollary \ref{Cor-coarseequi-K} and Lemma \ref{Lem-Liphtp}, we have the following lemma.

\begin{lemma}\label{Prop-reduction-BC}
	Let $N$, $G$ be two countable discrete groups and $A$ be an $(N \times G)$-$C^{\ast}$-algebra. Then $N \times G$ satisfies SNC, SAC and BCC with coefficients in $A$ if and only if the following homomorphism induced by the evaluation at zero map
	$$e_{\ast}: \lim_{k\rightarrow \infty} K_{\ast}\left(C^{\ast}_{L}\left(P_{k}(N)\times P_{k}(G), A\right)^{N \times G}\right) \rightarrow \lim_{k\rightarrow \infty} K_{\ast}\left(C^{\ast}\left(P_{k}(N)\times P_{k}(G), A\right)^{N \times G}\right)$$
	is injective, surjective and isomorphic, respectively.
\end{lemma}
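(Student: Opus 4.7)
The plan is to build, for each $k$, a commutative square relating the evaluation-at-zero map for $P_k(N\times G)$ to the evaluation-at-zero map for $P_k(N)\times P_k(G)$, in which both vertical arrows are isomorphisms, and then pass to the direct limit over $k$.

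First, I would invoke Lemma \ref{Lem-reduction-product}: the maps $\rho$ and $\rho'$ defined in \eqref{eq-Rip-prod} are mutually inverse strongly Lipschitz $(N\times G)$-homotopy equivalences, and are also coarsely $(N\times G)$-equivalences, between $P_k(N\times G)$ and $P_k(N)\times P_k(G)$. By Corollary \ref{Cor-coarseequi-K}, $\rho$ induces a natural $\ast$-isomorphism
\[
\rho_{\ast}: C^{\ast}\!\left(P_k(N\times G), A\right)^{N\times G} \xrightarrow{\;\cong\;} C^{\ast}\!\left(P_k(N)\times P_k(G), A\right)^{N\times G},
\]
and hence an isomorphism on $K$-theory. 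By Lemma \ref{Lem-Liphtp}, $\rho$ also induces an isomorphism
\[
\rho_{\ast}: K_{\ast}\!\left(C^{\ast}_{L}(P_k(N\times G), A)^{N\times G}\right) \xrightarrow{\;\cong\;} K_{\ast}\!\left(C^{\ast}_{L}(P_k(N)\times P_k(G), A)^{N\times G}\right).
\]
Because the evaluation-at-zero map $e: C^{\ast}_{L}(X,A)^{\Gamma}\to C^{\ast}(X,A)^{\Gamma}$ is natural with respect to uniformly continuous coarse $\Gamma$-maps (it sends $u\mapsto u(0)$), the two $\rho_{\ast}$'s above intertwine the evaluation maps for $P_k(N\times G)$ and for $P_k(N)\times P_k(G)$, yielding a commutative square at each scale $k$.

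Next I would verify naturality in $k$. For $k_1\le k_2$, the inclusion $i_{k_1 k_2}$ on Rips complexes is a uniformly continuous coarse $(N\times G)$-map, and likewise for the product complexes, and the maps $\rho$ commute with these inclusions up to strongly Lipschitz $(N\times G)$-homotopy (this is immediate from \eqref{eq-Rip-prod}). Hence, by the functoriality statements in Lemmas \ref{Lem-coveringmap} and \ref{Lem-conticovmap}, the commutative squares above fit together into a commutative ladder indexed by $k$.

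Finally, taking direct limits in $k$ preserves both commutativity and the isomorphism property of the vertical arrows, so I obtain a commutative square
\[
\begin{array}{ccc}
\displaystyle\lim_{k\to\infty} K_{\ast}(C^{\ast}_{L}(P_k(N\times G),A)^{N\times G}) & \xrightarrow{\;e_{\ast}\;} & \displaystyle\lim_{k\to\infty} K_{\ast}(C^{\ast}(P_k(N\times G),A)^{N\times G}) \\
\big\downarrow\cong & & \big\downarrow\cong \\
\displaystyle\lim_{k\to\infty} K_{\ast}(C^{\ast}_{L}(P_k(N)\times P_k(G),A)^{N\times G}) & \xrightarrow{\;e_{\ast}\;} & \displaystyle\lim_{k\to\infty} K_{\ast}(C^{\ast}(P_k(N)\times P_k(G),A)^{N\times G})
\end{array}
\]
with the top row being the Baum--Connes assembly map for $N\times G$ with coefficients in $A$ (via Definition \ref{Def-BC-assembly}). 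Since the vertical arrows are isomorphisms, the top map is injective, surjective, or an isomorphism if and only if the bottom map is, which is exactly the statement to be proved.

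The only mildly subtle point is checking that $\rho$ commutes with the inclusions $i_{k_1 k_2}$ strictly enough to yield a commutative ladder of $K$-theory groups (rather than only commuting up to homotopy); this is where the strong Lipschitz homotopy invariance of Lemma \ref{Lem-Liphtp} does the real work, and everything else reduces to applying the already-cited functoriality properties.
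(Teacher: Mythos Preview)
Your proposal is correct and follows essentially the same approach as the paper: the paper's proof is just the one-line observation that Lemma \ref{Lem-reduction-product} combined with Corollary \ref{Cor-coarseequi-K} and Lemma \ref{Lem-Liphtp} yields the result, and you have spelled out exactly those steps (plus the routine naturality check in $k$ needed to pass to the direct limit).
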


In order to divide the above conjectures for $N\times G$ into the corresponding conjectures for $N$ and $G$ independently, we introduce a partial version of the equivariant localization algebra.

For any $T\in C^{\ast}(P_{k}(N)\times P_{l}(G), A)^{N\times G}$, define
$$\prop_{P_{k}(N)}(T)=\sup\{d(x,x'): \text{there exist $y,y'\in P_{l}(G)$, s.t. $((x,y),(x',y'))\in \supp(T)$}\}.$$

\begin{definition}\label{Def-loc-along}
	The \textit{equivariant localization algebra along $P_{k}(N)$ with coefficients} in $A$ of $P_{k}(N)\times P_{l}(G)$, denoted by $C^{\ast}_{L, P_{k}(N)}(P_{k}(N)\times P_{l}(G), A)^{N\times G}$, is defined to be the norm closure of the $\ast$-algebra consisting of all bounded and uniformly continuous functions $u:[0,\infty)\rightarrow C^{\ast}(P_{k}(N)\times P_{l}(G), A)^{N\times G}$ such that
	$$\lim_{t\rightarrow \infty} \prop_{P_{k}(N)}(u(t))=0,$$
	in $\mathcal{L}\left(L^2([0,\infty)) \otimes \ell^2(Z_k(N)) \otimes \ell^2(Z_l(G)) \otimes H_A \otimes \ell^2(N\times G) \otimes H\right)$.
\end{definition}

Similar to Lemma \ref{Lem-Liphtp} and Lemma \ref{Lem-equiMV}, we have the following homological properties for the $K$-theory of the equivariant localization algebra along $P_{k}(N)$, please refer to \cite{Yu-Localizationalg} for the proofs.
\begin{lemma}\label{Lem-Liphomotopy-localong}
	Let $X_1$ and $X_2$ be two $N$-invariant subspaces of $P_{k}(N)$. If $X_1$ is strongly Lipschitz $N$-homotopy equivalent to $X_2$, then $K_{\ast}\left(C^{\ast}_{L, X_1}(X_1\times P_{l}(G), A)^{N\times G}\right)$ is naturally isomorphic to $K_{\ast}\left(C^{\ast}_{L, X_2}(X_2\times P_{l}(G), A)^{N\times G}\right)$.
\end{lemma}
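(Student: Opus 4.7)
The plan is to mimic Yu's original strong Lipschitz homotopy invariance argument for localization algebras (see \cite{Yu-Localizationalg} or \cite[Chapter 6]{WillettYu-Book}), carefully tracking that everything we do only affects the $P_k(N)$-coordinate, so that propagation control in the partial direction is preserved.

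First, I would show functoriality: a Lipschitz $N$-map $f:X_1\to X_2$ induces a $\ast$-homomorphism
$(f\times \mathrm{id}_{P_l(G)})_*: C^{\ast}_{L, X_1}(X_1\times P_{l}(G), A)^{N\times G} \to C^{\ast}_{L, X_2}(X_2\times P_{l}(G), A)^{N\times G}$.
To do this, I choose an $(N\times G)$-equivariant Borel cover of $f\times \mathrm{id}_G$ by a family of partial isometries via the standard covering-isometry construction. Conjugation by the resulting isometry sends $T\in C^{\ast}(X_1\times P_l(G), A)^{N\times G}$ to an operator on $X_2\times P_l(G)$ whose $X_2$-propagation is controlled by $\operatorname{Lip}(f)\cdot \prop_{X_1}(T)$, while the $P_l(G)$-coordinate is untouched. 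Thus if $u\in \C_{L,X_1}[X_1\times P_l(G),A]^{N\times G}$ then $\prop_{X_2}(f_*(u(t)))\to 0$, i.e.\ $f_*(u)$ lies in $\C_{L,X_2}[X_2\times P_l(G),A]^{N\times G}$.

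Next I prove homotopy invariance on $K$-theory. Let $F:[0,1]\times X_1\to X_2$ be a strong Lipschitz $N$-homotopy from $f_0$ to $f_1$ as in Definition \ref{Def-strLip}. For each $t\geq 0$ I pick an increasing integer $N(t)\to\infty$ and form the intermediate Lipschitz $N$-maps
$f_t^{(i)}(x) := F(i/N(t), x)$, for $i=0,1,\dots,N(t)$.
By the uniform continuity clause of strong Lipschitz homotopy, the $X_2$-distance between consecutive maps $f_t^{(i)}$ and $f_t^{(i+1)}$ can be made uniformly $\leq \varepsilon(t)$ with $\varepsilon(t)\to 0$. Using these maps to cover $F$ by a family of partial isometries (again only in the $P_k(N)$-direction, identity on $P_l(G)$), one builds a path of $\ast$-homomorphisms
$\Phi(t): C^{\ast}_{L,X_1}(X_1\times P_l(G),A)^{N\times G}\to C^{\ast}_{L,X_2}(X_2\times P_l(G),A)^{N\times G}$
interpolating between the maps induced by $f_0$ and $f_1$, such that the propagation error in $X_2$ introduced by $\Phi(t)$ relative to $(f_0)_*$ tends to $0$ as $t\to\infty$. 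This is precisely the condition needed for $\Phi$ to descend to the same map on $K_*$ as $(f_0)_*$, and symmetrically as $(f_1)_*$, so $(f_0)_* = (f_1)_*$ on $K_*\bigl(C^{\ast}_{L,X_1}(X_1\times P_l(G),A)^{N\times G}\bigr)$.

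Finally, given Lipschitz $N$-maps $f:X_1\to X_2$ and $h:X_2\to X_1$ with $h\circ f$ and $f\circ h$ strongly Lipschitz $N$-homotopic to the respective identities, the previous paragraph shows $h_*\circ f_* = (\mathrm{id}_{X_1})_* = \mathrm{id}$ and $f_*\circ h_* = \mathrm{id}$ on $K$-theory, giving the desired natural isomorphism. The main technical obstacle is verifying that the partial-isometry covers implementing $f_t^{(i)}$ can be chosen to be $(N\times G)$-equivariant and to preserve the partial propagation condition $\lim_{t\to\infty}\prop_{X_2}(\cdot)=0$ while acting trivially in the $P_l(G)$ factor; this is what makes the argument work for the partial localization algebra rather than merely the full one, and it reduces to a careful, equivariant version of the standard covering-isometry construction applied fiberwise over $P_l(G)$.
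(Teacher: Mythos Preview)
Your approach is essentially the same as the paper's: the paper gives no proof for this lemma and simply refers to Yu's original argument in \cite{Yu-Localizationalg}, noting that one just has to work in the $P_k(N)$-direction, which is exactly what you propose. Your identification of the key technical point---that the covering isometries act only on the $X_i$-factor and as the identity on $P_l(G)$, so that the $P_l(G)$-propagation is untouched---is correct.

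One imprecision is worth flagging. In your homotopy-invariance step you speak of building ``a path of $\ast$-homomorphisms $\Phi(t)$'' interpolating between $(f_0)_*$ and $(f_1)_*$, with a condition on propagation error as $t\to\infty$. This conflates the localization parameter $t\in[0,\infty)$ with a homotopy parameter, and is not how Yu's argument actually runs. The mechanism (spelled out in the paper's proof of the analogous Lemma~\ref{Lem-Liphomotopy-twoloc}) is an Eilenberg-swindle: one discretizes the homotopy into a doubly-indexed family $F(t_{i,j},\cdot)$, forms rotation-interpolated isometries $V_i(t)$, and compares the three infinite direct sums $a_1,a_2,a_3$ using the shift isometry and Lemma~\ref{isometryequivalent} to conclude $Ad_{hf,*}=\mathrm{id}$. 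A literal continuous path of $\ast$-homomorphisms between $Ad_{f_0}$ and $Ad_{f_1}$ is not what is constructed; if you write out this step carefully following the template of Lemma~\ref{Lem-Liphomotopy-twoloc}, the proof goes through.
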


\begin{lemma}\label{Lem-MV-localong}
	Let $X$ be an $N$-invariant subspace of $P_{k}(N)$. If $\{X_1, X_2\}$ is a uniformly excisive $N$-cover of $X$, then the following Mayer--Vietoris six-term exact sequence holds
	$$\tiny\xymatrix{
		K_0(C^{\ast}_{L, X_{1,2}}(X_{1,2}\times P_{l}(G), A)^{\Gamma}) \ar[r] & 
		\oplus_{i=1,2} K_0(C^{\ast}_{L, X_i}(X_i\times P_{l}(G), A)^{\Gamma}) \ar[r] & 
		K_0(C^{\ast}_{L, X}(X\times P_{l}(G), A)^{\Gamma})\ar[d] \\
		K_1(C^{\ast}_{L, X}(X\times P_{l}(G), A)^{\Gamma})\ar[u] &
		\oplus_{i=1,2} K_1(C^{\ast}_{L, X_i}(X_i\times P_{l}(G), A)^{\Gamma})\ar[l] &
		K_1(C^{\ast}_{L, X_{1,2}}(X_{1,2}\times P_{l}(G), A)^{\Gamma}),\ar[l] 
	}$$
	where $\Gamma=N\times G$ and $X_{1,2}=X_1\cap X_2$.
\end{lemma}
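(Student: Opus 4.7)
The plan is to imitate the standard proof of the Mayer--Vietoris sequence for equivariant localization algebras (Lemma \ref{Lem-equiMV}), taking care that the propagation control is only required in the $P_{k}(N)$-direction. The main point is that the uniformly excisive condition involves neighborhoods and distances inside $P_{k}(N)$ only, which matches exactly the propagation condition defining $C^{\ast}_{L, X}(X\times P_{l}(G), A)^{\Gamma}$.

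First I would realize the three algebras $C^{\ast}_{L, X_i}(X_i\times P_{l}(G), A)^{\Gamma}$ and $C^{\ast}_{L, X_{1,2}}(X_{1,2}\times P_{l}(G), A)^{\Gamma}$ as subalgebras of $C^{\ast}_{L, X}(X\times P_{l}(G), A)^{\Gamma}$ in the obvious way (extending operators by zero outside $X_i$). Using the third bullet of Definition \ref{Def-unicover} together with Lemma \ref{Lem-Liphomotopy-localong}, I may replace each $X_i$ by a small $\epsilon$-neighborhood without changing $K$-theory, so that there is room to implement a partition of unity argument. Next I would choose a bounded $N$-equivariant Lipschitz partition of unity $\{\phi_{1}, \phi_{2}\}$ on $X$ depending only on the $P_{k}(N)$-coordinate, subordinate to the cover $\{X_1, X_2\}$ (or rather to their $\epsilon$-neighborhoods), obtained by averaging a standard Lipschitz partition over the cocompact $N$-action. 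Regard each $\phi_i$ as a multiplier acting on the $\ell^2(Z_k(N))$-factor.

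The key algebraic step is to verify the two identities
\[
C^{\ast}_{L, X_{1}}(X_{1}\times P_{l}(G), A)^{\Gamma} \cap C^{\ast}_{L, X_{2}}(X_{2}\times P_{l}(G), A)^{\Gamma} = C^{\ast}_{L, X_{1,2}}(X_{1,2}\times P_{l}(G), A)^{\Gamma},
\]
\[
C^{\ast}_{L, X_{1}}(X_{1}\times P_{l}(G), A)^{\Gamma} + C^{\ast}_{L, X_{2}}(X_{2}\times P_{l}(G), A)^{\Gamma} = C^{\ast}_{L, X}(X\times P_{l}(G), A)^{\Gamma}.
\]
The first is a direct consequence of the second condition in Definition \ref{Def-unicover}: an element lying in both algebras has, for each $t$, support whose $P_{k}(N)$-projection lies in $B_{\varepsilon_t}(X_1)\cap B_{\varepsilon_t}(X_2)\subseteq B_{\delta_t}(X_{1,2})$ with $\delta_t\to 0$, and hence can be approximated by operators supported arbitrarily close to $X_{1,2}$. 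For the second identity, given $u\in C^{\ast}_{L, X}(X\times P_{l}(G), A)^{\Gamma}$, I would write
\[
u(t) = \phi_{1}^{1/2}\, u(t)\, \phi_{1}^{1/2} + \phi_{2}^{1/2}\, u(t)\, \phi_{2}^{1/2} + r(t),
\]
where $r(t)$ is a sum of commutator-type terms of the form $[\phi_{i}^{1/2}, u(t)]\,\phi_{i}^{1/2}$. Since $\phi_i$ is Lipschitz in the $P_{k}(N)$-direction and $\prop_{P_{k}(N)}(u(t))\to 0$, the norm of $r(t)$ tends to zero, so the residual term lies in the ideal of elements of $C^{\ast}_{L, X}$ vanishing at infinity; a standard reabsorption trick (reindexing the parameter) shows $u$ is genuinely in the sum.

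From these two identities one obtains the short exact sequence
\[
0\to C^{\ast}_{L, X_{1,2}}(X_{1,2}\times P_{l}(G), A)^{\Gamma} \xrightarrow{\alpha} \bigoplus_{i=1,2} C^{\ast}_{L, X_{i}}(X_{i}\times P_{l}(G), A)^{\Gamma} \xrightarrow{\beta} C^{\ast}_{L, X}(X\times P_{l}(G), A)^{\Gamma}\to 0,
\]
where $\alpha(a)=(a,a)$ and $\beta(a_1,a_2)=a_1-a_2$, and the six-term exact sequence in $K$-theory produces the desired Mayer--Vietoris sequence. The main obstacle I anticipate is the partition-of-unity step: one must confirm both that $\phi_i$ can be chosen $N$-equivariant (using cocompactness of the $N$-action on $P_{k}(N)$) and that multiplication by $\phi_i$ sends $\Gamma$-invariant locally compact finite-propagation operators on $X\times P_{l}(G)$ back into the corresponding algebra on $X_i\times P_{l}(G)$. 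Since $\phi_i$ is a bounded Lipschitz function of the $P_{k}(N)$-coordinate only, this commutes with the $P_{l}(G)$- and $A$-factors and the resulting error $[\phi_{i}^{1/2}, u(t)]$ is controlled solely by $\prop_{P_{k}(N)}(u(t))$, matching exactly the propagation condition in Definition \ref{Def-loc-along}.
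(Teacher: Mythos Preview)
Your overall strategy is the right one, but there is a genuine gap in the way you set up the short exact sequence. The map $\beta(a_1,a_2)=a_1-a_2$ is \emph{not} a $*$-homomorphism from $C^{\ast}_{L,X_1}\oplus C^{\ast}_{L,X_2}$ to $C^{\ast}_{L,X}$, so what you wrote is not a short exact sequence of $C^{\ast}$-algebras and the six-term sequence does not follow from it directly. The usual route to Mayer--Vietoris requires instead that $C^{\ast}_{L,X_1}$ and $C^{\ast}_{L,X_2}$ sit inside $C^{\ast}_{L,X}$ as closed two-sided \emph{ideals} with $C^{\ast}_{L,X_1}+C^{\ast}_{L,X_2}=C^{\ast}_{L,X}$ and $C^{\ast}_{L,X_1}\cap C^{\ast}_{L,X_2}=C^{\ast}_{L,X_{1,2}}$. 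But the extension-by-zero images are not ideals: for small $t$ an element $v$ of $C^{\ast}_{L,X}$ may have large $\prop_{P_k(N)}(v(t))$, so multiplying it against something supported over $X_i\times X_i$ can push the support outside $X_i$. This is also why your symmetric decomposition $u=\phi_1^{1/2}u\phi_1^{1/2}+\phi_2^{1/2}u\phi_2^{1/2}+r$ has a genuine remainder; the ``reabsorption trick'' you invoke changes $K$-classes, not algebra membership, so it does not rescue the sum identity at the $C^{\ast}$-algebra level.

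The standard fix, which is exactly what the paper carries out in the parallel proof of Lemma~\ref{Lem-MV-twoloc}, is to replace $C^{\ast}_{L,X_i}(X_i\times P_l(G),A)^{\Gamma}$ by the auxiliary subalgebra $C^{\ast}_{L,X}(X\times P_l(G),A;X_i)^{\Gamma}$ generated by those $u$ for which there exists $c_t\to 0$ with $\supp(u(t))\subseteq\{((x,y),(x',y')):d((x,x'),X_i\times X_i)\le c_t\}$. These \emph{are} ideals in $C^{\ast}_{L,X}$ (multiplying by $v$ only enlarges $c_t$ by $\prop_{P_k(N)}(v(t))\to 0$); they sum to $C^{\ast}_{L,X}$ via the one-sided decomposition $u=\chi_1 u+\chi_2 u$ for any $N$-invariant Borel partition $\chi_1+\chi_2=1$ subordinate to $\{X_1,X_2\}$; their intersection is the corresponding ideal for $X_{1,2}$ by the second bullet of Definition~\ref{Def-unicover}; and the inclusion $C^{\ast}_{L,X_i}(X_i\times P_l(G),A)^{\Gamma}\hookrightarrow C^{\ast}_{L,X}(X\times P_l(G),A;X_i)^{\Gamma}$ is a $K$-isomorphism by a time-shift argument combined with the third bullet of Definition~\ref{Def-unicover} and Lemma~\ref{Lem-Liphomotopy-localong}. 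With these ideals in place, the Mayer--Vietoris sequence for $C^{\ast}$-algebras applies and gives the statement.
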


Let $A$ be an $(N\times G)$-$C^{\ast}$-algebra equipped with the action $\alpha$ of $N\times G$. Then there exists an action $\alpha^{G}$ of $G$ on $A$ defined by $\alpha^{G}_{g}(a)=\alpha_{(e, g)}(a)$. Thus, we obtain an equivariant Roe algebra $C^{\ast}(P_{l}(G), A)^{G}$. Besides, we have an action $\alpha^{N}_{G}$ of $N$ on $C^{\ast}(P_{l}(G), A)^{G}$ defined by
$$\alpha^{N}_{G, n}\left( (T_{y_1, y_2})_{y_1, y_2\in Z_{l}(G)} \right)=\left( (\alpha_{(n, e)}\otimes I)(T_{y_1, y_2}) \right)_{y_1, y_2\in Z_{l}(G)},$$
where $\alpha_{(n, e)}\otimes I$ is an operator on $A\otimes \K(H) \otimes \K(\ell^2(G))$ defined by $(\alpha_{(n, e)}\otimes I)(a\otimes K\otimes Q)=\alpha_{(n, e)}(a)\otimes K\otimes Q$. Therefore, we finally obtain an equivariant Roe algebra $C^{\ast}(P_{k}(N), C^{\ast}(P_{l}(G), A)^{G})^{N}$.

Let $U: H \rightarrow H\otimes H$ be a unitary. Define
$$\phi: C^{\ast}(P_{k}(N)\times P_{l}(G), A)^{N\times G} \rightarrow C^{\ast}(P_{k}(N), C^{\ast}(P_{l}(G), A)^{G})^{N},$$
by
$$\phi(T)_{x_1, x_2}=\left((U\otimes I) T_{(x_1, y_1),(x_2, y_2)} (U^{\ast}\otimes I)\right)_{y_1, y_2\in Z_{l}(G)},$$
for any $T\in C^{\ast}(P_{k}(N)\times P_{l}(G), A)^{N\times G}$ and $x_1, x_2\in Z_k(N)$. In addition, define
$$\tilde{\phi}: C^{\ast}_{L, P_{k}(N)}(P_{k}(N)\times P_{l}(G), A)^{N\times G} \rightarrow C^{\ast}_{L}(P_{k}(N), C^{\ast}(P_{l}(G), A)^{G})^{N},$$
by 
$$\tilde{\phi}(u)(t)=\phi(u(t)),$$
for any $u\in  C^{\ast}_{L, P_{k}(N)}(P_{k}(N)\times P_{l}(G), A)^{N\times G}$. 
Then $\phi$ and $\tilde{\phi}$ are two isomorphisms and we have the following naturally commutative diagram:
$$\xymatrix{
    C^{\ast}_{L, P_{k}(N)}(P_{k}(N)\times P_{l}(G), A)^{N\times G} \ar[r]^{e} \ar[d]_{\tilde{\phi}}^{\cong} & C^{\ast}(P_{k}(N)\times P_{l}(G), A)^{N\times G} \ar[d]^{\phi}_{\cong} \\
    C^{\ast}_{L}(P_{k}(N), C^{\ast}(P_{l}(G), A)^{G})^{N} \ar[r]_{e} &
    C^{\ast}(P_{k}(N), C^{\ast}(P_{l}(G), A)^{G})^{N}.
}$$
Moreover, by Lemma \ref{Lem-eqRoe-crossprod}, $C^{\ast}(P_{l}(G), A)^{G}$ is $N$-equivariantly isomorphic to $(A\rtimes_{r} G) \otimes \K(H)$ equipped with the action of $N$ by $n\cdot(\sum (a_{g}g)\otimes K)=\sum (\alpha_{(n, e)}(a_g)g)\otimes K$ for any $l\geq 0$. Thus, we obtain the following proposition.

\begin{proposition}\label{Prop-BCalong-BC}
	Let $N$, $G$ be two countable discrete groups and $A$ be an $(N \times G)$-$C^{\ast}$-algebra. Then $N$ satisfies SNC, SAC and BCC with coefficients in $A \rtimes_r G$ if and only if the following homomorphism
	$$e_{\ast}: \lim_{k\rightarrow \infty} K_{\ast}\left(C^{\ast}_{L, P_{k}(N)}\left(P_{k}(N)\times P_{l}(G), A\right)^{N \times G}\right)
	\rightarrow 
	\lim_{k\rightarrow \infty} K_{\ast}\left(C^{\ast}\left(P_{k}(N)\times P_{l}(G), A\right)^{N\times G}\right)$$
	is injective, surjective and isomorphic for each $l\geq 0$, respectively.
\end{proposition}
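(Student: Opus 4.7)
The plan is to derive the equivalence directly from the commutative diagram and the isomorphisms $\phi$, $\tilde{\phi}$ displayed immediately before the statement, combined with Lemma \ref{Lem-eqRoe-crossprod}. First, I would verify that $\phi$ and $\tilde{\phi}$ are genuine $\ast$-isomorphisms of the relevant $C^{\ast}$-algebras, not just $\ast$-homomorphisms. The unitary $U\colon H\to H\otimes H$ is introduced precisely so that one factor of $H$ plays the role of the ambient separable Hilbert space in the inner Roe algebra $C^{\ast}(P_{l}(G), A)^{G}$, while the other factor retains its role in the outer Roe algebra over $P_{k}(N)$. With the matrix description in mind, one checks that $\phi$ sends $(N\times G)$-invariant, locally compact, finite-propagation operators with the partial propagation $\prop_{P_{k}(N)}(T)$ controlled to $N$-invariant, locally compact operators on $\ell^{2}(Z_{k}(N))$ valued in $C^{\ast}(P_{l}(G), A)^{G}$ whose (genuine) propagation along $P_{k}(N)$ agrees with $\prop_{P_{k}(N)}(T)$.

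Second, I would observe that the isomorphisms $\phi$ and $\tilde{\phi}$ are natural with respect to the inclusions $i_{k_{1}k_{2}}\colon P_{k_{1}}(N)\hookrightarrow P_{k_{2}}(N)$, since these only affect the outer $P_{k}(N)$-factor. Passing to the inductive limit in $k$ then produces a commutative diagram
\begin{equation*}
\xymatrix@C=1em{
\lim_{k} K_{\ast}(C^{\ast}_{L, P_{k}(N)}(P_{k}(N)\times P_{l}(G), A)^{N\times G}) \ar[r]^{e_{\ast}} \ar[d]_{\cong} & \lim_{k} K_{\ast}(C^{\ast}(P_{k}(N)\times P_{l}(G), A)^{N\times G}) \ar[d]^{\cong} \\
\lim_{k} K_{\ast}(C^{\ast}_{L}(P_{k}(N), C^{\ast}(P_{l}(G), A)^{G})^{N}) \ar[r]_{\qquad e_{\ast}} & \lim_{k} K_{\ast}(C^{\ast}(P_{k}(N), C^{\ast}(P_{l}(G), A)^{G})^{N}),
}
\end{equation*}
in which the vertical arrows are isomorphisms and the bottom row is, by Definition \ref{Def-BC-assembly}, exactly the Baum--Connes assembly map for $N$ with coefficients in the $N$-$C^{\ast}$-algebra $C^{\ast}(P_{l}(G), A)^{G}$.

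Third, I would invoke Lemma \ref{Lem-eqRoe-crossprod} applied to the $G$-$C^{\ast}$-algebra $A$ to get an $N$-equivariant $\ast$-isomorphism $C^{\ast}(P_{l}(G), A)^{G}\cong (A\rtimes_{r} G)\otimes\mathcal{K}(H)$ for every $l\geq 0$, where the $N$-action on $\mathcal{K}(H)$ is trivial. Since $N$-equivariant stabilization by $\mathcal{K}(H)$ with trivial action induces an isomorphism on the $K$-theory of both the Roe algebras and the localization algebras (Morita invariance), the Baum--Connes assembly map for $N$ with coefficients in $C^{\ast}(P_{l}(G), A)^{G}$ is naturally identified with the one with coefficients in $A\rtimes_{r}G$. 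Combined with the previous diagram, this yields the claimed equivalence of injectivity, surjectivity and isomorphism simultaneously for every $l\geq 0$.

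The principal obstacle I expect is the careful bookkeeping in the first step: making sure that finite propagation along $P_{k}(N)$ for $T$, local compactness of $T$ as an operator on $\ell^{2}(Z_{k}(N))\otimes\ell^{2}(Z_{l}(G))\otimes\cdots$, and $(N\times G)$-equivariance all transform under $\phi$ into finite propagation, local compactness and $N$-equivariance of $\phi(T)$ as an operator valued in $C^{\ast}(P_{l}(G), A)^{G}$, and conversely; and similarly that the decay condition $\lim_{t\to\infty}\prop_{P_{k}(N)}(u(t))=0$ matches exactly $\lim_{t\to\infty}\prop(\tilde{\phi}(u)(t))=0$ in the standard localization algebra over $P_{k}(N)$ with the larger coefficient algebra. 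Once this matching is established, the rest of the argument is formal naturality.
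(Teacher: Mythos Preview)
Your proposal is correct and follows essentially the same route as the paper: the paper's argument is precisely the commutative square built from the isomorphisms $\phi$ and $\tilde{\phi}$, together with the $N$-equivariant identification $C^{\ast}(P_{l}(G),A)^{G}\cong (A\rtimes_{r}G)\otimes\mathcal{K}(H)$ from Lemma~\ref{Lem-eqRoe-crossprod}. The bookkeeping you flag (matching propagation, local compactness and equivariance under $\phi$ and $\tilde{\phi}$) is exactly what the paper leaves implicit in declaring $\phi$ and $\tilde{\phi}$ to be isomorphisms.
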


Next, we explore the connection between the localization algebras along one direction and the localization algebras of direct products. Firstly, let us recall an elementary lemma in the $K$-theory. A $C^{\ast}$-algebra $A$ is called to be \textit{quasi-stable}, if for all positive integer $n$, there exists an isometry $v$ in the multiplier algebra of $M_{n}(A)$ such that $vv^{\ast}$ is the matrix unit. Notice that any equivariant Roe algebras and equivariant localization algebras are quasi-stable.

\begin{lemma}(\cite[Lemma 12.4.3]{WillettYu-Book})\label{Lem-elementary}
	Let $C_{ub}([0,\infty), A)$ be the algebra of all bounded uniformly continuous functions from $[0,\infty)$ to $A$. If $A$ is quasi-stable, then the evaluation at zero map 
	$$e: C_{ub}([0,\infty), A)\rightarrow A, \: u\mapsto u(0)$$
	induces an isomorphism on the $K$-theory.
\end{lemma}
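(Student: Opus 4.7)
The plan is a reduction via a split exact sequence followed by an Eilenberg swindle. First, the constant inclusion $\iota\colon A \to C_{ub}([0,\infty), A)$, $a \mapsto (t \mapsto a)$, is a $\ast$-homomorphism that splits $e$; this produces the split short exact sequence
\[
0 \longrightarrow I \longrightarrow C_{ub}([0,\infty), A) \xrightarrow{\;e\;} A \longrightarrow 0
\]
with $I := \ker(e) = \{u : u(0) = 0\}$. The associated six-term sequence degenerates into split short exact sequences of $K$-groups, so $e_{\ast}$ is an isomorphism if and only if $K_{\ast}(I) = 0$.

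Next I would execute an Eilenberg swindle on $I$, driven by quasi-stability of $A$. The right-shift defined by $(\sigma u)(t) = u(t-1)$ for $t \geq 1$ and $(\sigma u)(t) = 0$ for $t \in [0,1]$ is a $\ast$-endomorphism of $I$ that preserves the vanishing condition at $0$. Any class in $K_{\ast}(I)$ is represented inside some $M_n(I^+)$, and quasi-stability of $A$ (hence of $M_n(A)$ and, by the pointwise nature of the construction, of $I$ and $M_n(I)$) provides isometries in the appropriate multiplier algebra that absorb finite direct sums into single matrix corners. Combining this absorption with the shifts $\sigma, \sigma^2, \ldots$, one constructs an element $\Phi(x)$ realising the formal infinite direct sum $x \oplus \sigma x \oplus \sigma^2 x \oplus \cdots$. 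The telescoping relation $\Phi(x) \sim x \oplus \sigma\bigl(\Phi(x)\bigr)$, implemented by an inner $\ast$-equivalence, then gives $[\Phi(x)] = [x] + [\Phi(x)]$ in $K_{\ast}$, forcing $[x] = 0$.

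The main obstacle is making rigorous sense of the infinite direct sum, because quasi-stability only supplies absorption within each fixed matrix size $M_n(A)$, not an infinite orthogonal system of isometries in $M(A)$ itself. The standard remedy is to perform the swindle inside the stabilisation $I \otimes \mathcal{K}$, where countably many mutually orthogonal isometries are available in the multiplier algebra, and to use that $K_{\ast}(I) \cong K_{\ast}(I \otimes \mathcal{K})$ via the corner isomorphisms supplied by quasi-stability at each matrix level. Once $K_{\ast}(I) = 0$ is in hand, the split short exact sequence yields that $e_{\ast}$ is an isomorphism, which is precisely the claim.
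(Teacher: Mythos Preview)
Your approach is correct and is essentially the standard argument (and the one given in the cited reference, Lemma~12.4.3 of Willett--Yu); the paper itself does not supply a proof but simply quotes the book. One small point you leave implicit: the equality $[\sigma(\Phi(x))]=[\Phi(x)]$ in your telescoping identity requires that $\sigma_\ast=\mathrm{id}$ on $K_\ast(I)$, which follows from the homotopy $\sigma_s u(t)=u(t-s)$ for $t\geq s$ and $\sigma_s u(t)=0$ for $t<s$ (well-defined on $I$ since $u(0)=0$, and norm-continuous in $s$ by uniform continuity of $u$); it would strengthen the write-up to make this step explicit.
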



\begin{proposition}\label{Lem-Key}
	Let $A$ be an $(N \times G)$-$C^{\ast}$-algebra. If 
	$F\times G$ satisfies the Baum--Connes conjecture with coefficients in $A$ for any finite subgroup $F$ of $N$. Then for each $k\geq 0$, the inclusion map $\iota$ induces an isomorphism:
	$$\iota_{\ast}: \lim_{l\rightarrow \infty} K_{\ast}\left(C^{\ast}_{L}(P_{k}(N)\times P_{l}(G), A)^{N\times G}\right) \rightarrow \lim_{l\rightarrow \infty} K_{\ast}\left(C^{\ast}_{L,P_{k}(N)}(P_{k}(N)\times P_{l}(G), A)^{N\times G}\right).$$
\end{proposition}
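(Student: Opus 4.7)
The plan is to prove the proposition by induction on the dimension of the skeleton of $P_{k}(N)$, combining Mayer--Vietoris with Green imprimitivity and the hypothesis that $F \times G$ satisfies BCC with coefficients in $A$ for every finite subgroup $F$ of $N$. First I observe that for any $N$-invariant subspace $X \subseteq P_{k}(N)$, a uniformly excisive $N$-cover $\{X_{1}, X_{2}\}$ of $X$ yields (via the max-metric on the product) a uniformly excisive $(N \times G)$-cover $\{X_{1} \times P_{l}(G), X_{2} \times P_{l}(G)\}$ of $X \times P_{l}(G)$, so that Lemma \ref{Lem-equiMV} and Lemma \ref{Lem-MV-localong} apply in parallel to both $C^{\ast}_{L}$ and $C^{\ast}_{L, X}$. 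Since direct limits and $K$-theory preserve exactness, the inclusion $\iota$ gives a morphism of six-term exact sequences in $\lim_{l}$. A Five-Lemma argument then lets me induct on an exhaustion of $P_{k}(N)$ by $N$-invariant open neighborhoods of its $n$-skeleton.

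Base case ($n=0$). Up to strongly Lipschitz $N$-homotopy (Lemma \ref{Lem-Liphtp}, Lemma \ref{Lem-Liphomotopy-localong}), a neighborhood of the $0$-skeleton is the free $N$-orbit $N \cdot e$. Green imprimitivity (Proposition \ref{Prop-Green-impri}) replaces the $N$-direction by a point and drops $N$-equivariance. Then $C^{\ast}_{L, \mathrm{pt}}(\mathrm{pt} \times P_{l}(G), A)^{G}$ is naturally isomorphic to $C_{ub}([0, \infty), C^{\ast}(P_{l}(G), A)^{G})$, whose $K$-theory equals $K_{\ast}(C^{\ast}(P_{l}(G), A)^{G}) \cong K_{\ast}(A \rtimes_{r} G)$ by Lemma \ref{Lem-elementary} and Lemma \ref{Lem-eqRoe-crossprod}, whereas $\lim_{l} K_{\ast}(C^{\ast}_{L}(P_{l}(G), A)^{G})$ is the left-hand side of the Baum--Connes assembly map for $G$, which equals $K_{\ast}(A \rtimes_{r} G)$ by BCC for $G = \{e\} \times G$ (the hypothesis with $F = \{e\}$). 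Chasing the identifications shows that $\iota_{\ast}$ in the limit becomes the Baum--Connes assembly map $e_{\ast}$ for $G$, hence is an isomorphism.

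Inductive step. Assuming the result on a thickening of the $(n-1)$-skeleton, I take a uniformly excisive $N$-cover in which $X_{1}$ is an $\varepsilon$-thickening of the $(n-1)$-skeleton, $X_{2}$ is the disjoint union of $N$-orbits of the cores of the $n$-simplices (each simplex minus a small boundary collar), and $X_{1} \cap X_{2}$ is a thickening of the boundaries of these cores. By the Mayer--Vietoris Five-Lemma and the induction hypothesis, it suffices to verify $\iota_{\ast}$ is an isomorphism on a single $n$-simplex orbit $N \cdot \sigma$. Its stabilizer $F_{\sigma}$ is finite, so Green imprimitivity reduces the question to the $(F_{\sigma} \times G)$-equivariant one on $\sigma$ itself. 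Since $\sigma$ is $F_{\sigma}$-strongly Lipschitz contractible to its barycenter (a fixed point), Lemma \ref{Lem-Liphtp} and Lemma \ref{Lem-Liphomotopy-localong} reduce further to a single point in the $N$-direction. Then, exactly as in the base case, $\lim_{l} K_{\ast}(C^{\ast}_{L, \mathrm{pt}}(\mathrm{pt} \times P_{l}(G), A)^{F_{\sigma} \times G}) \cong K_{\ast}(A \rtimes_{r} (F_{\sigma} \times G))$, while $\lim_{l} K_{\ast}(C^{\ast}_{L}(P_{l}(G), A)^{F_{\sigma} \times G})$, identified via Lemma \ref{Lem-reduction-product} with $\lim_{l} K_{\ast}(C^{\ast}_{L}(P_{l}(F_{\sigma} \times G), A)^{F_{\sigma} \times G})$ (since $P_{l}(F_{\sigma}) \times P_{l}(G)$ is strongly Lipschitz $(F_{\sigma} \times G)$-homotopy equivalent to $P_{l}(G)$ via the barycenter), is identified with $K_{\ast}(A \rtimes_{r} (F_{\sigma} \times G))$ by BCC for $F_{\sigma} \times G$. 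Under these identifications $\iota_{\ast}$ becomes the Baum--Connes assembly map for $F_{\sigma} \times G$, which is an isomorphism by hypothesis, completing the induction.

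The main obstacle is the bookkeeping of naturality throughout: verifying that the Green imprimitivity, the strongly Lipschitz homotopy equivalences, and the Mayer--Vietoris connecting homomorphisms are all compatible with the natural transformation $\iota$ and with the structure maps of the direct system over $l$, so that the Five Lemma really applies to the induced morphism of six-term exact sequences after passing to $\lim_{l}$. A secondary but technical point is constructing the uniformly excisive $N$-cover $\{X_{1}, X_{2}\}$ of each skeletal thickening satisfying the homotopy conditions of Definition \ref{Def-unicover}, which is a standard skeletal argument for Rips complexes but needs the $N$-invariance of the collars chosen around simplex orbits.
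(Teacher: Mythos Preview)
Your approach is essentially the paper's: induction on the dimension of the skeleton of $P_k(N)$, Mayer--Vietoris plus the Five Lemma on each step, and reduction of each top-dimensional simplex orbit to the BCC hypothesis for $F\times G$.

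The one substantive difference is how you handle the orbit reduction. You invoke Proposition~\ref{Prop-Green-impri} (Green imprimitivity) to pass from an orbit $N\cdot\sigma\times P_l(G)$ to $\sigma\times P_l(G)$ with $F_\sigma\times G$-equivariance; the paper instead does this by hand. For the base case $P_k(N)^{(0)}=N$, it introduces the ``diagonal'' subalgebra $C^{\ast}_{L}(\prod_N P_l(G),A)^{N\times G}$ (elements supported where $n=n'$), shows by a time-shift argument that its inclusion into $C^{\ast}_{L}(N\times P_l(G),A)^{N\times G}$ is a $K$-theory isomorphism (and likewise for $C^{\ast}_{L,N}$), and then writes down explicit $\ast$-isomorphisms identifying these diagonal algebras with $C^{\ast}_{L}(P_l(G),A)^G$ and $C_{ub}([0,\infty),C^{\ast}(P_l(G),A)^G)$. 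The same concrete argument handles the set of barycenters $\{c(\Delta)\}$ in the inductive step, now with stabilizer $F$. This buys exactly the naturality you flag as the main obstacle: the two explicit identifications are visibly compatible with $\iota$ and with the evaluation maps, so the triangle with $e_\ast$ commutes on the nose. Your route via Proposition~\ref{Prop-Green-impri} is morally the same, but note that as stated that proposition concerns Rips complexes of groups and induced coefficients, not localization algebras of arbitrary $\Gamma$-spaces, so you would still need to unwind it to the concrete level to check compatibility with $\iota$---at which point you have essentially reproduced the paper's argument.
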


\begin{proof}
	For each $k\geq 0$, since the metric on $\Gamma$ is proper, hence the dimension of $P_{k}(N)$ is finite. Let $P_{k}(N)^{(m)}$ be the $m$-dimensional skeleton of $P_{k}(N)$. We will prove the lemma by induction on $m$.
	
	For $m=0$, we have $P_{k}(N)^{(0)}=N$. Since $N$ is a proper metric space, there exists $c>0$ such that $d(n,n')>c$ for all $n, n'\in N$. Let $C^{\ast}_{L}(\prod_{N} P_{l}(G), A)^{N \times G}$ be a $C^{\ast}$-algebra consisting of all $u\in C^{\ast}_{L}(N \times P_{l}(G), A)^{N \times G}$ such that 
	$$u(t)_{(n, y), (n', y')}=0,$$
	for any $n\neq n' \in N$ and $y,y'\in P_l(G)$, $t\geq 0$.
	Let 
	$$J: C^{\ast}_{L}(\prod_{N} P_{l}(G), A)^{N \times G} \rightarrow C^{\ast}_{L}(N\times P_{l}(G), A)^{N \times G},$$
	be the inclusion map. Then it induces an isomorphism $J_{\ast}$ on the $K$-theory level. Indeed, for any element $u\in C^{\ast}_{L}(N\times P_{l}(G), A)^{N \times G}$, there exists $t_0$ such that $\prop(u(t))<c$ for any $t\geq t_0$. Thus, the element $t\mapsto u(t+t_0)\in C^{\ast}_{L}(\prod_{N} P_{l}(G), A)^{N \times G}$, which implies $J_{\ast}$ is surjective. For the injection of $J_{1}$, if $J_{1}([v])=0$ for an element $[v]\in K_1(C^{\ast}_{L}(\prod_{N} P_{l}(G), A)^{N \times G})$. Let $v^{(s)}$ be a path of unitaries connecting $v$ and $0$ in $M_{m}\left(C^{\ast}_{L}(N\times P_{l}(G), A)^{N \times G}\right)$. Choosing $v=v^{(0)}, v^{(1)}, \cdots, v^{(q)}=0$ such that $\|v^{(i)}-v^{(i-1)}\|<1$ for $i=1, \cdots, q$. Then there exists $t_1>0$ such that $\max_{0\leq i \leq q} \{\prop(v^{(i)}(t))\}<c$ for any $t\geq t_1$. Let $w^{(i)}(t)=v^{(i)}(t+t_1)$, then $w^{(i)}\in C^{\ast}_{L}(\prod_{N} P_{l}(G), A)^{N \times G}$ for any $i=0, \cdots, q$. Thus, $v$ is homotopic to $0$ by a sequence of linear paths between $w^{(i-1)}$ and $w^{(i)}$, which implies $[v]=0$ in $K_1(C^{\ast}_{L}(\prod_{N} P_{l}(G), A)^{N \times G})$. By a similar argument as above, we can also show that $J_0$ is injective. 
	
	Let $C^{\ast}_{L, N}(\prod_{N} P_{l}(G), A)^{N \times G}$ be a $C^{\ast}$-algebra consisting of all $u\in C^{\ast}_{L, N}(N \times P_{l}(G), A)^{N \times G}$ such that 
	$$u(t)_{(n, y), (n', y')}=0,$$
	for any $n\neq n' \in N$ and $y,y'\in P_l(G)$, $t\geq 0$. Then using a similar argument as the above paragraph, we can show that the inclusion map $J$ induces an isomorphism
	$$J_{\ast}: K_{\ast}\left( C^{\ast}_{L, N}(\prod_{N} P_{l}(G), A)^{N \times G} \right) \rightarrow K_{\ast}\left( C^{\ast}_{L, N}(N \times P_{l}(G), A)^{N \times G} \right).$$
	Thus, we have the following commutative diagram:
	$$\tiny\xymatrix{
		K_{\ast}\left(C^{\ast}_{L}(P_l(G), A)^{G}\right) \ar[r]^{\cong} \ar[d]^{\iota_{\ast}} & 
		K_{\ast}\left(C^{\ast}_{L}(\prod_{N} P_{l}(G), A)^{N \times G}\right)\ar[r]^{\cong}_{J_{\ast}} \ar[d]^{\iota_{\ast}} & 
		K_{\ast}\left(C^{\ast}_{L}(N\times P_{l}(G), A)^{N \times G}\right)\ar[d]^{\iota_{\ast}}  \\
		K_{\ast}\left( C_{ub}([0,\infty), C^{\ast}(P_l(G), A)^{G}) \right) \ar[r]^{\cong} &
		K_{\ast}\left( C^{\ast}_{L, N}(\prod_{N} P_{l}(G), A)^{N \times G} \right)\ar[r]^{\cong}_{J_{\ast}} &
		K_{\ast}\left( C^{\ast}_{L, N}(N \times P_{l}(G), A)^{N \times G} \right),
	}
	$$
	where the left two horizontal arrows are actually isomorphic on the level of $C^{\ast}$-algebras.
	Moreover, we also have the following commutative diagram:
	$$\xymatrix{
		\lim_{l\rightarrow \infty} K_{\ast}\left(C^{\ast}_{L}(P_l(G), A)^{G}\right) \ar[r]^{e_{\ast}} \ar[d]^{\iota_{\ast}} &
		\lim_{l\rightarrow \infty} K_{\ast}\left(C^{\ast}(P_l(G), A)^{G}\right)\\
		\lim_{l\rightarrow \infty} K_{\ast}\left( C_{ub}([0,\infty), C^{\ast}(P_l(G), A)^{G}) \right) \ar[ur]_{e_{\ast}},
	}
	$$
	and two evaluation at zero map $e$ induce two above isomorphisms $e_{\ast}$ by the assumption and Lemma \ref{Lem-elementary}. Thus, combing the above two commutative diagrams, we have that $\iota_{\ast}$ is an isomorphism for the case of $n=0$.
	
	Now, we assume the lemma holds for the case of $m=m'-1$. Next, we will prove it for $m=m'$. Let $c(\Delta)$ be the center of any $m'$-dimensional simplex $\Delta$ in $P_{k}(N)$. Define 
	$$\Delta_1=\{x\in \Delta: d(x, c(\Delta))\leq 1/10\};\:\: \Delta_2=\{x\in \Delta: d(x, c(\Delta))\geq 1/10\}.$$ 
	And let
	$$X_1=\bigcup\{\Delta_1:\text{dim}(\Delta)=m'\};\:\:X_2=\bigcup\{\Delta_2:\text{dim}(\Delta)=m'\}.$$
	Then $\{X_1, X_2\}$ is a uniformly excisive $N$-cover of $P_{k}(N)^{m'}$. Besides, $X_2$ and $X_1\cap X_2$ are strongly Lipschitz $N$-homotopy equivalent to $P_{k}(N)^{(m'-1)}$ and the disjoint union of the boundaries of all $m'$-dimensional simplices of $P_{k}(N)$, respectively. Thus, $\iota_{\ast}$ are isomorphic for $X_2$ and $X_1\cap X_2$ by the inductive assumption and Lemma \ref{Lem-Liphtp} as well as Lemma \ref{Lem-Liphomotopy-localong}. 
	Moreover, $X_1$ is strongly Lipschitz $N$-homotopy equivalent to $\{c(\Delta): \text{dim}(\Delta)=m'\}$. However, each $c(\Delta)$ is a $F$-fixed point for some finite subgroup $F$ of $N$. Thus by the assumption of the proposition and Lemma \ref{Lem-Liphtp}, Lemma \ref{Lem-Liphomotopy-localong} again, $\iota_{\ast}$ is an isomorphism for $X_1$. Therefore, by Lemma \ref{Lem-equiMV}, Lemma \ref{Lem-MV-localong} and the five lemma, $\iota_{\ast}$ is an isomorphism for the case of $m=m'$. 
\end{proof}

Combining Proposition \ref{Lem-Key} and Proposition \ref{Prop-BCalong-BC} with Lemma \ref{Prop-reduction-BC}, we obtain the following theorem.

\begin{theorem}\label{main-thm1}
	Let $N$ and $G$ be two discrete countable groups, $A$ be an $(N \times G)$-$C^{\ast}$-algebra. If 
	\begin{enumerate}
		\item \label{main-thm1-1} $F\times G$ satisfies the Baum--Connes conjecture with coefficients in $A$ for any finite subgroup $F$ of $N$.
		\item \label{main-thm1-2} $N$ satisfies SNC, SAC and BCC with coefficients in $A\rtimes_r G$, respectively.
	\end{enumerate} 
	Then $N\times G$ satisfies SNC, SAC and BCC with coefficients in $A$, respectively.
\end{theorem}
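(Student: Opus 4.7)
The plan is to chain together the three propositions that have just been established in this subsection so as to rewrite the Baum--Connes-type assembly map for $N \times G$ with coefficients in $A$ as the corresponding assembly map for $N$ with coefficients in $A \rtimes_r G$, after which hypothesis (2) immediately concludes the proof. All the ingredients are already in place; the theorem amounts to bookkeeping of direct limits and a diagram chase.

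First I would apply Lemma \ref{Prop-reduction-BC}: since $P_k(N \times G)$ is strongly Lipschitz $(N\times G)$-homotopy and coarsely $(N\times G)$-equivalent to $P_k(N)\times P_k(G)$, the three conjectures SNC, SAC and BCC with coefficients in $A$ for $N\times G$ are equivalent to the corresponding properties (injectivity, surjectivity, bijectivity) of the evaluation map
$$e_{\ast}\colon \lim_{k\to\infty} K_\ast\bigl(C^\ast_L(P_k(N)\times P_k(G), A)^{N\times G}\bigr)\longrightarrow \lim_{k\to\infty} K_\ast\bigl(C^\ast(P_k(N)\times P_k(G), A)^{N\times G}\bigr).$$
Because both indexing systems are filtered, these diagonal limits in $k$ agree with the double limits in which $k$ (for $P_k(N)$) and $l$ (for $P_l(G)$) are decoupled and tend to infinity independently, and I will work with the bi-indexed system from here on.

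Next, hypothesis (1) allows me to invoke Proposition \ref{Lem-Key} for each fixed $k$, which produces a natural isomorphism
$$\iota_\ast\colon \lim_l K_\ast\bigl(C^\ast_L(P_k(N)\times P_l(G), A)^{N\times G}\bigr) \xrightarrow{\;\cong\;} \lim_l K_\ast\bigl(C^\ast_{L, P_k(N)}(P_k(N)\times P_l(G), A)^{N\times G}\bigr)$$
intertwining the two evaluation-at-zero maps into $\lim_l K_\ast\bigl(C^\ast(P_k(N)\times P_l(G), A)^{N\times G}\bigr)$. Since $\iota$ is compatible with the structure maps in $k$, taking $\lim_k$ identifies the source of the assembly map displayed above with $\lim_{k, l} K_\ast\bigl(C^\ast_{L, P_k(N)}(P_k(N)\times P_l(G), A)^{N\times G}\bigr)$ and transports the three conjectures into statements about the resulting evaluation map on the localization algebras along $P_k(N)$.

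Finally, Proposition \ref{Prop-BCalong-BC}, applied for each fixed $l$, identifies the resulting map with the Baum--Connes-type assembly map for $N$ with coefficients in the $N$-$C^\ast$-algebra $C^\ast(P_l(G), A)^G$; by Lemma \ref{Lem-eqRoe-crossprod} this coefficient algebra is $N$-equivariantly $\ast$-isomorphic to $(A\rtimes_r G)\otimes \mathcal{K}(H)$, and $K$-theoretic stability makes it the assembly map for $N$ with coefficients in $A\rtimes_r G$. Hypothesis (2) makes this map injective, surjective or isomorphic in the respective cases, and since each of these properties is preserved under filtered colimits of abelian groups, passing to $\lim_l$ and running the chain of identifications in reverse gives the conclusion for $N\times G$ with coefficients in $A$. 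The main conceptual obstacle has already been absorbed into Proposition \ref{Lem-Key}, whose Mayer--Vietoris induction over the skeleta of $P_k(N)$ is exactly what converts hypothesis (1) on the subgroups $F\times G$ ($F\leq N$ finite) into the isomorphism $\iota_\ast$; with that ingredient granted, no further difficulty arises.
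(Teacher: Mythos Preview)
Your proposal is correct and follows exactly the paper's approach: the theorem is stated in the paper as an immediate consequence of combining Lemma \ref{Prop-reduction-BC}, Proposition \ref{Lem-Key}, and Proposition \ref{Prop-BCalong-BC}, and you have simply spelled out the diagram chase and the direct-limit bookkeeping that this combination entails. Your observation about decoupling the diagonal limit in $k$ into a double limit in $(k,l)$ is the one technical point implicit in the paper's one-line proof, and you handle it correctly.
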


When $A=\mathbb{C}$, since $C^{\ast}_{r}(F)$ satisfies the K\"unneth formula for any finite group $F$ (cf. \cite[Corollary 0.2]{CEO-2004}), we obtain the following corollary.
\begin{corollary}\label{Cor-app-no-coeff}
	Let $N$ and $G$ be two discrete countable groups. If
	\begin{enumerate}
		\item $G$ satisfies the Baum--Connes conjecture;
		\item $N$ satisfies SNC, SAC and BCC with coefficients in $C^{\ast}_{r}(G)$ (with the trivial $N$-action), respectively.
	\end{enumerate}
	Then $N\times G$ satisfies SNC, SAC and BCC, respectively.
\end{corollary}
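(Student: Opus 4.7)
The plan is to deduce the corollary directly from Theorem \ref{main-thm1} specialized to $A = \mathbb{C}$. Hypothesis (2) of the corollary matches condition (\ref{main-thm1-2}) of the theorem verbatim, since $\mathbb{C} \rtimes_{r} G = C^{\ast}_{r}(G)$ equipped with the trivial $N$-action. The only residual task is therefore to verify condition (\ref{main-thm1-1}): for every finite subgroup $F \leq N$, the direct product $F \times G$ must satisfy BCC with coefficients in $\mathbb{C}$.

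To establish this, I would argue as follows. Any finite group $F$ trivially satisfies BCC, because $\underline{E}F$ reduces to a point and the assembly map is the canonical identification of the representation ring $R(F)$ with $K_{\ast}(C^{\ast}_{r}(F))$. Moreover, $C^{\ast}_{r}(F)$ is a finite direct sum of matrix algebras, hence satisfies the K\"unneth formula — this is exactly what \cite[Corollary 0.2]{CEO-2004} records. Exploiting the product model $\underline{E}(F \times G) \simeq \underline{E}F \times \underline{E}G$ together with the tensor decomposition $C^{\ast}_{r}(F \times G) \cong C^{\ast}_{r}(F) \otimes C^{\ast}_{r}(G)$, the assembly map for $F \times G$ splits, via K\"unneth and the triviality of the $F$-factor, as
\begin{equation*}
e^{F \times G}_{\ast} \;\cong\; e^{G}_{\ast} \otimes \mathrm{id}_{R(F)} : K^{G}_{\ast}(\underline{E}G) \otimes R(F) \longrightarrow K_{\ast}(C^{\ast}_{r}(G)) \otimes R(F).
\end{equation*}
Since $e^{G}_{\ast}$ is an isomorphism by hypothesis (1) of the corollary, so is $e^{F \times G}_{\ast}$, which establishes condition (\ref{main-thm1-1}).

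With both conditions of Theorem \ref{main-thm1} in place, the theorem delivers SNC, SAC, and BCC for $N \times G$ simultaneously, completing the proof. The only mildly delicate point in the argument is justifying the tensor-product factorization of the assembly map displayed above; this compatibility is standard and depends exclusively on the K\"unneth formula for $C^{\ast}_{r}(F)$, which is available because $C^{\ast}_{r}(F)$ is finite-dimensional. No other new ingredient is required.
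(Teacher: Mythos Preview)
Your proposal is correct and follows essentially the same route as the paper: specialize Theorem~\ref{main-thm1} to $A=\mathbb{C}$ and verify condition~(\ref{main-thm1-1}) by invoking the K\"unneth formula for $C^{\ast}_{r}(F)$ (which holds since $C^{\ast}_{r}(F)$ is finite-dimensional, cf.\ \cite[Corollary~0.2]{CEO-2004}). The paper leaves the verification of condition~(\ref{main-thm1-1}) as a one-line hint, whereas you spell out the tensor factorization of the assembly map explicitly; note that $R(F)$ is free abelian, so the Tor term in the K\"unneth sequence vanishes and your displayed isomorphism is indeed exact.
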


\subsection{The proof of the first main theorem}\label{Subsection-Green}
In this subsection, we generalize Theorem \ref{main-thm1} from the case of direct products to the case of group extensions. Let us first introduce a technical proposition associated to Green's imprimitivity theorem.

Let $\Gamma$ be a countable discrete group and $N$ be a subgroup of $\Gamma$. Let $B$ be an $N$-$C^{\ast}$-algebra equipped with the $N$-action $\alpha$.
Define $\Ind^{\Gamma}_{N}B$ to be the set consisting of all bounded functions $f: \Gamma \rightarrow B$ such that
\begin{itemize}
	\item $f(\gamma n)=\alpha_{n^{-1}}(f(\gamma))$ for any $\gamma\in \Gamma, n\in N$;
	\item $\gamma N \mapsto \|f(\gamma)\|$ is a function vanishing at infinity.
\end{itemize}
Then $\Ind^{\Gamma}_{N}B$ is a $C^{\ast}$-algebra equipped with the pointwise multiplication and the maximal norm. Endowed $\Ind^{\Gamma}_{N}B$ with a $\Gamma$-action $Ind\:{\alpha}$ by
$${Ind\:{\alpha}}_{\gamma}(f)(\gamma')=f(\gamma^{-1}\gamma'),$$
for any $f\in \Ind^{\Gamma}_{N}B$.

If $B$ is a $\Gamma$-$C^{\ast}$-algebra equipped with a $\Gamma$-action, also denoted by $\alpha$. And the action of $N$ on $B$ is the restriction of $\alpha$. Let $C_{0}(\Gamma / N, B)$ be the $C^{\ast}$-algebra of all functions from $\Gamma / N$ to $B$ vanishing at infinity. And let $\beta$ be a $\Gamma$-action on $C_{0}(\Gamma / N, B)$ defined by 
$$\beta_{\gamma}(f)(\gamma'N)=\alpha_{\gamma}(f(\gamma^{-1}\gamma'N)),$$
for any $f\in C_{0}(\Gamma / N, B)$. Define $\varrho: \Ind^{\Gamma}_{N}B \rightarrow C_0(\Gamma / N, B)$ by
$$\varrho(f)(\gamma'N)=\alpha_{\gamma'}(f(\gamma')),$$
for any $f\in \Ind^{\Gamma}_{N}B$. Then $\varrho$ is a $\Gamma$-equivariant isomorphism.

By \cite[Proposition 2.3]{CE-Permanence-BC}, we have the following proposition.

\begin{proposition}\label{Prop-Green-impri}
	Let $\Gamma$ be a countable discrete group, $N$ be a subgroup of $\Gamma$ and $B$ be an $N$-$C^{\ast}$-algebra. Then the following diagram commutes:
	$$\xymatrix{
		\lim_{k\rightarrow \infty}K_{\ast} \left( C^{\ast}_{L}(P_{k}(N), B)^{N}\right) \ar[r]^{e_{\ast}} \ar[d]_{\cong}  &
		\lim_{k\rightarrow \infty}K_{\ast}\left( C^{\ast}(P_{k}(N), B)^{N}\right) \ar[d]^{\cong}  \\
		\lim_{k\rightarrow \infty}K_{\ast}\left( C^{\ast}_{L}(P_{k}(\Gamma), \Ind^{\Gamma}_{N}B)^{\Gamma}\right) \ar[r]_{e_{\ast}}  &
		\lim_{k\rightarrow \infty}K_{\ast}\left( C^{\ast}(P_{k}(\Gamma), \Ind^{\Gamma}_{N}B)^{\Gamma}\right),
	}$$
	where two vertical maps are isomorphic because of Green's imprimitivity theorem (please see \cite[Theorem 2.2]{CE-Permanence-BC} and \cite[Theorem 17]{Green-78}).
\end{proposition}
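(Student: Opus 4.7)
The plan is to realize both vertical isomorphisms simultaneously, at the level of $C^{\ast}$-algebras, by a single Green-type induction construction which intertwines the evaluation-at-zero map on the nose; commutativity of the $K$-theoretic square then follows by naturality of $K_{\ast}$ and of the inductive limit over $k$. This essentially amounts to translating \cite[Proposition 2.3]{CE-Permanence-BC} into the Roe/localization-algebra language used in the excerpt.

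For the right-hand vertical map, I would apply Lemma \ref{Lem-eqRoe-crossprod} to identify
\begin{equation*}
  C^{\ast}(P_{k}(N), B)^{N} \cong (B \rtimes_{r} N) \otimes \mathcal{K}(H),
  \qquad
  C^{\ast}(P_{k}(\Gamma), \Ind^{\Gamma}_{N} B)^{\Gamma} \cong (\Ind^{\Gamma}_{N} B \rtimes_{r} \Gamma) \otimes \mathcal{K}(H),
\end{equation*}
and then invoke Green's imprimitivity theorem, which furnishes an $(\Ind^{\Gamma}_{N} B \rtimes_{r} \Gamma,\ B \rtimes_{r} N)$-imprimitivity bimodule. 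Stabilizing by $\mathcal{K}(H)$, this yields the right vertical isomorphism on $K$-theory.

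For the left-hand vertical map, fix a set-theoretic section $s : \Gamma/N \to \Gamma$ of the quotient, so that each $\gamma \in \Gamma$ decomposes uniquely as $\gamma = s(\gamma N) \cdot n(\gamma)$ with $n(\gamma) \in N$. The inclusion $N \hookrightarrow \Gamma$ extends to an $N$-equivariant, uniformly continuous coarse embedding $\iota : P_{k}(N) \hookrightarrow P_{k}(\Gamma)$, and via the translates $s(\gamma N) \cdot \iota(P_{k}(N))$ one obtains a $\Gamma$-equivariant induction $\ast$-homomorphism
\begin{equation*}
  \Ind : C^{\ast}(P_{k}(N), B)^{N} \longrightarrow C^{\ast}(P_{k}(\Gamma), \Ind^{\Gamma}_{N} B)^{\Gamma}
\end{equation*}
sending an $N$-invariant finite-propagation operator $T$ to the unique $\Gamma$-invariant operator whose $s(eN)$-block on $\iota(P_{k}(N))$ is $T$ (the other blocks being determined by $\Gamma$-equivariance, using the defining relation of $\Ind^{\Gamma}_{N} B$). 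Since $\Ind$ preserves propagation, applying it pointwise in the time parameter yields a $\ast$-homomorphism $\widetilde{\Ind} : C^{\ast}_{L}(P_{k}(N), B)^{N} \to C^{\ast}_{L}(P_{k}(\Gamma), \Ind^{\Gamma}_{N} B)^{\Gamma}$, inducing the left vertical map. Compatibility with the inclusions $i_{k_1 k_2}$ follows because $\iota$ is natural in $k$.

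Commutativity is then immediate from the construction: $\Ind \circ e = e \circ \widetilde{\Ind}$ at the $C^{\ast}$-algebra level, so after applying $K_{\ast}$ and passing to $\lim_{k \to \infty}$ the square commutes. That $\Ind$ induces a $K$-theory isomorphism follows from Green's imprimitivity as above, while the same property for $\widetilde{\Ind}$ can be established either by citing \cite[Proposition 2.3]{CE-Permanence-BC} or by a Mayer--Vietoris induction on the skeletal dimension of $P_{k}(N)$ (in the spirit of Proposition \ref{Lem-Key}), reducing to the case of finite subgroups of $N$ where the statement is classical. The main obstacle is the explicit construction of $\Ind$: since $P_{k}(\Gamma)$ is not literally a free $\Gamma/N$-orbit of $\iota(P_{k}(N))$, one must choose the coset representatives via $s$ so that the resulting operator genuinely has finite propagation in $P_{k}(\Gamma)$ and so that different choices of $s$ produce homotopic $\ast$-homomorphisms, ensuring the map is canonical on $K$-theory. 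Once these bookkeeping details are in place, the rest of the argument is formal.
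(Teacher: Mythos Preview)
Your proposal is correct and aligned with the paper: the paper does not give an independent argument but simply cites \cite[Proposition 2.3]{CE-Permanence-BC}, which is exactly the reference you invoke, and your sketch of how to translate that result into the Roe/localization-algebra picture (via Lemma~\ref{Lem-eqRoe-crossprod}, Green's bimodule, and a pointwise-in-$t$ induction homomorphism) is the standard way to unpack that citation. One minor caution on your alternative route: the Mayer--Vietoris induction on the skeleta of $P_{k}(N)$ does not reduce directly to ``finite subgroups of $N$'' at the base step---the $0$-skeleton is $N$ itself---but rather reduces the simplicial steps to stabilizers of simplices, which are finite; the base case $m=0$ is handled by the crossed-product identification and Green's theorem directly, as you already indicated for the right-hand column.
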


Now, we begin to consider a group extension:
$$1\rightarrow N \rightarrow \Gamma \xrightarrow{q} \Gamma/ N \rightarrow 1.$$
Let $A$ be a $\Gamma$-$C^{\ast}$-algebra equipped with the $\Gamma$-action $\alpha$.

The group $\Gamma$ can be seen as a subgroup of $\Gamma \times \Gamma/ N$ by the following embedding:
$$\Gamma \rightarrow \Gamma \times \Gamma/ N, \: \gamma \mapsto (\gamma, [\gamma]).$$
Let $\beta'$ be an action of $\Gamma \times \Gamma/ N$ on $C_0(\Gamma/ N, A)$ defined by 
\begin{equation}\label{eq-product-action}
	\beta'_{(\gamma_1, [\gamma'_1])}(f)([\gamma_2])=\alpha_{\gamma_1}(f([\gamma^{-1}_1 \gamma_2 \gamma'_1])),
\end{equation}
for any $f\in C_0(\Gamma/N, A)$.
Thus, by Proposition \ref{Prop-Green-impri}, we have the following lemma.

\begin{lemma}\label{Lem-BC-sub-group}
	The following two statements are equivalent:
	\begin{enumerate}
		\item $\Gamma$ satisfies SNC, SAC and BCC with coefficients in $A$, respectively;
		\item $\Gamma \times \Gamma/N$ satisfies SNC, SAC and BCC with coefficients in $C_0(\Gamma/N, A)$, respectively.
	\end{enumerate}
\end{lemma}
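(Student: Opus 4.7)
The plan is to apply Proposition \ref{Prop-Green-impri} with ambient group $\Gamma\times \Gamma/N$, subgroup $\iota(\Gamma):=\{(\gamma,[\gamma]):\gamma\in\Gamma\}$ (the diagonal image, which is a genuine subgroup because the second coordinate is the quotient of the first), and coefficient algebra $A$ regarded as an $\iota(\Gamma)$-$C^{\ast}$-algebra through the original action $\alpha$. This immediately yields a commutative square with isomorphic vertical arrows, relating the Baum--Connes assembly map for $\Gamma$ with coefficients in $A$ to the Baum--Connes assembly map for $\Gamma\times \Gamma/N$ with coefficients in the induced algebra $\Ind_{\iota(\Gamma)}^{\Gamma\times \Gamma/N}A$. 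Hence the injectivity, surjectivity, and bijectivity of one assembly map are equivalent to the same properties of the other, and the lemma will follow once we identify this induced algebra with $(C_0(\Gamma/N, A),\beta')$.

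For the identification I would first parameterize the coset space: a direct check shows that $(\gamma_1,[\gamma'_1])$ and $(\gamma_2,[\gamma'_2])$ lie in the same right $\iota(\Gamma)$-coset if and only if $[\gamma'_1\gamma_1^{-1}] = [\gamma'_2\gamma_2^{-1}]$ in $\Gamma/N$, so $(\Gamma\times \Gamma/N)/\iota(\Gamma)\cong \Gamma/N$ via $(\gamma,[\gamma'])\mapsto [\gamma\gamma'^{-1}]$, with section $s([\sigma])=(e,[\sigma]^{-1})$. Using this section, I would define
$$\Phi: \Ind_{\iota(\Gamma)}^{\Gamma\times \Gamma/N}A \longrightarrow C_0(\Gamma/N, A), \qquad \Phi(f)([\sigma])=f(e,[\sigma]^{-1}).$$
Since the covariance rule $f((\gamma_1,[\gamma'_1])(\gamma,[\gamma]))=\alpha_{\gamma^{-1}}(f(\gamma_1,[\gamma'_1]))$ determines $f$ from its restriction to $s(\Gamma/N)$, the map $\Phi$ is a $\ast$-isomorphism with explicit inverse $\Phi^{-1}(g)(\gamma_1,[\gamma'_1])=\alpha_{\gamma_1^{-1}}(g([\gamma_1\gamma_1'^{-1}]))$.

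The heart of the argument, which I expect to be the main obstacle, is verifying that $\Phi$ intertwines the two group actions; this is precisely why the formula in \eqref{eq-product-action} takes the shape it does. It reduces to a short manipulation: for $(\gamma,[\gamma'])\in \Gamma\times \Gamma/N$ one has
$$\Phi\big((\Ind\,\alpha)_{(\gamma,[\gamma'])}f\big)([\sigma]) = f(\gamma^{-1},[\gamma']^{-1}[\sigma]^{-1}),$$
and applying the covariance rule with $\gamma_1=\gamma^{-1}$ and $[\gamma'_1]=[\gamma']^{-1}[\sigma]^{-1}$ rewrites the right-hand side as $\alpha_\gamma(\Phi(f)([\gamma^{-1}\sigma\gamma']))=\beta'_{(\gamma,[\gamma'])}(\Phi(f))([\sigma])$, using the identity $([\gamma']^{-1}[\sigma]^{-1}[\gamma])^{-1}=[\gamma^{-1}\sigma\gamma']$ in $\Gamma/N$. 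Combined with the commutative square produced by Proposition \ref{Prop-Green-impri}, this completes the proof.
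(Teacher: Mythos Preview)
Your proposal is correct and follows exactly the same approach as the paper: apply Proposition~\ref{Prop-Green-impri} with the embedding $\Gamma\hookrightarrow\Gamma\times\Gamma/N$, $\gamma\mapsto(\gamma,[\gamma])$, and identify the induced algebra $\Ind_{\Gamma}^{\Gamma\times\Gamma/N}A$ with $(C_0(\Gamma/N,A),\beta')$. The paper's proof is the single sentence ``by Proposition~\ref{Prop-Green-impri}'' together with the preceding setup of the embedding and the action $\beta'$; you have simply written out the coset parametrization and the equivariance computation that the paper leaves implicit (and which parallels the maps $\varrho$ and $\pi'$ the paper spells out in the neighboring discussion).
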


Moreover, by applying Theorem \ref{main-thm1} to the $(\Gamma \times \Gamma/N)$-$C^{\ast}$-algebra $C_0(\Gamma/N, A)$ equipped with the action $\beta'$ defined by (\ref{eq-product-action}), we have the following lemma.

\begin{lemma}\label{Lem-BC-quotient}
	If the following two conditions hold:
	\begin{enumerate}
		\item $\Gamma \times F$ satisfies the Baum--Connes conjecture with coefficients in $C_0(\Gamma/N, A)$ for any finite subgroup $F$ of $\Gamma/N$;
		\item $\Gamma/N$ satisfies SNC, SAC and BCC with coefficients in $C_0(\Gamma/N, A) \rtimes_r \Gamma$, respectively.
	\end{enumerate}
	Then $\Gamma \times \Gamma/N$ satisfies SNC, SAC and BCC with coefficients in $C_0(\Gamma/N, A)$, respectively.
\end{lemma}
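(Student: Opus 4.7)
The plan is to obtain Lemma \ref{Lem-BC-quotient} as a direct application of Theorem \ref{main-thm1} to the direct product $\Gamma/N \times \Gamma$, which is canonically isomorphic to $\Gamma \times \Gamma/N$ via the swap map $(\gamma, [\gamma']) \mapsto ([\gamma'], \gamma)$. The reason the swap is needed at all is purely bookkeeping: in Theorem \ref{main-thm1} the finite subgroup hypothesis is imposed on the first factor of the product, whereas in our lemma the finite subgroup $F$ lives in the second factor $\Gamma/N$.

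First, I would transport the $(\Gamma \times \Gamma/N)$-action $\beta'$ on $C_0(\Gamma/N, A)$ defined in equation (\ref{eq-product-action}) through this swap to obtain a $(\Gamma/N \times \Gamma)$-action on the same $C^{\ast}$-algebra. Since the two factors of the product act through mutually commuting automorphisms, this step is purely a relabelling; the resulting coefficient algebra still fits the hypothesis format of Theorem \ref{main-thm1}, now with $\Gamma/N$ placed in the role of the first factor and $\Gamma$ in the role of the second factor.

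Next, I would invoke Theorem \ref{main-thm1} with the substitution ``$N$'' $= \Gamma/N$, ``$G$'' $= \Gamma$, and coefficient algebra $C_0(\Gamma/N, A)$ carrying the action just constructed. Under these substitutions, hypothesis (1) of Theorem \ref{main-thm1} becomes the statement that $F \times \Gamma$ satisfies BCC with coefficients in $C_0(\Gamma/N, A)$ for every finite subgroup $F$ of $\Gamma/N$, which coincides (after the swap) with hypothesis (1) of the lemma. Hypothesis (2) of Theorem \ref{main-thm1} becomes the statement that $\Gamma/N$ satisfies SNC, SAC and BCC with coefficients in $C_0(\Gamma/N, A) \rtimes_r \Gamma$, which is exactly hypothesis (2) of the lemma. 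The conclusion of Theorem \ref{main-thm1} then yields SNC, SAC and BCC with coefficients in $C_0(\Gamma/N, A)$ for $\Gamma/N \times \Gamma \cong \Gamma \times \Gamma/N$, which is precisely the statement to be proved.

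In summary, there is no substantive obstacle: once the swap is carried out, the lemma is an immediate specialisation of Theorem \ref{main-thm1}. The only point deserving a sentence of verification is that the swap intertwines the action $\beta'$ with the product action used in Theorem \ref{main-thm1}, which is built into commutativity of the two factors.
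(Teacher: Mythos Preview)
Your proposal is correct and takes essentially the same approach as the paper: the paper's proof consists of the single sentence ``by applying Theorem \ref{main-thm1} to the $(\Gamma \times \Gamma/N)$-$C^{\ast}$-algebra $C_0(\Gamma/N, A)$ equipped with the action $\beta'$'', and your argument is simply a more explicit unpacking of this application, including the bookkeeping swap of factors that the paper leaves implicit.
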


For any finite subgroup $F$ of $\Gamma/N$, $q^{-1}(F)$ can be seen as a subgroup of $\Gamma \times F$ by identifying $\gamma$ with $(\gamma, [\gamma])$ for any $\gamma \in q^{-1}(\Gamma)$. In addition, $A$ is a $q^{-1}(F)$-$C^{\ast}$-algebra by the action $\alpha$. Then $\Ind^{\Gamma \times F}_{q^{-1}(F)} A$ is $(\Gamma \times F)$-equivariantly isomorphic to $C_{0}(\Gamma/N, A)$ by an isomorphism $\pi': \Ind^{\Gamma \times F}_{q^{-1}(F)} A \rightarrow C_{0}(\Gamma/N, A)$ defined by 
$$\pi'(f)([\gamma_2])=\alpha_{\gamma_2}(f(\gamma_2, [e])),$$
where the action of $\Gamma \times F$ on $C_{0}(\Gamma/N, A)$ is defined by (\ref{eq-product-action}). Thus by applying Proposition \ref{Prop-Green-impri} again to $\Gamma \times F$ with a subgroup $q^{-1}(F)$, we get the following lemma.

\begin{lemma}\label{Lem-BC-N-Gamma}
	Let $F$ be a finite subgroup of $\Gamma/N$. Then the following two statements are equivalent:
	\begin{enumerate}
		\item $q^{-1}(F)$ satisfies the Baum--Connes conjecture with coefficients in $A$;
		\item $\Gamma \times F$ satisfies the Baum--Connes conjecture with coefficients in $C_0(\Gamma/N, A)$.
	\end{enumerate}
\end{lemma}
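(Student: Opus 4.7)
The plan is to derive this equivalence as a direct application of Proposition \ref{Prop-Green-impri} (Green's imprimitivity theorem in the Baum--Connes framework), now taking $\Gamma \times F$ as the ambient group and $q^{-1}(F)$, embedded via the graph homomorphism $\gamma \mapsto (\gamma,[\gamma])$, as the subgroup. The reasoning parallels that of Lemma \ref{Lem-BC-sub-group}, where $\Gamma$ was viewed as a subgroup of $\Gamma \times \Gamma/N$.

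First I would confirm that the graph map $\gamma \mapsto (\gamma,[\gamma])$ really identifies $q^{-1}(F)$ with a subgroup of $\Gamma \times F$: if $\gamma \in q^{-1}(F)$ then $[\gamma] \in F$, so the image lies in $\Gamma \times F$, and the map is clearly an injective group homomorphism. The main step is then to check that the map $\pi'$ introduced in the paragraph preceding the lemma is a well-defined $(\Gamma \times F)$-equivariant $*$-isomorphism from $\Ind^{\Gamma \times F}_{q^{-1}(F)} A$ to $C_0(\Gamma/N, A)$, where the target carries the action $\beta'$ from (\ref{eq-product-action}). Well-definedness on $\Gamma/N$-cosets follows because for $n \in N$ the element $n$ embeds as $(n,[e]) \in q^{-1}(F)$, so the induction identity $f(xh) = \alpha_{h^{-1}}(f(x))$ yields
\[
\alpha_{\gamma_2 n}\bigl(f(\gamma_2 n, [e])\bigr) = \alpha_{\gamma_2 n}\bigl(\alpha_{n^{-1}}(f(\gamma_2,[e]))\bigr) = \alpha_{\gamma_2}\bigl(f(\gamma_2,[e])\bigr).
\]
Bijectivity follows from the natural identification $(\Gamma \times F)/q^{-1}(F) \cong \Gamma/N$ given by $(\gamma_2,[\gamma'_2])\,q^{-1}(F) \mapsto [\gamma_2]$, under which $\pi'$ is just transport of structure.

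For equivariance I would compute both sides directly by exploiting the decomposition
\[
(\gamma^{-1}_1\gamma_2,\, [\gamma'^{-1}_1]) \;=\; (\gamma^{-1}_1\gamma_2\gamma'_1,\, [e]) \cdot (\gamma'^{-1}_1,\, [\gamma'^{-1}_1])
\]
inside $\Gamma \times F$, in which the second factor lies in the image of $q^{-1}(F)$; applying the induction relation to $\pi' \circ (\Ind\alpha)_{(\gamma_1,[\gamma'_1])}(f)$ and, separately, evaluating $\beta'_{(\gamma_1,[\gamma'_1])}(\pi'(f))([\gamma_2])$ via (\ref{eq-product-action}) both reduce to $\alpha_{\gamma_2\gamma'_1}\bigl(f(\gamma^{-1}_1\gamma_2\gamma'_1,[e])\bigr)$.

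With the $(\Gamma \times F)$-equivariant $*$-isomorphism $\Ind^{\Gamma \times F}_{q^{-1}(F)} A \cong C_0(\Gamma/N, A)$ in hand, Proposition \ref{Prop-Green-impri} applied to the pair $q^{-1}(F) \subseteq \Gamma \times F$ with coefficient $A$ produces a commutative square whose vertical arrows are isomorphisms and whose horizontal arrows are the Baum--Connes assembly maps for $q^{-1}(F)$ with coefficients in $A$ and for $\Gamma \times F$ with coefficients in $C_0(\Gamma/N, A)$; so one is an isomorphism iff the other is. The only delicate point in the whole argument is the bookkeeping between elements of $\Gamma$, their classes in $\Gamma/N$ and $F$, and the action conventions of (\ref{eq-product-action}); no conceptual ingredient beyond Green's imprimitivity theorem is required.
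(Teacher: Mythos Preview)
Your proposal is correct and follows exactly the paper's approach: embed $q^{-1}(F)$ in $\Gamma\times F$ via $\gamma\mapsto(\gamma,[\gamma])$, identify $\Ind^{\Gamma\times F}_{q^{-1}(F)}A$ with $C_0(\Gamma/N,A)$ via the map $\pi'$, and invoke Proposition~\ref{Prop-Green-impri}. You in fact supply more detail than the paper does (the well-definedness and equivariance checks); the only small slip is that the coset map $(\gamma_2,[\gamma'_2])\,q^{-1}(F)\mapsto[\gamma_2]$ is not well-defined as written---the correct identification sends this coset to $[\gamma_2][\gamma'_2]^{-1}$---but this does not affect the argument since your direct verification that $\pi'$ is well-defined on $N$-cosets is what matters.
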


Combing Lemma \ref{Lem-BC-sub-group}-\ref{Lem-BC-N-Gamma}, we obtain the first main theorem of this paper.

\begin{theorem}\label{first-main-thm}
	For a group extension $1\rightarrow N \rightarrow \Gamma \xrightarrow{q} \Gamma/ N \rightarrow 1$. If the following two conditions hold:
	\begin{enumerate}
		\item the group $q^{-1}(F)$ satisfies the Baum--Connes conjecture with coefficients in $A$ for any finite subgroup $F$ of $\Gamma/N$;
		\item the group $\Gamma/N$ satisfies SNC, SAC and BCC with coefficients in $C_0(\Gamma/N, A) \rtimes_r \Gamma$, respectively.
	\end{enumerate}
	Then the group $\Gamma$ satisfies SNC, SAC and BCC with coefficients in $A$, respectively.
\end{theorem}

When $A=\mathbb{C}$, we have the following corollary.
\begin{corollary}\label{main-cor1}
	If the following two conditions hold:
	\begin{enumerate}
		\item the group $q^{-1}(F)$ satisfies the Baum--Connes conjecture for any finite subgroup $F$ of $\Gamma/N$;
		\item the group $\Gamma/N$ satisfies SNC, SAC and BCC with coefficients in $C_0(\Gamma/N) \rtimes_r \Gamma$, respectively.
	\end{enumerate}
	Then the group $\Gamma$ satisfies SNC, SAC and BCC, respectively.
\end{corollary}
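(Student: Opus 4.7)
The plan is to reduce the theorem to the direct-product case already established in Theorem \ref{main-thm1}, by applying Green's imprimitivity theorem (Proposition \ref{Prop-Green-impri}) twice, once to transport the conclusion and once to transport hypothesis (1).

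First, I would replace the conjectures for $\Gamma$ with coefficients in $A$ by the same conjectures for the direct product $\Gamma \times \Gamma/N$ with coefficients in $C_0(\Gamma/N, A)$ equipped with the action in (\ref{eq-product-action}). The diagonal homomorphism $\gamma \mapsto (\gamma, [\gamma])$ embeds $\Gamma$ as a subgroup of $\Gamma \times \Gamma/N$, and a direct check shows that the induced algebra $\mathrm{Ind}^{\Gamma \times \Gamma/N}_{\Gamma} A$ is $(\Gamma \times \Gamma/N)$-equivariantly isomorphic to $C_0(\Gamma/N, A)$ via $f \mapsto ([\gamma] \mapsto \alpha_{\gamma}(f(\gamma,[\gamma])))$. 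Applying Proposition \ref{Prop-Green-impri} to this inclusion then yields Lemma \ref{Lem-BC-sub-group}: SNC, SAC or BCC for $(\Gamma, A)$ is equivalent to the corresponding statement for $(\Gamma \times \Gamma/N,\, C_0(\Gamma/N, A))$.

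Second, I would apply Theorem \ref{main-thm1} to the pair $(\Gamma \times \Gamma/N,\, C_0(\Gamma/N, A))$, viewing $\Gamma \times \Gamma/N$ as the direct product $N' \times G'$ with $N' = \Gamma$ and $G' = \Gamma/N$. This requires verifying two hypotheses: (a) $\Gamma \times F$ satisfies BCC with coefficients in $C_0(\Gamma/N, A)$ for every finite $F \leq \Gamma/N$; and (b) $\Gamma/N$ satisfies SNC, SAC and BCC with coefficients in $C_0(\Gamma/N, A) \rtimes_r \Gamma$. Hypothesis (b) is exactly assumption (2) of the theorem. For hypothesis (a), I would again invoke Green's imprimitivity via Lemma \ref{Lem-BC-N-Gamma}: for each finite $F$, the diagonal embedding realises $q^{-1}(F)$ as a subgroup of $\Gamma \times F$, the induced algebra $\mathrm{Ind}^{\Gamma \times F}_{q^{-1}(F)} A$ is $(\Gamma \times F)$-equivariantly isomorphic to $C_0(\Gamma/N, A)$, so by Proposition \ref{Prop-Green-impri} condition (a) translates exactly into assumption (1).

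Combining these reductions finishes the proof: Lemma \ref{Lem-BC-sub-group} moves the desired conjectures to the direct product $\Gamma \times \Gamma/N$; Theorem \ref{main-thm1}, whose two hypotheses are supplied by Lemma \ref{Lem-BC-N-Gamma} (together with assumption (1)) and by assumption (2), then gives them. The substantive work has already been done in proving the direct-product case, where the equivariant localization algebra along one direction (Definition \ref{Def-loc-along}) and the Mayer--Vietoris induction of Proposition \ref{Lem-Key} carry the real content; the step from direct products to general group extensions here is essentially bookkeeping, and the main conceptual point is the double use of imprimitivity. No separate obstacle arises beyond recognising the correct coefficient algebras to induce to.
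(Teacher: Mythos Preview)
Your argument is correct and is precisely the paper's proof of Theorem \ref{first-main-thm}: combine Lemma \ref{Lem-BC-sub-group}, Lemma \ref{Lem-BC-N-Gamma} and Theorem \ref{main-thm1} via the two applications of Green's imprimitivity you describe. The paper itself states Corollary \ref{main-cor1} simply as the special case $A=\mathbb{C}$ of Theorem \ref{first-main-thm}, so you have in effect reproved the theorem rather than merely invoked it, but the route is the same.
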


\begin{remark}
	For the Baum--Connes conjecture with coefficients, Theorem \ref{first-main-thm} even holds for any locally compact group by J. Chabert, S. Echterhoff and H. Oyono-Oyono's result in \cite[Theorem 2.1]{CEO-2004}. However, Theorem \ref{first-main-thm} is new for the strong Novikov conjecture and the surjective assembly conjecture to the best of our knowledge. Besides, Theorem \ref{first-main-thm} is also true for the rational version of the above conjectures.
\end{remark}

\section{The second main theorem} \label{Sec-3}

In this section, we will show that the rational analytic Novikov conjecture holds for group extensions $1\rightarrow N \rightarrow \Gamma \rightarrow \Gamma/N \rightarrow 1$ under the assumption that $N$ satisfies the rational Baum--Connes conjecture and $\Gamma/N$ satisfies the rational analytic Novikov conjecture with coefficients.

\subsection{$K$-homology for Rips complexes and Milnor--Rips complexes}
Recall that a $C^{\ast}$-algebra $A$ satisfies the K\"{u}nneth formula in $K$-theory (please see \cite{Schochet-Kunneth}) if the following short exact sequence holds for any $C^{\ast}$-algebra $B$:
$$0\rightarrow K_{\ast}(A)\otimes K_{\ast}(B) \rightarrow K_{\ast}(A\otimes B) \rightarrow \text{Tor}(K_{\ast}(A), K_{\ast}(B)) \rightarrow 0,$$
where $A\otimes B$ is the minimal tensor product of $A$ and $B$. In \cite{Schochet-Kunneth}, C. Schochet proved that the K\"{u}nneth formula holds for any $C^{\ast}$-algebra in the Bootstrap class which contains all type I $C^{\ast}$-algebras. Besides for a commutative $\Gamma$-$C^{\ast}$-algebra $A$, if $\Gamma$ satisfies the Baum--Connes conjecture with coefficients in $A\otimes C$ for any $C^{\ast}$-algebra $C$ with the trivial $\Gamma$-action, then $A\rtimes_r \Gamma$ satisfies the K\"unneth formula by \cite[Corollary 0.2]{CEO-2004}.

The following lemma is a slight generalization of \cite[Lemma 11.1]{GWWY-HHspaces-Novikov} and its proof is similar.

\begin{lemma}\label{Lem-torsion-to-free}
	Let $N$, $G$ be two countable discrete groups and $\Gamma=N\times G$. If a $\Gamma$-$C^{\ast}$-algebra $A$ satisfies the K\"unneth formula. Then the following homomorphism  induced by $\lambda$ (see (\ref{eq-twoRips}))
	$$\lambda_{\ast}: \lim_{k,m\rightarrow \infty}K_{\ast}\left(C^{\ast}_{L}(\widetilde{P_{k,m}}(N)\times \widetilde{P_{k,m}}(G), A)^{\Gamma}\right) 
	\rightarrow 
	\lim_{k,m\rightarrow \infty}K_{\ast}\left(C^{\ast}_{L}(\widetilde{P_{k,m}}(N)\times P_{k}(G), A)^{\Gamma}\right),$$
	is rationally injective.
\end{lemma}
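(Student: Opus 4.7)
The plan is to mirror the strategy of \cite[Lemma 11.1]{GWWY-HHspaces-Novikov}, adapting it to the setting with coefficients in $A$. The three key ingredients will be: (i) an imprimitivity-style reduction exploiting that the $N$-action on $\widetilde{P_{k,m}}(N)$ is free, proper and cocompact; (ii) the K\"unneth formula hypothesis on $A$, used to split the rational $K$-theory of the resulting coefficient algebras as external products; and (iii) the classical rational injectivity of the canonical comparison map $K_{\ast}^{G}(EG; B) \otimes \Q \to K_{\ast}^{G}(\underline{E}G; B) \otimes \Q$ for any $G$-$C^{\ast}$-algebra $B$.

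First I would handle the $N$-factor. Since $N$ acts freely, properly and cocompactly on $\widetilde{P_{k,m}}(N)$, an imprimitivity argument at the level of equivariant localization algebras yields an isomorphism of the form
\[
K_{\ast}\bigl(C^{\ast}_{L}(\widetilde{P_{k,m}}(N) \times Y, A)^{N \times G}\bigr) \cong K_{\ast}\bigl(C^{\ast}_{L}(Y, B_{k,m})^{G}\bigr),
\]
for any $G$-space $Y$, where $B_{k,m}$ is a $G$-$C^{\ast}$-algebra Morita equivalent to $(C_{0}(\widetilde{P_{k,m}}(N)) \otimes A) \rtimes_{r} N$. Applied with $Y = \widetilde{P_{k,m}}(G)$ and $Y = P_{k}(G)$, the map $\lambda_{\ast}$ is transformed into the natural comparison map $K_{\ast}^{G}(\widetilde{P_{k,m}}(G); B_{k,m}) \to K_{\ast}^{G}(P_{k}(G); B_{k,m})$. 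Passing to the direct limit in $k,m$, the coefficient algebra becomes KK-equivalent to $A \rtimes_{r} N$, since $\widetilde{P_{k,m}}(N)$ exhausts a model of $EN$.

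Using the K\"unneth hypothesis on $A$ (which propagates to the derived coefficient algebras), the relevant rational $K$-theory groups split as external products in such a way that $\lambda_{\ast}$ reduces to a tensor with the comparison map
\[
\lambda_{G,\ast}\colon K_{\ast}^{G}(EG; A\rtimes_{r}N) \otimes \Q \to K_{\ast}^{G}(\underline{E}G; A\rtimes_{r}N) \otimes \Q.
\]
The main obstacle is to verify that $\lambda_{G,\ast}$ is injective within the $C^{\ast}$-algebraic model of the localization algebra. The expected argument, as in \cite[Lemma 11.1]{GWWY-HHspaces-Novikov}, proceeds via the equivariant Chern character: rationally, $K_{\ast}^{G}(\underline{E}G; B)$ decomposes as a sum indexed by conjugacy classes of finite cyclic subgroups of $G$, and the image of $\lambda_{G,\ast}$ is precisely the summand corresponding to the trivial subgroup, appearing as a direct summand. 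Translating this decomposition into the localization-algebra framework is the delicate step.
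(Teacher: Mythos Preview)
Your approach diverges from the paper's and contains two genuine gaps.

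First, the assertion that the K\"unneth hypothesis on $A$ ``propagates to the derived coefficient algebras'' $B_{k,m}\simeq (C_0(\widetilde{P_{k,m}}(N))\otimes A)\rtimes_r N$ is unjustified: reduced crossed products do not in general preserve the K\"unneth class, and nothing in the hypotheses guarantees this. Relatedly, the claim that in the limit $B_{k,m}$ becomes $KK$-equivalent to $A\rtimes_r N$ is false in general; since $N$ acts freely and properly on $EN$, the limit is Morita equivalent to something like $C_0(BN)\otimes A$, not to $A\rtimes_r N$, unless you are implicitly assuming BCC for $N$.

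Second, and more seriously, the ``classical'' rational injectivity of $K_*^G(EG;B)\otimes\Q\to K_*^G(\underline{E}G;B)\otimes\Q$ for an \emph{arbitrary} $G$-$C^*$-algebra $B$ is not a standard result. The equivariant Chern character decomposition you invoke (into summands indexed by conjugacy classes of finite cyclic subgroups, with the trivial subgroup giving a direct summand) is valid for $B=\C$ or proper coefficients, but does not hold for general $B$. So after your imprimitivity reduction you are left with exactly the hard part, and the argument you sketch for it does not apply.

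The paper proceeds quite differently. It does not reduce to a pure $G$-statement via imprimitivity. Instead it works directly on the product space and uses the ordering-space trick from \cite{GWWY-HHspaces-Novikov}: one tensors the coefficients by $B=C(\Omega_G)\rtimes_r G$ (where $\Omega_G$ is the compact $G$-space of linear orders on $G$), and constructs ordering and collapsing maps $\varphi_{od},\varphi_{cp}$ which, combined with a map $\Phi_*$ coming from an explicit isometry, produce a one-sided inverse to $\lambda_*$ after tensoring with $B$. The K\"unneth hypothesis on $A$ is used in a specific and different place: to show, via Mayer--Vietoris over the finite-dimensional complex $\widetilde{P_{k',m'}}$, that the external product map
\[
\psi_*:\bigoplus_{j=0,1}K_{*-j}\bigl(C^*_L(\widetilde{P_{k',m'}},A)^{\Gamma}\bigr)\otimes K_j(B)\longrightarrow K_*\bigl(C^*_L(\widetilde{P_{k',m'}},A\otimes B)^{\Gamma}\bigr)
\]
is a rational isomorphism. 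A canonical trace on $B$ (coming from an invariant probability measure on $\Omega_G$) then lets one detect the original class after applying $\psi_*^{-1}$, yielding rational injectivity of $\lambda_*$. In short, the K\"unneth formula is applied to $A$ against a fixed auxiliary algebra $B$, not to any crossed product involving $N$.
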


\begin{proof}
	Let $\Omega_{G}$ be the set of all linear orders on $G$ equipped with a $G$-action defined by $g_1 <_{g \mathcal{R}} g_2$ if and only if $g^{-1}g_1 <_{\mathcal{R}} g^{-1}g_2$ for any $g, g_1, g_2\in G$ and $\mathcal{R}\in \Omega_{G}$. Then every torsion element in $G$ acts freely on $\Omega_{G}$. There exists a left $G$-invariant metric on $G\otimes \Omega_{G}$ such that the canonical projection from $G\otimes \Omega_{G}$ to $G$ is a coarse $G$-equivalence and $\Omega_{G}$ is a compact Hausdorff space under this metric (please see \cite[Section 8]{GWWY-HHspaces-Novikov}).
	
	For any $k\geq 0$ and $m\geq \sharp B(e, k)$, define an ordering map $\varphi_{od}: P_{k}(G)\times \Omega_{G} \rightarrow \widetilde{P_{k,m}}(G)$ by
	$$\varphi_{od}\left(\sum_{g\in F}t_{g}g, \mathcal{R}\right) = (t_{\mathcal{R}, 1}g_{\mathcal{R}, 1}, \cdots, t_{\mathcal{R}, i}g_{\mathcal{R}, i}, 0, \cdots),$$
	where $F=\{g_{\mathcal{R}, 1}, \cdots, g_{\mathcal{R}, i}\}$ is a finite subset of $G$ with $g_{\mathcal{R}, j} <_{\mathcal{R}} g_{\mathcal{R}, j'}$ for any $j<j'$. In addition, define a collapsing map $\varphi_{cp}:\widetilde{P_{k,m}}(G)\times \Omega_{G} \rightarrow \widetilde{P_{k,m}}(G)$ by
	$$\varphi_{cp}\left(\langle s, g\rangle, \mathcal{R} \right) = (t_{1}g_{\mathcal{R}, 1}, \cdots, t_{i}g_{\mathcal{R}, i}, 0, \cdots),$$
	for any $\langle s,g \rangle \in \widetilde{P_{k,m}}(G)$ and $\mathcal{R}\in \Omega_{G}$, where $\{g_{\mathcal{R}, 1}, \cdots, g_{\mathcal{R}, i}: s_{g_{\mathcal{R}, i}\neq 0}, \: g_{\mathcal{R}, i'} <_{\mathcal{R}} g_{\mathcal{R}, i''} \: \text{for}\: i'<i''\}$ and $t_{i}=\sum_{g'=g_{\mathcal{R}, i}}s_{g'}$. Then both ordering map and collapsing map are continuous coarse $G$-equivalences. 
	
	In order to simplify the notations, we take $P_{k,m}=\widetilde{P_{k,m}}(N)\times P_{k,m}(G)$ and $\widetilde{P_{k,m}}=\widetilde{P_{k,m}}(N)\times \widetilde{P_{k,m}}(G)$ as well as $B=C(\Omega_{G})\rtimes_{r} G$.
	Define 
	$$\Phi_{\ast}: K_{\ast}\left(C^{\ast}_{L}(P_{k,m}, A)^{N\times G}\right) \rightarrow K_{\ast}\left(C^{\ast}_{L}(P_{k,m}\times \Omega_{G}, A\otimes B)^{N\times G}\right),$$ 
	by
	$$\Phi_{\ast}([u])=[V\left((u\otimes I_{B}) \oplus 0\right)V^{\ast}],$$
	where $V$ is an isometry defined in \cite[Section 10.1]{GWWY-HHspaces-Novikov}. Similarly, we can define $\Phi_{\ast}$ when replacing $P_{k,m}$ by $\widetilde{P_{k,m}}$.
	
	Since a continuous equivariant map between two metric space also induces a homomorphism between $K$-theory of equivariant localization algebras (cf. \cite[Chapter 6]{WillettYu-Book}), we have the following commutative diagram:
	$$\xymatrix{
		K_{\ast}\left(C^{\ast}_{L}(\widetilde{P_{k,m}}, A)^{N\times G}\right) \ar[r]^{\lambda_{\ast}} \ar[d]_{\Phi_{\ast}} &
		K_{\ast}\left(C^{\ast}_{L}(P_{k,m}, A)^{N\times G}\right) \ar[d]^{\Phi_{\ast}} \\
		K_{\ast}\left(C^{\ast}_{L}(\widetilde{P_{k,m}} \times \Omega_{G}, A\otimes B)^{N\times G}\right) \ar[r]^{\lambda_{\ast}} \ar[d]_{\varphi_{cp,\ast}} &
		K_{\ast}\left(C^{\ast}_{L}(P_{k,m}\times \Omega_{G}, A\otimes B)^{N\times G}\right) \ar[dl]_{\varphi_{od,\ast}}  \ar@/^/[ddl] \\
		K_{\ast}\left(C^{\ast}_{L}(\widetilde{P_{k,m}}, A\otimes B)^{N\times G}\right) \ar[d]_{i_{k,k';m,m', \ast}} &
		\\
		K_{\ast}\left(C^{\ast}_{L}(\widetilde{P_{k',m'}}, A\otimes B)^{N\times G}\right). &
	}$$
	such that $i_{k,k';m,m',\ast}\circ \varphi_{cp,\ast} \circ \Phi_{\ast}([u])=[i_{k,k';m,m'}(u)\otimes I_{B}]$ for any element $[u]\in K_{\ast}\left(C^{\ast}_{L}(\widetilde{P_{k,m}}, A)^{N\times G}\right)$.
	
	Define 
	$$\psi_{\ast}: \bigoplus_{j=0,1}K_{\ast-j}\left(C^{\ast}_{L}(\widetilde{P_{k',m'}}, A)^{N\times G}\right) \otimes K_{j}(B) \rightarrow K_{\ast}\left(C^{\ast}_{L}(\widetilde{P_{k',m'}}, A\otimes B)^{N\times G}\right),$$
	by
	$$\psi_{\ast}([p]\otimes [q])=[p\otimes q];\: \psi_{\ast}([u]\otimes [q])=[u\otimes q+I\otimes (I-q)],$$
	for any $[p]\in K_{0}\left(C^{\ast}_{L}(\widetilde{P_{k',m'}}, A)^{N\times G}\right)$, $[u]\in K_{1}\left(C^{\ast}_{L}(\widetilde{P_{k',m'}}, A)^{N\times G}\right)$ and $[q]\in K_{0}(B)$. By the assumption of the lemma, we have that $A$ satisfies the K\"unneth formula. Thus, by using Mayer--Vietoris sequence for $K$-theory of equivariant localization algebras and the five lemma, we can prove that $\psi_{\ast}$ is an isomorphism after tensoring $\Q$. 
	
	On the other hand, there exits an invariant trace $\tau_{\Omega_{G}}$ on $C(\Omega_{G})$ (cf. \cite[Section 8]{GWWY-HHspaces-Novikov}) which induces a canonical trace $\tau$ on $B=C(\Omega_{G})\rtimes_{r} G$ defined by
	$$\tau(\sum_{g\in G}c_{g}g)=\tau_{\Omega_{G}}(c_{e}).$$
	Then $\tau$ induces an homomorphism:
	$$\tau_{\ast}: K_{\ast}\left(C^{\ast}_{L}(\widetilde{P_{k',m'}}, A)^{N\times G}\right) \otimes K_{0}(B) \rightarrow K_{\ast}\left(C^{\ast}_{L}(\widetilde{P_{k',m'}}, A)^{N\times G}\right) \otimes_{\Z} \R.$$
	In addition, for any $[u]\in K_{\ast}\left(C^{\ast}_{L}(\widetilde{P_{k,m}}, A)^{N\times G}\right)\otimes_{\Z} \Q$, we have that
	$$\tau_{\ast}\circ \psi^{-1}_{\ast}\circ i_{k,k';m,m',\ast}\circ \varphi_{cp,\ast} \circ \Phi_{\ast}([u])=[i_{k,k';m,m'}(u)]\otimes 1.$$
	Thus, $\varphi_{cp,\ast} \circ \Phi_{\ast}=\varphi_{od,\ast}\circ \Phi_{\ast} \circ \lambda_{\ast}$ is rationally injective, which implies that $\lambda_{\ast}$ is rationally injective.
\end{proof}

\begin{remark}
	In \cite{AAS2016}, P. Antonini, S. Azzali and G. Skandalis introduced an equivariant $KK$-theory with real coefficients $KK^{\Gamma}_{\R, \ast}(A, B)$ for two $\Gamma$-$C^{\ast}$-algebra $A$ and $B$. Moreover, they proved that $\lambda$ defined in (\ref{eq-twoRips}) induces an injective homomorphism from $\lim_{k,m\rightarrow \infty}KK^{\Gamma}_{\R, \ast}(\widetilde{P_{k,m}}(\Gamma), A)$ to $\lim_{k\rightarrow \infty}KK^{\Gamma}_{\R, \ast}(P_{k}(\Gamma), A)$ for any $\Gamma$-$C^{\ast}$-algebra $A$ (please see \cite{AAS2020}). Based on these facts, we can give an another proof for Lemma \ref{Lem-torsion-to-free} by using the K\"unneth formula.
\end{remark}

\subsection{The proof of the second main theorem}
Let us first consider the rational analytic Novikov conjecture with coefficients for direct products of groups.

Define a map $\widetilde{\rho}: \widetilde{P_{k,m}}(N\times G) \rightarrow \widetilde{P_{k,m}}(N) \times \widetilde{P_{k,m}}(G)$ by
$$\widetilde{\rho}\left( t_{0}(n_0, g_0),\cdots, t_{m}(n_m, g_m), 0,\cdots \right)=\left( (t_{0}n_0,\cdots, t_{m}n_m, 0,\cdots), (t_{0}g_0,\cdots, t_{m}g_m, 0,\cdots) \right).$$
and 
a map $\widetilde{\rho'}: \widetilde{P_{k,m}}(N) \times \widetilde{P_{k,m}}(G) \rightarrow \widetilde{P_{k,m^2}}(N\times G)$ by
\begin{multline*}
	\widetilde{\rho'}\left( (t_{0}n_0,\cdots, t_{m}n_m, 0,\cdots), (s_{0}g_0,\cdots, s_{m}g_m, 0,\cdots) \right) =\\
	\left( t_{0}s_{0}(n_0, g_0), \cdots, t_{0}s_{m}(n_0, g_m), \cdots, t_{m}s_{m}(n_m, g_{m}), 0, \cdots \right).
\end{multline*}
Then $\widetilde{\rho'}\circ\widetilde{\rho}$ and $\widetilde{\rho}\circ\widetilde{\rho'}$ are strongly Lipschitz $(N\times G)$-homotopic to $i_{k,k;m,m^2}$ and $i_{k,k;m,m^2} \times i_{k,k;m,m^2}$, respectively (see \cite[Lemma 4.17]{Zhang-CBCFC}). Thus, $\widetilde{\rho}$ induces an isomorphism 
$$\widetilde{\rho}_{\ast}: \lim_{k,m\rightarrow \infty}K_{\ast}\left( C^{\ast}_{L}(\widetilde{P_{k,m}}(N\times G), A)^{N\times G} \right) \rightarrow \lim_{k,m\rightarrow \infty}K_{\ast}\left( C^{\ast}_{L}(\widetilde{P_{k,m}}(N)\times \widetilde{P_{k,m}}(G), A)^{N\times G} \right),$$
by Lemma \ref{Lem-Liphtp}. Moreover, we have the following commutative diagram
$$
\xymatrix{
	\widetilde{P_{k,m}}(N\times G) \ar[r]^{\widetilde{\rho}\:\:\:\:\:\:\:\:} \ar[d]_{\lambda} 
	& \widetilde{P_{k,m}}(N)\times \widetilde{P_{k,m}}(G) \ar[d]^{\lambda} \\
	P_{k}(N\times G) \ar[r]_{\rho\:\:\:\:\:\:\:} 
	&  P_{k}(N)\times P_{k}(G),    
}$$
where $\rho: P_{k}(N\times G) \rightarrow P_{k}(N)\times P_{k}(G)$ is defined as equation (\ref{eq-Rip-prod}). Thus, we have the following lemma.

\begin{lemma}\label{Lem-raNov-product}
	Let $N$, $G$ be two countable discrete groups and $A$ be an $(N\times G)$-$C^{\ast}$-algebra. Then $N\times G$ satisfies the rational analytic Novikov conjecture with coefficients in $A$ if and only if the following composition map
	$$\mu_{\ast}=e_{\ast}\circ \lambda_{\ast}: \lim_{k,m\rightarrow \infty} K_{\ast}\left(C^{\ast}_{L}(\widetilde{P_{k,m}}(N)\times \widetilde{P_{k,m}}(G), A)^{N\times G}\right)  \rightarrow K_{\ast}\left( A\rtimes_{r} (N\times G) \right)$$
	is rationally injective.	
\end{lemma}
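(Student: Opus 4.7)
The plan is to transport the Mishchenko--Kasparov assembly map for $N\times G$ across the isomorphism $\widetilde{\rho}_{\ast}$ recorded just before the lemma. First I would observe that the commutative square
$$\xymatrix{
\widetilde{P_{k,m}}(N\times G) \ar[r]^{\widetilde{\rho}\:\:\:\:\:\:\:\:\:\:} \ar[d]_{\lambda} & \widetilde{P_{k,m}}(N)\times \widetilde{P_{k,m}}(G) \ar[d]^{\lambda} \\
P_{k}(N\times G) \ar[r]_{\rho\:\:\:\:\:\:\:} & P_{k}(N)\times P_{k}(G)
}$$
consists of uniformly continuous coarse $(N\times G)$-maps, so by Lemma~\ref{Lem-conticovmap} and Lemma~\ref{Lem-coveringmap} it induces a commutative square on the $K$-theory of the corresponding equivariant localization algebras and equivariant Roe algebras, compatible with the evaluation-at-zero map $e$ in the evident way.

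Next I would combine three inputs. By the paragraph immediately preceding the lemma, $\widetilde{\rho}_{\ast}$ is an isomorphism between the direct limits of the $K$-theories of the equivariant localization algebras over $k$ and $m$. By Lemma~\ref{Lem-reduction-product} the map $\rho$ is a coarse $(N\times G)$-equivalence, so by Corollary~\ref{Cor-coarseequi-K} it induces an isomorphism on the equivariant Roe algebras, and by Lemma~\ref{Lem-eqRoe-crossprod} both of these are naturally $\ast$-isomorphic to $(A\rtimes_{r}(N\times G))\otimes \mathcal{K}(H)$. Taking direct limits in $k$ and $m$ and post-composing both columns with $e_{\ast}$ followed by this identification, the above square becomes a commutative square whose left vertical composition is exactly the Mishchenko--Kasparov assembly map $\mu_{\ast}=e_{\ast}\circ \lambda_{\ast}$ for $N\times G$ in Definition~\ref{Def-MK-assembly}, whose right vertical composition is the map displayed in the statement of the lemma, and whose top horizontal arrow $\widetilde{\rho}_{\ast}$ is an isomorphism.

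From here the conclusion is a two-line diagram chase: rational injectivity is preserved under pre-composition with an isomorphism, so the two vertical maps are rationally injective simultaneously, which is precisely the asserted equivalence.

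I do not foresee any substantial obstacle in this argument, since the real content, the isomorphism $\widetilde{\rho}_{\ast}$, has already been packaged via the strong Lipschitz $(N\times G)$-homotopies between $\widetilde{\rho}'\circ \widetilde{\rho}$, $\widetilde{\rho}\circ \widetilde{\rho}'$ and the inclusions $i_{k,k;m,m^{2}}$, $i_{k,k;m,m^{2}}\times i_{k,k;m,m^{2}}$, combined with Lemma~\ref{Lem-Liphtp}. The only point deserving a bit of bookkeeping is that $\rho_{\ast}$ really does intertwine the two natural identifications $K_{\ast}(C^{\ast}(\cdot,A)^{N\times G})\cong K_{\ast}(A\rtimes_{r}(N\times G))$ on the two Rips complexes, which is the naturality assertion within Lemma~\ref{Lem-eqRoe-crossprod}.
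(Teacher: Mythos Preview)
Your proposal is correct and is exactly the argument the paper has in mind: the paper states the lemma immediately after recording the commutative square and the isomorphism $\widetilde{\rho}_{\ast}$ with the phrase ``Thus, we have the following lemma,'' and your write-up simply fills in the routine diagram chase together with the identification of the Roe-algebra targets via Lemma~\ref{Lem-reduction-product}, Corollary~\ref{Cor-coarseequi-K} and Lemma~\ref{Lem-eqRoe-crossprod}. Your closing remark about the naturality needed to match the two identifications with $K_{\ast}(A\rtimes_{r}(N\times G))$ is the only nontrivial bookkeeping point, and it is handled by the naturality built into Lemma~\ref{Lem-eqRoe-crossprod}.
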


By a similar argument of Proposition \ref{Prop-BCalong-BC}, we have the following proposition.
\begin{proposition}\label{Prop-localong-rational}
	Let $N$, $G$ be two countable discrete groups and $\Gamma=N\times G$. Let $A$ be a $\Gamma$-$C^{\ast}$-algebra. Then $N$ satisfies the rational analytic Novikov conjecture with coefficients in $A\rtimes_{r} G$ if and only if the following homomorphism
	$$e_{\ast}: \lim_{k, m\rightarrow \infty} K_{\ast}\left(C^{\ast}_{L, \widetilde{P_{k,m}}(N)}\left(\widetilde{P_{k,m}}(N)\times P_{l}(G), A\right)^{\Gamma}\right)
	\rightarrow 
	K_{\ast}\left( A \rtimes_{r} \Gamma \right)$$
	is rationally injective for any $l\geq 0$.
\end{proposition}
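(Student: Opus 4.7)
The plan is to transport the argument of Proposition \ref{Prop-BCalong-BC} verbatim, replacing the Rips complex $P_{k}(N)$ by the Milnor-Rips complex $\widetilde{P_{k,m}}(N)$. The unitary trick used there (absorbing the $G$-factor into the coefficient algebra) is oblivious to whether the first factor is a Rips or a Milnor-Rips complex, so almost no modification is required.

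Concretely, fix $l \geq 0$ and a unitary $U: H \to H\otimes H$. I would define a $\ast$-isomorphism
\[
\phi: C^{\ast}(\widetilde{P_{k,m}}(N)\times P_{l}(G), A)^{N\times G} \xrightarrow{\cong} C^{\ast}(\widetilde{P_{k,m}}(N), C^{\ast}(P_{l}(G), A)^{G})^{N}
\]
by $\phi(T)_{x_1,x_2} = ((U\otimes I)\, T_{(x_1,y_1),(x_2,y_2)}\, (U^{\ast}\otimes I))_{y_1,y_2\in Z_{l}(G)}$, observe that $\prop_{\widetilde{P_{k,m}}(N)}(T) < \varepsilon$ if and only if $\prop(\phi(T)) < \varepsilon$ on $\widetilde{P_{k,m}}(N)$, and conclude that $\phi$ extends pointwise in $t$ to a $\ast$-isomorphism
\[
\tilde{\phi}: C^{\ast}_{L,\widetilde{P_{k,m}}(N)}(\widetilde{P_{k,m}}(N)\times P_{l}(G), A)^{N\times G} \xrightarrow{\cong} C^{\ast}_{L}(\widetilde{P_{k,m}}(N), C^{\ast}(P_{l}(G), A)^{G})^{N}
\]
intertwining the evaluation-at-zero maps, naturally in $k$ and $m$.

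By Lemma \ref{Lem-eqRoe-crossprod}, the coefficient algebra $C^{\ast}(P_{l}(G), A)^{G}$ is $N$-equivariantly $\ast$-isomorphic to $(A\rtimes_{r} G)\otimes\mathcal{K}(H)$ independently of $l$, and the target $K_{\ast}(A\rtimes_{r}\Gamma)$ factors as $K_{\ast}((A\rtimes_{r} G)\rtimes_{r} N)$. Passing to $K$-theory and taking limits over $k, m \to \infty$, the map in the statement is thereby identified with the evaluation map
\[
e_{\ast}: \lim_{k,m\to\infty} K_{\ast}\bigl(C^{\ast}_{L}(\widetilde{P_{k,m}}(N), A\rtimes_{r} G)^{N}\bigr) \rightarrow K_{\ast}\bigl((A\rtimes_{r} G)\rtimes_{r} N\bigr),
\]
which is precisely the Mishchenko-Kasparov assembly map $\mu_{\ast}$ for $N$ with coefficients in $A\rtimes_{r} G$ (the $\lambda_{\ast}$-factor in Definition \ref{Def-MK-assembly} is automatically absorbed, since $\lambda$ is a uniformly continuous coarse $N$-equivalence and the direct evaluation at zero from the Milnor-Rips localization algebra agrees with $e_{\ast}\circ\lambda_{\ast}$). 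Hence rational injectivity of the map in the statement is equivalent to RANC for $N$ with coefficients in $A\rtimes_{r} G$. The only point requiring verification---namely, the propagation-control identification underlying $\tilde{\phi}$---is immediate from the definitions, so I expect no substantive obstacle.
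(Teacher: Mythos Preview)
Your proposal is correct and follows exactly the approach the paper intends: the paper's own proof consists of the single line ``By a similar argument of Proposition \ref{Prop-BCalong-BC}'', and you have spelled out precisely that transplant of the $\phi$, $\tilde{\phi}$ construction to the Milnor--Rips setting. Your remark about the $\lambda_{\ast}$-factor being absorbed (via the coarse $N$-equivalence $\widetilde{P_{k,m}}(N)\to P_k(N)$ on the Roe-algebra side) is the only point not explicit in the paper, and it is correct.
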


Moreover, since the action of $N$ on $\widetilde{P_{k,m}}(N)$ is free, we also have the following proposition by a similar proof of Proposition \ref{Lem-Key}.

\begin{proposition}\label{Prop-prod-key}
	 Let $N$, $G$ and $A$ be as above. If $G$ satisfies the rational Baum--Connes conjecture with coefficients in $A$. Then for any $k\geq 0$ and $m\in \N$, the inclusion map $\iota$ induces an isomorphism:
	$$\iota_{\ast}: \lim_{l\rightarrow \infty} K_{\ast}\left(C^{\ast}_{L}(\widetilde{P_{k,m}}(N)\times P_{l}(G), A)^{\Gamma}\right) \rightarrow \lim_{l\rightarrow \infty} K_{\ast}\left(C^{\ast}_{L,\widetilde{P_{k,m}}(N)}(\widetilde{P_{k,m}}(N)\times P_{l}(G), A)^{\Gamma}\right).$$
\end{proposition}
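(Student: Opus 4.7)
The plan is to mimic the proof of Proposition \ref{Lem-Key} almost verbatim, exploiting the fact that the $N$-action on $\widetilde{P_{k,m}}(N)$ is free. Setting $\widetilde{P}:=\widetilde{P_{k,m}}(N)$ and letting $\widetilde{P}^{(d)}$ denote its $d$-skeleton, and noting that $\widetilde{P}$ is finite-dimensional, I would argue by induction on $d$ that the inclusion induces an isomorphism
$$\iota_{\ast}:\lim_{l\to\infty} K_{\ast}\bigl(C^{\ast}_{L}(\widetilde{P}^{(d)}\times P_l(G), A)^{\Gamma}\bigr) \to \lim_{l\to\infty} K_{\ast}\bigl(C^{\ast}_{L,\widetilde{P}^{(d)}}(\widetilde{P}^{(d)}\times P_l(G), A)^{\Gamma}\bigr).$$

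For the base case $d=0$, the $0$-skeleton $\widetilde{P}^{(0)}=\{(1\cdot n,0,0,\ldots):n\in N\}$ is $N$-equivariantly identified with $N$ carrying the left-regular action. Reproducing the base case of Proposition \ref{Lem-Key} line for line, I would introduce the ``diagonal'' subalgebra consisting of elements with $u(t)_{(n,y),(n',y')}=0$ for $n\neq n'$, and use that the minimum gap in $N$ is positive to show that the diagonal inclusion induces a $K$-theory isomorphism (for both flavours of localization algebra). Combined with Lemma \ref{Lem-elementary} applied to the quasi-stable algebra $C^{\ast}(P_l(G),A)^G$, the base case reduces to showing that
$$e_{\ast}:\lim_{l} K_{\ast}\bigl(C^{\ast}_{L}(P_l(G), A)^G\bigr) \to \lim_{l} K_{\ast}\bigl(C^{\ast}(P_l(G), A)^G\bigr)$$
is a (rational) isomorphism, which is precisely the rational Baum--Connes conjecture for $G$ with coefficients in $A$.

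For the inductive step, I would use the same uniformly excisive $N$-cover $\{X_1, X_2\}$ of $\widetilde{P}^{(d)}$ as in Proposition \ref{Lem-Key}: $X_1$ is the union of small closed balls around the centres $c(\Delta)$ of the top $d$-simplices, and $X_2$ is the complement of slightly smaller open balls. Then $X_2$ and $X_1\cap X_2$ are strongly Lipschitz $N$-homotopy equivalent to $\widetilde{P}^{(d-1)}$ and to a disjoint union of $d$-simplex boundaries respectively, so Lemma \ref{Lem-Liphtp} and Lemma \ref{Lem-Liphomotopy-localong} together with the induction hypothesis handle these two pieces. The crucial new input, compared to Proposition \ref{Lem-Key}, is that since the $N$-action on $\widetilde{P_{k,m}}(N)$ is free, each centre $c(\Delta)$ has trivial isotropy; hence $X_1$ is $N$-equivariantly a disjoint union of free $N$-orbits, and the argument from the base case applies to $X_1$ using only the (rational) Baum--Connes conjecture for $G$ with coefficients in $A$, rather than requiring Baum--Connes for $F\times G$ as was needed in Proposition \ref{Lem-Key}. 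Applying Lemma \ref{Lem-equiMV}, Lemma \ref{Lem-MV-localong}, and the five lemma then completes the induction.

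The only subtlety I anticipate is tracking rationality carefully: since the hypothesis is only rational Baum--Connes for $G$, the isomorphism $\iota_{\ast}$ one actually obtains is an isomorphism only after tensoring with $\mathbb{Q}$, which is what is needed for the application in Theorem \ref{second-main-thm-intro}. Tensoring with $\mathbb{Q}$ is exact and commutes with direct limits and the five-lemma argument, so the rational isomorphism propagates through the induction without additional complication, and no machinery beyond what was used in Proposition \ref{Lem-Key} is required.
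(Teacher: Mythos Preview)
Your proposal is correct and follows essentially the same approach as the paper, which simply states that the result holds ``by a similar proof of Proposition~\ref{Lem-Key}'' since the $N$-action on $\widetilde{P_{k,m}}(N)$ is free. Your elaboration of how freeness replaces the need for the $F\times G$ hypothesis by trivializing the isotropy at simplex centres, and your remark that the conclusion is a rational isomorphism (which is all that is needed downstream), are exactly the right points.
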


Combing Lemma \ref{Lem-torsion-to-free}, \ref{Lem-raNov-product} with Proposition \ref{Prop-localong-rational}, \ref{Prop-prod-key}, we obtain the following theorem.

\begin{theorem}\label{Thm-rational-Novikov-prod}
	Let $N$ and $G$ be two countable discrete groups. Assume that $A$ is an $(N\times G)$-$C^{\ast}$-algebra satisfying the K\"unneth formula. If
	\begin{enumerate}
		\item $N$ satisfies the rational analytic Novikov conjecture with coefficients in $A\rtimes_{r} G$, and;
		\item  $G$ satisfies the rational Baum--Connes conjecture with coefficients in $A$. 
	\end{enumerate}
	Then $N\times G$ satisfies the rational analytic Novikov conjecture with coefficients in $A$.
\end{theorem}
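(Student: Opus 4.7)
The plan is to chain together the four preceding results via a short diagram chase; no new technical input is needed beyond them. By Lemma~\ref{Lem-raNov-product}, proving RANC with coefficients $A$ for $N\times G$ reduces to verifying that the composition
$$\mu_{\ast}=e_{\ast}\circ\lambda_{\ast}\colon \lim_{k,m\to\infty} K_{\ast}\bigl(C^{\ast}_{L}(\widetilde{P_{k,m}}(N)\times\widetilde{P_{k,m}}(G),A)^{N\times G}\bigr) \longrightarrow K_{\ast}\bigl(A\rtimes_{r}(N\times G)\bigr)$$
is rationally injective.

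First, I would invoke Lemma~\ref{Lem-torsion-to-free}, which relies on the K\"unneth hypothesis on $A$, to deduce that $\lambda_{\ast}$, replacing the $G$-factor $\widetilde{P_{k,m}}(G)$ by $P_{k}(G)$, is rationally injective. It therefore suffices to prove that the evaluation-at-zero map
$$e_{\ast}\colon \lim_{k,m\to\infty} K_{\ast}\bigl(C^{\ast}_{L}(\widetilde{P_{k,m}}(N)\times P_{k}(G),A)^{N\times G}\bigr) \longrightarrow K_{\ast}\bigl(A\rtimes_{r}(N\times G)\bigr)$$
is rationally injective.

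For this, I would factor through the localization algebra along $\widetilde{P_{k,m}}(N)$. The canonical inclusion $\iota\colon C^{\ast}_{L}(\widetilde{P_{k,m}}(N)\times P_{l}(G),A)^{N\times G}\hookrightarrow C^{\ast}_{L,\widetilde{P_{k,m}}(N)}(\widetilde{P_{k,m}}(N)\times P_{l}(G),A)^{N\times G}$ (the propagation condition of the former is strictly stronger than that of the latter) fits into a commutative triangle with the two evaluation-at-zero maps landing in $K_{\ast}(A\rtimes_{r}(N\times G))$. By Proposition~\ref{Prop-prod-key}, applied rationally using that $G$ satisfies rational BCC with coefficients in $A$, the induced map $\iota_{\ast}$ is a rational isomorphism after $\lim_{l\to\infty}$; and by Proposition~\ref{Prop-localong-rational}, using that $N$ satisfies RANC with coefficients in $A\rtimes_{r}G$, the bottom edge of the triangle, out of the localization algebra along $\widetilde{P_{k,m}}(N)$, is rationally injective for every $l$. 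Together these imply the top edge is rationally injective; precomposing with $\lambda_{\ast}$ and taking the limit in $k,m$ finishes the proof.

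The only delicate point is the interplay of the three directed limits in the argument: in $k$ and $m$ for the Milnor-Rips complex on $N$, and in the $G$-Rips index $l$ that intervenes in Proposition~\ref{Prop-prod-key} and Proposition~\ref{Prop-localong-rational}. Since all constructions are functorial in each index and filtered colimits commute with each other and with $-\otimes\Q$, identifying $\lim_{k,m}$ of $K_{\ast}(\cdots\widetilde{P_{k,m}}(N)\times P_{k}(G)\cdots)$ along the diagonal with the joint colimit $\lim_{k,m,l}$ in which $l$ varies independently causes no genuine obstacle; this is the one bookkeeping step that requires care, but not an essential difficulty.
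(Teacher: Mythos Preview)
Your proposal is correct and follows exactly the paper's approach: the paper's proof consists of the single sentence ``Combining Lemma~\ref{Lem-torsion-to-free}, \ref{Lem-raNov-product} with Proposition~\ref{Prop-localong-rational}, \ref{Prop-prod-key}, we obtain the following theorem,'' and you have spelled out precisely this combination. Your extra care about the diagonal-versus-joint colimit in $(k,m,l)$ and about Proposition~\ref{Prop-prod-key} yielding only a \emph{rational} isomorphism under the rational BCC hypothesis is well placed and, if anything, slightly more precise than the paper's terse treatment.
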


Finally, along the strategy of Subsection \ref{Subsection-Green}, we consider the rational analytic Novikov conjecture with coefficients for group extensions. Let us first relate the rational analytic Novikov conjecture with coefficients for groups and for their subgroups.

\begin{lemma}\label{Lem-Green-impri-free}
	Let $\Gamma$ be a countable discrete group, $N$ be a subgroup of $\Gamma$ and $B$ be an $N$-$C^{\ast}$-algebra satisfying the K\"unneth formula. If $\Gamma$ satisfies the rational analytic Novikov conjecture with coefficients in $\Ind^{\Gamma}_{N}B$, then $N$ satisfies the rational analytic Novikov conjecture with coefficients in $B$.
\end{lemma}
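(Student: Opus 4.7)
The plan is to establish a Milnor--Rips analogue of Proposition~\ref{Prop-Green-impri} and combine it with the factorization $\mu_{\ast}=e_{\ast}\circ\lambda_{\ast}$ from Definition~\ref{Def-MK-assembly}. Specifically, I want a commutative diagram of two rows linked by three vertical arrows: the top row computes $\mu^{N}_{\ast}$ for $N$ with coefficients in $B$, while the bottom row computes $\mu^{\Gamma}_{\ast}$ for $\Gamma$ with coefficients in $\Ind^{\Gamma}_{N}B$; each row is the composition of the $\lambda_{\ast}$ arrow from Milnor--Rips to Rips followed by the evaluation at zero map $e_{\ast}$. The rightmost vertical arrow is the isomorphism $K_{\ast}(B\rtimes_{r}N)\cong K_{\ast}(\Ind^{\Gamma}_{N}B\rtimes_{r}\Gamma)$ coming from Green's imprimitivity theorem \cite[Theorem~17]{Green-78}, and the middle vertical arrow is the isomorphism $\lim_{k}K_{\ast}(C^{\ast}_{L}(P_{k}(N),B)^{N})\cong \lim_{k}K_{\ast}(C^{\ast}_{L}(P_{k}(\Gamma),\Ind^{\Gamma}_{N}B)^{\Gamma})$ supplied by Proposition~\ref{Prop-Green-impri}; together these give the commutative right-hand square.

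The key step is to construct the leftmost vertical arrow $\widetilde{J}:\lim_{k,m}K_{\ast}(C^{\ast}_{L}(\widetilde{P_{k,m}}(N),B)^{N})\to\lim_{k,m}K_{\ast}(C^{\ast}_{L}(\widetilde{P_{k,m}}(\Gamma),\Ind^{\Gamma}_{N}B)^{\Gamma})$ and to verify that it is an isomorphism compatible with $\lambda_{\ast}$. The construction imitates the argument underlying Proposition~\ref{Prop-Green-impri}, which descends from \cite[Proposition~2.3]{CE-Permanence-BC}: given an $N$-equivariant, locally compact, finite-propagation operator on $\widetilde{P_{k,m}}(N)$ with values in $\mathcal{K}\otimes B$, one extends it $\Gamma$-equivariantly to the induced space $\Gamma\times_{N}\widetilde{P_{k,m}}(N)$ with values in $\Ind^{\Gamma}_{N}B$. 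This induced $\Gamma$-space is coarsely $\Gamma$-equivalent (and indeed strongly Lipschitz $\Gamma$-homotopy equivalent) to $\widetilde{P_{k,m}}(\Gamma)$, since both are models for $E\Gamma$ equipped with a free, proper, cocompact $\Gamma$-action. Crucially, the proof of Proposition~\ref{Prop-Green-impri} uses nothing about the Rips complex beyond the properness of the $N$-action and the stability of propagation under induction, so the same formal argument applies verbatim in the Milnor--Rips case. Naturality of the induction in the space variable, applied to the $\Gamma$-equivariant map $\lambda$ of~(\ref{eq-twoRips}), makes the leftmost square commute. With all this in place, rational injectivity of the bottom composition $\mu^{\Gamma}_{\ast}$ combined with a diagram chase on $-\otimes_{\Z}\Q$ yields rational injectivity of the top composition $\mu^{N}_{\ast}$, which is the rational analytic Novikov conjecture for $N$ with coefficients in $B$.

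The main obstacle is the careful verification that $\widetilde{J}$ is well-defined and is an isomorphism at the level of equivariant localization algebras of Milnor--Rips complexes. One must check that the induction procedure preserves uniform continuity in $t\in[0,\infty)$, local compactness, and the asymptotic vanishing of propagation, and that the resulting $\ast$-homomorphism is compatible with the coarse $\Gamma$-equivalence between $\Gamma\times_{N}\widetilde{P_{k,m}}(N)$ and $\widetilde{P_{k,m}}(\Gamma)$. This is technically more delicate for Milnor--Rips complexes than for Rips complexes because the Milnor--Rips metric is built by gluing simplices along $N$-orbits in a potentially infinite-dimensional fashion, so one must verify that propagation bounds measured in the Milnor--Rips metric on $\widetilde{P_{k,m}}(N)$ translate to compatible bounds on $\widetilde{P_{k,m}}(\Gamma)$. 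The K\"unneth hypothesis on $B$ is carried from the statement of Theorem~\ref{second-main-thm-intro} for downstream compatibility and is not strictly needed for the construction of $\widetilde{J}$ itself.
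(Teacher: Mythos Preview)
Your approach diverges from the paper's. The paper sets up the same $2\times 3$ diagram but does \emph{not} argue that the leftmost vertical $\widetilde J$ is an isomorphism; it only needs this map to exist and make the left square commute. Instead, the paper invokes Lemma~\ref{Lem-torsion-to-free} (with one factor trivial) to show that the top-row $\lambda_\ast^N$ is rationally injective---this is exactly where the K\"unneth hypothesis on $B$ is used. The chase then runs: if $\mu_\ast^N(x)=0$ rationally, push through the diagram to obtain $\mu_\ast^\Gamma(\Ind^\Gamma_N x)=0$, hence $\Ind^\Gamma_N x=0$; the left square and the middle isomorphism then give $\lambda_\ast^N(x)=0$, and rational injectivity of $\lambda_\ast^N$ yields $x=0$. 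So contrary to your last remark, the K\"unneth hypothesis is genuinely used in the paper's argument.

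Your route would make the chase trivial if $\widetilde J$ were an isomorphism, and in fact it is---but not for the reason you give. The induced space $\Gamma\times_N\widetilde{P_{k,m}}(N)$ is \emph{not} strongly Lipschitz $\Gamma$-homotopy equivalent to $\widetilde{P_{k,m}}(\Gamma)$, and its colimit $\Gamma\times_N EN$ is not a model for $E\Gamma$: it has the homotopy type of the discrete set $\Gamma/N$. Concretely, take $N=\{e\}$, $\Gamma=\Z$, trivial coefficients: the $\Gamma$-equivariant $K$-homology of $\Gamma\times_N\{\mathrm{pt}\}=\Z$ is $K_\ast(\mathrm{pt})$, while that of $\widetilde{P_{k,m}}(\Z)$ limits to $K_\ast(S^1)$, and these differ in degree $1$. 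The correct reason $\widetilde J$ is an isomorphism is specific to the coefficient $\Ind^\Gamma_N B$: one combines the restriction isomorphism $KK^\Gamma(C_0(Y),\Ind^\Gamma_N B)\cong KK^N(C_0(Y|_N),B)$ with the fact that $E\Gamma|_N$ is a model for $EN$. That is indeed the mechanism behind \cite[Proposition~2.3]{CE-Permanence-BC}, so your instinct that ``the same formal argument applies verbatim'' is right---but it is an argument about these particular coefficients, not about homotopy types of the underlying spaces. If you replace the ``models for $E\Gamma$'' claim by this argument, your approach goes through and actually removes the K\"unneth hypothesis from this lemma.
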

\begin{proof}
	By Proposition \ref{Prop-Green-impri}, we have the following commutative diagram as $k, m\rightarrow \infty$:
	$$\xymatrix{
		K_{\ast} \left( C^{\ast}_{L}(\widetilde{P_{k, m}}(N), B)^{N}\right) \ar[r]^{\:\:\:\:\lambda_{\ast}} \ar[d]_{\Ind^{\Gamma}_{N}}  &
		K_{\ast}\left( C^{\ast}_{L}(P_{k}(N), B)^{N}\right) \ar[r]^{\:\:\:\:\:\:\:\:\:\:\:\:e_{\ast}} \ar[d]_{\Ind^{\Gamma}_{N}}^{\cong} & 
		K_{\ast}\left(B\rtimes_{r} N\right) \ar[d]^{\cong}_{\Ind^{\Gamma}_{N}}
		\\
		K_{\ast}\left( C^{\ast}_{L}(\widetilde{P_{k, m}}(\Gamma), \Ind^{\Gamma}_{N}B)^{\Gamma}\right) \ar[r]^{\:\:\:\:\lambda_{\ast}}  &
		K_{\ast}\left( C^{\ast}_{L}(P_{k}(\Gamma), \Ind^{\Gamma}_{N}B)^{\Gamma}\right) \ar[r]^{\:\:\:\:\:\:\:\:\:\:\:\:e_{\ast}} &
		K_{\ast}\left(\Ind^{\Gamma}_{N}B\rtimes_{r} \Gamma\right),
	}$$
	where $\Ind^{\Gamma}_{N}$ is the induction homomorphism (cf. \cite[Section 2]{CE-Permanence-BC}). Moreover, the homomorphism $\lambda_{\ast}$ in the top row is rationally injective by Lemma \ref{Lem-torsion-to-free}. Thus, by tracing along the above commutative diagram, we complete the proof.
\end{proof}

For a group extension:
$$1\rightarrow N \rightarrow \Gamma \rightarrow \Gamma/N \rightarrow 1.$$
The group $\Gamma$ can be seen as a subgroup of $\Gamma \times \Gamma/N$ by the following embedding:
$$\Gamma \rightarrow \Gamma \times \Gamma/N, \: \gamma \mapsto (\gamma, [\gamma]).$$
In addition, we have $\Ind^{\Gamma\times \Gamma/N}_{\Gamma} A=C_0(\Gamma/N, A)$ for any $\Gamma$-$C^{\ast}$-algebra $A$. Thus, by Lemma \ref{Lem-Green-impri-free}, we have the following lemma.

\begin{lemma}\label{Lem-BC-sub-group-free}
	Let $A$ be a $\Gamma$-$C^{\ast}$-algebra satisfying the K\"unneth formula. If $\Gamma \times \Gamma/N$ satisfies the rational analytic Novikov conjecture with coefficients in $C_0(\Gamma/N, A)$, then $\Gamma$ satisfies the rational analytic Novikov conjecture with coefficients in $A$;
\end{lemma}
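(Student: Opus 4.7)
The plan is to observe that this lemma is an immediate specialization of Lemma \ref{Lem-Green-impri-free}. Consider the embedding $\iota: \Gamma \hookrightarrow \Gamma \times \Gamma/N,\ \gamma \mapsto (\gamma, [\gamma])$, which realizes $\Gamma$ as a subgroup of $\Gamma \times \Gamma/N$. A $\Gamma$-$C^{\ast}$-algebra $A$ becomes a coefficient algebra for this subgroup, and I would first verify that the induced algebra $\Ind^{\Gamma \times \Gamma/N}_{\iota(\Gamma)} A$ is $(\Gamma \times \Gamma/N)$-equivariantly isomorphic to $C_0(\Gamma/N, A)$ equipped with the action $\beta'$ from (\ref{eq-product-action}); this is the same identification (by essentially the same map $f \mapsto (\gamma_2 N \mapsto \alpha_{\gamma_2}(f(\gamma_2,[e])))$) that was used before Lemma \ref{Lem-BC-N-Gamma}, so nothing new is required.

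With this identification in hand, apply Lemma \ref{Lem-Green-impri-free} with ambient group $\Gamma \times \Gamma/N$, subgroup $\iota(\Gamma) \cong \Gamma$, and coefficient algebra $B = A$. The K\"unneth hypothesis of Lemma \ref{Lem-Green-impri-free} is satisfied by assumption on $A$, and the rational injectivity hypothesis of the Mishchenko--Kasparov assembly map for $\Gamma \times \Gamma/N$ with coefficients in $\Ind^{\Gamma \times \Gamma/N}_{\iota(\Gamma)} A \cong C_0(\Gamma/N, A)$ is precisely what the lemma assumes. The conclusion of Lemma \ref{Lem-Green-impri-free} then gives rational injectivity of $\mu_{\ast}$ for $\Gamma$ with coefficients in $A$, which is the desired statement. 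No new ideas are needed; the only mild task is checking the equivariant identification of the induced algebra, but this matches an identification already verified earlier in the paper and is therefore not a genuine obstacle.
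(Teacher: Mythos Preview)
Your proposal is correct and follows essentially the same approach as the paper: embed $\Gamma$ diagonally in $\Gamma\times\Gamma/N$, identify $\Ind^{\Gamma\times\Gamma/N}_{\Gamma}A$ with $C_0(\Gamma/N,A)$ (the paper states this identification explicitly just before the lemma), and then invoke Lemma~\ref{Lem-Green-impri-free}. The only cosmetic difference is that the relevant identification was already set up before Lemma~\ref{Lem-BC-sub-group} (same embedding, same induced algebra) rather than before Lemma~\ref{Lem-BC-N-Gamma}.
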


Moreover, by applying Theorem \ref{Thm-rational-Novikov-prod} to the $(\Gamma \times G)$-$C^{\ast}$-algebra $C_0(G, A)$, we have the following lemma.

\begin{lemma}\label{Lem-BC-quotient-free}
	Let $A$ be a $\Gamma$-$C^{\ast}$-algebra satisfying the K\"unneth formula. If the following two conditions hold:
	\begin{enumerate}
		\item $\Gamma$ satisfies the rational Baum--Connes conjecture with coefficients in $C_0(\Gamma/N, A)$;
		\item $\Gamma/N$ satisfies the rational analytic Novikov conjecture with coefficients in $C_0(\Gamma/N, A) \rtimes_r \Gamma$.
	\end{enumerate}
	Then $\Gamma \times \Gamma/N$ satisfies the rational analytic Novikov conjecture with coefficients in $C_0(\Gamma/N, A)$.
\end{lemma}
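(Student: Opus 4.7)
The plan is to invoke Theorem~\ref{Thm-rational-Novikov-prod} directly, with the two factors taken to be $\Gamma/N$ (playing the role of $N$) and $\Gamma$ (playing the role of $G$), and with the coefficient $(\Gamma/N \times \Gamma)$-$C^{\ast}$-algebra $C_0(\Gamma/N, A)$ carrying the product action $\beta'$ defined in equation~\eqref{eq-product-action}. Under this substitution, hypothesis~(1) of Theorem~\ref{Thm-rational-Novikov-prod} asks that $\Gamma/N$ satisfy RANC with coefficients in $C_0(\Gamma/N, A) \rtimes_r \Gamma$, which is exactly condition~(2) of the present lemma; hypothesis~(2) asks that $\Gamma$ satisfy the rational Baum--Connes conjecture with coefficients in $C_0(\Gamma/N, A)$, which is exactly condition~(1) of the present lemma. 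The conclusion delivered by Theorem~\ref{Thm-rational-Novikov-prod} is then that $\Gamma/N \times \Gamma$ satisfies RANC with coefficients in $C_0(\Gamma/N, A)$, matching the statement to be proved up to the irrelevant reordering of the direct factors.

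The only piece not accounted for by pure renaming is the K\"unneth hypothesis of Theorem~\ref{Thm-rational-Novikov-prod}, which here demands that the coefficient algebra $C_0(\Gamma/N, A)$ satisfy the K\"unneth formula. Since $\Gamma/N$ is a countable discrete group, one has a natural isomorphism $C_0(\Gamma/N, A) \cong C_0(\Gamma/N) \otimes_{\min} A$, and $C_0(\Gamma/N)$ is a $c_0$-direct sum of copies of $\C$, hence lies in the bootstrap class of~\cite{Schochet-Kunneth}. More concretely, the Künneth short exact sequence is additive on each of its three terms under $c_0$-direct sums in the first variable (both $K$-theory and $\mathrm{Tor}(-, K_{\ast}(B))$ commute with such sums, and the minimal tensor product distributes over $c_0$-sums), so if $A$ satisfies the K\"unneth formula then so does every $c_0$-sum of copies of $A$, and in particular $C_0(\Gamma/N, A)$.

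With this verification in place, a direct appeal to Theorem~\ref{Thm-rational-Novikov-prod} delivers the lemma. No substantive obstacle arises: essentially all the analytic content has already been absorbed in the proof of that theorem, and what remains is the bookkeeping identification of the two pairs of hypotheses together with the routine K\"unneth permanence check for $C_0(\Gamma/N, A)$, which is the only step that requires invoking an external fact.
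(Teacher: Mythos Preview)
Your proposal is correct and follows exactly the paper's approach: the paper derives the lemma in one line by ``applying Theorem~\ref{Thm-rational-Novikov-prod} to the $(\Gamma \times \Gamma/N)$-$C^{\ast}$-algebra $C_0(\Gamma/N, A)$,'' which is precisely the substitution you spell out. Your explicit verification that $C_0(\Gamma/N, A)$ inherits the K\"unneth formula from $A$ is a detail the paper leaves implicit, and your argument for it is sound.
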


Then by applying Proposition \ref{Prop-Green-impri} to $\Gamma$ with a subgroup $N$, we get the following lemma.

\begin{lemma}\label{Lem-BC-N-Gamma-free}
	 Let $A$ be a $\Gamma$-$C^{\ast}$-algebra. Then the following two statements are equivalent:
	\begin{enumerate}
		\item $N$ satisfies the rational Baum--Connes conjecture with coefficients in $A$;
		\item $\Gamma$ satisfies the rational Baum--Connes conjecture with coefficients in $C_0(\Gamma/N, A)$.
	\end{enumerate}
\end{lemma}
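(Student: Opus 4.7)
The plan is to deduce this lemma as a direct consequence of Proposition \ref{Prop-Green-impri} applied to the subgroup $N$ of $\Gamma$, with the coefficient $N$-$C^{\ast}$-algebra taken to be $A$ itself (viewed via the restriction of the $\Gamma$-action to $N$). The key point is the identification $\Ind^{\Gamma}_{N} A \cong C_{0}(\Gamma/N, A)$ as $\Gamma$-$C^{\ast}$-algebras, which is precisely the map $\varrho$ recalled just before Proposition \ref{Prop-Green-impri}; here the $\Gamma$-action on $C_{0}(\Gamma/N, A)$ is the natural one where $\Gamma$ acts both by translation on the base $\Gamma/N$ and via $\alpha$ on the fibres (as described by the formula $\beta_{\gamma}(f)(\gamma' N) = \alpha_{\gamma}(f(\gamma^{-1}\gamma' N))$).

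With this identification in hand, I would invoke Proposition \ref{Prop-Green-impri} with $B = A$ (as an $N$-$C^{\ast}$-algebra) to obtain the commutative square
\[
\xymatrix{
\lim_{k\to\infty} K_{\ast}\!\left(C^{\ast}_{L}(P_{k}(N), A)^{N}\right) \ar[r]^{e_{\ast}} \ar[d]_{\cong} &
\lim_{k\to\infty} K_{\ast}\!\left(C^{\ast}(P_{k}(N), A)^{N}\right) \ar[d]^{\cong} \\
\lim_{k\to\infty} K_{\ast}\!\left(C^{\ast}_{L}(P_{k}(\Gamma), C_{0}(\Gamma/N, A))^{\Gamma}\right) \ar[r]_{e_{\ast}} &
\lim_{k\to\infty} K_{\ast}\!\left(C^{\ast}(P_{k}(\Gamma), C_{0}(\Gamma/N, A))^{\Gamma}\right),
}
\]
in which both vertical arrows are honest isomorphisms (coming from Green's imprimitivity theorem). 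The top row is, by Definition \ref{Def-BC-assembly}, the Baum--Connes assembly map with coefficients in $A$ for $N$, and the bottom row is the Baum--Connes assembly map with coefficients in $C_{0}(\Gamma/N, A)$ for $\Gamma$.

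Once the square is in place, the equivalence is a purely formal consequence: a commutative square whose vertical maps are isomorphisms has a top arrow that is rationally isomorphic if and only if its bottom arrow is rationally isomorphic (tensor the whole diagram with $\Q$; the vertical maps remain isomorphisms, and the two-out-of-three property for isomorphisms in commutative squares applies). Hence $N$ satisfies QBCC with coefficients in $A$ if and only if $\Gamma$ satisfies QBCC with coefficients in $C_{0}(\Gamma/N, A)$.

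There is no real obstacle here beyond correctly matching conventions: one must verify that the $\Gamma$-$C^{\ast}$-algebra structure on $\Ind^{\Gamma}_{N} A$ provided by $\mathrm{Ind}\,\alpha$ is transported by $\varrho$ to the $\Gamma$-action $\beta$ on $C_{0}(\Gamma/N, A)$ that appears in the statement, which is immediate from the defining formulas. The bulk of the work was already done in establishing Proposition \ref{Prop-Green-impri}; this lemma is essentially a restatement of that proposition in the special case $B = A|_{N}$, combined with passage to rational coefficients.
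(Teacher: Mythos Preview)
Your proposal is correct and is exactly the paper's approach: the paper proves this lemma in one line by ``applying Proposition \ref{Prop-Green-impri} again to $\Gamma$ with a subgroup $N$,'' and you have simply spelled out the details of that application, including the identification $\Ind^{\Gamma}_{N} A \cong C_0(\Gamma/N, A)$ via $\varrho$ and the passage to rational coefficients.
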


Combing Lemma \ref{Lem-BC-sub-group-free}-\ref{Lem-BC-N-Gamma-free}, we obtain the second main theorem of this paper.

\begin{theorem}\label{second-main-thm}
	Let $1\rightarrow N \rightarrow \Gamma \rightarrow \Gamma/N \rightarrow 1$ be a group extension and $A$ be a $\Gamma$-$C^{\ast}$-algebra satisfying the K\"unneth formula. If the following two conditions hold:
	\begin{enumerate}
		\item the group $N$ satisfies the rational Baum--Connes conjecture with coefficients in $A$;
		\item the group $\Gamma/N$ satisfies the rational analytic Novikov conjecture with coefficients in $C_0(\Gamma/N, A) \rtimes_r \Gamma$.
	\end{enumerate}
	Then the group $\Gamma$ satisfies the rational analytic Novikov conjecture with coefficients in $A$.
\end{theorem}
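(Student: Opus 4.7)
My plan is to deduce the theorem by chaining together the three reduction lemmas that immediately precede the statement, in the same spirit as the proof of Theorem \ref{first-main-thm} in Subsection \ref{Subsection-Green}: embed $\Gamma$ diagonally into $\Gamma\times \Gamma/N$, use Green's imprimitivity theorem to replace the conjectures for $\Gamma$ (with coefficients in $A$) by conjectures for $\Gamma\times \Gamma/N$ (with coefficients in the induced algebra $C_0(\Gamma/N, A)$), and then apply the direct-product version of the rational analytic Novikov conjecture (Theorem \ref{Thm-rational-Novikov-prod}).

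Concretely, I would proceed in three steps. First, I would invoke Lemma \ref{Lem-BC-N-Gamma-free} to convert hypothesis (1), namely that $N$ satisfies the rational Baum--Connes conjecture with coefficients in $A$, into the equivalent statement that $\Gamma$ satisfies the rational Baum--Connes conjecture with coefficients in $C_0(\Gamma/N, A)$ (with the diagonal action); this is essentially another application of the imprimitivity theorem along the inclusion $N\hookrightarrow \Gamma$. Second, I would combine this with hypothesis (2) and feed both into Lemma \ref{Lem-BC-quotient-free}: hypothesis (2) states that $\Gamma/N$ satisfies RANC with coefficients in $C_0(\Gamma/N, A)\rtimes_r \Gamma$, and together with the rational BCC for $\Gamma$ obtained in the first step, Lemma \ref{Lem-BC-quotient-free} (which itself is proved by specializing the direct-product theorem \ref{Thm-rational-Novikov-prod} to the pair $(\Gamma,\Gamma/N)$ with coefficients $C_0(\Gamma/N,A)$) yields that $\Gamma\times \Gamma/N$ satisfies RANC with coefficients in $C_0(\Gamma/N, A)$. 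Third, I would apply Lemma \ref{Lem-BC-sub-group-free}, the Green imprimitivity statement for the rational analytic assembly map along the diagonal embedding $\Gamma\hookrightarrow \Gamma\times \Gamma/N$ (which identifies $\Ind^{\Gamma\times \Gamma/N}_{\Gamma} A$ with $C_0(\Gamma/N, A)$), to descend from RANC for $\Gamma\times \Gamma/N$ back to RANC for $\Gamma$ with coefficients in $A$. This final step uses the K\"unneth hypothesis, propagated through $C_0(\Gamma/N,A)$, which is why the assumption on $A$ is imposed.

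Since all three reduction lemmas are already established in the excerpt, the body of the proof is a short diagram chase, and there is no genuine obstacle at this stage; the real content has been absorbed into Lemma \ref{Lem-torsion-to-free}, Theorem \ref{Thm-rational-Novikov-prod}, and the imprimitivity compatibilities in Proposition \ref{Prop-Green-impri} and Lemma \ref{Lem-Green-impri-free}. The one place where I would be careful is in verifying that the coefficient algebras line up correctly under the successive applications of induction: in particular, that the $\Gamma$-action on $C_0(\Gamma/N, A)$ used in the input to Lemma \ref{Lem-BC-N-Gamma-free} matches the $(\Gamma\times \Gamma/N)$-action given by \eqref{eq-product-action}, and that the K\"unneth property survives passage from $A$ to $C_0(\Gamma/N, A)$ as required by Lemma \ref{Lem-BC-quotient-free} and Lemma \ref{Lem-BC-sub-group-free}. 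Once these bookkeeping checks are made, the theorem follows by composing the three equivalences/implications.
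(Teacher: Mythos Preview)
Your proposal is correct and matches the paper's own proof exactly: the paper simply states that the theorem follows by combining Lemma \ref{Lem-BC-sub-group-free}, Lemma \ref{Lem-BC-quotient-free}, and Lemma \ref{Lem-BC-N-Gamma-free}, which is precisely the three-step reduction you outline. Your additional remark about checking that the K\"unneth property passes from $A$ to $C_0(\Gamma/N,A)$ is a legitimate bookkeeping point (implicitly used when Lemma \ref{Lem-BC-quotient-free} invokes Theorem \ref{Thm-rational-Novikov-prod}), and it holds because $C_0(\Gamma/N)$ is commutative and hence in the bootstrap class.
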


\begin{remark}
	The first condition of Theorem \ref{second-main-thm} is strictly weaker than the first condition of Theorem \ref{first-main-thm} for certain coefficients. (Please see R. Meyer's example in \cite{Meyer-BC-counterex} and Example \ref{EX-Meyer}). 
\end{remark}

When $A=\mathbb{C}$, we have the following corollary.
\begin{corollary}\label{main-cor2}
	If the following two conditions hold:
	\begin{enumerate}
		\item the group $N$ satisfies the rational Baum--Connes conjecture;
		\item the group $\Gamma/N$ satisfies the rational analytic Novikov conjecture with coefficients in $C_0(\Gamma/N) \rtimes_r \Gamma$.
	\end{enumerate}
	Then the group $\Gamma$ satisfies the rational analytic Novikov conjecture.
\end{corollary}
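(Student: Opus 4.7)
The plan is to deduce Corollary \ref{main-cor2} as an immediate specialization of Theorem \ref{second-main-thm} by setting $A = \mathbb{C}$ equipped with the trivial $\Gamma$-action. I would first verify the K\"unneth hypothesis: since $K_{0}(\mathbb{C}) = \mathbb{Z}$ is torsion-free and $K_{1}(\mathbb{C}) = 0$, the Tor term in the K\"unneth short exact sequence vanishes identically, so $K_{\ast}(\mathbb{C} \otimes B) \cong K_{\ast}(B)$ holds for every $C^{\ast}$-algebra $B$. Equivalently, $\mathbb{C}$ lies in the Bootstrap class as a type I algebra, so the K\"unneth formula holds by Schochet's theorem cited just before Lemma \ref{Lem-torsion-to-free}.

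Next I would unpack the two hypotheses of Theorem \ref{second-main-thm} in the case $A = \mathbb{C}$. Condition (1) of the theorem requires $N$ to satisfy the rational Baum--Connes conjecture with coefficients in $\mathbb{C}$, which is exactly the rational Baum--Connes conjecture for $N$; this is hypothesis (1) of the corollary. Since $C_{0}(\Gamma/N, \mathbb{C}) = C_{0}(\Gamma/N)$, condition (2) of the theorem asks that $\Gamma/N$ satisfies the rational analytic Novikov conjecture with coefficients in $C_{0}(\Gamma/N) \rtimes_{r} \Gamma$, which is precisely hypothesis (2) of the corollary.

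With the hypotheses matched up, Theorem \ref{second-main-thm} concludes that $\Gamma$ satisfies the rational analytic Novikov conjecture with coefficients in $\mathbb{C}$, and this is by definition the rational analytic Novikov conjecture for $\Gamma$. There is essentially no obstacle to overcome here: the corollary is a direct specialization, and the only substantive verification is the trivial one that $\mathbb{C}$ satisfies the K\"unneth formula. Therefore the proof consists of a single sentence invoking Theorem \ref{second-main-thm}.
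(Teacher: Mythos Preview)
Your proposal is correct and matches the paper's approach exactly: the paper simply states the corollary immediately after Theorem \ref{second-main-thm} with the line ``When $A=\mathbb{C}$, we have the following corollary,'' without further argument. Your extra verification that $\mathbb{C}$ satisfies the K\"unneth formula is correct and appropriately fills in the one detail the paper leaves implicit.
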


\section{Isometric semi-direct products and two-parametric equivariant localization algebras} \label{Sec-4}

In the above two sections, we always assume that the (rational) Baum--Connes conjecture with coefficients holds for subgroups of extensions. In the next two sections, we will weaken this assumption for a class of extensions, called isometric semi-direct products.

\subsection{Isometric semi-direct products}
\begin{definition}\label{Def-semi-pro}
	Let $G$ and $N$ be two countable discrete groups. Equipped $N$ with a $G$-action $\beta$. The \textit{semi-direct product} $N\rtimes G$ is defined to be a group with the set $N\times G$ and the multiplication given by 
	$$(n, g)(n',g')=(n\beta_{g}(n'), gg'),$$
	for any $n,n'\in N$ and $g,g'\in G$.
	
	If there exists a left $N$-invariant proper metric $d_{N}$ on $N$ such that the action $\beta$ is isometric, then we call $N\rtimes G$ an \textit{isometric semi-direct product}.
\end{definition}

\begin{remark}
    The direct products, restricted wreath products and semi-direct products by finite groups are examples of isometric semi-direct products.
\end{remark}

Let $d_{G}$ be a left $G$-invariant proper metric on $G$. Then we can define a proper metric $d$ on the isometric semi-direct product $N\rtimes G$ by  
  $$d\left( (n_1,g_1), (n_2,g_2)\right)=\max\{d_{N}(n_1,n_2), d_{G}(g_1,g_2)\}.$$
This metric is left $(N\rtimes G)$-invariant since the action $\beta$ of $G$ on $N$ is isometric. 

Define a left $(N\rtimes G)$-action on $P_{k}(N)\times P_{k}(G)$ by 
$$(n,g)(\sum_{i=1}^{l} t_{i} n_{i},  \sum_{j=1}^{m} s_{j} g_{j})=(\sum_{i=1}^{l} t_{i} (n\beta_{g}(n_{i})), \sum_{j=1}^{m} s_{j} (gg_{j})),$$
for any $n,n_1, \cdots, n_l\in N$ and $g, g_1, \cdots, g_m\in G$. Then this action is proper, isometric and co-compact.
Thus, by \cite[Lemma 4.17]{Zhang-CBCFC}, we have the following lemma which is similar to Lemma \ref{Lem-reduction-product}.

\begin{lemma}\label{Lem-Rip-pro}
	Let $N\rtimes G$ be an isometric semi-direct product. Then $P_{k}(N\rtimes G)$ is strongly Lipschitz $(N\rtimes G)$-homotopy equivalent to and coarsely $(N\rtimes G)$-equivalent to $P_{k}(N)\times P_{k}(G)$ for any $k\geq 0$.
\end{lemma}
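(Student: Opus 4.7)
The plan is to adapt the argument used for direct products in Lemma \ref{Lem-reduction-product} (equation (\ref{eq-Rip-prod})) to the semi-direct product setting, using exactly the same formulas for the structure maps. Define $\rho \colon P_k(N \rtimes G) \to P_k(N) \times P_k(G)$ and $\rho' \colon P_k(N) \times P_k(G) \to P_k(N \rtimes G)$ by
\[
\rho\left(\sum_{i,j} t_{ij}(n_i, g_j)\right) = \left(\sum_i\left(\sum_j t_{ij}\right) n_i,\ \sum_j\left(\sum_i t_{ij}\right) g_j\right)
\]
and $\rho'(\sum_i t_i n_i, \sum_j s_j g_j) = \sum_{i,j} t_i s_j (n_i, g_j)$. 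First I would verify these are well defined: because the chosen metric on $N\rtimes G$ is the max of $d_N$ and $d_G$, a finite subset $\{(n_i, g_j)\}$ has diameter at most $k$ in $N\rtimes G$ if and only if $\{n_i\}$ and $\{g_j\}$ each have diameter at most $k$ in $N$ and $G$, so the simplicial structures match. The key new point relative to the direct product case is $(N\rtimes G)$-equivariance: using the multiplication $(n,g)(n_i,g_j) = (n\beta_g(n_i), gg_j)$ and the defined action on $P_k(N)\times P_k(G)$, a direct computation shows that $\rho$ and $\rho'$ intertwine the actions; here the isometry of $\beta$ is exactly what ensures that $\{n\beta_g(n_i)\}$ has the same diameter as $\{n_i\}$, so the images remain in Rips complexes at the same scale.

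Next I would check the metric and homotopy-theoretic properties. On each simplex both maps are affine in the barycentric coordinates, hence Lipschitz for the spherical metrics with a constant depending only on $k$; the path metric formula then yields that $\rho$ and $\rho'$ are uniformly continuous coarse $(N\rtimes G)$-maps of bounded distortion. A direct expansion shows $\rho\circ\rho' = \mathrm{id}_{P_k(N)\times P_k(G)}$. For the other composition, I would use the straight-line homotopy $H(s,x) = (1-s)x + s\,\rho'\rho(x)$ inside $P_k(N\rtimes G)$. The support of $\rho'\rho(x)$ lies in $S\times T$, where $S = \{n_i : \sum_j t_{ij} > 0\}$ and $T = \{g_j : \sum_i t_{ij} > 0\}$; since $S$ and $T$ are projections of the support of a single simplex, each has diameter at most $k$, hence $S\times T$ has diameter at most $k$ in $N\rtimes G$ and thus spans a simplex of $P_k(N\rtimes G)$ containing both $x$ and $\rho'\rho(x)$. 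Therefore $H$ is well defined, continuous, equivariant, and Lipschitz, yielding the strong Lipschitz $(N\rtimes G)$-homotopy equivalence; coarse $(N\rtimes G)$-equivalence is then automatic because both $\rho$ and $\rho'$ are coarse maps and the two compositions are at uniformly bounded distance from the identities.

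The main obstacle I anticipate is precisely the combinatorial check in the last step: confirming that the straight-line path from $x$ to $\rho'\rho(x)$ remains inside $P_k(N\rtimes G)$ itself rather than in some larger Rips complex. This is the place where the choice of the max-metric on $N\rtimes G$, and hence the isometric hypothesis on $\beta$, is used in an essential way; without isometry the diameters of transported subsets could grow, making the formulas for $\rho$, $\rho'$, and $H$ fail to be well defined on the specified Rips scale. A secondary delicate point is to verify that the Lipschitz constants of $\rho$, $\rho'$, and $H$ can be chosen independently of the simplex and equivariantly under the $(N\rtimes G)$-action, so that the resulting homotopy is \emph{strongly} Lipschitz in the sense of Definition \ref{Def-strLip}.
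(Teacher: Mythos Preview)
Your proposal is correct and follows essentially the same approach as the paper: the paper simply cites \cite[Lemma 4.17]{Zhang-CBCFC} (the same reference used for the direct product case in Lemma \ref{Lem-reduction-product}), and your argument reconstructs precisely that construction, correctly isolating the one new ingredient, namely that the isometry of $\beta$ is what makes the $(N\rtimes G)$-action on $P_k(N)\times P_k(G)$ well defined and the maps $\rho,\rho'$ equivariant. The combinatorial check you flagged as the main obstacle is fine as written: since the max-metric on $N\rtimes G$ makes the diameter of $S\times T$ equal to $\max\{\mathrm{diam}(S),\mathrm{diam}(T)\}\le k$, the straight-line homotopy indeed stays in $P_k(N\rtimes G)$ at the same scale.
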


Thus, by Corollary \ref{Cor-coarseequi-K} and Lemma \ref{Lem-Liphtp}, we have the following corollary.

\begin{corollary}\label{Cor-red-conj}
	Let $N\rtimes G$ be an isometric semi-direct product and $A$ be an $(N\rtimes G)$-$C^{\ast}$-algebra. Then $N\rtimes G$ satisfies SNC, SAC and BCC with coefficients in $A$, respectively, if and only if the following evaluation at zero map 
	$$e_{\ast}: \lim_{k\rightarrow \infty} K_{\ast}\left(C^{\ast}_{L}(P_{k}(N)\times P_{k}(G), A)^{N\rtimes G}\right)\rightarrow \lim_{k\rightarrow \infty} K_{\ast}\left(C^{\ast}(P_{k}(N)\times P_{k}(G), A)^{N\rtimes G}\right)$$
	is injective, surjective and isomorphic, respectively.
\end{corollary}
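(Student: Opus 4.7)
The plan is to use Lemma \ref{Lem-Rip-pro} to translate each of SNC, SAC, and BCC for $N\rtimes G$ from the standard formulation (using $P_k(N\rtimes G)$) into the statement about $P_k(N)\times P_k(G)$, by combining it with Corollary \ref{Cor-coarseequi-K} and Lemma \ref{Lem-Liphtp}. The only thing to check carefully is that the induced $K$-theory isomorphisms on the Roe side and on the localization side come from a single map and hence commute with evaluation at zero.

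First, by Definition \ref{Def-BC-assembly} and Conjecture \ref{Conj}, the three conjectures for $N\rtimes G$ with coefficients in $A$ are precisely the statements that
\[
e_{\ast}: \lim_{k\rightarrow \infty} K_{\ast}\!\left(C^{\ast}_{L}(P_{k}(N\rtimes G), A)^{N\rtimes G}\right)\rightarrow \lim_{k\rightarrow \infty} K_{\ast}\!\left(C^{\ast}(P_{k}(N\rtimes G), A)^{N\rtimes G}\right)
\]
is injective, surjective, or bijective respectively. Next, by Lemma \ref{Lem-Rip-pro} there is a continuous coarse $(N\rtimes G)$-map $\rho\colon P_k(N\rtimes G)\to P_k(N)\times P_k(G)$ (together with a strongly Lipschitz $(N\rtimes G)$-homotopy inverse, as in the analogous construction $\rho$, $\rho'$ of equation \eqref{eq-Rip-prod}) which is simultaneously a coarse $(N\rtimes G)$-equivalence and a strongly Lipschitz $(N\rtimes G)$-homotopy equivalence.

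By Lemma \ref{Lem-coveringmap} and Lemma \ref{Lem-conticovmap}, $\rho$ induces $\ast$-homomorphisms on both equivariant Roe algebras and equivariant localization algebras, and since the evaluation-at-zero map is natural in the space, we obtain for each $k$ a commutative square
\[
\xymatrix{
K_{\ast}(C^{\ast}_{L}(P_k(N\rtimes G),A)^{N\rtimes G}) \ar[r]^{e_\ast} \ar[d]^{\rho_\ast} & K_{\ast}(C^{\ast}(P_k(N\rtimes G),A)^{N\rtimes G}) \ar[d]^{\rho_\ast} \\
K_{\ast}(C^{\ast}_{L}(P_k(N)\times P_k(G),A)^{N\rtimes G}) \ar[r]_{e_\ast} & K_{\ast}(C^{\ast}(P_k(N)\times P_k(G),A)^{N\rtimes G})
}
\]
compatible with the inclusions $i_{k,k'}$ as $k$ varies. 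Passing to the direct limit over $k$, the left vertical arrow is an isomorphism by Lemma \ref{Lem-Liphtp}, and the right vertical arrow is an isomorphism by Corollary \ref{Cor-coarseequi-K}. Hence the top $e_\ast$ is injective, surjective, or bijective if and only if the bottom $e_\ast$ has the same property, which is exactly the stated equivalence.

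There is no real obstacle here: the content sits entirely in Lemma \ref{Lem-Rip-pro}, and the rest is naturality. The only detail worth being careful about is that the strongly Lipschitz $(N\rtimes G)$-homotopy equivalence and the coarse $(N\rtimes G)$-equivalence are realized by the same pair of maps $\rho,\rho'$ (this is guaranteed by the cited Lemma 4.17 of \cite{Zhang-CBCFC}), so that the two isomorphisms of the vertical arrows are induced by homomorphisms of $C^\ast$-algebras intertwining evaluation at zero; otherwise one could not fit them into a commutative square.
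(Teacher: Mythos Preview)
Your proof is correct and follows exactly the approach indicated in the paper: invoke Lemma \ref{Lem-Rip-pro}, then apply Corollary \ref{Cor-coarseequi-K} on the Roe side and Lemma \ref{Lem-Liphtp} on the localization side. Your added remark that both vertical isomorphisms are induced by the same map $\rho$, ensuring commutativity with $e_\ast$, is a useful clarification of what the paper leaves implicit.
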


\subsection{Two-parametric equivariant localization algebras}

In this subsection, we introduce a new variation of the equivariant localization algebra and we will use it to reduce the Baum-Connes assembly map with coefficients for isometric semi-direct products in the next subsection.

Let $N\rtimes G$ be an isometric semi-direct product, $A$ be an $(N\rtimes G)$-$C^{\ast}$-algebra and $H_A$ be a covariantly represented Hilbert space of $A$. For any $k, l\geq 0$, let $Z_{k}(N)\times Z_{l}(G)$ be an $(N\rtimes G)$-invariant countable dense subset in $P_{k}(N)\times P_{l}(G)$. Let $H$ be a separable Hilbert space equipped with the trivial $(N\rtimes G)$-action.

For an operator $T\in C^{\ast}(P_{k}(N)\times P_{l}(G), A)^{N\rtimes G}$, recall that
$$\prop_{P_{k}(N)}(T)=\sup\{d(x,x'): \text{there exist $y,y'\in P_{l}(G)$, s.t. $((x,y),(x',y'))\in \supp(T)$}\},$$
and 
$$\prop_{P_{l}(G)}(T)=\sup\{d(y,y'): \text{there exist $x,x'\in P_{k}(N)$, s.t. $((x,y),(x',y'))\in \supp(T)$}\}.$$

\begin{definition}\label{Def-two-localg}
	The \textit{two-parametric equivariant localization algebra} with coefficients in $A$ of $P_{k}(N)\times P_{l}(G)$, denoted by $C^{\ast}_{LL}(P_{k}(N)\times P_{l}(G), A)^{N\rtimes G}$, is defined to be the norm closure of the $\ast$-algebra consisting of all bounded functions $u:[0,\infty)\times [0,\infty) \rightarrow C^{\ast}(P_{k}(N)\times P_{l}(G), A)^{N\rtimes G}$ such that
	 \begin{itemize}
	 	\item the family of functions $\{u(s, \cdot)\}_{s\geq 0}$ is uniformly equicontinuous;
	 	\item for each $t\geq 0$, the function $u(\cdot,t)$ is uniformly continuous;
	 	\item $\lim_{t\rightarrow \infty}\sup_{s\in [0,\infty)} \prop_{P_{l}(G)}(u(s,t))=0$;
	 	\item $\lim_{s\rightarrow \infty}\prop_{P_{k}(N)}(u(s,t))=0$ for each $t\geq 0$,
	 \end{itemize}
	in $\mathcal{L}(L^2([0,\infty)^2) \otimes \ell^2(Z_{k}(N)\times Z_{l}(G)) \otimes H_A \otimes \ell^2(N\times G) \otimes H)$. 
\end{definition}

Let $Y_1$, $Y_2$ be two $G$-invariant subspaces of $P_{l}(G)$ and $Z_{Y_1}$, $Z_{Y_2}$ be two $G$-invariant countable dense subsets of $Y_1$, $Y_2$, respectively. Let $f:Y_1\rightarrow Y_2$ be a uniformly continuous coarse $G$-map and 
$$w_f(R)=\sup\{d(f(y_1),f(y_2)): d(y_1,y_2)\leq R\}.$$ Let $\{\varepsilon_{m}\}_{m\in \N}$ be a decreasing sequence of positive numbers with $\lim_{m\rightarrow \infty}\varepsilon_m=0$. Then there exists a $2\pi$-Lipschitz $G$-equivariant isometry 
$$V_f: L^2([0,\infty))\otimes \ell^2(Z_{Y_1})\otimes \ell^2(G)\otimes H \rightarrow L^2([0,\infty))\otimes \ell^2(Z_{Y_2})\otimes \ell^2(G)\otimes H,$$ 
such that
$$\supp(V_f(t))\subseteq \{(y_1,y_2)\in Y_1\times Y_2: d(f(y_1), y_2)<\varepsilon_{m}\},$$
for any $t\in [m,m+1]$ (cf. \cite[Proposition 4.5.12]{WillettYu-Book} or \cite{Zhang-CBCFC}). Thus, $V_f$ induces the following homomorphism:
$$Ad_{f}: C^{\ast}_{LL}(P_{k}(N)\times Y_1, A)^{N\rtimes G} \rightarrow C^{\ast}_{LL}(P_{k}(N)\times Y_2, A)^{N\rtimes G},$$
defined by $$Ad_{f}(u)(s,t)=\left(I_{\ell^2(Z_k(N))\otimes \ell^2(N)\otimes H_A}\otimes V_f(t)\right)(u(s,t)\oplus 0)\left(I_{\ell^2(Z_k(N))\otimes \ell^2(N)\otimes H_A}\otimes V^{\ast}_f(t)\right).$$
Moreover, we have that 
$$\prop_{Y_2}(Ad_{f}(u)(s,t))\leq w_f(\prop_{Y_1}(u(s,t)))+2\varepsilon_{m},$$ $$\prop_{P_{k}(N)}(Ad_{f}(u)(s,t))=\prop_{P_{k}(N)}(u(s,t)),$$
for any $t\in [m,m+1]$ and $m\in \N$.

Before discussing the homological properties for $K$-theory of two-parametric equivariant localization algebras, we first recall an elementary lemma in the $K$-theory (please see \cite[Proposition 2.7.5]{WillettYu-Book}).

\begin{lemma}\label{isometryequivalent}
  Let $\psi: B\rightarrow C$ be a $\ast$-homomorphism between two $C^{\ast}$-algebras and $v$ be an isometry in the multiplier algebra of $C$. Then the map
    $$B\rightarrow C,\: b \mapsto v\psi(b)v^{\ast}$$
  induces the same map on $K$-theory as $\psi$.
\end{lemma}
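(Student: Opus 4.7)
The plan is the classical trick of enlarging to $2\times 2$ matrices and turning the isometry $v$ into a unitary. Write $\phi(b)=v\psi(b)v^{\ast}$, which is a $\ast$-homomorphism from $B$ to $C$ (since $v^{\ast}v=1$ in $M(C)$). My goal will be to show that, after composing with the top-left-corner inclusion $\iota\colon C\hookrightarrow M_2(C)$, the two maps $\iota\circ\phi$ and $\iota\circ\psi$ are inner-equivalent via a multiplier unitary; then passing to $K$-theory and using that $\iota$ induces an isomorphism will finish the proof.

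The concrete step is to define
\[
u=\begin{pmatrix} v & 1-vv^{\ast} \\ 0 & v^{\ast} \end{pmatrix}\in M_2(M(C)),
\]
and check directly, using $v^{\ast}v=1$ and the fact that $1-vv^{\ast}$ is a projection, that $u^{\ast}u=uu^{\ast}=1_{M_2(M(C))}$, so $u$ is a unitary in the multiplier algebra of $M_2(C)$. A straightforward matrix computation then gives
\[
u\bigl(\psi(b)\oplus 0\bigr)u^{\ast}=\phi(b)\oplus 0
\]
for every $b\in B$.

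From this identity, $\iota\circ\phi$ and $\iota\circ\psi$ are $\ast$-homomorphisms $B\to M_2(C)$ that agree up to conjugation by a unitary in $M(M_2(C))$. Since conjugation by a multiplier unitary induces the identity on $K_{\ast}$ (the unitary can be connected to $1$ through unitaries in $M_2$ of the unitization, or one can use the functoriality of $K$-theory under inner automorphisms of the multiplier algebra), we obtain $(\iota\circ\phi)_{\ast}=(\iota\circ\psi)_{\ast}$. Finally, $\iota_{\ast}\colon K_{\ast}(C)\to K_{\ast}(M_2(C))$ is an isomorphism (stability), so cancelling $\iota_{\ast}$ yields $\phi_{\ast}=\psi_{\ast}$.

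The only potentially delicate point is justifying that conjugation by the multiplier unitary $u$ acts trivially on $K$-theory. I would handle this by noting that $u$ lies in $M_2(M(C))$ and in $2\times 2$ matrices any unitary is homotopic to a diagonal unitary through unitaries in the multiplier algebra (using the standard rotation $R(s)=\bigl(\begin{smallmatrix}\cos(\pi s/2)&\sin(\pi s/2)\\-\sin(\pi s/2)&\cos(\pi s/2)\end{smallmatrix}\bigr)$ to swap entries), which is enough to make the induced automorphism on $K_{\ast}(M_2(C))$ trivial. Everything else is routine matrix algebra, so this equivalence-to-trivial step is the only place I would need to be careful.
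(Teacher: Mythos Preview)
Your proof is correct and is exactly the standard argument. The paper itself does not prove this lemma but merely cites \cite[Proposition 2.7.5]{WillettYu-Book}; your argument is essentially the one found there (and in most K-theory textbooks): enlarge to $M_2$, complete $v$ to the unitary $u=\bigl(\begin{smallmatrix} v & 1-vv^\ast\\ 0 & v^\ast\end{smallmatrix}\bigr)$, and use that inner automorphisms by multiplier unitaries act trivially on $K$-theory. Your only slightly loose sentence is ``in $2\times 2$ matrices any unitary is homotopic to a diagonal unitary''---this is not literally true in an arbitrary multiplier algebra---but the rotation trick you mention does give the correct statement, namely that $u\oplus u^\ast$ is path-connected to $1$ in $M_2(M_2(M(C)))$, which is all you need.
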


\begin{lemma}\label{Lem-Liphomotopy-twoloc}
	Let $Y_1$ and $Y_2$ be two $G$-invariant subspaces of $P_{l}(G)$. If $Y_1$ is strongly Lipschitz $G$-homotopy equivalent to $Y_2$, then $K_{\ast}\left(C^{\ast}_{LL}(P_{k}(N)\times Y_1, A)^{N\rtimes G}\right)$ is naturally isomorphic to $K_{\ast}\left(C^{\ast}_{LL}(P_{k}(N)\times Y_2, A)^{N\rtimes G}\right)$.
\end{lemma}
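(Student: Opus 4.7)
The plan is to mimic the proof of Lemma \ref{Lem-Liphtp} using the covariance construction $Ad_f$ just introduced for the two-parametric equivariant localization algebra. First, I would establish functoriality: for any uniformly continuous coarse $G$-map $f\colon Y_1 \to Y_2$, the assignment $u \mapsto V_f(u \oplus 0)V_f^{\ast}$ defines a $\ast$-homomorphism
$$Ad_f\colon C^{\ast}_{LL}(P_k(N)\times Y_1, A)^{N\rtimes G} \to C^{\ast}_{LL}(P_k(N)\times Y_2, A)^{N\rtimes G}.$$
The equivariance of $V_f$, the preservation of $\prop_{P_k(N)}$, and the estimate $\prop_{Y_2}(Ad_f(u)(s,t)) \le w_f(\prop_{Y_1}(u(s,t))) + 2\varepsilon_m$ recorded just before the lemma together ensure that both asymptotic conditions of Definition \ref{Def-two-localg} pass from $u$ to $Ad_f(u)$, so $Ad_f$ descends to a $K$-theory map $(Ad_f)_{\ast}$.

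Next, I would show that $(Ad_f)_{\ast}$ is independent of the particular choice of the isometry $V_f$ satisfying the required support condition. Given two admissible choices $V_f$ and $V_f'$, Lemma \ref{isometryequivalent} applied inside the multiplier algebra of the target two-parametric algebra will show that the induced $K$-theory maps coincide; the standard $2\times 2$ rotation trick lets one connect $V_f \oplus 0$ to $V_f' \oplus 0$ through a path of equivariant isometries whose supports are controlled by the union of those of $V_f$ and $V_f'$.

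The heart of the argument is homotopy invariance. Suppose $F\colon [0,1]\times Y_1 \to Y_2$ is a strongly Lipschitz $G$-homotopy from $f$ to $g$. For each $r \in [0,1]$, the map $F(r,\cdot)$ is an equivariant Lipschitz coarse map whose distortion is controlled uniformly in $r$ by the Lipschitz constant of $F$. I would build a continuous family $\{V_{F(r,\cdot)}\}_{r \in [0,1]}$ of $2\pi$-Lipschitz $G$-equivariant isometries, each satisfying $\supp(V_{F(r,\cdot)}(t)) \subseteq \{(y_1,y_2) : d(F(r,y_1), y_2) < \varepsilon_m\}$ for $t \in [m, m+1]$, using the uniform equicontinuity in $r$ coming from the strongly Lipschitz homotopy. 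Splicing these via a partition of $[0,1]$ whose images under $F$ have small overlap should give a homotopy of isometries in the multiplier algebra of the target two-parametric algebra connecting $V_f$ and $V_g$, and a final application of Lemma \ref{isometryequivalent} then yields $(Ad_f)_{\ast} = (Ad_g)_{\ast}$.

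With these tools in place the lemma follows in the usual way: if $f\colon Y_1 \to Y_2$ and $h\colon Y_2 \to Y_1$ realise the strongly Lipschitz $G$-homotopy equivalence, then functoriality combined with homotopy invariance gives $(Ad_h)_{\ast} \circ (Ad_f)_{\ast} = \mathrm{id}$ and $(Ad_f)_{\ast} \circ (Ad_h)_{\ast} = \mathrm{id}$, so $(Ad_f)_{\ast}$ is the desired isomorphism. The main obstacle I anticipate is the splicing step in homotopy invariance: the interpolating isometries must have supports that continuously follow $F$ in the $Y$-direction while the $P_k(N)$-direction propagation is left completely untouched, and the vanishing of $\prop_{Y}$ must remain uniform in $s$. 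Diagonalising the homotopy parameter into the $t$-variable, in the spirit of Yu's original localization-algebra arguments, should work, but verifying both asymptotic conditions of Definition \ref{Def-two-localg} simultaneously will require careful bookkeeping of the two scales.
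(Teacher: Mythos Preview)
Your overall strategy---establish that $Ad_f$ is functorial on $K$-theory and then prove homotopy invariance of $f\mapsto (Ad_f)_\ast$---matches the paper's. The gap is in how you propose to prove homotopy invariance. You want to build a norm-continuous family $r\mapsto V_{F(r,\cdot)}$ of isometries, splice them into a homotopy in the multiplier algebra, and invoke Lemma~\ref{isometryequivalent}. But the support condition $\supp(V_{F(r,\cdot)}(t))\subseteq\{(y_1,y_2):d(F(r,y_1),y_2)<\varepsilon_m\}$ with $\varepsilon_m\to 0$ forces the isometries for distinct values of $r$ to have essentially disjoint supports once $t$ is large, so $\|V_{F(r,\cdot)}-V_{F(r',\cdot)}\|$ does not tend to $0$ as $r'\to r$; no norm-continuous family exists. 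Discretising $[0,1]$ does not help either: to conclude $(Ad_{F(r_i,\cdot)})_\ast=(Ad_{F(r_{i+1},\cdot)})_\ast$ via your step~2 you would need $\sup_y d(F(r_i,y),F(r_{i+1},y))<\varepsilon_m$ for \emph{every} $m$, which forces the steps to be trivial. Lemma~\ref{isometryequivalent} alone cannot bridge this.

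The paper instead runs Yu's infinite-repetition (Eilenberg swindle) argument, which is what your closing remark about ``diagonalising the homotopy parameter into the $t$-variable'' gestures toward but does not describe. Concretely, one chooses a doubly-indexed grid $\{t_{i,j}\}\subseteq[0,1]$ with $t_{0,j}=0$, each column eventually reaching $1$, and both $d(F(t_{i,j},y),F(t_{i+1,j},y))$ and $d(F(t_{i,j},y),F(t_{i,j+1},y))$ bounded by $1/(j+1)$. For each $i$ one builds an isometry $V_i(t)$ by rotating between $V_{F(t_{i,j},\cdot)}$ and $V_{F(t_{i,j+1},\cdot)}$ on $t\in[j,j+1]$. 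Then, for a class $[u]\in K_1$, one forms the three unitaries
\[
a_1=\bigoplus_{i\ge 0}\big(V_i(u-I)V_i^\ast+I\big)W^\ast,\quad a_2=\bigoplus_{i\ge 0}\big(V_{i+1}(u-I)V_{i+1}^\ast+I\big)W^\ast,\quad a_3=I\oplus\bigoplus_{i\ge 1}\big(V_i(u-I)V_i^\ast+I\big)W^\ast,
\]
with $W=u\oplus I$. The column-adjacency estimate gives $[a_1]=[a_2]$; the unilateral shift and Lemma~\ref{isometryequivalent} give $[a_2]=[a_3]$; and $[a_1a_3^{-1}]$ computes to $[Ad_{hg}(u)\,W^\ast]$, forcing $(Ad_{hg})_\ast=\mathrm{id}$. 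The point is that the homotopy parameter is absorbed into the \emph{direct-sum index}~$i$, not into a continuous path of isometries, and the $j$-dependence of the grid is what lets the propagation estimates in Definition~\ref{Def-two-localg} survive.
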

\begin{proof}
		By the assumption, there exist two Lipschitz maps $g: Y_1\rightarrow Y_2$ and $h:Y_2\rightarrow Y_1$ such that $hg$ and $gh$ are strongly Lipschitz $G$-homotopic to $id_{Y_1}$ and $id_{Y_2}$, respectively. It is sufficient to prove that $Ad_{hg}$ induces the identity map on $K_{\ast}\left(C^{\ast}_{LL}(P_{k}(N)\times Y_1, A)^{N\rtimes G}\right)$ and similar to $Ad_{gh}$. Let $F(t,y)$ be a strongly Lipschitz $G$-homotopy with the Lipschitz constant $C$ such that $F(0,\cdot)=hg$ and $F(1,\cdot)=id_{Y_1}$. Then we can choose a sequence of non-negative numbers $\{t_{i,j}\}_{i,j\geq 0}$ in $[0,1]$ such that
	\begin{itemize}
		\item $t_{0, j}=0$ and $t_{i, j}\leq t_{i+1,j}$;
		\item for each $j$, there exists $N_j\geq 0$ such that $t_{i,j}=1$ for all $i\geq N_j$;
		\item $d(F(t_{i,j}, y), F(t_{i+1,j}, y))\leq 1/(j+1)$ and $d(F(t_{i,j}, y), F(t_{i,j+1}, y))\leq 1/(j+1)$ for all $y\in Y_1$.
	\end{itemize}
	Then there exists a sequence of $2\pi$-Lipschitz $G$-equivariant isometry $V_{F(t_{i,j},\cdot)}$ on $L^2([0,\infty))\otimes \ell^2(Z_{Y_1})\otimes \ell^2(G)\otimes H$ such that
	$$\supp(V_{F(t_{i,j},\cdot)}(t))\subseteq \{(y,y')\in Y_1\times Y_1: d(F(t_{i,j},y), y')<1/(1+i+j)\}$$
	for any $i,j\geq 0$ by \cite[Proposition 4.5.12]{WillettYu-Book}. 
	Define
	$$V_i(t)=R(t-j) (V_{F(t_{i,j},\cdot)}(t)\oplus V_{F(t_{i,j+1},\cdot)}(t)) R^{\ast}(t-j),\: t\in [j,j+1],$$
	where 
	$$R(t)=
	\begin{pmatrix}
		\cos(\pi t/2) & \sin(\pi t/2)\\
		-\sin(\pi t/2) & \cos(\pi t/2)
	\end{pmatrix}.$$
	Now we prove the lemma for $\ast=1$ and by a suspension argument, we can similarly prove it for $\ast=0$. For an element $[u]\in K_{1}(C^{\ast}_{LL}(P_{k}(N)\times Y_1, A)^{N\rtimes G})$, we assume $u\in (C^{\ast}_{LL}(P_{k}(N)\times Y_1, A)^{N\rtimes G})^{+}$. Let $W=u\oplus I$. Define the following three unitaries:
	\begin{equation*}
		\begin{split}
			&a_1=\bigoplus_{i\geq 0} ((I\otimes V_i(t))((u(s,t)-I)\oplus 0)(I\otimes V^{\ast}_i(t))+I)\cdot W^{\ast};\\
			&a_2=\bigoplus_{i\geq 0} ((I\otimes V_{i+1}(t))((u(s,t)-I)\oplus 0)(I\otimes V^{\ast}_{i+1}(t))+I)\cdot W^{\ast};\\
			&a_3=I\bigoplus_{i\geq 1} ((I\otimes V_i(t))((u(s,t)-I)\oplus 0)(I\otimes V^{\ast}_i(t))+I)\cdot W^{\ast}.
		\end{split}
	\end{equation*}
	Since $\prop_{Y_1}(a_i)\leq (C+1) \prop_{Y_1}(u(s,t))+1/(j+1)$ for $t\in [j,j+1]$ and $\prop_{P_{k}(N)}(a_i)=2\prop_{P_{k}(N)}(u(s,t))$, we have that $a_i\in M_2(C^{\ast}_{LL}(P_{k}(N)\times Y_1, A)^{N\rtimes G})^{+}$. Moreover, since $d(F(t_{i,j}, y), F(t_{i+1,j}, y))\leq 1/(j+1)$, we have that $[a_1]=[a_2]$ in $K_{1}(C^{\ast}_{LL}(P_{k}(N)\times Y_1, A)^{N\rtimes G})$. Let 
	$$V: H^{\infty}\rightarrow H^{\infty}, \: (h_1, h_2, \cdots)\mapsto (0, h_1, \cdots),$$ 
	then $I\otimes V$ is an isometric multiplier of $C^{\ast}_{LL}(P_{k}(N)\times Y_1, A)^{N\rtimes G}$ and we have that
	$$a_3=(I\otimes V)(a_2-I)(I\otimes V^{\ast})+I.$$ 
	thus $[a_2]=[a_3]$ in $K_{1}(C^{\ast}_{LL}(P_{k}(N)\times Y_1, A)^{N\rtimes G})$ by Lemma \ref{isometryequivalent}. Besides, we have that $$[a_1a^{-1}_3]=[(V_0(t)((u(t)-I)\oplus 0)V^{\ast}_0(t)+I)\cdot W^{\ast}\bigoplus_{i\geq 1}I]=[Ad_{hg}(u)W^{\ast}\bigoplus_{i\geq 1}I],$$
	which is equal to $[I]$. Thus $Ad_{hg, \ast}=id$ on $K_{1}(C^{\ast}_{LL}(P_{k}(N)\times Y_1, A)^{N\rtimes G})$.
\end{proof}

\begin{lemma}\label{Lem-MV-twoloc}
	Let $Y$ be a $G$-invariant subspace of $P_{l}(G)$. If $\{Y_1, Y_2\}$ is a uniformly excisive $G$-cover of $Y$, then the following Mayer--Vietoris six-term exact sequence holds for any $k\geq 0$
	$$\scriptsize\xymatrix{
		K_0(C^{\ast}_{LL}(P_{k}(N)\times Y_{1,2}, A)^{\Gamma}) \ar[r] & 
		\bigoplus_{i=1,2} K_0(C^{\ast}_{LL}(P_{k}(N)\times Y_{i}, A)^{\Gamma}) \ar[r] & 
		K_0(C^{\ast}_{LL}(P_{k}(N)\times Y, A)^{\Gamma})\ar[d] \\
		K_1(C^{\ast}_{LL}(P_{k}(N)\times Y, A)^{\Gamma})\ar[u] &
		\bigoplus_{i=1,2} K_1(C^{\ast}_{LL}(P_{k}(N)\times Y_{i}, A)^{\Gamma})\ar[l] &
		K_1(C^{\ast}_{LL}(P_{k}(N)\times Y_{1,2}, A)^{\Gamma}),\ar[l] 
	}$$
	where $\Gamma=N\rtimes G$ and $Y_{1,2}=Y_1\cap Y_2$.
\end{lemma}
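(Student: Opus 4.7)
The plan is to adapt the standard proof of the Mayer--Vietoris sequence for one-parametric equivariant localization algebras (as used in Lemma~\ref{Lem-equiMV}) to the two-parametric setting. The crucial observation is that the cover $\{Y_1, Y_2\}$ lives only in the $P_l(G)$-direction, so the parameter $s$ governing the $P_k(N)$-propagation can be carried along as an auxiliary variable that is undisturbed by the cutoff constructions.

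Write $\Gamma=N\rtimes G$, $Y_{1,2}=Y_1\cap Y_2$, and abbreviate $B_{\ast}=C^{\ast}_{LL}(P_k(N)\times \ast, A)^{\Gamma}$ for $\ast\in\{Y_1,Y_2,Y_{1,2},Y\}$. Let $\mathcal{S}\subseteq B_Y$ be the closure of $B_{Y_1}+B_{Y_2}$. By routine verification of $\Gamma$-invariance, local compactness, and the four controls in Definition~\ref{Def-two-localg}, one obtains a natural short exact sequence
$$0\longrightarrow B_{Y_{1,2}}\xrightarrow{u\mapsto(u,-u)} B_{Y_1}\oplus B_{Y_2}\xrightarrow{(u_1,u_2)\mapsto u_1+u_2}\mathcal{S}\longrightarrow 0,$$
whose induced six-term $K$-theory sequence has $K_{\ast}(\mathcal{S})$ in place of $K_{\ast}(B_Y)$. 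The lemma therefore reduces to proving that the inclusion $\iota:\mathcal{S}\hookrightarrow B_Y$ induces an isomorphism on $K$-theory.

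For the $K$-theory step, use the uniform excisiveness of $\{Y_1,Y_2\}$ to choose $\epsilon>0$ and a $G$-invariant Lipschitz partition of unity $\{\phi_1,\phi_2\}$ subordinate to $\{B_{\epsilon}(Y_1),B_{\epsilon}(Y_2)\}$; this is possible because the $\epsilon$-neighbourhoods are $G$-invariant by the third bullet of Definition~\ref{Def-unicover}. For $u\in B_Y$, condition (c) of Definition~\ref{Def-two-localg} gives a cutoff time $t_0$ with $\prop_{P_l(G)}(u(s,t))<\epsilon/2$ for all $s$ and all $t\geq t_0$. A standard linear reparametrization in $t$ (homotopic to the identity on $K$-theory) lets us assume $t_0=0$. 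Then we decompose
$$u(s,t)=\phi_1^{1/2}u(s,t)\phi_1^{1/2}+\phi_2^{1/2}u(s,t)\phi_2^{1/2}+R(s,t),$$
where $R(s,t)=\sum_i\phi_i^{1/2}[\phi_i^{1/2},u(s,t)]$ has norm controlled by $\mathrm{Lip}(\phi_i^{1/2})\cdot\prop_{P_l(G)}(u(s,t))$, hence tends to zero as $t\to\infty$. Each summand $\phi_i^{1/2}u(s,t)\phi_i^{1/2}$ is supported in $B_{2\epsilon}(Y_i)\times B_{2\epsilon}(Y_i)$, which by the uniform excisive structure and Lemma~\ref{Lem-Liphomotopy-twoloc} is identified on $K$-theory with $B_{Y_i}$. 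Combined with an Eilenberg-swindle-type absorption of $R$, this produces the desired inverse to $\iota_{\ast}$.

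The main obstacle is preserving all four conditions of Definition~\ref{Def-two-localg} simultaneously through the cutoff. Fortunately, because $\phi_i$ depends only on the $Y$-coordinate, multiplication by $\phi_i^{1/2}$ neither increases $\prop_{P_k(N)}$ nor changes continuity in $s$, so conditions (b) and (d) are automatic; pointwise multiplication by a function of $y$ does not increase $\prop_{P_l(G)}$, so condition (c) is preserved; and left/right multiplication by a fixed bounded operator is continuous in norm, so the uniform equicontinuity of $\{u(s,\cdot)\}_s$ in condition (a) survives. The most delicate point is implementing the reparametrization $t\mapsto t+t_0$ (and the swindle absorbing $R$) in a way compatible with (a) uniformly in $s$; a standard argument via a Lipschitz rescaling connecting the shift to the identity handles this, but it must be written out carefully to avoid disturbing the $s$-direction controls.
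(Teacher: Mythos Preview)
Your proposed short exact sequence
$$0\longrightarrow B_{Y_{1,2}}\xrightarrow{\,u\mapsto(u,-u)\,} B_{Y_1}\oplus B_{Y_2}\xrightarrow{\,(u_1,u_2)\mapsto u_1+u_2\,}\mathcal{S}\longrightarrow 0$$
does not induce a six-term $K$-theory sequence, because the right-hand map is not a $\ast$-homomorphism: $(u_1+u_2)(v_1+v_2)\neq u_1v_1+u_2v_2$ in general. Relatedly, the subalgebras $B_{Y_i}$ are \emph{not} ideals in $B_Y$. If $u\in B_{Y_1}$ and $v\in B_Y$, then the $Y$-support of $(uv)(s,t)$ lies in $Y_1\times Y$, not in $Y_1\times Y_1$; condition~(c) of Definition~\ref{Def-two-localg} only forces $uv$ to be supported \emph{near} $Y_1$ for large $t$, not in $Y_1$. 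So there is no Mayer--Vietoris sequence available directly from $B_{Y_1}$, $B_{Y_2}$ and their sum.

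The paper fixes exactly this point by enlarging $B_{Y_i}$ to the subalgebra $C^{\ast}_{LL}(P_k(N)\times Y, A; Y_i)^{\Gamma}$ generated by those $u$ for which there exists $c_t\to 0$ with $\supp(u(s,t))\subseteq\{((x,y),(x',y')):d((y,y'),Y_i\times Y_i)\leq c_t\}$ for all $s,t$. These are genuine two-sided ideals of $B_Y$, their sum is all of $B_Y$ (here the uniform $P_l(G)$-propagation decay and the second bullet of Definition~\ref{Def-unicover} are used), and their intersection is the corresponding ideal for $Y_{1,2}$; the standard ideal-based Mayer--Vietoris for $C^{\ast}$-algebras then applies. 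Your reparametrization idea does appear, but in a different role: the time-shift $u(s,t)\mapsto u(s,t+t_0)$, combined with Lemma~\ref{Lem-Liphomotopy-twoloc} and the strong Lipschitz retraction of $B_{\varepsilon_0}(Y_i)$ onto $Y_i$, shows that the inclusion $B_{Y_i}\hookrightarrow C^{\ast}_{LL}(P_k(N)\times Y, A; Y_i)^{\Gamma}$ is a $K$-theory isomorphism, which converts the ideal-level Mayer--Vietoris into the one in the statement.
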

\begin{proof}
	Let $C^{\ast}_{LL}(P_{k}(N)\times Y, A; Y_i)^{\Gamma}$ be a closed subalgebra of $C^{\ast}_{LL}(P_{k}(N)\times Y, A)^{\Gamma}$ generated by all elements $u$ satisfying that there exists $c_t\geq 0$ with $\lim_{t\rightarrow \infty}c_t=0$ such that 
	\begin{equation}\label{equ-1}
	\supp(u(s,t))\subseteq \{((x,y),(x',y'))\in (P_{k}(N)\times Y)^2: d((y,y'), Y_i\times Y_i)\leq c_t\},
	\end{equation}
	for all $s,t\in [0,\infty)$ and $i=1,2$. Then there exists a natural inclusion map 
	$$\tau: C^{\ast}_{LL}(P_{k}(N)\times Y_i, A)^{\Gamma}\rightarrow C^{\ast}_{LL}(P_{k}(N)\times Y, A; Y_i)^{\Gamma}.$$
	Now we prove that $\tau$ induces an isomorphism on the $K$-theory. By the assumption, there exists $\varepsilon_0>0$ such that $\varepsilon_0$-neighborhood $B_{\varepsilon_0}(Y_i)$ of $Y_i$ is strongly Lipschitz $G$-homotopy equivalent to $Y_i$. For any element $u\in C^{\ast}_{LL}(P_{k}(N)\times Y, A; Y_i)^{\Gamma}$ satisfying the condition (\ref{equ-1}), there exists $t_0>0$ such that 
	$$\supp(u(s,t+t_0))\subseteq \{((x,y),(x',y'))\in (P_{k}(N)\times Y)^2: (y,y')\in B_{\varepsilon_0}(Y_i)\times B_{\varepsilon_0}(Y_i)\},$$
	for all $s,t\geq 0$. Let $v(s,t)=u(s,t+t_0)$, then $u$ and $v$ are homotopic equivalent in $C^{\ast}_{LL}(P_{k}(N)\times Y, A; Y_i)^{\Gamma}$ because $\{u(s,\cdot)\}_{s\geq 0}$ is uniformly equicontinuous. Thus, the inclusion map from $C^{\ast}_{LL}(P_{k}(N)\times B_{\varepsilon_0}(Y_i), A)^{\Gamma}$ to $C^{\ast}_{LL}(P_{k}(N)\times Y, A; Y_i)^{\Gamma}$ induces an isomorphism on the $K$-theory. Therefore, $\tau_{\ast}: K_{\ast}\left(C^{\ast}_{LL}(P_{k}(N)\times Y_i, A)^{\Gamma}\right)\rightarrow K_{\ast}\left(C^{\ast}_{LL}(P_{k}(N)\times Y, A; Y_i)^{\Gamma}\right)$ is an isomorphism by Lemma \ref{Lem-Liphomotopy-twoloc}.
	
	Moreover, $C^{\ast}_{LL}(P_{k}(N)\times Y, A; Y_i)^{\Gamma}$ is an ideal of $C^{\ast}_{LL}(P_{k}(N)\times Y, A)^{\Gamma}$ for $i=1,2$ and $C^{\ast}_{LL}(P_{k}(N)\times Y, A; Y_1)^{\Gamma}+C^{\ast}_{LL}(P_{k}(N)\times Y, A; Y_2)^{\Gamma}=C^{\ast}_{LL}(P_{k}(N)\times Y, A)^{\Gamma}$ due to the third condition in Definition \ref{Def-two-localg}. In addition, we have $C^{\ast}_{LL}(P_{k}(N)\times Y, A; Y_1)^{\Gamma} \cap C^{\ast}_{LL}(P_{k}(N)\times Y, A; Y_2)^{\Gamma}=C^{\ast}_{LL}(P_{k}(N)\times Y, A; Y_1\cap Y_2)^{\Gamma}$. Thus we complete the proof by using of Mayer--Vietoris six-term sequence in the $K$-theory for $C^{\ast}$-algebras.
\end{proof}

\subsection{Equivariant $K$-homology for isometric semi-direct products}
In this subsection, we first build a connection between $C^{\ast}_{L}(P_{k}(N)\times P_{l}(G), A)^{N\rtimes G}$ and $C^{\ast}_{L}(P_{l}(G), C^{\ast}_{L}(P_{k}(N), A)^{N})^{G}$ on the $K$-theory level for the isometric semi-direct product $N\rtimes G$ through $C^{\ast}_{LL}(P_{k}(N)\times P_{l}(G), A)^{N\rtimes G}$. Then we reduce the Baum-Connes assembly map with coefficients for isometric semi-direct products to two partial Baum-Connes assembly maps.

For any $k,l\geq 0$, define a $\ast$-homomorphism
  $$\varphi: C^{\ast}_{L}(P_{k}(N)\times P_{l}(G), A)^{N\rtimes G} \rightarrow C^{\ast}_{LL}(P_{k}(N)\times P_{l}(G), A)^{N\rtimes G},$$
by
$$\varphi(u)(s,t)=u(s+t),$$
for any $u\in C^{\ast}_{L}(P_{k}(N)\times P_{l}(G), A)^{N\rtimes G}$.

\begin{proposition}\label{Prop-loc-twoloc}
	For any $k,l\geq 0$, the $\ast$-homomorphism $\varphi$ induces the following isomorphism
	$$\varphi_{\ast}: K_{\ast}\left(C^{\ast}_{L}(P_{k}(N)\times P_{l}(G), A)^{N\rtimes G}\right) \rightarrow K_{\ast}\left(C^{\ast}_{LL}(P_{k}(N)\times P_{l}(G), A)^{N\rtimes G}\right).$$
\end{proposition}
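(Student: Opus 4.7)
The plan is to deduce the isomorphism $\varphi_\ast$ by induction on the dimension of the skeleton $P_{l}(G)^{(n)}$, reducing eventually to the case where the $G$-factor is replaced by a single orbit $G/F$ with $F$ a finite subgroup of $G$. The map $\varphi$ is natural in the $G$-space in the $P_{l}(G)$-slot: for any $G$-invariant subspace $Y \subseteq P_{l}(G)$ the same formula $\varphi(u)(s,t)=u(s+t)$ gives a $\ast$-homomorphism $\varphi_Y\colon C^{\ast}_{L}(P_{k}(N)\times Y, A)^{N\rtimes G} \to C^{\ast}_{LL}(P_{k}(N)\times Y, A)^{N\rtimes G}$ (well-definedness is immediate from the uniform continuity and propagation decay of $u$). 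Both source and target are strongly Lipschitz $G$-homotopy invariant in $Y$ (by Lemma \ref{Lem-Liphtp} and Lemma \ref{Lem-Liphomotopy-twoloc}) and both satisfy a Mayer--Vietoris six-term exact sequence for uniformly excisive $G$-covers of $Y$ (by Lemma \ref{Lem-equiMV} and Lemma \ref{Lem-MV-twoloc}). The five lemma then lets me propagate the isomorphism $\varphi_\ast$ across these sequences.

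The inductive step follows the standard skeleton-climb used in the proof of Proposition \ref{Lem-Key}. Assuming the statement for $Y=P_{l}(G)^{(n-1)}$, decompose $P_{l}(G)^{(n)}= X_1\cup X_2$ where $X_1$ is a small $G$-invariant neighborhood of the barycenters of the $n$-simplices and $X_2$ is a small $G$-invariant neighborhood of $P_{l}(G)^{(n-1)}$. Then $X_2$ is strongly Lipschitz $G$-homotopy equivalent to $P_{l}(G)^{(n-1)}$, while $X_1$ and $X_1\cap X_2$ are strongly Lipschitz $G$-homotopy equivalent to disjoint unions of $G$-orbits of points with finite stabilizer (the barycenters) and of $(n-1)$-spheres around them, respectively. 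Combining this with the Mayer--Vietoris reduction, the entire induction is reduced to verifying that $\varphi_\ast$ is an isomorphism for $Y=G/F$ where $F$ is any finite subgroup of $G$.

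For the base case $Y=G/F$, the set $Y$ is discrete with a strictly positive minimum spacing $c>0$. Hence for every $u \in C^{\ast}_{LL}(P_{k}(N)\times G/F, A)^{N\rtimes G}$ the condition $\lim_{t\to\infty}\sup_{s\geq 0}\prop_{G/F}(u(s,t))=0$ forces $u(s,t)$ to be block-diagonal in the $G/F$-coordinate for every $s\geq 0$ and all $t\geq t_0$, for some $t_0$ depending on $u$. Combining this with the uniform equicontinuity of $\{u(s,\cdot)\}_{s\geq 0}$ and the reparametrization $t\mapsto t+t_0$, I would identify $K_\ast\!\left(C^{\ast}_{LL}(P_{k}(N)\times G/F, A)^{N\rtimes G}\right)$ with $K_\ast\!\left(C_{ub}([0,\infty),\, C^{\ast}_{L}(P_{k}(N), A)^{N\rtimes F})\right)$, and in parallel $K_\ast\!\left(C^{\ast}_{L}(P_{k}(N)\times G/F, A)^{N\rtimes G}\right)$ with $K_\ast\!\left(C^{\ast}_{L}(P_{k}(N), A)^{N\rtimes F}\right)$. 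Under these identifications, $\varphi_\ast$ becomes the map induced by evaluation at $0$ in the outer $t$-variable, which is an isomorphism by Lemma \ref{Lem-elementary} since equivariant localization algebras are quasi-stable. This mirrors the base case of the proof of Proposition \ref{Lem-Key}.

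The main obstacle is the technical identification in the base case: carefully separating the two parameters and realizing the two-parametric algebra, modulo a subalgebra that is $K$-theoretically trivial, as a $C_{ub}$-path algebra valued in the one-parametric equivariant localization algebra for $N\rtimes F$. The asymmetry of Definition \ref{Def-two-localg} between the $s$- and $t$-directions is precisely what enables this, but one must verify that the resulting identifications assemble into a commutative diagram in which $\varphi_\ast$ corresponds to evaluation-at-zero, and that the quasi-stability hypothesis of Lemma \ref{Lem-elementary} survives the re-bracketing. Once this base case is established, the skeleton-climb and five-lemma assembly are routine, following the template of Proposition \ref{Lem-Key}.
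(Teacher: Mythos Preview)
Your proposal is correct and follows essentially the same approach as the paper's own proof: induction on the skeleta of $P_{l}(G)$, using strong Lipschitz $G$-homotopy invariance (Lemmas \ref{Lem-Liphtp} and \ref{Lem-Liphomotopy-twoloc}) and Mayer--Vietoris (Lemmas \ref{Lem-equiMV} and \ref{Lem-MV-twoloc}) for the inductive step, and reducing the base case of a discrete orbit to an evaluation-at-zero isomorphism via Lemma \ref{Lem-elementary}. The only minor difference is that you phrase the base case for a general orbit $G/F$ with finite stabilizer $F$ (which is indeed what is needed for the barycenters in the skeleton climb), while the paper writes out only the case $Y=G$ explicitly and folds the general orbit case into the phrase ``by the assumption''; your formulation is slightly more careful here but otherwise identical in spirit.
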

\begin{proof}
	Since the metric on $G$ is proper, the Rips complex $P_{l}(G)$ is finite-dimensional for any $l\geq 0$. Let $P_{l}(G)^{n}$ be the $n$-dimensional skeleton of $P_{l}(G)$. We will prove the proposition by induction on $n$.
	
	Firstly, for $n=0$, we have that $P_{l}(G)^{0}=G$. Define a homomorphism $\tau: C^{\ast}_{L}(P_{k}(N), A)^{N\rtimes G} \rightarrow C^{\ast}_{L}(P_{k}(N)\times G, A)^{N\rtimes G}$ by
	$$\tau(u)(t)_{(x,e), (x',e)}=u(t)_{x,x'} \: \text{and} \: \tau(u)(t)_{(x,g), (x',g')}=0 \:\text{if $g\neq g'$},$$
	for any $x,x'\in P_{k}(N)$ and $g,g'\in G$. Then $\tau$ induces an isomorphism
	\begin{equation}\label{equ-proploc-1}
	K_{\ast}\left(C^{\ast}_{L}(P_{k}(N), A)^{N\rtimes G}\right) \cong K_{\ast}\left(C^{\ast}_{L}(P_{k}(N)\times G, A)^{N\rtimes G}\right).
	\end{equation}
	Moreover, by a similar argument as above, we have that
	\begin{equation}\label{equ-proploc-2}
	K_{\ast}\left(C_{ub}([0,\infty), C^{\ast}_{L}(P_{k}(N), A)^{N\rtimes G})\right) \cong K_{\ast} \left( C^{\ast}_{LL}(P_{k}\times G, A)^{N\rtimes G} \right).
	\end{equation}
	In addition, since $C^{\ast}_{L}(P_{k}(N), A)^{N\rtimes G}$ is quasi-stable. By Lemma \ref{Lem-elementary}, we have that 
	\begin{equation}\label{equ-proploc-3}
	K_{\ast}\left(C_{ub}([0,\infty), C^{\ast}_{L}(P_{k}(N), A)^{N\rtimes G})\right) \cong K_{\ast}\left(C^{\ast}_{L}(P_{k}(N), A)^{N\rtimes G}\right).
	\end{equation}
	Three isomorphic maps in (\ref{equ-proploc-1}-\ref{equ-proploc-3}) are commutative with $\varphi_{\ast}$, thus the proposition holds for the case of $n=0$. 
	
	Secondly, we assume that the proposition holds for the case of $n=m-1$. Let $c(\Delta)$ be the center of a $m$-dimensional simplex $\Delta$ in $P_{l}(G)$. Define
	$$\Delta_1=\{y\in \Delta: d(y, c(\Delta))\leq 1/10\};\:\: \Delta_2=\{y\in \Delta: d(y, c(\Delta))\geq 1/10\}.$$
	Let 
	$$Y_1=\bigcup \left\{ \Delta_1: \text{dim}(\Delta)=m\right\};\:\:Y_2=\bigcup \left\{ \Delta_2: \text{dim}(\Delta)=m\right\}.$$
	Then $Y_1$, $Y_2$ and $Y_1 \cap Y_2$ are strongly Lipschitz $G$-homotopy equivalent to $\{c(\Delta): \text{dim}(\Delta)=m\}$, $P_{l}(G)^{m-1}$ and the disjoint union of the boundaries of all $m$-dimensional simplices of $P_{l}(G)$, respectively. Thus, $\varphi_{\ast}$ are isomorphic for $Y_1$, $Y_2$ and $Y_1 \cap Y_2$ by Lemma \ref{Lem-Liphtp}, Lemma \ref{Lem-Liphomotopy-twoloc} and the assumption. Moreover, $\{Y_1, Y_2\}$ is a uniformly excisive $G$-cover of $P_{l}(G)^{m}$. Therefore, by Lemma \ref{Lem-equiMV}, Lemma \ref{Lem-MV-twoloc} and the five lemma, the proposition holds for the case of $n=m$.
\end{proof}

Let $A$ be an $(N\rtimes G)$-$C^{\ast}$-algebra equipped with the $(N\rtimes G)$-action $\alpha$. Then $A$ is also an $N$-$C^{\ast}$-algebra because $N$ is a subgroup of $N\rtimes G$. Thus, we have an equivariant localization algebra $C^{\ast}_{L}(P_{k}(N), A)^{N}$. For any $g\in G$, define a unitary $U_{g}$ on $\ell^2(Z_{k}(N))\otimes H_{A}\otimes \ell^2(N)\otimes H$ by
$$U_{g}(\delta_{x}\otimes h_{A}\otimes \delta_{n}\otimes h)=\delta_{\beta_{g}(x)}\otimes U^{\alpha}_{(e,g)}(h_{A})\otimes \delta_{\beta_{g}(n)}\otimes h,$$
where $\beta_{g}(x)=\sum_{i=1}^{l}t_i\beta_{g}(n_i)$ for any $x=\sum_{i=1}^{l}t_in_i\in Z_{k}(N)$ and $(H_A, U^{\alpha})$ is a covariant representation of $(A, N\rtimes G, \alpha)$. Then these unitaries induce an action of $G$ on $C^{\ast}_{L}(P_{k}(N), A)^{N}$ defined by 
$$v \mapsto (I\otimes U_g) v (I\times U^{\ast}_{g}),$$ 
for any $g\in G$ and $v\in C^{\ast}_{L}(P_{k}(N), A)^{N}$. Thus we obtain an equivariant localization algebra $C^{\ast}_{L}(P_{l}(G), C^{\ast}_{L}(P_{k}(N), A)^{N})^{G}$.

Let $\C[P_{l}(G), \C_{L}[P_{k}(N), A]^{N}]^{G}$ be the $\ast$-subalgebra of $\C[P_{l}(G), C_{L}(P_{k}(N), A)^{N}]^{G}$ consisting of all operators $T$ such that $\chi_{K}T, T\chi_{K}\in \mathcal{K}(\ell^2(Z_{l}(G))\otimes H \otimes \ell^2(G))\otimes \C_{L}[P_{k}(N), A]^{N}$ for any compact subset $K\subseteq P_{l}(G)$. Similarly, we can define $\C_{L}[P_{l}(G), \C_{L}[P_{k}(N), A]^{N}]^{G}$ to be the $\ast$-subalgebra of $\C_{L}[P_{l}(G), C_{L}(P_{k}(N), A)^{N}]^{G}$ consisting of all elements $u$ satisfying that $u(t)\in \C[P_{l}(G), \C_{L}[P_{k}(N), A]^{N}]^{G}$ for all $t\geq 0$.

\begin{lemma}\label{Lem-dense-subalg}
	Any element $u\in C^{\ast}_{L}(P_{l}(G), C^{\ast}_{L}(P_{k}(N), A)^{N})^{G}$ can be approximated by a sequence of elements in $\C_{L}[P_{l}(G), \C_{L}[P_{k}(N), A]^{N}]^{G}$.
\end{lemma}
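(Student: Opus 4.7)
The plan is a two-step density argument. Since
$C^{\ast}_{L}(P_{l}(G), C^{\ast}_{L}(P_{k}(N), A)^{N})^{G}$ is by definition the
norm closure of $\C_{L}[P_{l}(G), C^{\ast}_{L}(P_{k}(N), A)^{N}]^{G}$, it suffices
to show that every element $v$ of this dense $\ast$-algebra can itself be
approximated in norm by elements of the smaller subalgebra
$\C_{L}[P_{l}(G), \C_{L}[P_{k}(N), A]^{N}]^{G}$. The task is therefore one of
\emph{coefficient refinement}: replace the inner coefficients of $v(t)$, which
lie in $C^{\ast}_{L}(P_{k}(N), A)^{N}$, by coefficients in its dense
$\ast$-subalgebra $\C_{L}[P_{k}(N), A]^{N}$, while preserving $G$-invariance,
finite propagation in $P_{l}(G)$, local compactness, uniform continuity in $t$,
and the outer propagation decay.

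For each fixed $t \geq 0$, $v(t)$ has finite propagation $R(t)$ (with
$R(t) \to 0$ as $t \to \infty$), is $G$-invariant, and is locally compact with
inner coefficients in $C^{\ast}_{L}(P_{k}(N), A)^{N}$. Pick a relatively compact
fundamental domain $D \subset P_{l}(G)$ for the co-compact $G$-action; then by
$G$-invariance $v(t)$ is completely determined by the compression
$T(t) := \chi_{D}\, v(t)\, \chi_{B_{R(t)}(D)}$, which by the local-compactness
hypothesis lies in $\mathcal{K}(\mathcal{H}_{t}) \otimes C^{\ast}_{L}(P_{k}(N), A)^{N}$
for $\mathcal{H}_{t} = \chi_{B_{R(t)}(D)}\bigl(\ell^{2}(Z_{l}(G)) \otimes H \otimes \ell^{2}(G)\bigr)$.
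Expanding $T(t)$ in matrix units, which are supported only on pairs $(y, y')$
with $d(y, y') \leq R(t)$, truncating to a finite sum, and approximating each
surviving coefficient by an element of $\C_{L}[P_{k}(N), A]^{N}$ (possible since
$\C_{L}[P_{k}(N), A]^{N}$ is norm-dense in $C^{\ast}_{L}(P_{k}(N), A)^{N}$, hence
$\mathcal{K}(\mathcal{H}_{t}) \otimes_{\mathrm{alg}} \C_{L}[P_{k}(N), A]^{N}$ is
dense in the spatial tensor product) yields an approximation $\tilde T(t)$ with
the same propagation bound $R(t)$. Extending $\tilde T(t)$ $G$-equivariantly
produces a slice approximation
$\tilde v_{t} \in \C[P_{l}(G), \C_{L}[P_{k}(N), A]^{N}]^{G}$ of propagation at
most $R(t)$.

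To assemble these slice approximations into a uniformly continuous function of
$t$ with decaying outer propagation, fix $\varepsilon > 0$ and use uniform
continuity of $v$ to pick a partition $0 = t_{0} < t_{1} < \cdots$ of
$[0, \infty)$ with
$\sup_{t \in [t_{n}, t_{n+1}]} \|v(t) - v(t_{n})\| < \varepsilon/4$. For each
$n$ choose an approximation $\tilde v_{n}$ of $v(t_{n})$ as constructed above,
with $\|v(t_{n}) - \tilde v_{n}\| < \varepsilon/4$ and propagation at most
$R(t_{n})$, and define $\tilde v(t)$ on $[t_{n}, t_{n+1}]$ by the affine
interpolation $(1 - \lambda_{n}(t))\tilde v_{n} + \lambda_{n}(t)\tilde v_{n+1}$
with $\lambda_{n}$ the linear parameter from $0$ to $1$. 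Then
$\tilde v \in \C_{L}[P_{l}(G), \C_{L}[P_{k}(N), A]^{N}]^{G}$ satisfies
$\sup_{t} \|v(t) - \tilde v(t)\| < \varepsilon$, and its outer propagation tends
to zero as $t \to \infty$ because convex combinations do not increase
propagation and $\max\{R(t_{n}), R(t_{n+1})\} \to 0$. The main obstacle is
precisely this simultaneous control of uniform continuity in $t$ and decay of
outer propagation; it is overcome by the partition-and-interpolation scheme,
together with the sharp propagation bound $\prop(\tilde v_{n}) \leq R(t_{n})$
inherited from $v(t_{n})$ via the matrix-unit truncation.
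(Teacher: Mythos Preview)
Your proof is correct and follows essentially the same approach as the paper: both reduce to the dense subalgebra $\C_{L}[P_{l}(G), C^{\ast}_{L}(P_{k}(N), A)^{N}]^{G}$, use uniform continuity in $t$ to choose a partition $0=t_0<t_1<\cdots$, approximate each $u(t_i)$ by an element of $\C[P_{l}(G), \C_{L}[P_{k}(N), A]^{N}]^{G}$ with the same (or no larger) $P_{l}(G)$-propagation, and then linearly interpolate. Your argument is in fact more explicit than the paper's at the slice-approximation step, spelling out the fundamental-domain and tensor-product mechanism that the paper compresses into the phrase ``since $P_{l}(G)/G$ is compact.''
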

\begin{proof}
	we can assume $u\in \C_{L}[P_{l}(G), C^{\ast}_{L}(P_{k}(N), A)^{N}]^{G}$. Then for any $\varepsilon>0$, there exists $\delta>0$ such that $\|u(t')-u(t)\|<\varepsilon$ for any $|t'-t|<\delta$. Choose $0=t_0<t_1<\cdots<t_n<\cdots$ with $|t_{i+1}-t_i|<\delta$ for any $i=0,1,\cdots$. For each $u_{t_i}$, since $P_{l}(G)/G$ is compact, there exists an operator $v_{t_i}\in \C[P_{l}(G), \C_{L}[P_{k}(N), A]^{N}]^{G}$ such that $\prop_{P_{l}(G)}(v_{t_i})=\prop_{P_{l}(G)}(u_{t_i})$ and $\|v_{t_i}-u_{t_i}\|<\varepsilon$. Let
	$$v(t)=\frac{t-t_{i}}{t_{i+1}-t_i}v_{t_{i+1}}+\frac{t_{i+1}-t}{t_{i+1}-t_i}v_{t_{i}},\:\text{for}\:t\in[t_i, t_{i+1}].$$
	Then $v$ belongs to $\C_{L}[P_{l}(G), \C_{L}[P_{k}(N), A]^{N}]^{G}$ and $\|v-u\|<5\varepsilon$. Thus, we complete the proof.
\end{proof}

Recall that $C^{\ast}_{L}(P_{l}(G), C^{\ast}_{L}(P_{k}(N), A)^{N})^{G}$ acts on 
$$L^{2}([0,\infty))\otimes \ell^2(Z_{l}(G))\otimes \left(L^2([0,\infty))\otimes \ell^2(Z_{k}(N))\otimes H_A\otimes \ell^2(N)\otimes H\right) \otimes \ell^2(G)\otimes H,$$ 
and $C^{\ast}_{LL}(P_{k}(N)\times P_{l}(G), A)^{N\rtimes G}$ acts on
$$L^{2}([0,\infty))\otimes L^2([0,\infty))\otimes \ell^2(Z_{l}(G))\otimes \ell^2(Z_{k}(N))\otimes H_A\otimes \ell^2(N) \otimes \ell^2(G)\otimes H.$$

Let $V: H\otimes H \rightarrow H$ be a unitary. Then $V$ induces a $\ast$-homomorphism
$$\psi: C^{\ast}_{L}(P_{l}(G), C^{\ast}_{L}(P_{k}(N), A)^{N})^{G} \rightarrow C^{\ast}_{LL}(P_{k}(N)\times P_{l}(G), A)^{N\rtimes G},$$
defined by 
$$\psi(u)(s,t)=(I\otimes V)\left(u(t)(s)\right)(I\otimes V^{\ast}),$$ 
for any $t\mapsto u(t)\in C^{\ast}_{L}(P_{l}(G), C^{\ast}_{L}(P_{k}(N), A)^{N})^{G}$. Notice that $\psi$ is well-defined by Lemma \ref{Lem-dense-subalg}.

\begin{proposition}\label{Prop-oneloc-twoloc}
	For any $k,l\geq 0$, the homomorphism $\psi$ induces an isomorphism:
	$$\psi_{\ast}: K_{\ast}\left(C^{\ast}_{L}\left(P_{l}(G), C^{\ast}_{L}(P_{k}(N), A)^{N} \right)^{G}\right) \rightarrow K_{\ast}\left(C^{\ast}_{LL}\left(P_{k}(N)\times P_{l}(G), A\right)^{N\rtimes G}\right).$$
\end{proposition}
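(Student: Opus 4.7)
My plan is to mirror the inductive argument used in the proof of Proposition \ref{Prop-loc-twoloc}: induct on the dimension $n$ of the skeleton $P_{l}(G)^{(n)}$, applying Mayer--Vietoris and strong Lipschitz homotopy invariance at each step.

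\textbf{Base case ($n=0$).} Here $P_{l}(G)^{(0)}=G$, which carries a uniformly discrete metric with some separation constant $c>0$. For the left-hand side, given $u\in C^{\ast}_{L}(G, C^{\ast}_{L}(P_{k}(N), A)^{N})^{G}$, the requirement $\lim_{t\to\infty}\prop(u(t))=0$ forces $u(t)$ to be ``block-diagonal'' with respect to the decomposition by points of $G$ for $t$ sufficiently large; by the truncation-and-linear-homotopy trick in the $n=0$ step of Proposition \ref{Prop-loc-twoloc}, the $K$-theory of this algebra is naturally isomorphic to $K_{\ast}\bigl(C_{ub}([0,\infty), C^{\ast}_{L}(P_{k}(N),A)^{N})\bigr)$, which by Lemma \ref{Lem-elementary} is just $K_{\ast}(C^{\ast}_{L}(P_{k}(N),A)^{N})$. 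The same argument applied in the second parameter of $C^{\ast}_{LL}(P_{k}(N)\times G, A)^{N\rtimes G}$ reduces its $K$-theory to $K_{\ast}(C^{\ast}_{L}(P_{k}(N),A)^{N\rtimes G})$, and absorbing the $G$-direction via the standard stabilization identifies this also with $K_{\ast}(C^{\ast}_{L}(P_{k}(N),A)^{N})$. A direct check shows $\psi_{\ast}$ intertwines these two identifications, completing the base case.

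\textbf{Inductive step.} Assume the result for $(n-1)$-skeleta. For each top-dimensional simplex $\Delta$ in $P_{l}(G)^{(n)}$, set $\Delta_1=\{y:d(y,c(\Delta))\le 1/10\}$ and $\Delta_2=\{y:d(y,c(\Delta))\ge 1/10\}$, and let $Y_i=\bigcup_{\dim\Delta=n}\Delta_i$. Then $\{Y_1,Y_2\}$ is a uniformly excisive $G$-cover of $P_{l}(G)^{(n)}$; moreover $Y_1$ is strongly Lipschitz $G$-homotopy equivalent to the disjoint union of simplex-centers (a discrete $G$-space to which the base case applies), while $Y_2$ and $Y_1\cap Y_2$ are strongly Lipschitz $G$-homotopy equivalent to $P_{l}(G)^{(n-1)}$ and to the disjoint union of boundaries of top simplices respectively. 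Applying the Mayer--Vietoris sequences (Lemma \ref{Lem-equiMV} on the domain of $\psi_{\ast}$ and Lemma \ref{Lem-MV-twoloc} on the codomain), together with the Lipschitz homotopy invariance results (Lemma \ref{Lem-Liphtp} and Lemma \ref{Lem-Liphomotopy-twoloc}) and the inductive hypothesis, the five lemma yields that $\psi_{\ast}$ is an isomorphism for $P_{l}(G)^{(n)}$, completing the induction.

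\textbf{Main obstacle.} The substantial technical point is naturality: one must verify that, up to $K$-theoretic equality, $\psi$ commutes with the adjoint-by-isometry maps $Ad_{f}$ implementing strongly Lipschitz $G$-homotopies on both sides, and with the Mayer--Vietoris boundary maps. The subtlety is that the Hilbert-space bookkeeping differs between the two algebras: on the domain the inner copy of $C^{\ast}_{L}(P_{k}(N),A)^{N}$ already carries an $N$-stabilization and an auxiliary copy of $H$, while on the codomain the $G$-stabilization sits at the outermost level, so one must carefully track how the unitary $V\colon H\otimes H\to H$ reshuffles these tensor factors in a $G$-equivariant way. Once this is sorted out (using Lemma \ref{isometryequivalent} to make the choice of $V$ irrelevant on $K$-theory, and Lemma \ref{Lem-dense-subalg} to guarantee $\psi$ is well-defined on the ambient representation), the inductive machinery above goes through cleanly.
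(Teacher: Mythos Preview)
Your overall architecture matches the paper's proof exactly: induct on the skeleton of $P_{l}(G)$, decompose via the same $Y_1/Y_2$ cover, and close with Mayer--Vietoris plus the five lemma. The naturality issues you flag as the ``main obstacle'' are not where the real difficulty lies; the paper treats those as routine.

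The genuine gap is in your treatment of $Y_1$. You assert that $Y_1$ is strongly Lipschitz $G$-homotopy equivalent to the set of simplex centers, ``a discrete $G$-space to which the base case applies.'' But your base case was specifically about $P_{l}(G)^{(0)}=G$, on which $G$ acts \emph{freely}; by contrast, the center $c(\Delta)$ of an $m$-simplex can have a nontrivial finite stabilizer $F\leq G$ (any $g\in G$ permuting the vertex set of $\Delta$ fixes $c(\Delta)$). For such an orbit the two sides do not both collapse to $K_{\ast}(C^{\ast}_{L}(P_{k}(N),A)^{N})$: the domain of $\psi_{\ast}$ produces $K_{\ast}\bigl(C^{\ast}_{L}(P_{k}(N),A)^{N}\rtimes_{r}F\bigr)$, while the codomain produces $K_{\ast}\bigl(C^{\ast}_{L}(P_{k}(N),A\otimes\mathcal{K}(\ell^{2}(G/F)))^{N\rtimes F}\bigr)$. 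The paper inserts precisely this computation and then invokes the finiteness of $F$ to identify
\[
K_{\ast}\bigl(C^{\ast}_{L}(P_{k}(N),A)^{N}\rtimes_{r}F\bigr)\ \cong\ K_{\ast}\bigl(C^{\ast}_{L}(P_{k}(N),A)^{N\rtimes F}\bigr),
\]
which is what makes $\psi_{\ast}$ an isomorphism over the centers. Without this step your inductive argument does not start: you cannot conclude that $\psi_{\ast}$ is an isomorphism for $Y_1$, so the five lemma never fires. Once you add this finite-stabilizer analysis, your proof coincides with the paper's.
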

\begin{proof}
	Let $P_{l}(G)^{n}$ be the $n$-dimensional skeleton of $P_{l}(G)$. We will prove the proposition by induction on $n$.
	
	When $n=0$, we have $P_{l}(G)^{0}=G$. Then the left-hand side of $\psi_{\ast}$ is isomorphic to $K_{\ast}\left(C_{ub}\left( [0,\infty), C^{\ast}_{L}(P_{k}(N), A)^{N}\otimes \K(\ell^2(G))\otimes \K(H) \right)\right)$ and the right-hand side of $\psi_{\ast}$ is isomorphic to $K_{\ast}\left(C_{ub}\left( [0,\infty),C^{\ast}_{L}(P_{k}(N), A\otimes \K(\ell^2(G)))^{N} \right)\right)$. Thus, $\psi_{\ast}$ is an isomorphism by Lemma \ref{Lem-elementary}.
	
	Let $c(\Delta)$ be the center of any $m$-dimensional simplex $\Delta$ in $P_{l}(G)$ and $c(P_{l}(G))=\{c(\Delta): \text{dim}(\Delta)=m\}$. 
	Let $F$ be the stabilizer subgroup of $c(\Delta_{e})$, where $\Delta_{e}$ is a fixed $m$-dimensional simplex containing $e$. Then by Lemma \ref{Lem-elementary}, we have that
	$$K_{\ast}\left(C^{\ast}_{L}\left(c(P_{l}(G)), C^{\ast}_{L}(P_{k}(N), A)^{N} \right)^{G}\right) \cong K_{\ast}\left(C^{\ast}_{L}(P_{k}(N), A)^{N}\rtimes_{r} F \right),$$
	and 
	$$K_{\ast}\left(C^{\ast}_{LL}\left(P_{k}(N)\times c(P_{l}(G)), A\right)^{N\rtimes G}\right) \cong K_{\ast}\left(C^{\ast}_{L}\left(P_{k}(N), A\otimes \K(\ell^2(G/F))\right)^{N\rtimes F}\right),$$
	where the action of $N\rtimes F$ on $\K(\ell^2(G/F))$ is trivial.
	Moreover, since $F$ is finite, we also have that
	$$K_{\ast}\left(C^{\ast}_{L}(P_{k}(N), A)^{N}\rtimes_{r} F \right) \cong K_{\ast}\left(C^{\ast}_{L}(P_{k}(N), A)^{N\rtimes F} \right).$$
	Thus, we have the following isomorphism:
	$$\psi_{\ast}: K_{\ast}\left(C^{\ast}_{L}\left(c(P_{l}(G)), C^{\ast}_{L}(P_{k}(N), A)^{N} \right)^{G}\right) \rightarrow K_{\ast}\left(C^{\ast}_{LL}\left(P_{k}(N)\times c(P_{l}(G)), A\right)^{N\rtimes G}\right).$$
	
	Next, we assume that $\psi_{\ast}$ is an isomorphism for the case of $n=m-1$. Define
	$$\Delta_1=\{y\in \Delta: d(y, c(\Delta))\leq 1/10\};\:\: \Delta_2=\{y\in \Delta: d(y, c(\Delta))\geq 1/10\}.$$
	Let 
	$$Y_1=\bigcup \left\{ \Delta_1: \text{dim}(\Delta)=m\right\};\:\:Y_2=\bigcup \left\{ \Delta_2: \text{dim}(\Delta)=m\right\}.$$
	Then $Y_1$, $Y_2$ and $Y_1 \cap Y_2$ are strongly Lipschitz $G$-homotopy equivalent to $c(P_{l}(G))$, $P_{l}(G)^{m-1}$ and the disjoint union of the boundaries of all $m$-dimensional simplices of $P_{l}(G)$, respectively. Thus, $\psi_{\ast}$ are isomorphic for $Y_1$, $Y_2$ and $Y_1 \cap Y_2$ by Lemma \ref{Lem-Liphtp}, Lemma \ref{Lem-Liphomotopy-twoloc}. Moreover, $\{Y_1, Y_2\}$ is a uniformly excisive $G$-cover of $P_{l}(G)^{m}$. Therefore, by Lemma \ref{Lem-equiMV}, Lemma \ref{Lem-MV-twoloc} and the five lemma, the proposition holds for the case of $n=m$.
\end{proof}

Combining Proposition \ref{Prop-loc-twoloc} with Proposition \ref{Prop-oneloc-twoloc}, we obtain the following commutative diagram for any $k,l\geq 0$.

\begin{displaymath}
	\xymatrix{
	 K_{\ast}\left(C^{\ast}_{L}\left(P_{k}(N)\times P_{l}(G), A\right)^{N\rtimes G}\right) \ar[r]^{e_{\ast}} \ar[d]^{\cong}_{\varphi_{\ast}}  
	 & K_{\ast}\left(C^{\ast}\left(P_{k}(N)\times P_{l}(G), A\right)^{N\rtimes G}\right),
	\ar[d]^{=} \\
	K_{\ast}\left(C^{\ast}_{LL}\left(P_{k}(N)\times P_{l}(G), A\right)^{N\rtimes G}\right) \ar[r]^{e_{\ast}}
	& K_{\ast}\left(C^{\ast}\left(P_{k}(N)\times P_{l}(G), A\right)^{N\rtimes G}\right),
	 \\
	K_{\ast}\left(C^{\ast}_{L}\left(P_{l}(G), C^{\ast}_{L}(P_{k}(N), A)^{N}\right)^{G}\right) \ar[r]^{e_{\ast}} \ar[u]_{\cong}^{\psi_{\ast}}
	& K_{\ast}\left(C^{\ast}\left(P_{l}(G), C^{\ast}(P_{k}(N), A)^{N}\right)^{G}\right).  \ar[u]_{\cong}
    }
\end{displaymath}

Thus, we have the following proposition by Lemma \ref{Cor-red-conj}.

\begin{proposition}\label{Prop-reduction-Conj}
	Let $N\rtimes G$ be an isometric semi-direct product and $A$ be an $(N\rtimes G)$-$C^{\ast}$-algebra. Then $N\rtimes G$ satisfies SNC, SAC and BCC with coefficients in $A$, respectively, if and only if the following evaluation at zero map 
	\begin{equation}\label{equ-red}
	e_{\ast}: \lim_{l,k\rightarrow \infty} K_{\ast}\left(C^{\ast}_{L}\left(P_{l}(G), C^{\ast}_{L}(P_{k}(N), A)^{N}\right)^{G}\right) \rightarrow \lim_{l,k\rightarrow \infty} K_{\ast}\left(C^{\ast}\left(P_{l}(G), C^{\ast}(P_{k}(N), A)^{N}\right)^{G}\right)
	\end{equation}
	is injective, surjective and isomorphic, respectively.
\end{proposition}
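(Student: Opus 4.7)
The plan: The proposition is essentially a bookkeeping corollary of Propositions \ref{Prop-loc-twoloc} and \ref{Prop-oneloc-twoloc}, assembled via the three-row commutative diagram displayed immediately before the statement. My strategy is to reduce the conjectures to the evaluation map on the product Rips complex, then use the two vertical isomorphisms to transport the equivalent statement to its two-parameter form.

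First, I would invoke Corollary \ref{Cor-red-conj}, which reduces SNC/SAC/BCC for $N\rtimes G$ with coefficients in $A$ to the injectivity/surjectivity/bijectivity of the evaluation map
\[
e_\ast\colon \lim_{k\to\infty}K_\ast\bigl(C^\ast_L(P_k(N)\times P_k(G),A)^{N\rtimes G}\bigr)\to \lim_{k\to\infty}K_\ast\bigl(C^\ast(P_k(N)\times P_k(G),A)^{N\rtimes G}\bigr).
\]
Next, I would take the commutative diagram displayed just before the proposition and pass to the direct limit over $k,l\to\infty$. The left column is an isomorphism via the composition $\psi_\ast^{-1}\circ\varphi_\ast$, supplied by Proposition \ref{Prop-loc-twoloc} and Proposition \ref{Prop-oneloc-twoloc}. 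The right column is an isomorphism coming from the identification $C^\ast(P_k(N)\times P_l(G),A)^{N\rtimes G}\cong C^\ast(P_l(G),C^\ast(P_k(N),A)^N)^G$, obtained in essentially the same way as the isomorphism $\phi$ constructed before Proposition \ref{Prop-BCalong-BC}, by picking a unitary $H\to H\otimes H$ and rearranging tensor factors.

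Finally, since all these isomorphisms are natural with respect to the inclusions $P_k\hookrightarrow P_{k'}$ and $P_l\hookrightarrow P_{l'}$, they survive passage to the direct limit. Using that the diagonal $\{k=l\}$ is cofinal in the product directed set $\{(k,l)\}$, the one-parameter colimit on the top of the diagram agrees with the two-parameter colimit appearing on the bottom row. Concatenating Corollary \ref{Cor-red-conj} with this equivalence of evaluation maps then yields the desired statement.

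The main (and only) obstacle has already been cleared by Propositions \ref{Prop-loc-twoloc} and \ref{Prop-oneloc-twoloc}; the remaining work is purely organizational. The one point that demands care is checking that the identification $C^\ast(P_k(N)\times P_l(G),A)^{N\rtimes G}\cong C^\ast(P_l(G),C^\ast(P_k(N),A)^N)^G$ on the right column intertwines, on the nose, the compositions $\varphi$ and $\psi$ used to set up the two vertical isomorphisms on the left column. This is a direct tensor-factor reshuffling once the unitary $H\cong H\otimes H$ is fixed coherently throughout Section \ref{Sec-4}, and it poses no genuine difficulty.
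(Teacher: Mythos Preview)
Your proposal is correct and follows essentially the same route as the paper: the paper states that combining Propositions \ref{Prop-loc-twoloc} and \ref{Prop-oneloc-twoloc} yields the displayed three-row commutative diagram, and then deduces Proposition \ref{Prop-reduction-Conj} directly from Corollary \ref{Cor-red-conj}. Your write-up is in fact more explicit than the paper's, spelling out the cofinality of the diagonal and the right-column identification that the paper leaves implicit.
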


We can divide (\ref{equ-red}) into the following two homomorphisms:
\begin{equation}\label{equ-red-1}
e_{\ast}: \lim_{l,k\rightarrow \infty} K_{\ast}\left(C^{\ast}_{L}\left(P_{l}(G), C^{\ast}_{L}(P_{k}(N), A)^{N}\right)^{G} \right) \rightarrow \lim_{l,k\rightarrow \infty} K_{\ast}\left(C^{\ast}\left(P_{l}(G), C^{\ast}_{L}(P_{k}(N), A)^{N}\right)^{G} \right),
\end{equation}
and
\begin{equation}\label{equ-red-2}
e_{\ast}: \lim_{l,k\rightarrow \infty} K_{\ast}\left( C^{\ast}\left(P_{l}(G), C^{\ast}_{L}(P_{k}(N), A)^{N}\right)^{G} \right) \rightarrow \lim_{l,k\rightarrow \infty} K_{\ast}\left(C^{\ast}\left(P_{l}(G), C^{\ast}(P_{k}(N), A)^{N}\right)^{G} \right).
\end{equation}

In addition, since the natural embedding map from $G$ to $P_{l}(G)$ is a coarse $G$-equivalence (cf. \cite[Proposition 7.2.11]{WillettYu-Book}) for any $l\geq 0$, we can simplify (\ref{equ-red-2}) to the following homomorphism by Corollary \ref{Cor-coarseequi-K}:
\begin{equation}\label{equ-red-2'}
e_{\ast}: \lim_{k\rightarrow \infty} K_{\ast}\left( C^{\ast}\left(G, C^{\ast}_{L}(P_{k}(N), A)^{N}\right)^{G} \right) \rightarrow \lim_{k\rightarrow \infty} K_{\ast}\left(C^{\ast}\left(G, C^{\ast}(P_{k}(N), A)^{N}\right)^{G} \right). 
\end{equation}

\section{The third main theorem} \label{Sec-5}

In this section, we investigate when $e_{\ast}$ in (\ref{equ-red-2'}) is injective, surjective and isomorphic, and complete the proof of the third main theorem. we assume that $N\rtimes G$ is an isometric semi-direct product and $A$ be an $(N\rtimes G)$-$C^{\ast}$-algebra in this whole section. 

\subsection{Two equivariant localization algebras along one direction}

Recall that $C^{\ast}_{L, P_{k}(N)}(P_{k}(N)\times G, A)^{N\rtimes G}$ denotes the equivariant localization algebra along $P_{k}(N)$ with coefficients in $A$ of $P_{k}(N)\times G$ defined in Definition \ref{Def-loc-along}.
\begin{definition}\label{Def-uniloc-along}
  Define $C^{\ast}_{L, P_{k}(N),u}(P_{k}(N)\times G, A)^{N\rtimes G}$ to be a $C^{\ast}$-subalgebra of $C^{\ast}_{L, P_{k}(N)}(P_{k}(N)\times G, A)^{N\rtimes G}$ generated by all elements $u$ with
  $$\sup_{t\in[0,\infty)}\prop_{G}(u(t))<\infty.$$
\end{definition}

We also have the following lemmas similar to Lemma \ref{Lem-Liphomotopy-localong} and Lemma \ref{Lem-MV-localong}.

\begin{lemma}\label{Lem-Liphomotopy-unilocalong}
  Let $X_1$ and $X_2$ be two $(N\rtimes G)$-invariant subspaces of $P_{k}(N)$. If $X_1$ is strongly Lipschitz $(N\rtimes G)$-homotopy equivalent to $X_2$, then $K_{\ast}\left(C^{\ast}_{L, X_1, u}(X_1\times G, A)^{N\rtimes G}\right)$ is naturally isomorphic to $K_{\ast}\left(C^{\ast}_{L, X_2, u}(X_2\times G, A)^{N\rtimes G}\right)$.
\end{lemma}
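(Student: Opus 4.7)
The plan is to mimic the proof of Lemma \ref{Lem-Liphomotopy-twoloc} (the homotopy invariance result already established for the two-parametric algebra), adapted to the uniformly controlled algebra $C^{\ast}_{L,X,u}$. First I will fix two strongly Lipschitz $(N\rtimes G)$-equivariant Lipschitz maps $g:X_1\to X_2$ and $h:X_2\to X_1$ whose compositions $hg$ and $gh$ are strongly Lipschitz $(N\rtimes G)$-homotopic to $id_{X_1}$ and $id_{X_2}$, with homotopies $F$ and $F'$. Via \cite[Proposition 4.5.12]{WillettYu-Book} each Lipschitz $(N\rtimes G)$-map $\varphi:X_i\to X_j$ lifts to a $2\pi$-Lipschitz $(N\rtimes G)$-equivariant isometry $V_\varphi$ on $L^2([0,\infty))\otimes\ell^2(Z_{X_i})\otimes H_A\otimes\ell^2(N\rtimes G)\otimes H$ whose support at time $t\in[m,m+1]$ lies in an $\varepsilon_m$-neighborhood of the graph of $\varphi$, and these isometries act trivially in the $G$-direction. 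Conjugation by $I\otimes V_\varphi$ then defines $Ad_\varphi$ between the corresponding algebras $C^{\ast}_{L,X_i,u}(X_i\times G,A)^{N\rtimes G}$.

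The crucial verification (and what differs from Lemma \ref{Lem-Liphomotopy-twoloc}) is that this $Ad$-construction really lands inside the uniformly controlled subalgebra: since $V_\varphi$ is a $G$-scalar operator (it only alters the $P_k(N)$-coordinate), one has
\begin{equation*}
\prop_{G}\bigl(Ad_{\varphi}(u)(t)\bigr)=\prop_{G}(u(t)),\qquad \prop_{X_j}\bigl(Ad_{\varphi}(u)(t)\bigr)\le w_\varphi(\prop_{X_i}(u(t)))+2\varepsilon_m,
\end{equation*}
so the condition $\sup_{t}\prop_G<\infty$ is preserved and the $\prop_{X_j}$-propagation still tends to zero in $t$. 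Thus $Ad_{g}$ and $Ad_{h}$ are genuine $\ast$-homomorphisms between the uniformly controlled localization algebras, and they induce maps $Ad_{g,\ast}$ and $Ad_{h,\ast}$ on $K$-theory.

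It remains to show $Ad_{hg,\ast}=id$ and $Ad_{gh,\ast}=id$. I would reproduce verbatim the Eilenberg-swindle argument from the proof of Lemma \ref{Lem-Liphomotopy-twoloc}: pick a dyadic subdivision $\{t_{i,j}\}$ of $[0,1]$ compatible with the homotopy $F$, build the sequence of $(N\rtimes G)$-equivariant isometries $V_{F(t_{i,j},\cdot)}$, rotate pairs of them into $V_i(t)$ via the standard $R(\theta)$, and then form the three infinite-direct-sum unitaries $a_1,a_2,a_3$ as in that proof. The uniform Lipschitz constant of $F$ and the control $d(F(t_{i,j},x),F(t_{i+1,j},x))\le 1/(j+1)$ give bounded propagation in $X_1$ that still tends to zero in $t$, while propagation in $G$ is trivially preserved because all $V_i$ act only on the $X_1$-factor; hence $a_1,a_2,a_3$ lie in $M_2\bigl(C^{\ast}_{L,X_1,u}(X_1\times G,A)^{N\rtimes G}\bigr)^+$. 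Then Lemma \ref{isometryequivalent} applied to the shift $V$ on $H^\infty$ yields $[a_2]=[a_3]$ in $K_1$, the small-perturbation argument gives $[a_1]=[a_2]$, and a direct computation shows $[a_1a_3^{-1}]=[Ad_{hg}(u)W^\ast\oplus I]$. The $K_0$-case follows by suspension, so $Ad_{hg,\ast}=id$ and symmetrically $Ad_{gh,\ast}=id$, completing the isomorphism.

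The main obstacle I anticipate is purely bookkeeping: one must make sure every isometry produced by the standard Willett--Yu lift acts as the identity on the $\ell^2(G)$-factor (so that $\prop_G$ is unchanged), and that the swindle construction—which a priori is only defined in the two-parametric setting—remains inside the single-parameter, uniformly $G$-controlled algebra. Once this is verified, no new $K$-theoretic input is needed beyond Lemma \ref{isometryequivalent} and the patterns already developed in Lemma \ref{Lem-Liphomotopy-twoloc}.
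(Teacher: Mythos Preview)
Your proposal is correct and follows exactly the approach the paper has in mind: the paper does not give a proof of this lemma at all, simply stating that it is ``similar to Lemma~\ref{Lem-Liphomotopy-localong}'' (which in turn is left to \cite{Yu-Localizationalg}), and your sketch carries out precisely that standard Eilenberg-swindle argument---the same one written out in detail for Lemma~\ref{Lem-Liphomotopy-twoloc}---while correctly noting the one extra verification needed here, namely that the covering isometries $V_{\varphi}$ act as the identity on the $G$-factor so that $\sup_t\prop_G(u(t))<\infty$ is preserved.
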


\begin{lemma} \label{Lem-MV-unilocalong}
	Let $X$ be an $(N\rtimes G)$-invariant subspace of $P_{k}(N)$. If $\{X_1, X_2\}$ is a uniformly excisive $(N\rtimes G)$-cover of $X$, then the following Mayer--Vietoris six-term exact sequence holds
	$$\scriptsize\xymatrix{
		K_0(C^{\ast}_{L, X_{1,2}, u}(X_{1,2}\times G, A)^{\Gamma}) \ar[r] & 
		\oplus_{i=1,2} K_0(C^{\ast}_{L, X_i, u}(X_i\times G, A)^{\Gamma}) \ar[r] & 
		K_0(C^{\ast}_{L, X, u}(X\times G, A)^{\Gamma})\ar[d] \\
		K_1(C^{\ast}_{L, X, u}(X\times G, A)^{\Gamma})\ar[u] &
		\oplus_{i=1,2} K_1(C^{\ast}_{L, X_i, u}(X_i\times G, A)^{\Gamma})\ar[l] &
		K_1(C^{\ast}_{L, X_{1,2}, u}(X_{1,2}\times G, A)^{\Gamma}),\ar[l] 
	}$$
	where $\Gamma=N\rtimes G$ and $X_{1,2}=X_1\cap X_2$.
\end{lemma}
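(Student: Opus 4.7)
The plan is to mimic the Mayer--Vietoris argument already used for the two-parametric algebra in Lemma \ref{Lem-MV-twoloc}, transplanted to the one-parametric, uniformly $G$-controlled setting, and then invoke the standard six-term sequence in $K$-theory for $C^{\ast}$-algebras. Set $\Gamma = N\rtimes G$ and, for $i=1,2$, define a closed ideal
$$C^{\ast}_{L,X,u}(X\times G, A; X_i)^{\Gamma}$$
of $C^{\ast}_{L,X,u}(X\times G, A)^{\Gamma}$ generated by the elements $u$ for which there exists a positive function $c_t\to 0$ as $t\to\infty$ with
$$\supp(u(t)) \subseteq \bigl\{\bigl((x,g),(x',g')\bigr) : d\bigl((x,x'), X_i\times X_i\bigr)\le c_t\bigr\},$$
for all $t\ge 0$. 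Since the propagation in the $P_k(N)$-direction of $u$ tends to $0$, these are honest ideals, and the sum condition $\lim_{t\to\infty}\prop_{P_k(N)}(u(t))=0$ together with the third bullet in Definition \ref{Def-unicover} yields
$$C^{\ast}_{L,X,u}(X\times G, A; X_1)^{\Gamma} + C^{\ast}_{L,X,u}(X\times G, A; X_2)^{\Gamma} = C^{\ast}_{L,X,u}(X\times G, A)^{\Gamma},$$
as well as
$$C^{\ast}_{L,X,u}(X\times G, A; X_1)^{\Gamma} \cap C^{\ast}_{L,X,u}(X\times G, A; X_2)^{\Gamma} = C^{\ast}_{L,X,u}(X\times G, A; X_1\cap X_2)^{\Gamma},$$
using the uniform excisiveness to compare asymptotic support near $X_1\times X_2$ with asymptotic support near $X_1\cap X_2$. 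The uniform $G$-control is preserved throughout, because we are only cutting the operators in the $P_k(N)$-coordinate.

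Next I would show that the natural inclusion
$$C^{\ast}_{L,X_i,u}(X_i\times G, A)^{\Gamma} \hookrightarrow C^{\ast}_{L,X,u}(X\times G, A; X_i)^{\Gamma}$$
induces an isomorphism on $K$-theory, and likewise for $X_1\cap X_2$. Given $\epsilon>0$ as in Definition \ref{Def-unicover}, an element $u$ of the right-hand side eventually has its $P_k(N)$-support inside the $\epsilon$-neighbourhood $B_{\epsilon}(X_i)$; truncating the initial time-interval (a homotopy through uniformly equicontinuous paths, which preserves the condition $\sup_{t}\prop_G(\cdot)<\infty$) lands $u$ in $C^{\ast}_{L,B_{\epsilon}(X_i),u}(B_{\epsilon}(X_i)\times G, A)^{\Gamma}$. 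Then applying Lemma \ref{Lem-Liphomotopy-unilocalong} to the strong Lipschitz $\Gamma$-homotopy equivalence $B_{\epsilon}(X_i)\simeq X_i$ gives the desired isomorphism on $K$-theory.

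With these four pieces in hand, the Mayer--Vietoris six-term exact sequence for the pair of ideals $(I_1,I_2) = (C^{\ast}_{L,X,u}(X\times G, A; X_1)^{\Gamma}, C^{\ast}_{L,X,u}(X\times G, A; X_2)^{\Gamma})$ in $C^{\ast}_{L,X,u}(X\times G, A)^{\Gamma}$ (see, e.g., the standard construction recalled in \cite[Chapter 6]{WillettYu-Book}) immediately produces the claimed sequence after translating $K_*(I_i)$ and $K_*(I_1\cap I_2)$ via the isomorphisms of the preceding paragraph.

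The main obstacle will be the decomposition step showing $I_1 + I_2$ is dense in the whole algebra while respecting the uniform $G$-propagation bound; a naive partition-of-unity splitting in the $P_k(N)$-coordinate may enlarge the $G$-propagation through the action $U_\gamma$ that mixes $N$ and $G$ coordinates. I would handle this by choosing the cutoff functions to depend only on the $P_k(N)$-coordinate and to be $(N\rtimes G)$-invariant via a bounded Borel functional-calculus construction (as in \cite[Chapter 6]{WillettYu-Book}); multiplication by such cutoffs commutes with the $G$-shift part of the representation, so $\prop_G$ is unchanged and the uniformly controlled subalgebra is preserved. Modulo this technicality, the argument is formally the same as the proof of Lemma \ref{Lem-MV-twoloc} with one time parameter removed and one propagation bound promoted to a uniform one.
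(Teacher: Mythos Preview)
Your proposal is correct and follows essentially the same approach that the paper (implicitly) intends: the paper does not give a separate proof of this lemma but declares it ``similar to Lemma \ref{Lem-MV-localong}'', whose template in turn is the detailed argument you are adapting from Lemma \ref{Lem-MV-twoloc}. Your handling of the one extra point---that cutting by $\Gamma$-invariant functions of the $P_k(N)$-coordinate alone leaves $\prop_G$ unchanged, so the uniform $G$-control survives the ideal decomposition---is exactly the additional observation needed in this setting, and it is sound.
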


Next, we explore the relationship between $C^{\ast}_{L, P_{k}(N)}(P_{k}(N)\times G, A)^{N\rtimes G}$ and $C^{\ast}_{L, P_{k}(N), u}(P_{k}(N)\times G, A)^{N\rtimes G}$ on the $K$-theory level by using G. Yu's quantitative $K$-theory. Firstly, we need recall some basic lemmas for the quantitative $K$-theory (please see \cite{OyonoYu2015}\cite{Yu1998}\cite{Zhang-quan-CBC}).

\begin{definition}(\cite[Definition 1.1]{OyonoYu2015})\label{Def-filtration}
	Let $A$ be a $C^{\ast}$-algebra. A \textit{filtration} of $A$ is a family of closed linear subspaces $(A_r)_{r>0}$ such that
	\begin{itemize}
		\item $A_r\subseteq A_{r'}$, if $r\leq r'$;
		\item $A_r$ is closed by involution;
		\item $A_rA_{r'}\subseteq A_{r+r'}$;
		\item $\cup_{r>0}A_r$ is dense in $A$. 
	\end{itemize}
	If $A$ is unital, we also require the unit of $A$ belongs to $A_r$ for each $r>0$. A $C^{\ast}$-algebra equipped with a filtration is called a \textit{filtered $C^{\ast}$-algebra}.
\end{definition} 

\begin{example}\label{Ex-Roe-fil}
	Let $\Gamma$ be a countable discrete group, $X$ be a $\Gamma$-space and $A$ be a $\Gamma$-$C^{\ast}$-algebra. Then the equivariant Roe algebra $C^{\ast}(X, A)^{\Gamma}$ is a filtered $C^{\ast}$-algebra equipped with the following filtration
	$$\C[X, A]^{\Gamma}_{r}=\{T\in \C[X, A]^{\Gamma}: \prop(T)\leq r\},$$
	for any $r>0$.
\end{example}

\begin{definition}(\cite{OyonoYu2015})\label{Def-quasielement}
	Let $A$ be a unital filtered $C^{\ast}$-algebra with the unit $I$ and $0<\ep<1/4$, $r>0$.
	\begin{enumerate}
		\item An element $p$ in $A$ is called an \textit{$(\ep, r)$-projection}, if $p\in A_r$, $p=p^{\ast}$ and $\|p^2-p\|<\ep$. Denote $P^{\ep, r}(A)$ to be the set of all $(\ep, r)$-projections in $A$.
		\item An element $u$ in $A$ is called an \textit{$(\ep, r)$-unitary}, if $u\in A_r$ and $\max\{\|uu^{\ast}-I\|, \|u^{\ast}u-I\|\}<\ep$. Denote $U^{\ep, r}(A)$ to be the set of all $(\ep, r)$-unitaries in $A$.
	\end{enumerate}
\end{definition}

\begin{remark}\label{Rmek-almost-proj-proj}
	We have the following two comments for the above two notions.
	\begin{enumerate}
		\item For an $(\ep, r)$-projection $p$ in a filtered $C^{\ast}$-algebra $A$, the spectrum of $p$ is contained in $(\frac{1-\sqrt{1+4\ep}}{2}, \frac{1-\sqrt{1-4\ep}}{2})\cup (\frac{1+\sqrt{1-4\ep}}{2}, \frac{1+\sqrt{1+4\ep}}{2})$. Let $\chi_0$ be a real-valued continuous function on $\R$ which satisfies $\chi_0(x)=0$ if $x\leq \frac{1-\sqrt{1-4\ep}}{2}$ and $\chi_0(x)=1$ if $x\geq \frac{1+\sqrt{1-4\ep}}{2}$. Then by the continuous functional calculus, $\chi_0(p)$ is a projection in $A$ and $\|\chi_0(p)-p\|<2\ep$. 
		\item Any $(\ep, r)$-unitary $u$ in $A$ is invertible. Let $\chi_1(u)=u(u^{\ast}u)^{-1/2}$, then $\chi_1(u)$ is a unitary that satisfies $\|\chi_1(u)-u\|<\ep$.
	\end{enumerate}
\end{remark}

Let $A$ be a filtered $C^{\ast}$-algebra. For a positive integer $n$, the matrix algebra $M_n(A)$ is a filtered $C^{\ast}$-algebra equipped with a filtration $(M_n(A_r))_{r>0}$. Let $C([0,1],A)$ be the $C^{\ast}$-algebra of all continuous functions from $[0,1]$ to $A$. Then $C([0,1],A)$ is a filtered $C^{\ast}$-algebra equipped with a filtration $(C([0,1],A_r))_{r>0}$.

For $0<\ep<1/4$, $r>0$, let $P^{\ep, r}_{\infty}(A)=\lim_{n\rightarrow\infty}P^{\ep, r}(M_{n}(A))$ and $U^{\ep, r}_{\infty}(A)=\lim_{n\rightarrow\infty}U^{\ep, r}(M_{n}(A))$. Two elements $p$ and $p'$ in $P^{\ep, r}_{\infty}(A)$ are called to be \textit{$(\ep, r)$-homotopic}, if there exists an element $p$ in $P^{\ep, r}_{\infty}(C([0,1],A))$ such that $p_0=p$ and $p_1=p'$. In addition, two elements $u$ and $u'$ in $U^{\ep, r}_{\infty}(A)$ are called to be \textit{$(\ep, r)$-homotopic}, if there exists an element $u$ in $U^{\ep, r}_{\infty}(C([0,1],A))$ such that $u_0=u$ and $u_1=u'$. 

\begin{definition}\cite[Definition 2.7]{Zhang-quan-CBC}\label{Def-quan-K}
	Let $A$ be a filtered $C^{\ast}$-algebra and $0<\ep<1/4$, $r>0$. Let $A^{+}=A\oplus \C$ be the one-point compactification of $A$ with a map $\pi: A^{+}\rightarrow \C$ defined by $(a, c)\mapsto c$.
	\begin{enumerate}
		\item If $A$ is unital, define
		$$K^{\ep, r}_0(A)=\{(p)_{\ep, r}-(q)_{\ep, r}: p, q\in P^{\ep, r}_{\infty}(A)\}/\sim.$$
		In addition, if $A$ is non-unital, define
		$$K^{\ep, r}_0(A)=\{(p)_{\ep, r}-(q)_{\ep, r}: p, q\in P^{\ep, r}_{\infty}(A^{+}),\:\:\textrm{dim}\:\chi_0(\pi(p))=\textrm{dim}\:\chi_0(\pi(q))\}/\sim.$$
		Where $(p)_{\ep,r}-(q)_{\ep, r}\sim (p')_{\ep,r}-(q')_{\ep, r}$ if and only if there exists a positive integer $k$ such that $p\oplus q'\oplus I_k$ is $(\ep, r)$-homotopic to $p'\oplus q\oplus I_k$. Denote by $[p]_{\ep, r}-[q]_{\ep, r}$ the equivalence class of $(p)_{\ep, r}-(q)_{\ep, r}$ modulo $\sim$. $[p]_{\ep, r}$ and $-[q]_{\ep, r}$ are the abbreviations of $[p]_{\ep, r}-[0]_{\ep, r}$ and $[0]_{\ep, r}-[q]_{\ep, r}$, respectively.
		\item Let $\Tilde{A}= A$ if $A$ is unital and $\Tilde{A}=A^{+}$ if $A$ is non-unital. Define
		$$K^{\ep, r}_1(A)=U^{\ep, r}_{\infty}(\Tilde{A})/\sim,$$
		where two $(\ep, r)$-unitaries $u\sim u'$ if and only if $u$ is $(3\ep, 2r)$-homotopic to $u'$. Denote by $[u]_{\ep, r}$ the equivalence class of $u$ modulo $\sim$.
	\end{enumerate}
\end{definition}

The quantitative $K$-theory is a refinement of the $K$-theory. Besides, the $K$-theory can be approached by quantitative $K$-theory as $r\rightarrow \infty$. More precisely, for a filtered $C^{\ast}$-algebra $A$ and $0<\ep<1/4$, $r>0$, define (see Remark \ref{Rmek-almost-proj-proj})
$$\iota^{\ep, r}_0: K^{\ep, r}_0(A)\rightarrow K_0(A)\:\: [p]_{\ep, r}-[q]_{\ep, r}\mapsto [\chi_0(p)]-[\chi_0(q)],$$
and 
$$\iota^{\ep, r}_1: K^{\ep, r}_1(A)\rightarrow K_1(A)\:\: [u]_{\ep, r}\mapsto [\chi_1(u)].$$                                                                               
Then we have the following lemma.

\begin{lemma}\cite[Lemma 2.14]{Zhang-quan-CBC}\label{quan-K-and-K}
	Let $A$ be a filtered $C^{\ast}$-algebra equipped with a filtration $(A_r)_{r>0}$.
	\begin{enumerate}
		\item For any $0<\ep<1/4$ and any $w\in K_{\ast}(A)$, there exists a positive number $r$ and $z\in K^{\ep, r}_{\ast}(A)$ such that $\iota^{\ep, r}_{\ast}(z)=w$.
		\item For any $0<\ep<1/64$ and $r>0$. If an element $z\in K^{\ep, r}_{\ast}(A)$ satisfies $\iota^{\ep, r}_{\ast}(z)=0$ in $K_{\ast}(A)$, then there exists $r'\geq r$ such that $\iota^{\ep, 16\ep, r, r'}_{\ast}(z)=0$ in $K^{16\ep, r'}_{\ast}(A)$.
	\end{enumerate}
\end{lemma}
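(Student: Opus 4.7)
Both parts are essentially approximation statements for filtered $C^{\ast}$-algebras, saying that the quantitative $K$-theory $K^{\ep, r}_{\ast}(A)$ approximates the ordinary $K_{\ast}(A)$ as $r \to \infty$ with $\ep$ kept small. I will prove them by systematically approximating honest projections and unitaries by finite-propagation elements via the density of $\bigcup_{r>0} A_{r}$ in $A$, and by controlling the quantitative loss along continuous $K$-theory homotopies.

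For assertion (1), I treat the two parities separately. In degree $0$, any class $w \in K_{0}(A)$ has a representative $[p]-[q]$ with $p, q$ honest projections in $M_{n}(A^{+})$. Using density of $\bigcup_{r>0} M_{n}(A_{r}^{+})$, I will pick self-adjoint elements $a, b$ in some $M_{n}(A_{r}^{+})$ with $\pi(a)=\pi(p)$, $\pi(b)=\pi(q)$, and $\max\{\|a-p\|, \|b-q\|\}$ smaller than any prescribed $\delta$. Since $p^{2}=p$, for $\delta$ small enough one gets $\|a^{2}-a\| < \ep$, so $a \in P^{\ep, r}_{\infty}(A)$ and similarly for $b$; functional calculus gives $\chi_{0}(a)$ close in norm to $p$, hence $[\chi_{0}(a)]=[p]$ in $K_{0}(A)$. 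This yields the required class $z=[a]_{\ep,r}-[b]_{\ep,r}$ with $\iota_{0}^{\ep,r}(z)=w$. The degree $1$ case is parallel, approximating a representing unitary $u$ by $u'\in M_{n}(\widetilde{A}_{r})$ within $\delta$ small enough that $\|u'^{\ast}u'-I\|<\ep$ and $\|u'u'^{\ast}-I\|<\ep$, and noting $\chi_{1}(u')$ is norm-close to $u$.

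For assertion (2), I treat the degree $0$ case; degree $1$ is parallel. Suppose $z=[p]_{\ep,r}-[q]_{\ep,r}$ with $\iota^{\ep,r}_{0}(z)=0$. By the definition of equality in $K_{0}(A)$, after possibly stabilising, there exist an integer $k$ and a norm-continuous path $\{P_{t}\}_{t\in[0,1]}$ of honest projections in $M_{N}(A^{+})$ joining $\chi_{0}(p)\oplus I_{k}$ to $\chi_{0}(q)\oplus I_{k}$. I will discretise it $0=t_{0}<t_{1}<\cdots<t_{M}=1$ so that $\|P_{t_{i+1}}-P_{t_{i}}\|<\eta$ for a small $\eta$ to be chosen; use density of $\bigcup_{r>0}A_{r}$ to replace each $P_{t_{i}}$ by a self-adjoint $a_{i}$ in a common $M_{N}(A_{r'})$ for some $r'\geq r$ with $\|a_{i}-P_{t_{i}}\|$ arbitrarily small (only finitely many vertices are involved). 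Then each $a_{i}$ is an $(\ep',r')$-projection with $\ep'$ near $\ep$, and consecutive $a_{i},a_{i+1}$ are norm-close; the standard fact that two such close almost-projections are connected by a linear $(\ep'+O(\eta),r')$-path inside $P^{\bullet,r'}_{\infty}(A)$ chains these segments into a single quasi-homotopy. Optimising $\eta$ and $\ep'$ against $\ep$ absorbs every loss into the final tolerance $16\ep$, producing the claimed vanishing $\iota^{\ep,16\ep,r,r'}_{0}(z)=0$ in $K^{16\ep,r'}_{0}(A)$.

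\textbf{Main obstacle.} The work is purely quantitative bookkeeping: ensuring the discretised perturb-and-interpolate procedure yields a genuine $(16\ep, r')$-projection path rather than an uncontrolled blow-up. The numerical factor $16$ is not sharp but reflects how many times one invokes the perturbation inequality $\|a^{2}-a\|\leq \|b^{2}-b\|+O(\|a-b\|)$ during chaining; keeping $\ep<1/64$ guarantees that all intermediate almost-projections retain a spectral gap separating $\chi_{0}$ from $\chi_{1}$, so the construction never falls outside the regime where $\chi_{0}$ is holomorphic on the spectrum. Everything else, in particular the degree $1$ analogue via $\chi_{1}(u)=u(u^{\ast}u)^{-1/2}$, follows the same template with unitaries in place of projections.
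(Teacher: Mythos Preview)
The paper does not give its own proof of this lemma: it is quoted verbatim from \cite[Lemma 2.14]{Zhang-quan-CBC} and used as a black box. Your outline is the standard density-and-discretisation argument that underlies such results in the quantitative $K$-theory literature (cf.\ also \cite{OyonoYu2015}), and it is essentially correct; the only step you leave implicit in part (2) is connecting the original $(\ep,r)$-projections $p,q$ to their functional-calculus replacements $\chi_{0}(p),\chi_{0}(q)$ at the endpoints, but since $\|p-\chi_{0}(p)\|<2\ep$ the linear path handles this and is absorbed into the same bookkeeping you describe.
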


\begin{definition}\label{uniformproduct}\cite[Definition 5.14]{OyonoYu2019}
	Let $(A_i)_{i\in \mathcal{I}}$ be a family of filtered $C^{\ast}$-algebras equipped with filtrations $(A_{i,r})_{i\in \mathcal{I}, r>0}$. The \textit{uniform product} of $(A_i)_{i\in \mathcal{I}}$, denoted by $\prod^{uf}_{i\in \mathcal{I}} A_i$, is defined to be the norm closure of $\cup_{r>0}(\prod_{i\in \mathcal{I}} A_{i, r})$ in $\prod_{i\in \mathcal{I}} A_i$.
\end{definition}

\begin{remark}
	The uniform product $\prod^{uf}_{i\in \mathcal{I}} A_i$ is a filtered $C^{\ast}$-algebra equipped with a filtration $(\prod_{i\in \mathcal{I}} A_{i, r})_{r>0}$.
\end{remark}

\begin{definition}\cite[Definition 2.23]{Zhang-quan-CBC}\label{Def-quasi-stable-fil}
  A family of filtered $C^{\ast}$-algebras $(A_i)_{i\in \mathcal{I}}$ equipped with filtrations $(A_{i,r})_{i\in \mathcal{I}, r>0}$ is called \textit{uniformly quasi-stable}, if for any positive integer $n$, there exists a family of isometries $(w_i)_{i\in \mathcal{I}}\in \mathcal{M}(M_n(A_i))_{i\in \mathcal{I}}$ satisfying that
	\begin{enumerate}
		\item $w_iw^{\ast}_i=e_{11}=I\oplus 0_{n-1}$ for any $i\in \mathcal{I}$;
		\item $(w_ia_i)_{i\in \mathcal{I}}$ and $(a_iw_i)_{i\in \mathcal{I}}$ are in $M_n(A_{i,r})_{i\in \mathcal{I}}$ for any element $(a_i)_{i\in \mathcal{I}}\in M_n(A_{i,r})_{i\in \mathcal{I}}$ and $r>0$.
	\end{enumerate}
\end{definition}

A family of equivariant Roe algebras $(C^{\ast}(X_i, A_i)^{\Gamma})_{i\in \mathcal{I}}$ equipped with filtrations $(\C[X_i, A_i]^{\Gamma}_{r})_{i\in \mathcal{I}, r>0}$ is uniformly quasi-stable (please see \cite[Lemma 3.6]{Zhang-quan-CBC}).

\begin{lemma}\cite[Corollary 2.27]{Zhang-quan-CBC}\label{Lem-quanK-prod}
	If $(A_i)_{i\in \mathcal{I}}$ is a family of uniformly quasi-stable filtered $C^{\ast}$-algebras, then the natural homomorphisms 
	$$\pi_j:\prod^{uf}_{i\in \mathcal{I}} A_i\rightarrow A_j, \:\: (a_i)_{i\in \mathcal{I}}\mapsto a_j,$$
	induce the following isomorphism
	$$\Pi \pi_{i, \ast}: K^{\ep, r}_{\ast}(\prod^{uf}_{i\in \mathcal{I}} A_i)\rightarrow \prod_{i\in \mathcal{I}}K^{\ep, r}_{\ast}(A_i),$$
	for any $0<\ep<1/4$, $r>0$.
\end{lemma}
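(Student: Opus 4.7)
The plan is to prove that $\Pi\pi_{i,\ast}$ is both surjective and injective at each fixed level $(\ep,r)$, handling $K^{\ep,r}_0$ and $K^{\ep,r}_1$ in parallel. Both directions rely on a common mechanism: uniform quasi-stability of the family $(A_i)_i$ is precisely what allows us to normalize matrix sizes uniformly in $i\in\mathcal{I}$, so that data living a priori in unrelated matrix algebras $M_{n_i}(A_i)$ can be reassembled into a common matrix algebra over the uniform product with its propagation bounds preserved.

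For surjectivity, given $(z_i)_i\in\prod_i K^{\ep,r}_{\ast}(A_i)$, I would first choose representatives in $P^{\ep,r}_\infty(A_i^+)$ (for $\ast=0$) or in $U^{\ep,r}_\infty(\widetilde{A_i})$ (for $\ast=1$), living in $M_{n_i}(A_i^+)$ for some $n_i$ depending on $i$. For each fixed $n$, uniform quasi-stability supplies a family $(w_i)_i$ of isometries in $\mathcal{M}(M_n(A_i))$ with $w_iw_i^{\ast}=e_{11}$, and the second clause of Definition \ref{Def-quasi-stable-fil} ensures that conjugation by these multipliers sends $M_n(A_{i,r})$ into itself uniformly in $i$, hence preserves the $(\ep,r)$-quantitative bounds. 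Combining this with the elementary relations in quantitative $K$-theory (stabilization by identity and conjugation by isometries), each representative can be normalized to live in a fixed $M_n(A_i^+)$. The resulting family then assembles into an element of $\prod^{uf}_i M_n(A_i^+)\cong M_n\bigl(\prod^{uf}_i A_i^+\bigr)$ whose class is sent to $(z_i)_i$.

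For injectivity, suppose $z\in K^{\ep,r}_{\ast}(\prod^{uf}_i A_i)$ satisfies $\pi_{i,\ast}(z)=0$ for every $i$. Unpacking Definition \ref{Def-quan-K}, for each $i$ there is an integer $k_i$ and an $(\ep,r)$-homotopy (or $(3\ep,2r)$-homotopy, for $\ast=1$) in a matrix algebra $M_{n+k_i}(A_i^+)$ witnessing the vanishing of $\pi_{i,\ast}(z)$. As in the surjectivity step, I would use uniform quasi-stability to conjugate all these homotopies into a common fixed matrix size. The point is that, viewing the homotopy as an element of $P^{\ep,r}_\infty(C([0,1],A_i^+))$, the filtration of $C([0,1],A_i^+)$ is pointwise in $[0,1]$, so the uniform isometries act simultaneously on the whole homotopy while preserving the propagation bound $r$. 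The family of normalized homotopies then lies in $\prod^{uf}_i C([0,1],A_i^+)\cong C\bigl([0,1],\prod^{uf}_i A_i^+\bigr)$ and witnesses $z=0$.

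The main obstacle is precisely the uniform assembly of homotopies in the injectivity step: one must carefully verify that after conjugating by the multipliers from Definition \ref{Def-quasi-stable-fil}, the witnessing homotopies not only share a common matrix size but also retain the propagation bound $r$ (resp.\ $2r$) \emph{simultaneously} across all $i$, so that the assembled path truly belongs to the uniform product filtration rather than merely the unrestricted product. This is exactly where uniform quasi-stability is essential, as opposed to individual quasi-stability of each $A_i$: the latter would produce isometries whose control on the filtration varies with $i$, and the resulting assemblage would typically fail to sit in $\prod^{uf}_i A_i$.
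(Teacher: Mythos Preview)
The paper does not give a proof of this lemma; it is quoted from \cite[Corollary 2.27]{Zhang-quan-CBC} and used as a black box, so there is nothing in the present paper to compare your argument against.

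That said, your outline is the standard argument and is essentially correct. The mechanism you isolate---using the isometries $(w_i)_i$ from Definition~\ref{Def-quasi-stable-fil} to compress representatives and homotopies from varying matrix sizes $M_{n_i}(A_i)$ down to a common size, with condition~(2) guaranteeing that this compression preserves the filtration level $r$ uniformly in $i$---is precisely the content of uniform quasi-stability and is how the proof in the cited reference runs. The one point you pass over is why compression by $w_i$ preserves the quantitative $K$-class at the \emph{same} level $(\ep,r)$ rather than only up to a control pair. This does hold: from $w_iw_i^*=e_{11}$ and condition~(2) one builds the unitary multiplier $U_i=\left(\begin{smallmatrix} w_i & I-e_{11}\\ 0 & w_i^*\end{smallmatrix}\right)$ in $\mathcal{M}(M_{2n}(A_i))$, which conjugates $p\oplus 0$ to $(w_ipw_i^*)\oplus 0$ and preserves $M_{2n}(A_{i,r})$; the usual rotation homotopy $R(t)(U_i\oplus I)R(-t)$ in $M_{4n}$ then connects $U_i\oplus I$ to $I\oplus U_i$ through filtration-preserving unitary multipliers, yielding an $(\ep,r)$-homotopy between $p$ and $w_ipw_i^*$ after stabilization. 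Since this construction uses only the data of Definition~\ref{Def-quasi-stable-fil}, it is uniform in $i$, and both your surjectivity and injectivity steps go through at the exact level $(\ep,r)$ as stated.
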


Let $A$ be a filtered $C^{\ast}$-algebra. Define $C_{ubf}\left([0,\infty), A\right)$ to be the norm closure of the algebra consisting of all bounded and uniformly continuous functions $u: [0,\infty) \rightarrow A$ satisfying that there exists $r>0$ such that $u(t)\in A_r$ for all $t\in [0,\infty)$. Then we have the following lemma inspired by Lemma \ref{Lem-elementary}.

\begin{lemma}\label{Lem-fil-eva}
	If $(A)_{i\in \mathcal{I}}$ is a family of uniformly quasi-stable filtered $C^{\ast}$-algebras. Then the evaluation at zero map 
	$$e_0: C_{ubf}\left( [0,\infty), A \right) \rightarrow A, u\mapsto u(0),$$
induces an isomorphism on the $K$-theory.
\end{lemma}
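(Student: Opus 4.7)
My plan is to mimic the argument for Lemma \ref{Lem-elementary} while tracking filtrations, with uniform quasi-stability playing the role that ordinary quasi-stability plays in the unfiltered case. First, the constant-function inclusion $c : A \to C_{ubf}([0,\infty), A)$ defined by $a \mapsto (t \mapsto a)$ is a filtered $\ast$-homomorphism splitting $e_0$. This yields a split short exact sequence of filtered $C^\ast$-algebras
$$0 \to I \to C_{ubf}([0,\infty), A) \xrightarrow{e_0} A \to 0,$$
with $I := \ker e_0 = \{u \in C_{ubf}([0,\infty), A) : u(0) = 0\}$, inducing a split exact sequence in $K$-theory. Hence $K_{\ast}(C_{ubf}([0,\infty), A)) \cong K_{\ast}(A) \oplus K_{\ast}(I)$ with $e_{0, \ast}$ being the projection onto the first factor, so the claim reduces to showing $K_{\ast}(I) = 0$.

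To prove $K_{\ast}(I) = 0$, I would set up an Eilenberg swindle. Define the truncated shift $\sigma : I \to I$ by $\sigma(u)(t) = u(t - 1)$ for $t \geq 1$ and $\sigma(u)(t) = 0$ for $t < 1$; the condition $u(0) = 0$ ensures continuity at $t = 1$ and $\sigma(u) \in I$, while clearly $\prop(\sigma(u)) \leq \prop(u)$. The interpolation $h_s(u)(t) := u(\max(t - s, 0))$ for $s \in [0, 1]$ provides a continuous homotopy in $I$ from $u$ to $\sigma(u)$: the uniform continuity of $s \mapsto h_s(u)$ is controlled by the modulus of continuity of $u$ together with $u(0) = 0$. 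Hence $[u] = [\sigma(u)]$ in $K_\ast(I)$. Using the isometries supplied by Definition \ref{Def-quasi-stable-fil}, which preserve each filtration level, I would assemble inside the stabilization $I \otimes \K$ (whose $K$-theory coincides with that of $I$) the infinite orthogonal sum $\Psi(u) := \bigoplus_{n=0}^{\infty} \sigma^n(u)$. Orthogonality of the summands gives $\|\Psi(u)(t) - \Psi(u)(t')\| = \sup_{n} \|u(t - n) - u(t' - n)\|$, which tends to $0$ as $|t - t'| \to 0$ by the uniform continuity of $u$, and $\prop(\Psi(u)) = \prop(u)$. The identity $u \oplus \sigma(\Psi(u)) = \Psi(u)$ combined with the homotopy invariance $[\sigma(\Psi(u))] = [\Psi(u)]$ (obtained by applying $h_s$ pointwise to $\Psi(u)$) forces $[u] + [\Psi(u)] = [\Psi(u)]$ in $K_0(I)$, so $[u] = 0$. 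The $K_1$ case is strictly parallel, with unitaries in place of projections.

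The main obstacle is verifying that the Eilenberg swindle remains compatible with the filtered structure at every step: the infinite orthogonal sum $\Psi(u)$ must lie in the filtered stabilization of $I$ with propagation uniformly bounded by $\prop(u)$. This is precisely what condition (2) in Definition \ref{Def-quasi-stable-fil} supplies, since it ensures that the isometries implementing the identification $\bigoplus_n A \hookrightarrow A$ preserve each filtration level. Without this uniformity the swindle could still be performed in the ambient $C^\ast$-algebra, but $\prop(\Psi(u))$ would no longer be finite, breaking the filtered argument. The assumption therefore enters in an essential way precisely at the moment the infinite direct sum is assembled.
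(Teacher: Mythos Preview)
Your Eilenberg swindle argument is correct and is a genuinely different route from the paper's proof. One point deserves clarification: you write that $\Psi(u)=\bigoplus_{n\ge 0}\sigma^n(u)$ is assembled ``inside the stabilization $I\otimes\K$'', but in fact $\bigoplus_n\sigma^n(u)$ does \emph{not} lie in $I\otimes\K$, since $\|\sigma^n(u)\|=\|u\|$ does not tend to~$0$. The swindle must instead be carried out inside $I$ itself. From the $n=2$ case of Definition~\ref{Def-quasi-stable-fil} one extracts isometries $s_1,s_2\in\mathcal{M}(A)$ with $s_i^*s_j=\delta_{ij}$, $s_1s_1^*+s_2s_2^*=1$, and $s_iA_r,\,A_rs_i\subseteq A_r$; setting $t_k=s_2^{k-1}s_1$ gives infinitely many filtration-preserving orthogonal isometries, and $\Psi(u)(t)=\sum_{k\ge 1}t_k\,\sigma^{k-1}(u)(t)\,t_k^*$ is then a well-defined element of $I$ with $\prop(\Psi(u))\le\prop(u)$. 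The key observation you did not make explicit is that for each fixed $t$ this sum has only finitely many nonzero terms (those with $k\le t+1$), which is why $\Psi(u)(t)\in A_r$ rather than merely in the multiplier algebra. With this in hand, the identity $\Psi=\mathrm{ad}_{t_1}+\mathrm{ad}_{s_2}\circ\Psi\circ\sigma$ and the homotopy $h_s$ give $\Psi_*=\mathrm{id}+\Psi_*$ on $K_*(I)$, hence $K_*(I)=0$.

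By contrast, the paper reduces to $K_*(\ker e_0)=0$ as you do, but then attacks it via a Mayer--Vietoris decomposition of $[0,\infty)$ into unions of odd and even unit intervals, reducing the problem to showing that a certain map between $K$-groups of \emph{uniform products} $\prod^{uf}A$ is an isomorphism; this last step is verified using quantitative $K$-theory (Lemma~\ref{Lem-quanK-prod} and Lemma~\ref{quan-K-and-K}). Your approach avoids quantitative $K$-theory entirely and is closer in spirit to the classical proof of Lemma~\ref{Lem-elementary}. The paper's route, while heavier, exercises the quantitative machinery that is in any case needed elsewhere and makes explicit where the uniform-product compatibility (Lemma~\ref{Lem-quanK-prod}) enters.
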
 
\begin{proof}
It is sufficient to prove that $K_{\ast}\left( \text{ker}(e_0) \right)=0$. Let
$$X=\bigcup_{n\geq 1, \text{odd}} [n,n+1],\:\: Y=\bigcup_{n\geq 2,\text{even}} [n,n+1].$$
Then we have the following pullback diagram
 $$\xymatrix{
   \text{ker}(e_0) \ar[r] \ar[d] &  C_{ubf}\left(X, A\right) \ar[d] \\
   \left(C_{ubf}\left([0,1], A\right)\cap \text{ker}(e_0)\right)\oplus C_{ubf}\left(Y, A\right) \ar[r] &  \prod_{n\geq 1}^{uf} A. 
 }$$
Thus, we have the following Mayer--Vietoris six-term exact sequence (please see \cite[Proposition 2.7.15]{WillettYu-Book})

$$\cdots \rightarrow K_{\ast}\left(\text{ker}(e_0)\right) \rightarrow K_{\ast}\left(C_{ubf}\left( X, A \right)\right) \oplus K_{\ast}\left(C_{ubf}\left( Y, A \right)\right) \rightarrow K_{\ast}\left( \prod_{n\geq 1}^{uf} A \right) \rightarrow \cdots.$$
Moreover, the natural embedding maps 
$$\prod_{n\geq 1, \text{odd}}^{uf} A \hookrightarrow C_{ubf}\left( X, A \right)\:\text{and} \:\prod_{n\geq 2, \text{even}}^{uf} A \hookrightarrow C_{ubf}\left( Y, A \right),$$
induce two isomorphisms on the $K$-theory. Thus, we have the following exact sequence
$$\cdots \rightarrow K_{\ast}\left(\text{ker}(e_0)\right) \rightarrow K_{\ast}\left(\prod_{n\geq 1, \text{odd}}^{uf} A\right) \oplus K_{\ast}\left(\prod_{n\geq 2, \text{even}}^{uf} A\right) \stackrel{\rho_{\ast}}{\rightarrow} K_{\ast}\left( \prod_{n\geq 1}^{uf} A \right) \rightarrow \cdots.$$
Now, we prove that $\rho_{\ast}$ is an isomorphism which implies $K_{\ast}\left(\text{ker}(e_0)\right)=0$. For $0<\ep<1/4$ and $r>0$, consider 
$$\rho^{\ep, r}_{\ast}: K^{\ep, r}_{\ast}\left( \prod^{uf}_{n\geq 1, \text{odd}} A \right) \oplus K^{\ep, r}_{\ast}\left( \prod^{uf}_{n\geq 2, \text{even}} A \right) \rightarrow K^{\ep, r}_{\ast}\left( \prod^{uf}_{n\geq 1 } A \right),$$
defined by 
$$\rho^{\ep, r}_{\ast}\left( [(a_n)_{n\geq 1, \text{odd}}]_{\ep, r},  [(a'_n)_{n\geq 2, \text{even}}]_{\ep, r}\right)=[(a_1, a_1,a_3,a_3,\cdots)]_{\ep, r}-[(0, a'_2, a'_2,a'_4,a'_4,\cdots)]_{\ep, r}.$$
Obviously, the homomorphism
$$\rho^{\ep, r}_{\prod, \ast}: \left(\prod_{n\geq 1, \text{odd}} K^{\ep, r}_{\ast}(A)\right) \oplus \left(\prod_{n\geq 2, \text{even}} K^{\ep, r}_{\ast}(A)\right) \rightarrow \left(\prod_{n\geq 1} K^{\ep, r}_{\ast}(A)\right),$$
defined by
$$\rho^{\ep, r}_{\prod, \ast}\left( ([a_n]_{\ep, r})_{n\geq 1, \text{odd}}, ([a'_n]_{\ep, r})_{n\geq 2, \text{even}} \right)= ([a_1]_{\ep, r}, [a_1]_{\ep, r}-[a'_2]_{\ep,r}, [a_3]_{\ep, r}-[a'_2]_{\ep, r}, \cdots)$$
is an isomorphism. Thus, the homomorphism $\rho^{\ep, r}_{\ast}$ is an isomorphism for any $0<\ep<1/4$ and $r>0$ by Lemma \ref{Lem-quanK-prod}. Therefore, $\rho_{\ast}$ is an isomorphism by Lemma \ref{quan-K-and-K}.
\end{proof}

Now, we have the following relationship for the $K$-theory of two equivariant localization algebras along one direction.

\begin{proposition}\label{Prop-localong-uniforalong}
	The embedding map
	$$\tau: C^{\ast}_{L, P_{k}(N), u}\left(P_{k}(N)\times G, A\right)^{N\rtimes G} \rightarrow C^{\ast}_{L, P_{k}(N)}\left(P_{k}(N)\times G, A\right)^{N\rtimes G},$$
	induces an isomorphism $\tau_{\ast}$ on the $K$-theory for any $k\geq 0$.
\end{proposition}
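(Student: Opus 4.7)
The plan is to mirror the skeletal induction used in the proof of Proposition \ref{Lem-Key}. Since the metric on $N$ is proper, $P_{k}(N)$ is finite-dimensional, so I would induct on the dimension $m$ of the skeleton $P_{k}(N)^{(m)}$. At each inductive step, Mayer--Vietoris applied simultaneously via Lemma \ref{Lem-MV-localong} and Lemma \ref{Lem-MV-unilocalong}, together with strong Lipschitz homotopy invariance (Lemma \ref{Lem-Liphomotopy-localong} and Lemma \ref{Lem-Liphomotopy-unilocalong}), reduces the problem via the same decomposition $\{X_{1}, X_{2}\}$ used in Proposition \ref{Lem-Key} to two base cases: the $0$-skeleton $N$, and the discrete $N$-set $\{c(\Delta) : \dim \Delta = m\}$ of simplex centers (each having finite $N$-stabilizer). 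The five lemma will then close the induction once $\tau_{\ast}$ is shown to be an isomorphism in each base case.

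In either base case, the underlying space $Y$ is a $0$-dimensional $(N\rtimes G)$-invariant discrete subset of $P_{k}(N)$. The properness of the metric on $N$ provides a separation constant $c > 0$ such that any element $u \in C^{\ast}_{L, Y}(Y \times G, A)^{N\rtimes G}$ has $\prop_{Y}(u(t)) < c$ for sufficiently large $t$, i.e., $u(t)$ becomes diagonal along $Y$. The reparametrization-and-compression trick used in the base step of the proof of Proposition \ref{Lem-Key} should then identify
$$K_{\ast}\bigl( C^{\ast}_{L, Y}(Y \times G, A)^{N\rtimes G} \bigr) \cong K_{\ast}\bigl( C_{ub}([0,\infty), B_{Y}) \bigr)$$
and
$$K_{\ast}\bigl( C^{\ast}_{L, Y, u}(Y \times G, A)^{N\rtimes G} \bigr) \cong K_{\ast}\bigl( C_{ubf}([0,\infty), B_{Y}) \bigr),$$
where $B_{Y}$ denotes the filtered equivariant Roe algebra of the fiber at a representative point (filtered by $G$-propagation as in Example \ref{Ex-Roe-fil}); under these identifications, $\tau_{\ast}$ becomes the map induced by the natural inclusion $C_{ubf}([0,\infty), B_{Y}) \hookrightarrow C_{ub}([0,\infty), B_{Y})$.

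Both evaluations at $t = 0$ induce $K$-theory isomorphisms onto $K_{\ast}(B_{Y})$: the one from $C_{ub}$ by Lemma \ref{Lem-elementary} (equivariant Roe algebras are quasi-stable), and the one from $C_{ubf}$ by Lemma \ref{Lem-fil-eva} (equivariant Roe algebras are uniformly quasi-stable as filtered $C^{\ast}$-algebras, as noted after Definition \ref{Def-quasi-stable-fil}). Compatibility of the inclusion with evaluation at zero then forces $\tau_{\ast}$ to be an isomorphism in both base cases, completing the induction. The hard part will be the base-case identification: one must verify that the uniform $\prop_{G}$-bound from $C^{\ast}_{L, Y, u}$ survives the compression to a fixed filtration degree in $B_{Y}$, rather than merely landing in some subalgebra of $C_{ub}([0,\infty), B_{Y})$. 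This is precisely the step where Lemma \ref{Lem-fil-eva} is indispensable, rather than the softer Lemma \ref{Lem-elementary}, and where the quantitative $K$-theory machinery developed earlier in this section earns its keep.
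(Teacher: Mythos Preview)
Your proposal is correct and follows essentially the same approach as the paper: skeletal induction on $P_{k}(N)^{(m)}$ using Mayer--Vietoris (Lemmas \ref{Lem-MV-localong}, \ref{Lem-MV-unilocalong}) and strong Lipschitz homotopy invariance (Lemmas \ref{Lem-Liphomotopy-localong}, \ref{Lem-Liphomotopy-unilocalong}), with the base case handled via the identification with $C_{ub}$ versus $C_{ubf}$ of paths into a filtered equivariant Roe algebra and then invoking Lemma \ref{Lem-elementary} against Lemma \ref{Lem-fil-eva}. The paper's proof is terser (it names the target algebra as $C^{\ast}(G, A\otimes \K(\ell^2(N)))^{G}$ and defers the inductive step to ``a similar argument of the proof of Proposition \ref{Prop-oneloc-twoloc}''), but your more explicit treatment of the simplex-center base case and the role of the quantitative $K$-theory via Lemma \ref{Lem-fil-eva} is exactly on point.
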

\begin{proof}
	Let $P_{k}(N)^{m}$ be the $m$-dimensional skeleton of $P_{k}(N)$. For $m=0$, since 
	$$K_{\ast}\left(C^{\ast}_{L, N, u}\left(N\times G, A\right)^{N\rtimes G}\right) \cong K_{\ast}\left( C_{ubf}([0,\infty), C^{\ast}(G, A\otimes \K(\ell^2(N)))^{G}) \right),$$
	and 
	$$K_{\ast}\left(C^{\ast}_{L, N}\left(N\times G, A\right)^{N\rtimes G}\right) \cong K_{\ast}\left( C_{ub}([0,\infty), C^{\ast}(G, A\otimes \K(\ell^2(N)))^{G}) \right).$$
	Then $\tau_{\ast}$ is an isomorphism for $m=0$ by Lemma \ref{Lem-elementary} and Lemma \ref{Lem-fil-eva}.
    Then by a similar argument of the proof of Proposition \ref{Prop-oneloc-twoloc}, we can complete the proof by induction on $m$.
\end{proof}

\subsection{The proof of the third main theorem}

Define
$$W: \ell^2(G)\otimes \ell^2(G) \rightarrow \ell^2(G)\otimes \ell^2(G), \: \delta_{g_1}\otimes \delta_{g_2} \mapsto \delta_{g_1} \otimes \delta_{g_2g_1},$$
where the actions of $G$ on the first $\ell^2(G)$ of the left-hand side and the right-hand side are the left regular representation, the action of $G$ on the second $\ell^2(G)$ of the left-hand side is the right regular representation and on the second $\ell^2(G)$ of the right-hand side is trivial. Then $W$ is a $G$-equivariant unitary.
Thus $W$ induces an isomorphism:
\begin{equation}\label{equ-uloc-loc}
ad_{W}: C^{\ast}_{L, P_{k}(N), u}(P_{k}(N)\times G, A)^{N\rtimes G} \otimes \K(H\otimes \ell^2(G)) \rightarrow C^{\ast}(G, C^{\ast}_{L}(P_{k}(N), A)^{N})^{G},
\end{equation}
defined by 
$$ad_{W}(u\otimes K)=(W^{\ast}\otimes I)\left( (W\otimes I) u (W^{\ast}\otimes I) \otimes K\right) (W\otimes I),$$
for any $u\in C^{\ast}_{L, P_{k}(N), u}(P_{k}(N)\times G, A)^{N\rtimes G}$ and $K\in \K(H\otimes \ell^2(G))$, where the action of $G$ on $H\otimes \ell^2(G)$ is trivial. Therefore, we have the following lemma.

\begin{lemma}\label{Lem-trans-loc}
	The homomorphism $e_{\ast}$ in (\ref{equ-red-2'}) is injective, surjective and isomorphic, respectively, if and only if the homomorphism
	$$e_{\ast}: \lim_{k\rightarrow \infty}K_{\ast}\left( C^{\ast}_{L, P_{k}(N), u}(P_{k}(N)\times G, A)^{N\rtimes G} \right) \rightarrow \lim_{k\rightarrow \infty}K_{\ast}\left( C^{\ast}(P_{k}(N)\times G, A)^{N\rtimes G} \right)$$
	is injective, surjective and isomorphic, respectively.
\end{lemma}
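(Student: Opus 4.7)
The strategy is to exhibit a commutative square that identifies the two instances of $e_{\ast}$ appearing in the statement, after which passage to $K$-theory and direct limits finishes the proof. The main input is the $\ast$-isomorphism (\ref{equ-uloc-loc}) already constructed in the preceding paragraph: the $G$-equivariant unitary $W$ conjugates the stabilization of $C^{\ast}_{L, P_{k}(N), u}(P_{k}(N)\times G, A)^{N\rtimes G}$ onto the $G$-equivariant Roe algebra $C^{\ast}(G, C^{\ast}_{L}(P_{k}(N), A)^{N})^{G}$. I would first observe that \emph{exactly the same formula} defines an analogous $\ast$-isomorphism at the non-localized level,
\[
ad_{W} : C^{\ast}(P_{k}(N)\times G, A)^{N\rtimes G} \otimes \mathcal{K}(H\otimes \ell^2(G)) \xrightarrow{\;\cong\;} C^{\ast}(G, C^{\ast}(P_{k}(N), A)^{N})^{G},
\]
because $W$ is built entirely out of $\ell^2(G)$ and therefore commutes with every operation happening in the $N$-direction. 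The bookkeeping on $(N\rtimes G)$-equivariance, local compactness, and finite propagation is the verbatim translation of the argument for the localized version, so the construction of this Roe-algebra analogue is routine.

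Next I would verify the commutativity of the square
\[
\xymatrix@C=1.6em{
 C^{\ast}_{L, P_{k}(N), u}(P_{k}(N)\times G, A)^{N\rtimes G} \otimes \mathcal{K} \ar[r]^-{ad_W}_-{\cong} \ar[d]_{e_0 \otimes \mathrm{id}} & C^{\ast}(G, C^{\ast}_{L}(P_{k}(N), A)^{N})^{G} \ar[d]^{(e_0)_{\sharp}} \\
 C^{\ast}(P_{k}(N)\times G, A)^{N\rtimes G} \otimes \mathcal{K} \ar[r]^-{ad_W}_-{\cong} & C^{\ast}(G, C^{\ast}(P_{k}(N), A)^{N})^{G},
}
\]
where the right-hand vertical arrow is the $\ast$-homomorphism on $G$-equivariant Roe algebras induced fibrewise by evaluation at zero $e_0 : C^{\ast}_{L}(P_{k}(N), A)^{N} \to C^{\ast}(P_{k}(N), A)^{N}$, $u \mapsto u(0)$. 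Commutativity is immediate from the explicit formula for $ad_W$, since the conjugation by $W\otimes I$ does not touch the localization parameter $t\in [0,\infty)$ nor the $N$-direction. Naturality of this square with respect to the inclusions $P_{k_1}(N)\hookrightarrow P_{k_2}(N)$ is equally clear, as $W$ is defined independently of $k$.

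Applying $K_{\ast}$ to the square, using stability to absorb the tensor factor $\mathcal{K}(H\otimes \ell^2(G))$, and taking the direct limit as $k\to\infty$ produces a commutative diagram in which the two horizontal arrows are isomorphisms and the two vertical arrows are precisely the two $e_{\ast}$-maps appearing in the lemma. The equivalences of injectivity, surjectivity and isomorphy follow at once from a diagram chase. I do not foresee any real obstacle; the only content is the verification that the Roe-algebra analogue of $ad_{W}$ is a well-defined $\ast$-isomorphism and that it intertwines the evaluation maps, and both follow directly from the fact that $W$ is $G$-equivariant, acts trivially in the $N$-direction, and preserves all relevant notions of support and local compactness.
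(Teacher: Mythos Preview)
Your proposal is correct and follows precisely the argument the paper has in mind: the paper simply records the isomorphism $ad_W$ in (\ref{equ-uloc-loc}) and writes ``Therefore, we have the following lemma'', leaving the reader to supply exactly the commutative square with the non-localized analogue of $ad_W$, the compatibility with evaluation at zero, and the passage to $K$-theory and direct limits that you have spelled out.
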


\begin{remark}
	Notice that the algebra $C^{\ast}_{L, P_{k}(N), u}(P_{k}(N)\times G, A)^{N\rtimes G}$ is not isomorphic to $C^{\ast}(G, C^{\ast}_{L}(P_{k}(N), A)^{N})^{G}$, since for any element $(T_{g_1, g_2})_{g_1, g_2\in G}$ in the algebra $C^{\ast}(G, C^{\ast}_{L}(P_{k}(N), A)^{N})^{G}$, there exits $u_{g_1, g_2}\in C^{\ast}_{L}(P_{k}(N), A)^{N}$ and $K_{g_1, g_2}\in \K(H\otimes \ell^2(G))$ which is independent of $t$ such that $T_{g_1, g_2}(t)=u_{g_1, g_2}(t)\otimes K_{g_1, g_2}$ for any $t\in [0,\infty)$, where the action of $G$ on $\ell^2(G)$ is the right regular representation.
\end{remark}

Combining Proposition \ref{Prop-localong-uniforalong} with Lemma \ref{Lem-trans-loc}, we obtain the following proposition. 

\begin{proposition}\label{Prop-Condition2}
    The homomorphism $e_{\ast}$ in (\ref{equ-red-2'}) is injective, surjective and isomorphic, respectively, if and only if the homomorphism
    \begin{equation}\label{eq-along}
    e_{\ast}: \lim_{k\rightarrow \infty}K_{\ast}\left( C^{\ast}_{L, P_{k}(N)}(P_{k}(N)\times G, A)^{N\rtimes G} \right) \rightarrow \lim_{k\rightarrow \infty}K_{\ast}\left( C^{\ast}(P_{k}(N)\times G, A)^{N\rtimes G} \right)
    \end{equation}
    is injective, surjective and isomorphic, respectively.
\end{proposition}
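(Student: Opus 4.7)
The plan is to deduce this proposition by stacking two results already in hand: Proposition \ref{Prop-localong-uniforalong}, which compares the two flavors of equivariant localization algebra along $P_{k}(N)$, and Lemma \ref{Lem-trans-loc}, which handles the translation between (\ref{equ-red-2'}) and the uniformly controlled partial assembly map. The argument is essentially a two-step diagram chase, so no serious obstacle is expected; the work lies only in checking naturality of the pieces.

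First I would record that the inclusion
\[
\tau: C^{\ast}_{L, P_{k}(N), u}(P_{k}(N)\times G, A)^{N\rtimes G} \hookrightarrow C^{\ast}_{L, P_{k}(N)}(P_{k}(N)\times G, A)^{N\rtimes G}
\]
is, by the very definitions, compatible with the common evaluation-at-zero homomorphism into $C^{\ast}(P_{k}(N)\times G, A)^{N\rtimes G}$ and with the structure maps induced by the inclusion $P_{k_{1}}(N) \hookrightarrow P_{k_{2}}(N)$. Applying $K$-theory and passing to $\lim_{k\to\infty}$, this produces a commutative square
\[
\xymatrix@C=1em{
\lim_{k} K_{\ast}(C^{\ast}_{L, P_{k}(N), u}(P_{k}(N)\times G, A)^{N\rtimes G}) \ar[r]^-{\tau_{\ast}} \ar[d]_{e_{\ast}} & \lim_{k} K_{\ast}(C^{\ast}_{L, P_{k}(N)}(P_{k}(N)\times G, A)^{N\rtimes G}) \ar[d]^{e_{\ast}} \\
\lim_{k} K_{\ast}(C^{\ast}(P_{k}(N)\times G, A)^{N\rtimes G}) \ar@{=}[r] & \lim_{k} K_{\ast}(C^{\ast}(P_{k}(N)\times G, A)^{N\rtimes G}).
}
\]
By Proposition \ref{Prop-localong-uniforalong}, the map $\tau_{\ast}$ is an isomorphism at each finite level, hence also after the direct limit. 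Consequently the left vertical arrow is injective (resp.\ surjective, resp.\ isomorphic) if and only if the right vertical arrow is; and the right vertical arrow is exactly the homomorphism (\ref{eq-along}) featured in the conclusion.

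To close the argument, I would invoke Lemma \ref{Lem-trans-loc}, which provides the companion equivalence: the partial assembly map (\ref{equ-red-2'}) has the property of being injective (resp.\ surjective, resp.\ isomorphic) if and only if the left vertical arrow of the square above has the same property. Concatenating this equivalence with the one extracted from the square yields the claimed equivalence between (\ref{equ-red-2'}) and (\ref{eq-along}). The only subtlety is verifying that $\tau$ genuinely commutes with the structure maps in $k$ and that Proposition \ref{Prop-localong-uniforalong} survives the direct limit; both are routine, since $\tau$ is simply an inclusion of subalgebras and direct limits are exact in $K$-theory.
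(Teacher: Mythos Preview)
Your proposal is correct and follows exactly the paper's own route: the paper simply states that the proposition is obtained by combining Proposition \ref{Prop-localong-uniforalong} with Lemma \ref{Lem-trans-loc}, and your commutative square with the naturality checks spells out precisely what that combination means.
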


We introduce the following conjectures for $N\rtimes G$.

\begin{conjecture}\label{partial-conj}
	Let $N\rtimes G$ be an isometric semi-direct product and $A$ be an $(N\rtimes G)$-$C^{\ast}$-algebra.
	\begin{enumerate}
		\item The \textit{strong Novikov conjecture} along $N$ with coefficients in $A$ for $N\rtimes G$ asserts that $e_{\ast}$ in (\ref{eq-along}) is injective.
		\item The \textit{surjective assembly conjecture} along $N$ with coefficients in $A$ for $N\rtimes G$ asserts that $e_{\ast}$ in (\ref{eq-along}) is surjective.
		\item The \textit{Baum--Connes conjecture} along $N$ with coefficient in $A$ for $N\rtimes G$ asserts that $e_{\ast}$ in (\ref{eq-along}) is isomorphic.
	\end{enumerate}
\end{conjecture}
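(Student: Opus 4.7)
Conjecture \ref{partial-conj} is definitional rather than propositional: in the same spirit as Conjecture \ref{Conj}, it names three assertions about the map $e_{\ast}$ of (\ref{eq-along})---injectivity, surjectivity, and bijectivity---so that they can be invoked as hypotheses elsewhere (notably as Condition~(2) in the statement of Theorem \ref{third-main-thm-intro}). There is accordingly no proof to supply in the usual sense; what can be recorded is that the object under discussion is well-posed, together with a plan for how one would go about \emph{verifying} instances of the three assertions.

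Well-posedness follows from the material already developed. By Definition \ref{Def-loc-along}, $C^{\ast}_{L,P_{k}(N)}(P_{k}(N)\times G,A)^{N\rtimes G}$ is a genuine $C^{\ast}$-algebra, and the evaluation-at-zero map $u\mapsto u(0)$ is a bounded $\ast$-homomorphism into the equivariant Roe algebra $C^{\ast}(P_{k}(N)\times G,A)^{N\rtimes G}$. Both constructions are functorial with respect to the inclusions $P_{k_{1}}(N)\hookrightarrow P_{k_{2}}(N)$, so the evaluation map commutes with the transition homomorphisms $i_{k_{1}k_{2},\ast}$ and thus induces a canonically defined homomorphism of the direct limits. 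Each of the three bullets therefore refers to a well-defined property of a genuine group homomorphism, and the three \emph{conjectures along $N$} are meaningful in exactly the same way that SNC, SAC, and BCC are meaningful.

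For verifying instances, the natural plan is to mimic the skeleton induction of Proposition \ref{Lem-Key}. One would filter $P_{k}(N)$ by its $m$-skeleta $P_{k}(N)^{(m)}$, at each step cover the top-dimensional simplices by a uniformly excisive $N\rtimes G$-pair $(X_{1},X_{2})$ built from small and large neighborhoods of the simplex centers, and apply the Mayer--Vietoris six-term sequence of Lemma \ref{Lem-MV-localong} to the along-$P_{k}(N)$ localization algebras together with the corresponding sequence for the equivariant Roe algebras; the five lemma then propagates injectivity, surjectivity, or bijectivity of $e_{\ast}$ from the pieces $X_{1}$, $X_{2}$, $X_{1}\cap X_{2}$ to $P_{k}(N)^{(m)}$. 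The base case $m=0$, where $P_{k}(N)^{(0)}=N$, reduces---by the same kind of argument as in the first paragraph of the proof of Proposition \ref{Lem-Key}---to an assertion that the full Baum--Connes-type assembly map $e_{\ast}$ is injective, surjective, or an isomorphism for the relevant ``transverse'' algebra on the $G$-side.

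The principal obstacle to turning this plan into a uniform verification---and the essential reason why the partial conjectures are \emph{conjectures} rather than theorems---is the presence of the $G$-twist on $N$. In the skeleton step, the center of a simplex of $P_{k}(N)$ has stabilizer of the form $F\rtimes_{\beta|_{F}}G$ for a finite subgroup $F\le N$, not the direct product $F\times G$ appearing in the proof of Proposition \ref{Lem-Key}; a clean reduction to BCC for an independent $G$-side problem is therefore unavailable, and one is forced to assume (or separately establish) Baum--Connes-type statements for each semidirect factor $F\rtimes_{\beta|_{F}}G$ with appropriate coefficients. It is precisely this obstruction that Theorem \ref{third-main-thm-intro} circumvents by taking the conjecture along $N$ as a hypothesis, rather than attempting to prove it in general.
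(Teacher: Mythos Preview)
You are correct that Conjecture~\ref{partial-conj} is a definition, not a theorem: the paper introduces no argument for it and simply states the three assertions so they can serve as hypotheses later (most prominently in Theorem~\ref{main-THM}). Your observation that the map $e_{\ast}$ in (\ref{eq-along}) is well-defined is accurate and follows from the material already in place.

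Your additional paragraphs sketching a skeleton-induction approach to \emph{verifying} instances of the partial conjectures go beyond anything the paper attempts here; the paper does not try to prove these conjectures in general, and indeed the only general verification result in this direction is Proposition~\ref{Prop-Novikov-along}, which handles the injectivity case under a coarse-embeddability hypothesis on $N$ via the Kasparov--Yu method rather than a Mayer--Vietoris argument. Your remark about the obstruction coming from the $G$-twist on $N$ is a reasonable heuristic for why the partial conjectures are genuinely hypotheses rather than consequences, though one small inaccuracy: in the isometric semi-direct product setting the stabilizer in $N\rtimes G$ of a simplex center in $P_k(N)$ need not be of the clean form $F\rtimes G$ for a finite $F\le N$, since $G$ permutes simplices of $P_k(N)$ nontrivially in general. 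This does not affect your main point, which stands.
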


Combining Proposition \ref{Prop-reduction-Conj}, homomorphisms (\ref{equ-red-1}) (\ref{equ-red-2'}) with Proposition \ref{Prop-Condition2}, we obtain the third main theorem of this paper.

\begin{theorem}\label{main-THM}
	Let $N\rtimes G$ be an isometric semi-direct product and $A$ be an $(N\rtimes G)$-$C^{\ast}$-algebra. If
	\begin{enumerate}
		\item \label{Condition1} there exists a non-negative sequence $\{k_{i}\}_{i\in \N}$ with $\lim_{i\rightarrow \infty}k_{i}=\infty$ such that $G$ satisfies SNC, SAC and BCC with coefficients in $C^{\ast}_{L}(P_{k_{i}}(N), A)^{N}$ for all $i\in \N$, respectively;
		\item \label{Condition2} $N\rtimes G$ satisfies SNC along $N$, SAC along $N$ and BCC along $N$ with coefficients in $A$, respectively.
	\end{enumerate}
	Then $N\rtimes G$ satisfies SNC, SAC and BCC with coefficients in $A$, respectively.
\end{theorem}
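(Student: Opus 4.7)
The plan is to directly assemble the reductions established in Sections \ref{Sec-4} and \ref{Sec-5} without introducing new machinery. First I invoke Proposition \ref{Prop-reduction-Conj}, which translates each of SNC, SAC and BCC with coefficients in $A$ for $N\rtimes G$ into the statement that the evaluation-at-zero map (\ref{equ-red}) is injective, surjective, or isomorphic, respectively. I then factor (\ref{equ-red}) as the composition of the two homomorphisms (\ref{equ-red-1}) and (\ref{equ-red-2'}), so it suffices to verify that each factor has the appropriate property.

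For the upper factor (\ref{equ-red-1}), I observe that for a fixed $k \geq 0$ it is precisely the Baum--Connes assembly map for $G$ with coefficients in the $G$-$C^\ast$-algebra $C^\ast_L(P_k(N), A)^N$, where the $G$-action is the one constructed just before Proposition \ref{Prop-oneloc-twoloc}. Hypothesis (\ref{Condition1}) asserts that this assembly map is injective, surjective, or isomorphic for every $k = k_i$; since $\{k_i\}$ is cofinal in $\N$ by the assumption $\lim_{i\to\infty} k_i = \infty$, passing to the direct limit preserves each of these properties, so (\ref{equ-red-1}) inherits the required property.

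For the lower factor (\ref{equ-red-2'}), I apply Proposition \ref{Prop-Condition2}, which shows that (\ref{equ-red-2'}) is injective, surjective, or isomorphic if and only if the partial Baum--Connes assembly map along $N$ has the corresponding property. Hypothesis (\ref{Condition2}) --- namely SNC along $N$, SAC along $N$ and BCC along $N$ with coefficients in $A$ for $N\rtimes G$ --- supplies exactly this. Composing the conclusions for (\ref{equ-red-1}) and (\ref{equ-red-2'}) yields the desired property for (\ref{equ-red}), and Proposition \ref{Prop-reduction-Conj} then finishes the argument. The deep analytic work --- the $K$-theoretic identifications via two-parametric localization algebras (Propositions \ref{Prop-loc-twoloc} and \ref{Prop-oneloc-twoloc}) and the quantitative $K$-theory argument identifying the two equivariant localization algebras along $N$ (Proposition \ref{Prop-localong-uniforalong}) --- has already been absorbed into the cited reductions, so the only care required in this final step is the routine bookkeeping for the direct limits in $k$ and $l$, which I do not expect to pose any significant obstacle.
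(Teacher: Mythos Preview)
Your proposal is correct and follows essentially the same approach as the paper: the paper's own proof is a single sentence that combines Proposition \ref{Prop-reduction-Conj}, the factorization of (\ref{equ-red}) into (\ref{equ-red-1}) and (\ref{equ-red-2'}), and Proposition \ref{Prop-Condition2}, which is exactly the logical skeleton you have written out. Your added remarks---that (\ref{equ-red-1}) at fixed $k$ is precisely the assembly map for $G$ with coefficients in $C^{\ast}_{L}(P_{k}(N),A)^{N}$, and that cofinality of $\{k_i\}$ lets one pass to the limit---are the details the paper leaves implicit.
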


When $A=\C$, we have the following result.

\begin{corollary}
	Let $N\rtimes G$ be an isometric semi-direct product. If
	\begin{enumerate}
		\item \label{Condition1'} there exists a non-negative sequence $\{k_{i}\}_{i\in \N}$ with $\lim_{i\rightarrow \infty}k_{i}=\infty$ such that $G$ satisfies SNC, SAC and BCC with coefficients in $C^{\ast}_{L}(P_{k_{i}}(N))^{N}$ for all $i\in \N$, respectively;
		\item \label{Condition2'} $N\rtimes G$ satisfies the strong Novikov conjecture along $N$, the surjective assembly conjecture along $N$ and the Baum--Connes conjecture along $N$, respectively.
	\end{enumerate}
	Then $N\rtimes G$ satisfies SNC, SAC and BCC, respectively.
\end{corollary}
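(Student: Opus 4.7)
The plan is to deduce this corollary as the direct specialization of Theorem~\ref{main-THM} to the trivial coefficient algebra $A=\C$. First I would verify that the two hypotheses of the corollary match the hypotheses of Theorem~\ref{main-THM} under this choice of coefficients. Condition~(\ref{Condition1'}) coincides with condition~(\ref{Condition1}) of Theorem~\ref{main-THM} because, by Definition~\ref{Def-localg}, we have $C^{\ast}_{L}(P_{k_i}(N),\C)^N = C^{\ast}_{L}(P_{k_i}(N))^N$, so ``$G$ satisfies SNC/SAC/BCC with coefficients in $C^{\ast}_{L}(P_{k_i}(N))^N$'' is literally the $A=\C$ instance of condition~(\ref{Condition1}). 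Condition~(\ref{Condition2'}) coincides with condition~(\ref{Condition2}) of Theorem~\ref{main-THM} because, by Conjecture~\ref{partial-conj}, the strong Novikov, surjective assembly, and Baum--Connes conjectures along $N$ for $N\rtimes G$ are by definition the ``with coefficients in $\C$'' instances of the along-$N$ conjectures with coefficients.

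Once the hypotheses are aligned, Theorem~\ref{main-THM} delivers that $N\rtimes G$ satisfies SNC, SAC, and BCC with coefficients in $\C$, which by Conjecture~\ref{Conj} is exactly the plain strong Novikov, surjective assembly, and Baum--Connes conjectures for $N\rtimes G$, as required. Since the corollary is a direct specialization requiring no additional input, there is no genuine obstacle at this step; all the substantive work has already been carried out in Theorem~\ref{main-THM}, whose proof reduces the assembly map for $N\rtimes G$ via the two-parametric equivariant localization algebra (steps $2_{3}$--$3_{3}$ of the strategy outlined in the introduction) to a composition of an assembly map for $G$ with coefficients in $C^{\ast}_{L}(P_{k_i}(N))^{N}$, controlled by hypothesis~(\ref{Condition1'}), and a partial assembly map along $N$, controlled by hypothesis~(\ref{Condition2'}).
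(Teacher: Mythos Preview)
Your proposal is correct and matches the paper's approach exactly: the paper simply states the corollary after Theorem~\ref{main-THM} with the sentence ``When $A=\C$, we have the following result,'' treating it as an immediate specialization with no further argument. Your verification that conditions~(\ref{Condition1'}) and~(\ref{Condition2'}) are the $A=\C$ instances of conditions~(\ref{Condition1}) and~(\ref{Condition2}) is precisely the intended (trivial) content.
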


We remark that Theorem \ref{main-THM} also holds for rational version of the above conjectures.

When replacing $P_{k}(N)$ by $\widetilde{P_{k,m}}(N)$ in (\ref{eq-along}), we can similarly introduce the \textit{rational analytic Novikov conjecture along $N$ with coefficients}. Thus, we have the following theorem by a similar argument of Theorem \ref{main-THM}.

\begin{theorem}\label{main-THM-rational}
	Let $N\rtimes G$ be an isometric semi-direct product and $A$ be an $(N\rtimes G)$-$C^{\ast}$-algebra. If
	\begin{enumerate}
		\item there exists a non-negative sequence $\{k_{i}\}_{i\in \N}$ with $\lim_{i\rightarrow \infty}k_{i}=\infty$ such that $G$ satisfies the rational analytic Novikov conjecture with coefficients in $C^{\ast}_{L}(P_{k_{i}}(N), A)^{N}$ for all $i\in \N$;
		\item $N\rtimes G$ satisfies the rational analytic Novikov conjecture along $N$ with coefficients in $A$.
	\end{enumerate}
	Then $N\rtimes G$ satisfies the rational analytic Novikov conjecture with coefficients in $A$.
\end{theorem}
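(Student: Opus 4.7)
The plan is to carry out the analogue of the proof of Theorem \ref{main-THM} for the Mishchenko--Kasparov assembly map, substituting Milnor-Rips complexes $\widetilde{P_{k,m}}$ for Rips complexes $P_{k}$ at the appropriate stages and showing rational injectivity in place of isomorphism. The three main ingredients — reduction to products of classifying spaces, the two-parametric equivariant localization algebras, and the uniformly controlled equivariant localization algebras — all admit direct Milnor-Rips analogues.

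First, I would establish the Milnor-Rips version of Lemma \ref{Lem-Rip-pro}: the Milnor-Rips complex $\widetilde{P_{k,m}}(N\rtimes G)$ is strongly Lipschitz $(N\rtimes G)$-homotopy equivalent to the product $\widetilde{P_{k,m}}(N)\times \widetilde{P_{l,m}}(G)$. The comparison maps $\widetilde{\rho}$ and $\widetilde{\rho'}$ used in Section \ref{Sec-3} for direct products extend essentially verbatim to the isometric semi-direct product setting, since the metric on $N\rtimes G$ is the maximum of the metrics on $N$ and $G$ and the $G$-action on $N$ is isometric. This reduces the rational analytic Novikov conjecture for $N\rtimes G$ to rational injectivity of the Mishchenko--Kasparov assembly map whose domain is $\lim K_{\ast}\bigl(C^{\ast}_{L}(\widetilde{P_{k,m}}(N)\times \widetilde{P_{l,m}}(G), A)^{N\rtimes G}\bigr)$.

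Next, I would introduce a Milnor-Rips two-parametric algebra and the Milnor-Rips uniformly controlled localization algebra along $\widetilde{P_{k,m}}(N)$, and prove the Milnor-Rips analogues of Propositions \ref{Prop-loc-twoloc}, \ref{Prop-oneloc-twoloc}, \ref{Prop-localong-uniforalong} and \ref{Prop-Condition2}. The skeleton induction via Mayer--Vietoris, the strong Lipschitz homotopy equivalences, and the quantitative $K$-theory argument with uniformly quasi-stable filtrations depend only on the CW-complex structure and equivariance of the underlying spaces, so they extend directly; indeed the base case of the induction is simpler than before because $G$ acts freely on $\widetilde{P_{l,m}}(G)$ and no finite-stabilizer quotients appear. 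Combined with the $ad_{W}$ isomorphism of (\ref{equ-uloc-loc}), these produce a factorization of $\mu_{\ast}^{N\rtimes G}$ as the Mishchenko--Kasparov assembly $\mu_{\ast}^{G}$ for $G$ with coefficients in $C^{\ast}_{L}(P_{k}(N), A)^{N}$ followed by the along-$N$ partial Mishchenko--Kasparov assembly. Condition (1), applied along the sequence $\{k_{i}\}$, gives rational injectivity of the first factor; condition (2) gives rational injectivity of the second; and rational injectivity of $\mu_{\ast}^{N\rtimes G}$ follows by composition.

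The main obstacle will be threading the coefficient mismatch between the Rips-of-$N$ algebra $C^{\ast}_{L}(P_{k}(N), A)^{N}$ appearing in condition (1) and the Milnor-Rips-of-$N$ structure naturally produced by the above reduction. This is bridged by the naturality of $\mu_{\ast}^{G}$ with respect to the $G$-equivariant coefficient map induced by $\lambda_{N}\colon \widetilde{P_{k,m}}(N)\to P_{k}(N)$, expressed by the commutative diagram
\begin{equation*}
  \mu_{\ast}^{G}[P_{k}(N)]\circ (\lambda_{N})_{\ast}^{\text{LHS}} \;=\; (\lambda_{N})_{\ast}^{\text{coeff}}\circ \mu_{\ast}^{G}[\widetilde{P_{k,m}}(N)].
\end{equation*}
The resulting $\lambda_{N}$-factor on the coefficient side gets absorbed into the along-$N$ MK assembly of condition (2), since that assembly is precisely the composition $e_{\ast}^{N}\circ (\lambda_{N})_{\ast}^{\text{coeff}}$ after the Milnor-Rips analogue of Lemma \ref{Lem-trans-loc}; in particular one does not need $(\lambda_{N})_{\ast}$ to be rationally injective in isolation (which would require a K\"unneth hypothesis on $A$ as in Lemma \ref{Lem-torsion-to-free}). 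The remaining verification — that the Milnor-Rips analogue of Proposition \ref{Prop-localong-uniforalong} extends mechanically with Lemma \ref{Lem-fil-eva} applying verbatim, and that all the relevant inductive limits in $k$, $l$, $m$ interact consistently — is expected to be routine but notationally heavy.
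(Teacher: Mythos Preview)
Your overall strategy—Milnor--Rips analogues of Lemma \ref{Lem-Rip-pro}, Propositions \ref{Prop-loc-twoloc}, \ref{Prop-oneloc-twoloc}, \ref{Prop-localong-uniforalong}, \ref{Prop-Condition2}, and Lemma \ref{Lem-trans-loc}—is exactly what the paper means by ``a similar argument of Theorem \ref{main-THM}''. You are also right to flag the mismatch between the Rips-of-$N$ coefficient $C^{\ast}_{L}(P_{k}(N),A)^{N}$ in condition (1) and the Milnor--Rips-of-$N$ structure that the product reduction naturally produces.

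However, your resolution of this mismatch has a gap. After the Milnor--Rips machinery, the genuine factorization is
\[
\mu_{\ast}^{N\rtimes G}=\bigl(\text{along-}N\text{ MK}\bigr)\circ \mu_{\ast}^{G}\bigl[C^{\ast}_{L}(\widetilde{P_{k,m}}(N),A)^{N}\bigr],
\]
with the first factor having domain indexed by $\widetilde{P_{k,m}}(N)$. Your observation that along-$N$ MK $= e_{\ast}^{N}\circ(\lambda_{N})_{\ast}^{\text{coeff}}$ is correct, and combined with naturality it rewrites the composite as $e_{\ast}^{N}\circ \mu_{\ast}^{G}[P_{k}(N)]\circ(\lambda_{N})_{\ast}^{\text{LHS}}$. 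But now the two hypotheses no longer slot in: condition (2) gives rational injectivity of $e_{\ast}^{N}\circ(\lambda_{N})_{\ast}^{\text{coeff}}$ (on the Milnor--Rips side, not of $e_{\ast}^{N}$ alone), and condition (1) gives rational injectivity of $\mu_{\ast}^{G}[P_{k}(N)]$. Chasing a class $x$ with $\mu_{\ast}^{N\rtimes G}(x)=0$ through condition (2) yields $\mu_{\ast}^{G}[\widetilde{P_{k,m}}(N)](x)=0$; pushing through naturality and condition (1) then gives only $(\lambda_{N})_{\ast}^{\text{LHS}}(x)=0$, not $x=0$. So the $(\lambda_{N})_{\ast}^{\text{LHS}}$ factor is \emph{not} absorbed—your claim that ``one does not need $(\lambda_{N})_{\ast}$ to be rationally injective in isolation'' is precisely the missing step. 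The two $\lambda_{N}$-maps (on the domain side and on the coefficient side) are different homomorphisms between different groups, and knowing condition (2) controls the latter does not help with the former.
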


In \cite{Kasparov-Yu-NovikovConj}, G. Kasparov and G. Yu introduced a notion of (rational) Property (H) for Banach spaces which contains Hilbert spaces, and they proved that the (rational) strong Novikov conjecture with coefficients holds for any countable discrete group which admits a coarse embedding into a Banach space with (rational) Property (H).

Kasparov and Yu's method in \cite{Kasparov-Yu-NovikovConj} is also practicable for the (rational) strong Novikov conjecture along $N$ with coefficients. Thus, we have the following proposition.

\begin{proposition}\label{Prop-Novikov-along}
	Let $N\rtimes G$ be an isometric semi-direct product and $A$ be an $(N\rtimes G)$-$C^{\ast}$-algebra. If $N$ admits a coarse embedding into a Banach space with (rational) Property (H), then the (rational) strong Novikov conjecture along $N$ with coefficients in $A$ holds for $N\rtimes G$.
\end{proposition}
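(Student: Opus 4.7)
The plan is to adapt the Dirac/dual-Dirac argument of Kasparov--Yu in \cite{Kasparov-Yu-NovikovConj} to the ``along $N$'' setting, exploiting the fact that the $G$-direction plays only the role of an inert auxiliary coefficient and does not interact with the Bott construction. Fix a coarse embedding $f: N \to E$ into a Banach space with (rational) Property (H). Since the $G$-action on $N$ is isometric, we may replace $f$ by its $G$-orbit average (or rather build the twist equivariantly with respect to $G$, using that the Property (H) Banach algebra constructed from $E$ carries a $G$-action extending the $N$-equivariant one when $f$ is suitably equivariant up to bounded error). The first step is to construct the twisted equivariant Roe algebra along $P_{k}(N)$, namely $C^{\ast}_{L, P_{k}(N)}\bigl(P_{k}(N) \times G, A \otimes \mathcal{A}(E)\bigr)^{N\rtimes G}$ and its non-localized companion, where $\mathcal{A}(E)$ is the Kasparov--Yu Banach algebra associated with $E$; the key point is that the Roe-type construction is carried out only in the $P_{k}(N)$-direction, while the $G$-direction is left as a trivial tensor factor, so the finite-propagation condition along $G$ is preserved verbatim.

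The second step is to construct a Bott homomorphism
\[
\beta_{\ast}: K_{\ast}\bigl( C^{\ast}_{L, P_{k}(N)}(P_{k}(N)\times G, A)^{N\rtimes G} \bigr) \longrightarrow K_{\ast}\bigl( C^{\ast}_{L, P_{k}(N)}(P_{k}(N)\times G, A\otimes \mathcal{A}(E))^{N\rtimes G} \bigr)
\]
and an analogous Bott homomorphism on the Roe side, assembled from the coarse embedding $f$ exactly as in \cite{Kasparov-Yu-NovikovConj}; again the $G$-direction is untouched, so the Bott element tensors trivially across it and the Bott maps commute with the evaluation-at-zero map $e$. The naturality of this construction with respect to inclusions $P_{k}(N) \hookrightarrow P_{k'}(N)$ ensures that the Bott map passes to the inductive limit over $k$.

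The third and crucial step is to verify that the \emph{twisted} evaluation-at-zero map
\[
e_{\ast}^{tw}: \lim_{k\to\infty} K_{\ast}\bigl( C^{\ast}_{L, P_{k}(N)}(P_{k}(N)\times G, A\otimes \mathcal{A}(E))^{N\rtimes G} \bigr) \longrightarrow \lim_{k\to\infty} K_{\ast}\bigl( C^{\ast}(P_{k}(N)\times G, A\otimes \mathcal{A}(E))^{N\rtimes G} \bigr)
\]
is an isomorphism (or rationally so, in the rational case). This is the technical heart of the argument, but it reduces to the cutting-and-pasting/Mayer--Vietoris scheme already developed in the previous sections: by Lemma \ref{Lem-Liphomotopy-localong} and Lemma \ref{Lem-MV-localong} one can induct on the dimension of the skeleton $P_{k}(N)^{(m)}$, and the base case $m=0$ reduces to showing the isomorphism on the $0$-skeleton $N$, where the twisted algebra becomes a uniform continuous function algebra into a stable $C^{\ast}$-algebra and the desired isomorphism follows from the Property (H) dual-Dirac construction applied fiberwise in the $N$-direction, with $G$ and $A$ sitting as a trivial coefficient. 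The coarse embedding and the finite propagation along $N$ guarantee the required support control for the dual-Dirac element.

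Finally, the standard diagram chase: the Bott maps fit into a commutative square with the evaluation maps, and a dual-Dirac / Bott-periodicity style composition argument shows that the Bott map on the $K$-homological side is (rationally) split injective, whence the original $e_{\ast}$ is (rationally) injective. The main obstacle I expect is the careful construction of the $G$-equivariance of the coarse embedding-twisted algebras: Property (H) is stated for a single group acting on $E$, so one must either arrange an $(N\rtimes G)$-action on $\mathcal{A}(E)$ that restricts to the Kasparov--Yu $N$-action and is compatible with the isometric $G$-action on $N$, or else incorporate the $G$-action as part of the coefficient algebra; the isometric hypothesis on $N\rtimes G$ is precisely what makes this feasible, since the $G$-action sends the metric structure used to build $\mathcal{A}(E)$ to itself.
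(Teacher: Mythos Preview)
Your proposal is correct and takes essentially the same approach as the paper: the paper does not actually supply a proof of this proposition, but merely asserts that ``Kasparov and Yu's method in \cite{Kasparov-Yu-NovikovConj} is also practicable for the (rational) strong Novikov conjecture along $N$ with coefficients,'' and your sketch is precisely a fleshing-out of that method in the present setting. One small remark: the Mayer--Vietoris lemmas you cite (Lemma \ref{Lem-Liphomotopy-localong} and Lemma \ref{Lem-MV-localong}) are stated in Section \ref{Sec-2} for the direct product $N\times G$, so in the semi-direct case you should either invoke their obvious $(N\rtimes G)$-equivariant analogues (the isometric hypothesis guarantees that the skeleton decomposition of $P_{k}(N)$ by distance-to-center is $(N\rtimes G)$-invariant), or pass through Lemma \ref{Lem-Liphomotopy-unilocalong} and Lemma \ref{Lem-MV-unilocalong} via Proposition \ref{Prop-localong-uniforalong}.
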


Combing Proposition \ref{Prop-Novikov-along} with Theorem \ref{main-THM}, we obtain the following corollary which generalizes J. Deng's result in \cite{Deng-Novikov-extensions} for the case of isometric semi-direct products.

\begin{corollary}\label{Cor-Novikov-PropertyH}
	Let $N\rtimes G$ and $A$ be as above. If $N$ admits a coarse embedding into a Banach space with (rational) Property (H) and there exists a non-negative sequence $\{k_{i}\}_{i\in \N}$ with $\lim_{i\rightarrow \infty}k_{i}=\infty$ such that $G$ satisfies the (rational) strong Novikov conjecture with coefficients in $C^{\ast}_{L}(P_{k_{i}}(N), A)^{N}$ for all $i\in \N$, then $N\rtimes G$ satisfies the (rational) strong Novikov conjecture with coefficients in $A$.
\end{corollary}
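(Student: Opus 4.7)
The plan is to deduce this corollary by simply combining the two ingredients that have already been put in place just above, namely Proposition \ref{Prop-Novikov-along} and Theorem \ref{main-THM} (or its rational analogue Theorem \ref{main-THM-rational}). The first hypothesis of the corollary, that $N$ admits a coarse embedding into a Banach space with (rational) Property (H), is precisely the hypothesis of Proposition \ref{Prop-Novikov-along}, so that proposition immediately supplies condition (\ref{Condition2}) of Theorem \ref{main-THM} for the strong Novikov piece, i.e.\ the (rational) strong Novikov conjecture along $N$ with coefficients in $A$ for $N\rtimes G$. The second hypothesis of the corollary is, word for word, the part of condition (\ref{Condition1}) of Theorem \ref{main-THM} that concerns the strong Novikov conjecture.

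With both conditions of Theorem \ref{main-THM} now verified, the injectivity of the Baum--Connes assembly map $e_{\ast}$ for $N\rtimes G$ with coefficients in $A$ follows directly by reading off the strong Novikov conclusion of that theorem. In the rational case one runs exactly the same argument: Proposition \ref{Prop-Novikov-along} gives the rational strong Novikov conjecture along $N$ with coefficients in $A$, and then Theorem \ref{main-THM-rational} (whose hypothesis on $G$ is also the second assumption of the corollary) yields that $\mu_\ast$, and hence $e_{\ast}$, is rationally injective for $N\rtimes G$ with coefficients in $A$.

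Since both Proposition \ref{Prop-Novikov-along} and Theorem \ref{main-THM} are already proved at the time this corollary is stated, the only thing to verify is the correct matching of coefficient algebras: the coefficient algebra $C^{\ast}_{L}(P_{k_{i}}(N), A)^{N}$ on which $G$ acts in the hypothesis of the corollary is exactly the one appearing in condition (\ref{Condition1}) of Theorem \ref{main-THM}, so no intermediate translation is needed. Consequently, there is no real obstacle in this step; the mild point to be careful about is only that the Property (H) hypothesis must be chosen in the same variant (integral versus rational) as the conclusion one wants, so that the correct version of Proposition \ref{Prop-Novikov-along} and the correct version of Theorem \ref{main-THM} (respectively Theorem \ref{main-THM-rational}) are invoked consistently. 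This is a bookkeeping matter rather than a mathematical difficulty, so the corollary drops out as an essentially formal consequence of the preceding results.
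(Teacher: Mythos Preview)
Your approach is the same as the paper's: the corollary is obtained by combining Proposition \ref{Prop-Novikov-along} with Theorem \ref{main-THM}. The integral case is handled correctly.

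However, your treatment of the rational case contains a small but genuine slip. You invoke Theorem \ref{main-THM-rational}, which concerns the \emph{rational analytic Novikov conjecture} (injectivity of $\mu_\ast$), and then write ``$\mu_\ast$, and hence $e_\ast$, is rationally injective''. This implication does not hold in general: since $\mu_\ast = e_\ast \circ \lambda_\ast$, rational injectivity of $\mu_\ast$ does not force rational injectivity of $e_\ast$. The conclusion of the corollary is the \emph{rational strong Novikov conjecture}, i.e.\ rational injectivity of $e_\ast$, and Theorem \ref{main-THM-rational} is not the right tool for that. The correct reference is the remark immediately preceding Theorem \ref{main-THM-rational}, which states that Theorem \ref{main-THM} also holds for the rational versions of SNC, SAC and BCC. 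Combining Proposition \ref{Prop-Novikov-along} (which gives the rational strong Novikov conjecture along $N$) with this rational version of Theorem \ref{main-THM} yields the desired conclusion directly, with no need to pass through $\mu_\ast$.
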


\section{Applications and examples} \label{Sec-6}

\subsection{Applications to central extensions and finite extensions}

Firstly, we show two applications of Theorem \ref{first-main-thm} to central extensions of groups and group extensions by finite groups.

All a-T-menable groups and hyperbolic groups satisfy the Baum--Connes conjecture with coefficients (cf. \cite{Higson-Kasparov} and \cite{Lafforgue-2012}). Moreover, both a-T-menability and hyperbolicity are preserved under finite index extensions. Thus, we have the following theorem by Theorem \ref{first-main-thm}.

\begin{theorem}\label{Thm-N-special}
	Let $1\rightarrow N \rightarrow \Gamma \rightarrow \Gamma/N \rightarrow 1$ be a group extension and $A$ be a $\Gamma$-$C^{\ast}$-algebra. Assume that $N$ is an a-T-menable group or a hyperbolic group. If $\Gamma/N$ satisfies SNC, SAC and BCC with coefficients in $C_0(\Gamma/N, A)\rtimes_{r}\Gamma$, respectively. Then $\Gamma$ satisfies SNC, SAC and BCC with coefficients in $A$, respectively.
\end{theorem}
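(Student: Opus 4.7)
The plan is to reduce the statement directly to Theorem \ref{first-main-thm}. Hypothesis (2) of that theorem, namely that $\Gamma/N$ satisfies SNC, SAC and BCC with coefficients in $C_0(\Gamma/N, A)\rtimes_r \Gamma$, is identical to the assumption we are given. Therefore the only thing left to verify is hypothesis (1) of Theorem \ref{first-main-thm}: for every finite subgroup $F \subseteq \Gamma/N$, the group $q^{-1}(F)$ satisfies the Baum--Connes conjecture with coefficients in $A$.

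To this end, I would first observe that the restriction of $q$ to $q^{-1}(F)$ gives a short exact sequence $1 \to N \to q^{-1}(F) \to F \to 1$, so $N$ sits as a finite-index subgroup of $q^{-1}(F)$ with index $|F|$. The next step is to invoke the permanence of the two structural properties at hand under finite-index supergroups. If $N$ is a-T-menable, then a proper affine isometric action of $N$ on a Hilbert space can be induced to a proper affine isometric action of $q^{-1}(F)$ on the corresponding Hilbert $\ell^2(q^{-1}(F)/N)$-tensor space, so $q^{-1}(F)$ is again a-T-menable. If $N$ is hyperbolic, then hyperbolicity being a quasi-isometry invariant and the inclusion $N \hookrightarrow q^{-1}(F)$ being a quasi-isometry (as the index is finite), $q^{-1}(F)$ is hyperbolic as well.

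With $q^{-1}(F)$ established to lie in the same class as $N$, the Higson--Kasparov theorem \cite{Higson-Kasparov} in the a-T-menable case, and Lafforgue's theorem \cite{Lafforgue-2012} in the hyperbolic case, guarantee that $q^{-1}(F)$ satisfies the Baum--Connes conjecture with coefficients in any $\Gamma$-$C^\ast$-algebra, in particular in $A$. Thus both hypotheses of Theorem \ref{first-main-thm} are met, and its conclusion gives exactly the statement to be proved.

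There is no serious obstacle here: the entire content of the argument is the permanence of a-T-menability and of hyperbolicity under finite-index extensions, which are standard. The main point worth double-checking is that the permanence is in the correct direction (passing from a subgroup $N$ to a finite-index supergroup $q^{-1}(F)$, rather than the more commonly stated passage to a finite-index subgroup), but both directions are well known for these two classes.
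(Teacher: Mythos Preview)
Your proposal is correct and follows exactly the same route as the paper: verify hypothesis (1) of Theorem \ref{first-main-thm} by noting that a-T-menability and hyperbolicity pass to finite-index supergroups, so that $q^{-1}(F)$ inherits the relevant property and hence satisfies BCC with coefficients by Higson--Kasparov or Lafforgue, and then invoke Theorem \ref{first-main-thm}. The paper's own argument is just the one-sentence version of what you wrote.
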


Any finite groups and abelian groups are a-T-menable. Thus, we have the following two corollaries which imply that SNC, SAC and BCC with coefficients are closed under extensions by finite groups and central extensions.

\begin{corollary}\label{Cor-finite-extension}
	Let $1\rightarrow N \rightarrow \Gamma \rightarrow \Gamma/N \rightarrow 1$ be a group extension by a finite group $N$ and $A$ be a $\Gamma$-$C^{\ast}$-algebra. If $\Gamma/N$ satisfies SNC, SAC and BCC with coefficients in $C_0(\Gamma/N, A)\rtimes_{r}\Gamma$, respectively. Then $\Gamma$ satisfies SNC, SAC and BCC with coefficients in $A$, respectively.
\end{corollary}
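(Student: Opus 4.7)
The plan is to derive this corollary as an immediate consequence of Theorem \ref{Thm-N-special}, or equivalently by a direct application of the main Theorem \ref{first-main-thm}. Since the hypothesis on $\Gamma/N$ in Corollary \ref{Cor-finite-extension} matches condition (2) of Theorem \ref{first-main-thm} verbatim, the entire task reduces to verifying condition (1) of Theorem \ref{first-main-thm}: namely that $q^{-1}(F)$ satisfies BCC with coefficients in $A$ for every finite subgroup $F\leq \Gamma/N$.

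First I would observe that because $N$ itself is finite, for any finite subgroup $F\leq \Gamma/N$ the preimage $q^{-1}(F)$ fits into an exact sequence $1\to N\to q^{-1}(F)\to F\to 1$ of finite groups and is therefore finite of order $|N|\cdot |F|$. Next I would invoke the (well-known) fact that the Baum--Connes conjecture with arbitrary coefficients holds for every finite group: indeed, for a finite group $H$ the space $\underline{E}H$ can be taken to be a point, so the left-hand side $K^{H}_{\ast}(\underline{E}H;A)$ is canonically isomorphic to $K_{\ast}(A\rtimes H)$ (and the reduced and full crossed products coincide for finite groups), under which identification $e_{\ast}$ becomes the identity. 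This dispatches condition (1) for $A$ regarded as a $q^{-1}(F)$-$C^{\ast}$-algebra by restriction.

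Alternatively, and even more cleanly, I would simply cite Theorem \ref{Thm-N-special}: every finite group is a-T-menable (the trivial action on a point is a proper affine isometric action), so finite $N$ falls into the class covered by that theorem, and the conclusion is immediate.

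There is no real obstacle here; the statement is a direct specialization. The only point requiring a line of justification is the fact that finite groups satisfy BCC with arbitrary coefficients, which is classical and follows from the models $\underline{E}H=\mathrm{pt}$ and the identification of equivariant $K$-homology with $K$-theory of the crossed product in this degenerate situation.
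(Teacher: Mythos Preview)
Your proposal is correct, and your second route---finite groups are a-T-menable, hence Theorem \ref{Thm-N-special} applies directly---is exactly the paper's argument. Your first route via Theorem \ref{first-main-thm} and the observation that each $q^{-1}(F)$ is itself finite is an equally valid (and slightly more elementary) alternative.
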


\begin{corollary}\label{Cor-central-extension}
	Let $1\rightarrow N \rightarrow \Gamma \rightarrow \Gamma/N \rightarrow 1$ be a central extension of groups and $A$ be a $\Gamma$-$C^{\ast}$-algebra. If $\Gamma/N$ satisfies SNC, SAC and BCC with coefficients in $C_0(\Gamma/N, A)\rtimes_{r}\Gamma$, respectively. Then $\Gamma$ satisfies SNC, SAC and BCC with coefficients in $A$, respectively.
\end{corollary}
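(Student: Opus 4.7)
The plan is to deduce Corollary \ref{Cor-central-extension} as an immediate consequence of Theorem \ref{Thm-N-special}. The key observation is that in a central extension $1\to N\to \Gamma\to \Gamma/N\to 1$, the normal subgroup $N$ is contained in the center of $\Gamma$, so in particular $N$ is abelian. Thus it suffices to verify that any countable discrete abelian group $N$ is a-T-menable, and then invoke Theorem \ref{Thm-N-special}.

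To establish a-T-menability of a countable discrete abelian group $N$, I would recall the standard argument: by the structure theorem for countable abelian groups, $N$ embeds as a closed subgroup of a second countable locally compact abelian group, and any second countable locally compact abelian group acts properly and isometrically on an affine Hilbert space (for instance, through a proper negative-type function built from the Pontryagin dual, or more concretely, since $N$ is countable abelian it is a direct limit of finitely generated abelian groups, each of which embeds in some $\mathbb{Z}^k \times F$ with $F$ finite, and hence admits a proper isometric action on a Euclidean space). In particular, $N$ is amenable, hence a-T-menable.

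With this in hand, the hypothesis of Theorem \ref{Thm-N-special} (``$N$ is an a-T-menable group or a hyperbolic group'') is fulfilled, so the conclusion that $\Gamma$ satisfies SNC, SAC and BCC with coefficients in $A$ follows directly from the assumption that $\Gamma/N$ satisfies these conjectures with coefficients in $C_0(\Gamma/N, A)\rtimes_r \Gamma$. There is no substantive obstacle here: the entire content of Corollary \ref{Cor-central-extension} is the observation that centrality forces abelianness, which in turn forces a-T-menability, reducing the statement to Theorem \ref{Thm-N-special}. If one prefers to avoid appealing to a-T-menability, one could alternatively invoke Theorem \ref{first-main-thm} directly together with the fact that for abelian $N$, any subgroup of the form $q^{-1}(F)$ for $F$ a finite subgroup of $\Gamma/N$ is a central extension of a finite group by $N$, hence virtually abelian, hence satisfies BCC with arbitrary coefficients by Higson--Kasparov \cite{Higson-Kasparov}. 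Either route completes the proof.
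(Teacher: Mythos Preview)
Your proposal is correct and follows the same route as the paper: the paper notes just before stating Corollaries \ref{Cor-finite-extension} and \ref{Cor-central-extension} that ``any finite groups and abelian groups are a-T-menable,'' so a central extension has abelian (hence a-T-menable) $N$ and Theorem \ref{Thm-N-special} applies. Your alternative route via Theorem \ref{first-main-thm} and Higson--Kasparov for the virtually abelian subgroups $q^{-1}(F)$ is also fine and is essentially what underlies Theorem \ref{Thm-N-special} in this case.
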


\begin{remark}
	The strong Novikov conjecture with coefficients holds for any groups which admit a coarse embedding into a Hilbert space (cf. \cite{STY-2002}\cite{Yu2000}). Corollary \ref{Cor-central-extension} implies that the strong Novikov conjecture is closed under central extensions, but we do not know if coarse embeddability is preserved by central extensions (cf. \cite[Question 5.2]{AT-NotCoarseEmbed}, \cite{DG-2003}).
\end{remark}

Secondly, we show an application of Theorem \ref{second-main-thm} to group extensions of finite groups. Since each finite group satisfies the rational analytic Novikov conjecture with any coefficients, we have the following theorem by Theorem \ref{second-main-thm}.

\begin{theorem}\label{Thm-rational-Novikov-finite extension}
	Let $1\rightarrow N \rightarrow \Gamma \rightarrow G \rightarrow 1$ be a group extension of a finite group $G$ and $A$ be a $\Gamma$-$C^{\ast}$-algebra satisfying the K\"unneth formula. If $N$ satisfies the rational Baum--Connes conjecture with coefficients in $A$, then $\Gamma$ satisfies the rational analytic Novikov conjecture with coefficients in $A$.
\end{theorem}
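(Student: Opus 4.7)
The plan is to apply Theorem \ref{second-main-thm} directly to the extension $1\to N\to \Gamma\to G\to 1$. The first hypothesis of Theorem \ref{second-main-thm}, namely that $N$ satisfies the rational Baum--Connes conjecture with coefficients in $A$, is exactly the hypothesis of the present theorem. So the only work is to verify hypothesis (2), i.e.\ that the finite group $G$ satisfies the rational analytic Novikov conjecture with coefficients in $B := C_{0}(G, A)\rtimes_{r}\Gamma$.

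The bulk of the argument therefore reduces to the following claim: \emph{every finite group $G$ satisfies the rational analytic Novikov conjecture with coefficients in an arbitrary $G$-$C^{\ast}$-algebra $B$}. My plan to prove this is to factor the Mishchenko--Kasparov assembly map through the Baum--Connes assembly map. Since $G$ is finite, the one-point space is a model of $\underline{E}G$, and a finite group is trivially a-T-menable, so by Higson--Kasparov \cite{Higson-Kasparov} the Baum--Connes assembly map
$$e_{\ast}:\lim_{k\to\infty}K_{\ast}\bigl(C^{\ast}_{L}(P_{k}(G), B)^{G}\bigr)\longrightarrow K_{\ast}(B\rtimes_{r}G)$$
is an isomorphism for any $G$-$C^{\ast}$-algebra $B$. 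Consequently, in the factorisation $\mu_{\ast}=e_{\ast}\circ\lambda_{\ast}$ of Definition \ref{Def-MK-assembly}, the rational injectivity of $\mu_{\ast}$ is equivalent to that of $\lambda_{\ast}$.

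To handle $\lambda_{\ast}$, I would use the fact that for a finite group every Milnor--Rips complex $\widetilde{P_{k,m}}(G)$ is a finite CW-complex on which $G$ acts freely, while $P_{k}(G)$ is compact for each $k$, so the map $\lambda:\widetilde{P_{k,m}}(G)\to P_{k}(G)$ is, up to a strongly Lipschitz $G$-homotopy, the projection $EG\to \mathrm{pt}$ (take $k$ large enough that $P_{k}(G)$ is $G$-contractible, and $m$ large enough that $\widetilde{P_{k,m}}(G)$ is $(m-1)$-connected). Rationally, the induced map on equivariant $K$-homology coincides with the classical rationally injective map $K^{G}_{\ast}(EG;B)\otimes\Q\to K^{G}_{\ast}(\mathrm{pt};B)\otimes\Q$ that splits via induction from the trivial subgroup; its rational injectivity for finite $G$ and arbitrary $B$ is a standard consequence of the equivariant Chern character (or of the completion theorem), since the kernel of restriction to the trivial subgroup is torsion for finite groups.

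The main obstacle I anticipate is the careful verification that the rational injectivity of $\lambda_{\ast}$ at the level of equivariant localization algebras indeed reduces to the classical finite-group statement; the arrangement of Rips and Milnor--Rips complexes and of the direct limits needs checking, but no new machinery beyond Lemma \ref{Lem-Liphtp} (for strong Lipschitz $G$-homotopy invariance) and a finite-group equivariant Chern character is required. Once claim (2) is established, Theorem \ref{second-main-thm}, together with the K\"unneth hypothesis on $A$, immediately gives the rational analytic Novikov conjecture with coefficients in $A$ for $\Gamma$, completing the proof.
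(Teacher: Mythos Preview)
Your proposal is correct and follows exactly the same route as the paper: both apply Theorem~\ref{second-main-thm} directly, with the only thing to check being that a finite group $G$ satisfies the rational analytic Novikov conjecture with coefficients in an arbitrary $G$-$C^{\ast}$-algebra. The paper treats this last fact as known and states it in one sentence (``Since each finite group satisfies the rational analytic Novikov conjecture with any coefficients''), whereas you supply an outline of why it holds via the factorisation $\mu_{\ast}=e_{\ast}\circ\lambda_{\ast}$ with $e_{\ast}$ an isomorphism by Higson--Kasparov and $\lambda_{\ast}$ rationally injective; the paper's own justification for the rational injectivity of $\lambda_{\ast}$ in this generality is the Antonini--Azzali--Skandalis result mentioned in the remark after Lemma~\ref{Lem-torsion-to-free}, which is perhaps cleaner than the transfer/Chern-character sketch you give.
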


When $A=\mathbb{C}$, we have the following corollary.
\begin{corollary}\label{Cor-rational-Novikov-finite extension}
	Let $\Gamma$ be an extension of a finite group $G$ by a group $N$. If $N$ satisfies the rational Baum--Connes conjecture, then $\Gamma$ satisfies the rational analytic Novikov conjecture.
\end{corollary}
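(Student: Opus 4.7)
The plan is to derive the corollary as an immediate specialization of Theorem \ref{Thm-rational-Novikov-finite extension-intro} to the coefficient $C^{\ast}$-algebra $A = \mathbb{C}$ equipped with the trivial $\Gamma$-action. The two ingredients to verify are that $\mathbb{C}$ satisfies the K\"unneth formula and that the specialized hypothesis and conclusion match the statements of the present corollary.

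First I would note that $\mathbb{C}$ is a type I $C^{\ast}$-algebra, hence lies in the Bootstrap class, so Schochet's K\"unneth theorem (recalled at the beginning of Section \ref{Sec-3}) applies. More elementarily, because $K_{0}(\mathbb{C}) = \mathbb{Z}$ is torsion-free and $K_{1}(\mathbb{C}) = 0$, the $\operatorname{Tor}$ term in the K\"unneth sequence vanishes, and the canonical identification $\mathbb{C}\otimes B \cong B$ trivializes the sequence for every $C^{\ast}$-algebra $B$.

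Next I would translate the hypothesis that $N$ satisfies the rational Baum--Connes conjecture as the assertion that the assembly map $e_{\ast}$ for $N$ is a rational isomorphism with the trivial coefficient $\mathbb{C}$, which is precisely the rational Baum--Connes conjecture with coefficients in $\mathbb{C}$ in the sense of Conjecture \ref{Conj}. Plugging this, together with the K\"unneth observation, into Theorem \ref{Thm-rational-Novikov-finite extension-intro} with $A = \mathbb{C}$ yields that $\Gamma$ satisfies the rational analytic Novikov conjecture with coefficients in $\mathbb{C}$, which by definition is the rational analytic Novikov conjecture for $\Gamma$.

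There is no real obstacle to overcome here; the corollary is a direct specialization. The only subtle point is a sanity check that the trivial $\Gamma$-action on $\mathbb{C}$ is indeed the action implicit in the coefficient-free formulation of both the rational Baum--Connes conjecture for $N$ and the rational analytic Novikov conjecture for $\Gamma$, which is immediate from the definitions given in Section \ref{Sec-1-conj}.
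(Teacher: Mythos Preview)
Your proposal is correct and matches the paper's approach exactly: the paper simply introduces this corollary with ``When $A=\mathbb{C}$, we have the following corollary,'' treating it as the immediate specialization of Theorem~\ref{Thm-rational-Novikov-finite extension} to the trivial coefficient $\mathbb{C}$, which satisfies the K\"unneth formula.
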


\begin{remark}
	It is unknown if the (rational) Baum--Connes conjecture is closed for extensions of finite groups. However, Corollary \ref{Cor-rational-Novikov-finite extension} implies that such group extension satisfies the rational analytic Novikov conjecture. Moreover, in \cite{Meyer-BC-counterex}, R. Meyer constructed an extension $\Gamma$ of a finite cyclic group by a group $N$ satisfying the Baum--Connes conjecture with coefficients in a non-trivial $C^{\ast}$-algebra $A$ such that $\Gamma$ does not satisfy the Baum--Connes conjecture with coefficients in $A$. However, Theorem \ref{Thm-rational-Novikov-finite extension} tell us that such group $\Gamma$ satisfies the rational analytic Novikov conjecture with coefficients in $A$ (please see Example \ref{EX-Meyer}).
\end{remark}

\subsection{An application to direct products}
Next, we study an application of Theorem \ref{main-THM} and Theorem \ref{main-THM-rational} to direct products of groups. The direct product $N\times G$ of $N$ and $G$ is an isometric semi-direct product of $G$ on $N$ with the trivial action.

Let $A$ be an $(N\times G)$-$C^{\ast}$-algebra equipped with the action $\alpha$. Then there exists an action $\alpha^{G}$ of $G$ on $A$ defined by $\alpha^{G}_{g}(a)=\alpha_{(e, g)}(a)$. Thus, we obtain an equivariant Roe algebra $C^{\ast}(G, A)^{G}$. Besides, we have an action $\alpha^{N}_{G}$ of $N$ on $C^{\ast}(G, A)^{G}$ defined by
$$\alpha^{N}_{G, n}\left( (T_{g_1, g_2})_{g_1, g_2\in G} \right)=\left( (\alpha_{(n, e)}\otimes I)(T_{g_1, g_2}) \right)_{g_1, g_2\in G},$$
where $\alpha_{(n, e)}\otimes I$ is an operator on $A\otimes \K(H) \otimes \K(\ell^2(G))$ defined by $(\alpha_{(n, e)}\otimes I)(a\otimes K\otimes Q)=\alpha_{(n, e)}(a)\otimes K\otimes Q$. Therefore, we finally obtain an equivariant Roe algebra $C^{\ast}(P_{k}(N), C^{\ast}(G, A)^{G})^{N}$.

Let $U: H \rightarrow H\otimes H$ be a unitary. Define
$$\phi: C^{\ast}(P_{k}(N)\times G, A)^{N\times G} \rightarrow C^{\ast}(P_{k}(N), C^{\ast}(G, A)^{G})^{N},$$
by
$$\phi(T)_{x_1, x_2}=((U\otimes I) T_{(x_1, g_1),(x_2, g_2)} (U^{\ast}\otimes I))_{g_1, g_2\in G},$$
for any $T\in C^{\ast}(P_{k}(N)\times G, A)^{N\times G}$. Moreover, define
$$\tilde{\phi}: C^{\ast}_{L, P_{k}(N)}(P_{k}(N)\times G, A)^{N\times G} \rightarrow C^{\ast}_{L}(P_{k}(N), C^{\ast}(G, A)^{G})^{N},$$
by 
$$\tilde{\phi}(u)(t)=\phi(u(t)),$$
for any $u\in  C^{\ast}_{L, P_{k}(N)}(P_{k}(N)\times G, A)^{N\times G}$.
Then $\phi$ and $\tilde{\phi}$ are two isomorphisms which are commutative with the evaluation at zero maps. Moreover, by Lemma \ref{Lem-eqRoe-crossprod}, $C^{\ast}(G, A)^{G}$ is $N$-equivariantly isomorphic to $(A\rtimes_{r} G) \otimes \K(H)$ equipped with an action of $N$ by $n\cdot(\sum (a_{g}g)\otimes K)=\sum (\alpha_{(n, e)}(a_g)g)\otimes K$. Thus, we obtain the following lemma.

\begin{lemma}\label{Lem-for-products}
	Let $N$ and $G$ be two countable discrete groups and $A$ be an $(N\times G)$-$C^{\ast}$-algebra. Then $N\times G$ satisfies the strong Novikov conjecture along $N$, the surjective assembly conjecture along $N$ and the Baum--Connes conjecture along $N$ with coefficients in $A$, respectively, if and only if $N$ satisfies SNC, SAC and BCC with coefficients in $A\rtimes_{r} G$, respectively.
\end{lemma}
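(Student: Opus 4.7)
The plan is to verify that the maps $\phi$ and $\tilde{\phi}$ introduced just before the lemma statement are $\ast$-isomorphisms that fit into a commutative diagram with the evaluation-at-zero maps, and then to identify the target of $\phi$ with the Roe algebra of $P_k(N)$ with coefficients in $A\rtimes_r G$ by means of Lemma \ref{Lem-eqRoe-crossprod}. Once this is in place, passing to the inductive limit in $k$ converts the along-$N$ assembly map for $N\times G$ into the ordinary assembly map for $N$ with coefficients in $A\rtimes_r G$, and the three statements (injectivity, surjectivity, and being an isomorphism) transfer directly.

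First I would check that $\phi\colon C^{\ast}(P_k(N)\times G,A)^{N\times G}\to C^{\ast}(P_k(N), C^{\ast}(G,A)^{G})^{N}$ is a well-defined $\ast$-homomorphism. The critical observation is that because $(N\times G)$-invariance is built from the commuting $N$- and $G$-actions, an $(N\times G)$-invariant, locally compact operator $T$ with finite propagation on $P_k(N)\times G$ reorganizes, via $T_{x_1,x_2}=(T_{(x_1,g_1),(x_2,g_2)})_{g_1,g_2\in G}$, into an $N$-invariant, locally compact operator on $P_k(N)$ whose $(x_1,x_2)$-entries are $G$-invariant and have finite propagation in the $G$ direction. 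The propagation in the $P_k(N)$ direction controls $\prop(\phi(T))$, and the propagation in the $G$ direction controls $\prop$ of each matrix entry in $C^{\ast}(G,A)^{G}$. A corresponding inverse reorganization produces a two-sided inverse on the dense $\ast$-subalgebras, and the unitary $U$ and its ampliation take care of matching the auxiliary Hilbert spaces, so $\phi$ extends to a $\ast$-isomorphism on the norm closures.

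Next I would verify that $\tilde{\phi}$ restricts $\phi$ correctly to the localization algebras. The defining condition $\lim_{t\to\infty}\prop_{P_k(N)}(u(t))=0$ on the domain is precisely the condition $\lim_{t\to\infty}\prop(\tilde{\phi}(u)(t))=0$ on the codomain, because the propagation of $\phi(u(t))$ as an element of $C^{\ast}(P_k(N),C^{\ast}(G,A)^{G})^{N}$ is exactly the $P_k(N)$-propagation of $u(t)$ (the $G$-direction being absorbed into the coefficient algebra). Thus $\tilde{\phi}$ is a $\ast$-isomorphism and the square
\[
\xymatrix{
C^{\ast}_{L,P_k(N)}(P_k(N)\times G,A)^{N\times G} \ar[r]^{\quad e} \ar[d]_{\tilde{\phi}}^{\cong} & C^{\ast}(P_k(N)\times G,A)^{N\times G} \ar[d]^{\phi}_{\cong} \\
C^{\ast}_L(P_k(N), C^{\ast}(G,A)^{G})^{N} \ar[r]_{\quad e} & C^{\ast}(P_k(N), C^{\ast}(G,A)^{G})^{N}
}
\]
commutes by direct inspection of the formulas.

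Finally I would invoke Lemma \ref{Lem-eqRoe-crossprod} to obtain an $N$-equivariant $\ast$-isomorphism $C^{\ast}(G,A)^{G}\cong (A\rtimes_r G)\otimes\mathcal{K}(H)$, where the $N$-action on the right-hand side is the one described before the lemma. This induces natural $\ast$-isomorphisms $C^{\ast}(P_k(N),C^{\ast}(G,A)^{G})^{N}\cong C^{\ast}(P_k(N),A\rtimes_r G)^{N}\otimes\mathcal{K}(H)$ and similarly for the localization algebras, which are $K$-theory isomorphisms, compatible with evaluation at zero and with the coarse maps $P_k(N)\hookrightarrow P_{k'}(N)$. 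Taking the limit as $k\to\infty$ converts the along-$N$ assembly map for $N\times G$ with coefficients in $A$ into the Baum--Connes assembly map for $N$ with coefficients in $A\rtimes_r G$, which proves all three equivalences simultaneously. The only subtle point I expect is the careful bookkeeping of the various auxiliary Hilbert spaces and the ampliation $U\otimes I$ used to match representations; everything else is a routine reorganization.
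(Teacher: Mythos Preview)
Your proposal is correct and follows essentially the same approach as the paper: the paper's argument (given in the paragraphs immediately preceding the lemma) defines exactly the isomorphisms $\phi$ and $\tilde{\phi}$ you describe, notes that they commute with evaluation at zero, and then invokes Lemma~\ref{Lem-eqRoe-crossprod} to identify $C^{\ast}(G,A)^{G}$ $N$-equivariantly with $(A\rtimes_r G)\otimes\mathcal{K}(H)$. Your write-up simply fills in the routine verifications that the paper leaves implicit (this same argument also appears, with $P_l(G)$ in place of $G$, just before Proposition~\ref{Prop-BCalong-BC}).
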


Therefore, we can simplify Theorem \ref{main-THM} to the following theorem for direct products of groups.

\begin{theorem}\label{Thm-direct-prod}
	Let $N$ and $G$ be two countable discrete groups and $A$ be an $(N\times G)$-$C^{\ast}$-algebra. If
	\begin{enumerate}
		\item \label{Condition1-direct} there exists a non-negative sequence $\{k_{i}\}_{i\in \N}$ with $\lim_{i\rightarrow \infty}k_{i}=\infty$ such that $G$ satisfies SNC, SAC and BCC with coefficients in $C^{\ast}_{L}(P_{k_{i}}(N), A)^{N}$ for all $i\in \N$, respectively;
		\item \label{Condition2-direct} $N$ satisfies SNC, SAC and BCC with coefficients in $A\rtimes_{r} G$, respectively.
	\end{enumerate}
	Then $N\times G$ satisfies SNC, SAC and BCC with coefficients in $A$, respectively.
\end{theorem}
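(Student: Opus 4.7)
The plan is to deduce Theorem \ref{Thm-direct-prod} as an immediate specialisation of Theorem \ref{main-THM}, by observing that a direct product is a particular isometric semi-direct product and by using Lemma \ref{Lem-for-products} to translate condition (\ref{Condition2-direct}) into the partial-conjecture hypothesis that Theorem \ref{main-THM} actually requires. First, I would note that $N \times G$ satisfies Definition \ref{Def-semi-pro-intro}: the trivial $G$-action on $N$ is automatically isometric with respect to any left $N$-invariant proper metric (which exists because any countable discrete group carries a proper length function). Hence Theorem \ref{main-THM} is applicable, and it remains to verify its two hypotheses in the present setting.

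The first hypothesis of Theorem \ref{main-THM} is literally hypothesis (\ref{Condition1-direct}), so no work is needed there. For the second, I would invoke Lemma \ref{Lem-for-products}, whose proof is already sketched immediately above the statement: the pair of mutually compatible isomorphisms
\[
\phi: C^{\ast}(P_{k}(N)\times G, A)^{N\times G} \xrightarrow{\cong} C^{\ast}(P_{k}(N), C^{\ast}(G, A)^{G})^{N}
\]
and its localised counterpart $\tilde{\phi}$ intertwine the partial evaluation-at-zero map defining the ``conjecture along $N$'' with the ordinary evaluation-at-zero map associated with the Baum--Connes assembly map for $N$. Combined with the identification $C^{\ast}(G, A)^{G} \cong (A \rtimes_r G) \otimes \mathcal{K}(H)$ from Lemma \ref{Lem-eqRoe-crossprod}, this shows that $N\times G$ satisfies SNC/SAC/BCC along $N$ with coefficients in $A$ if and only if $N$ satisfies SNC/SAC/BCC with coefficients in $A \rtimes_r G$. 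Thus hypothesis (\ref{Condition2-direct}) is exactly the second hypothesis of Theorem \ref{main-THM}.

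Having verified both hypotheses, the conclusion follows directly from Theorem \ref{main-THM}. I do not expect any serious obstacle at this stage: all of the nontrivial analytic ingredients — the reduction to two-parametric equivariant localisation algebras in Section \ref{Sec-4}, and the passage through uniformly controlled equivariant localisation algebras along $N$ via quantitative $K$-theory and Mayer--Vietoris in Section \ref{Sec-5} — have already been absorbed into Theorem \ref{main-THM}. The only content specific to the direct product case is the bookkeeping isomorphism $\phi$, whose verification is routine and already recorded above. Accordingly, the proof of Theorem \ref{Thm-direct-prod} reduces to citing Theorem \ref{main-THM} and Lemma \ref{Lem-for-products}.
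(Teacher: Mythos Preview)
Your proposal is correct and follows essentially the same route as the paper: the paper derives Theorem \ref{Thm-direct-prod} by noting that $N\times G$ is an isometric semi-direct product and then invoking Theorem \ref{main-THM}, with Lemma \ref{Lem-for-products} providing precisely the translation of condition (\ref{Condition2-direct}) into the ``conjecture along $N$'' hypothesis. There is nothing to add.
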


we have the following two corollaries.

\begin{corollary}\label{Cor-direct-prod}
	Let $N$ and $G$ be two countable discrete groups. If $G$ and $N$ satisfies SNC, SAC and BCC with coefficients, respectively. Then $N\times G$ satisfies SNC, SAC and BCC with coefficients, respectively.
\end{corollary}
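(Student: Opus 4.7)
The plan is to deduce Corollary \ref{Cor-direct-prod} as a direct application of Theorem \ref{Thm-direct-prod}. Fix an $(N\times G)$-$C^{\ast}$-algebra $A$; the goal is to verify the two hypotheses of Theorem \ref{Thm-direct-prod} for $A$ under the standing assumption that $G$ and $N$ each satisfy SNC, SAC and BCC with coefficients in \emph{any} coefficient algebra equipped with the appropriate action.

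First I would handle condition (\ref{Condition2-direct}). The $N\times G$-action on $A$ restricts to an $N$-action and to a $G$-action that commute, so $A\rtimes_{r} G$ is naturally an $N$-$C^{\ast}$-algebra via $n\cdot(\sum a_{g}g)=\sum \alpha_{(n,e)}(a_{g})g$. Since $N$ by hypothesis satisfies SNC, SAC and BCC with arbitrary coefficients, applying this to the $N$-$C^{\ast}$-algebra $A\rtimes_{r}G$ yields condition (\ref{Condition2-direct}) for free.

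Next I would verify condition (\ref{Condition1-direct}). For each $k\geq 0$, the $G$-action on $A$ induces a $G$-action on $C^{\ast}_{L}(P_{k}(N),A)^{N}$ by conjugation on the $A$-coefficients (using that the $N$- and $G$-actions commute, so the $G$-action preserves $N$-invariance and propagation along $P_{k}(N)$). Thus $C^{\ast}_{L}(P_{k}(N),A)^{N}$ is a $G$-$C^{\ast}$-algebra for every $k$, and choosing the sequence $k_{i}=i$, the hypothesis that $G$ satisfies SNC, SAC and BCC with arbitrary coefficients immediately supplies condition (\ref{Condition1-direct}).

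With both conditions in hand, Theorem \ref{Thm-direct-prod} delivers the conclusion that $N\times G$ satisfies SNC, SAC and BCC with coefficients in $A$; since $A$ was arbitrary, the corollary follows. There is no real obstacle here: the content of the corollary is entirely bundled inside Theorem \ref{Thm-direct-prod}, and the only thing to check is the routine observation that the auxiliary algebras $C^{\ast}_{L}(P_{k_{i}}(N),A)^{N}$ and $A\rtimes_{r} G$ inherit canonical $G$- and $N$-actions from the factor structure of $N\times G$.
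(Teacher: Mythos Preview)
Your proposal is correct and is exactly the intended argument: the paper states Corollary \ref{Cor-direct-prod} as an immediate consequence of Theorem \ref{Thm-direct-prod} without further proof, and your verification of the two hypotheses---applying the standing assumptions on $N$ and $G$ to the specific coefficient algebras $A\rtimes_{r}G$ and $C^{\ast}_{L}(P_{k_i}(N),A)^{N}$, which carry the canonical $N$- and $G$-actions described in the paper---is precisely what is needed.
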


\begin{corollary}\label{Cor-direct-prod-C}
	Let $N$ and $G$ be two countable discrete groups, $A$ be a $C^{\ast}$-algebra with the trivial $(N\times G)$-action (in particular, $A=\C$). If
	$G$ and $N$ satisfy SNC, SAC and BCC with coefficients in any $C^{\ast}$-algebra with the trivial action, respectively. Then $N\times G$ satisfies SNC, SAC and BCC with coefficients in $A$, respectively.
\end{corollary}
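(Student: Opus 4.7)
The plan is to derive this directly from Theorem \ref{Thm-direct-prod} by verifying both hypotheses under the trivial-action assumption, so that no new machinery beyond what has already been developed is needed.

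First I would check condition (2). Because $A$ carries the trivial $(N\times G)$-action, the $G$-action on $A$ is trivial and so $A\rtimes_r G$ is naturally isomorphic to $A\otimes_{\min} C^\ast_r(G)$. The residual $N$-action on $A\rtimes_r G$ obtained from restricting the trivial action of $N\times G$ on $A$ is again trivial. Thus $A\rtimes_r G$ is a $C^\ast$-algebra with trivial $N$-action, and the assumption that $N$ satisfies SNC, SAC and BCC with coefficients in every $C^\ast$-algebra with trivial action applies verbatim to give condition (2).

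Next I would handle condition (1) by showing that the $G$-action on the equivariant localization algebra $C^\ast_L(P_k(N),A)^N$ (the one introduced in Section \ref{Sec-4} above Proposition \ref{Prop-reduction-Conj}) is trivial for every $k\ge 0$. In the direct-product setting the twisting automorphism $\beta_g$ of $N$ is the identity, so the unitary $U_g$ defined there reduces to $\delta_x\otimes U^\alpha_{(e,g)}(h_A)\otimes \delta_n\otimes h$. Since the $G$-action on $A$ is trivial, the covariant representation $(H_A,U^\alpha)$ can be chosen with $U^\alpha_{(e,g)}=I_{H_A}$, whence $U_g=I$ and the induced automorphism of $C^\ast_L(P_k(N),A)^N$ is the identity. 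Consequently $C^\ast_L(P_k(N),A)^N$ is a $G$-$C^\ast$-algebra with trivial action for every $k$, and the hypothesis on $G$ yields SNC, SAC and BCC with coefficients in $C^\ast_L(P_{k_i}(N),A)^N$ for any sequence $k_i\to\infty$. Applying Theorem \ref{Thm-direct-prod} then finishes the argument.

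The only subtle point I anticipate is making the $G$-action on $C^\ast_L(P_k(N),A)^N$ strictly trivial as opposed to merely unitarily implemented. Even if a given covariant pair has $U^\alpha_{(e,g)}\ne I$, the automorphism is inner and so induces the identity on $K$-theory; this is already enough to identify the equivariant Roe and localization algebras on $K$-theory with their coefficient-algebra versions relative to the trivial action. So in the worst case the argument above runs with \emph{inner action} in place of \emph{trivial action}, and $K$-theoretic stability absorbs the difference without any obstruction.
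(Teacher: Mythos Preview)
Your proposal is correct and follows exactly the route the paper intends: the paper states this corollary immediately after Theorem~\ref{Thm-direct-prod} without giving an explicit argument, and the verification you outline---that both the $N$-action on $A\rtimes_r G$ and the $G$-action on $C^{\ast}_{L}(P_k(N),A)^{N}$ are trivial under the hypothesis---is precisely how one deduces it. Your observation that one may choose $U^{\alpha}_{(e,g)}=I_{H_A}$ (since the whole $(N\times G)$-action on $A$ is trivial, the identity gives a legitimate covariant representation) makes the action strictly trivial rather than merely inner, so the final caveat in your write-up is not actually needed.
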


\begin{remark}\label{remark-new}
	Corollary \ref{Cor-direct-prod} and Corollary \ref{Cor-direct-prod-C} imply that the strong Novikov conjecture, the surjective assembly conjecture and the Baum--Connes conjecture with coefficients are closed under direct products. Moreover, Condition (\ref{Condition1-direct}) in Theorem \ref{Thm-direct-prod} for the Baum--Connes conjecture is weaker than the Condition (\ref{main-thm1-1}) in Theorem \ref{main-thm1} by combining Proposition \ref{Prop-loc-twoloc}, Proposition \ref{Prop-oneloc-twoloc}, Isomorphism (\ref{equ-uloc-loc}) and Proposition \ref{Prop-localong-uniforalong} with Proposition \ref{Lem-Key}.
\end{remark}

Similar results also hold for the rational analytic Novikov conjecture by Theorem \ref{main-THM-rational}.

\begin{theorem}\label{Thm-direct-prod-rational}
	Let $N$ and $G$ be two countable discrete groups and $A$ be an $(N\times G)$-$C^{\ast}$-algebra. If
	\begin{enumerate}
		\item there exists a non-negative sequence $\{k_{i}\}_{i\in \N}$ with $\lim_{i\rightarrow \infty}k_{i}=\infty$ such that $G$ satisfies the rational analytic Novikov conjecture with coefficients in $C^{\ast}_{L}(P_{k_{i}}(N), A)^{N}$ for all $i\in \N$;
		\item $N$ satisfies the rational analytic Novikov conjecture with coefficients in $A\rtimes_{r} G$.
	\end{enumerate}
	Then $N\times G$ satisfies the rational analytic Novikov conjecture with coefficients in $A$.
\end{theorem}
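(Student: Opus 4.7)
\medskip

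The plan is to deduce this rational analytic version as an instance of Theorem \ref{main-THM-rational}, exactly as Theorem \ref{Thm-direct-prod} was deduced from Theorem \ref{main-THM}. First I would observe that any direct product $N\times G$ is an isometric semi-direct product in the sense of Definition \ref{Def-semi-pro-intro}, corresponding to the trivial action of $G$ on $N$ (any left $N$-invariant proper metric on $N$ is automatically $G$-invariant under the trivial action). Hence Theorem \ref{main-THM-rational} is applicable to $N\times G$, and it suffices to show that the two hypotheses of that theorem hold under the two hypotheses of the current theorem. Hypothesis (1) is identical in both statements, so only the translation of hypothesis (2) requires work: I must show that, for direct products, RANC along $N$ with coefficients in $A$ for $N\times G$ is equivalent to RANC for $N$ with coefficients in $A\rtimes_{r} G$.

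This translation is the rational analytic analogue of Lemma \ref{Lem-for-products}, obtained by replacing each occurrence of the Rips complex $P_{k}(N)$ by the Milnor-Rips complex $\widetilde{P_{k,m}}(N)$ throughout. The crucial point is the $\ast$-isomorphism
\[
\phi: C^{\ast}(\widetilde{P_{k,m}}(N)\times G, A)^{N\times G} \xrightarrow{\cong} C^{\ast}(\widetilde{P_{k,m}}(N), C^{\ast}(G,A)^{G})^{N},
\]
defined exactly as in the Rips case by conjugating by a unitary $U:H\to H\otimes H$ and regrouping the $G$-variables; this formula makes sense verbatim since $G$ acts trivially on $N$ (hence on $\widetilde{P_{k,m}}(N)$), so no cocycle term appears. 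The same formula induces an isomorphism $\tilde{\phi}$ between the equivariant localization algebra along $\widetilde{P_{k,m}}(N)$ of $\widetilde{P_{k,m}}(N)\times G$ and the equivariant localization algebra of $\widetilde{P_{k,m}}(N)$ with coefficients in $C^{\ast}(G,A)^{G}$, and $\tilde{\phi}$ is compatible with the map $\lambda$ of (\ref{eq-twoRips}) (applied on the $N$-factor) and with evaluation at zero. Combined with the $N$-equivariant identification $C^{\ast}(G,A)^{G}\cong (A\rtimes_{r}G)\otimes \mathcal{K}(H)$ from Lemma \ref{Lem-eqRoe-crossprod}, this produces a commutative square identifying the partial Mishchenko--Kasparov assembly map along $N$ for $N\times G$ with coefficients in $A$ with the Mishchenko--Kasparov assembly map for $N$ with coefficients in $A\rtimes_{r} G$.

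Passing to $K$-theory and taking the direct limit as $k,m\to \infty$, rational injectivity of one map is equivalent to rational injectivity of the other, so hypothesis (2) of the current theorem is equivalent to hypothesis (2) of Theorem \ref{main-THM-rational}. Invoking Theorem \ref{main-THM-rational} then yields RANC with coefficients in $A$ for $N\times G$. The only step requiring any verification beyond rewriting formulas is that the isomorphism $\tilde{\phi}$ intertwines the map $\lambda_{\ast}$ on both sides and the evaluation-at-zero maps, which is immediate from the definitions since $\tilde{\phi}$ acts only on the ``coefficient'' Hilbert-module factors and does not interact with the localization parameter $t$ or with the simplicial coordinates along $\widetilde{P_{k,m}}(N)$. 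Thus no genuine new difficulty arises; the whole proof is a direct-product specialization of the isometric semi-direct product theorem.
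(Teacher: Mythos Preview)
Your proposal is correct and follows essentially the same approach as the paper, which simply states that the result holds ``by Theorem \ref{main-THM-rational}'' together with the Milnor--Rips analogue of Lemma \ref{Lem-for-products} (this analogue is also recorded in the paper as Proposition \ref{Prop-localong-rational}). Your elaboration of the isomorphisms $\phi$, $\tilde{\phi}$ and their compatibility with $e_{\ast}$ and $\lambda_{\ast}$ is exactly the content the paper leaves implicit.
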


we also have the following two corollaries.

\begin{corollary}\label{Cor-direct-prod-rational}
	Let $N$ and $G$ be two countable discrete groups. If $G$ and $N$ satisfies the rational analytic Novikov conjecture with coefficients. Then $N\times G$ satisfies the rational analytic Novikov conjecture with coefficients.
\end{corollary}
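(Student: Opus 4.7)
The strategy is to reduce directly to Theorem \ref{Thm-direct-prod-rational} by verifying that its two hypotheses follow automatically once both $N$ and $G$ satisfy RANC with arbitrary coefficients.

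Fix an arbitrary $(N\times G)$-$C^{\ast}$-algebra $A$. To check hypothesis (2), I will use that the $N$-action and $G$-action on $A$ commute in the direct product, so the reduced crossed product $A\rtimes_{r}G$ inherits a canonical $N$-$C^{\ast}$-algebra structure whose $N$-action is induced from the $N$-action on $A$ (acting coefficient-wise on the convolution algebra). The assumption that $N$ satisfies RANC with arbitrary coefficients then applies with these particular coefficients.

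For hypothesis (1), it remains to exhibit a sequence $k_{i}\to\infty$ for which $G$ satisfies RANC with coefficients in $C^{\ast}_{L}(P_{k_{i}}(N),A)^{N}$. Because $G$ acts trivially on $N$ in the direct product, the $G$-action on $A$ extends naturally to a $G$-action on the entire localization algebra $C^{\ast}_{L}(P_{k}(N),A)^{N}$ for every $k\geq 0$ by applying $g$ entry-wise (this is the direct-product specialization of the $G$-action on $C^{\ast}_{L}(P_{k}(N),A)^{N}$ constructed just before Proposition \ref{Prop-oneloc-twoloc}, with the $G$-action on $N$ trivial). Hence by assumption RANC holds for $G$ with these coefficients for every $k$, so one may simply take $k_{i}=i$.

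With both hypotheses verified, Theorem \ref{Thm-direct-prod-rational} yields that $N\times G$ satisfies RANC with coefficients in $A$, and since $A$ was arbitrary the conclusion follows. I do not anticipate any real obstacle here: the entire content consists in observing that the auxiliary coefficient $C^{\ast}$-algebras produced by the reduction machinery are themselves of the form to which the standing hypotheses apply, which is immediate from the direct-product structure of $N\times G$.
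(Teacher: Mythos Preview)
Your proposal is correct and follows exactly the route the paper intends: the corollary is stated immediately after Theorem \ref{Thm-direct-prod-rational} as a direct consequence, and your verification that both hypotheses of that theorem are automatically satisfied once $N$ and $G$ satisfy RANC with arbitrary coefficients is precisely the (unwritten) argument. The only content is recognizing that $A\rtimes_{r}G$ and $C^{\ast}_{L}(P_{k}(N),A)^{N}$ carry the required $N$- and $G$-actions, which you have identified correctly.
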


\begin{corollary}\label{Cor-direct-prod-rational-C}
	Let $N$ and $G$ be two countable discrete groups, $A$ be a $C^{\ast}$-algebra with the trivial $(N\times G)$-action (in particular, $A=\C$). If $N$ and $G$ satisfy the rational analytic Novikov conjecture with coefficients in any $C^{\ast}$-algebra with the trivial action. Then $N\times G$ satisfies the rational analytic Novikov conjecture with coefficients in $A$.
\end{corollary}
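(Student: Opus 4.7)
The plan is to deduce this corollary directly from Theorem \ref{Thm-direct-prod-rational}, by checking that both hypotheses of that theorem follow from the triviality of the $(N\times G)$-action on $A$. Thus I need to verify:
\begin{enumerate}
\item there exists a sequence $k_{i}\to\infty$ such that $G$ satisfies RANC with coefficients in $C^{\ast}_{L}(P_{k_{i}}(N), A)^{N}$ for every $i$; and
\item $N$ satisfies RANC with coefficients in $A\rtimes_{r} G$.
\end{enumerate}

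For (2), because the $G$-action on $A$ is trivial, the reduced crossed product $A\rtimes_{r} G$ is $\ast$-isomorphic to $A\otimes C^{\ast}_{r}(G)$, and the residual $N$-action on $A\rtimes_{r} G$ (coming from the $N$-action on $A$) is trivial. Hence $A\rtimes_{r} G$ is a $C^{\ast}$-algebra with trivial $N$-action, and the hypothesis on $N$ gives (2) immediately.

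For (1), the key observation is that in the direct product $N\times G$ the group $G$ acts trivially both on $N$ (viewed as the isometric semi-direct product factor with trivial $G$-action) and on $A$. Tracing through the definition of the induced $G$-action on $C^{\ast}_{L}(P_{k}(N), A)^{N}$ via the unitaries $U_g$ constructed in Section \ref{Sec-4}, each $U_g$ is implemented by trivial motions on the $P_{k}(N)$-coordinate and by the identity on $A$, so the resulting $G$-action on $C^{\ast}_{L}(P_{k}(N), A)^{N}$ is trivial. Consequently $C^{\ast}_{L}(P_{k_{i}}(N), A)^{N}$ is a $C^{\ast}$-algebra with trivial $G$-action for any sequence $k_{i}\to\infty$, and the hypothesis on $G$ yields (1) with any such sequence (for instance $k_{i}=i$).

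Having verified both hypotheses, Theorem \ref{Thm-direct-prod-rational} applies and $N\times G$ satisfies RANC with coefficients in $A$. The only nontrivial content is the unwinding of the induced $G$-action on the equivariant localization algebra, which I expect to be the main (but essentially formal) checkpoint; the rest is a direct invocation of Theorem \ref{Thm-direct-prod-rational}.
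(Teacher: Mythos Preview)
Your proposal is correct and matches the paper's intended approach: the corollary is an immediate consequence of Theorem \ref{Thm-direct-prod-rational} once one observes that the trivial $(N\times G)$-action on $A$ forces both the $N$-action on $A\rtimes_r G$ and the $G$-action on each $C^{\ast}_{L}(P_{k}(N), A)^{N}$ to be trivial. The paper does not spell out these verifications, but your unwinding of the $U_g$-action from Section \ref{Sec-4} is exactly the right check.
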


\subsection{Examples for the strong Novikov conjecture}

The strong Novikov conjecture with coefficients holds for groups which admit a coarse embedding into a Hilbert space or more general Banach space with property (H) (cf. \cite{Yu2000}\cite{STY-2002}\cite{Kasparov-Yu-NovikovConj}). Such examples contain hyperbolic groups (\cite{Sela-hyer-embed}), linear groups (\cite{GHW-2005}), subgroups of mapping class groups (\cite{Hamenstadt-2009}\cite{Kida-2008}) and Out($F_n$) as well as Aut($F_n$) (\cite{BGH-Out(Fn)}). On the other hand, there exist some groups that can not be coarsely embedded into a Hilbert space and satisfy the Baum--Connes conjecture with coefficients, such as the wreath product $N_{AT}=\Z/2\Z \wr_{G'} H'$ constructed by G. Arzhantseva and R. Tessera in \cite{AT-NotCoarseEmbed}.

\begin{example}\label{Ex-central-extention}
	Let $1\rightarrow N \rightarrow \Gamma \rightarrow G \rightarrow 1$ be a central extension of groups. If $G$ admits a coarse embedding into a Hilbert space, then $\Gamma$ satisfies the strong Novikov conjecture with coefficients by Corollary \ref{Cor-central-extension}, although it is generally unknown if $\Gamma$ can be coarsely embedded into a Hilbert space.
\end{example}

\begin{example}\label{Ex-not-coarse-embed}
	Let $N_{AT}$ be as above and $G_1, \cdots, G_n$ be groups which admit a coarse embedding into a Hilbert space. Then the direct product $N_{AT}\times \cdots \times N_{AT}\times G_1 \times \cdots \times G_n$ satisfies the strong Novikov conjecture with coefficients by Corollary \ref{Cor-direct-prod}, but it can not be coarsely embedded into a Hilbert space.
\end{example}

\begin{example}\label{Ex-wreath-prod}
	Let $N$, $G$ be two countable discrete groups and $S$ be a finite set equipped with a $G$-action. The \textit{wreath product} $N\wr_{S} G$ is defined to be the semi-direct product $N^S\rtimes G$, where the action of $G$ on $N^S$ is defined by $g\cdot(n_{s})_{s\in S}=(n_{g^{-1}s})_{s\in S}$ for any $g\in G$ and $(n_s)_{s\in S}\in N^S$. If $d$ is a proper $N$-invariant metric on $N$, define
	$$d_{S}\left( (n_s)_{s\in S}, (n'_s)_{s\in S} \right)=\max_{s\in S} d(n_s, n'_s),$$
	for any $(n_s)_{s\in S}, (n'_s)_{s\in S}\in N^S$. Then $d_S$ is a $G$-isometric metric on $N^S$ which implies that $N\wr_{S} G$ is an isometric semi-direct product. Thus if $N$ admits a coarse embedding into a Hilbert space and $G$ satisfies the strong Novikov conjecture with coefficients, then $N\wr_{S} G$ also satisfies this conjecture by Corollary \ref{Cor-Novikov-PropertyH}. For an example, $\text{Out}(F_n)\wr_{S} N_{AT}$ satisfies the strong Novikov conjecture with coefficients, although it can not be coarsely embedded into a Hilbert space.
\end{example}

\subsection{Examples for the rational analytic Novikov conjecture}

\begin{example}\label{Ex-Novikov-notembed}
	In \cite{GWY-Novikov}, S. Gong, J. Wu and G. Yu proved that the rational analytic Novikov conjecture holds for groups acting properly and isometrically on an admissible Hilbert-Hadamard space (see also \cite{GWXY-Novikov}\cite{GWWY-HHspaces-Novikov}). As an example, They showed that the rational analytic Novikov conjecture holds for geometrically discrete subgroups of the group of volume preserving diffeomorphisms of a closed smooth manifold. Actually, Gong, Wu and Yu's result is also true for the rational analytic Novikov conjecture with coefficients. The key reason is that \cite[Proposition 8.8]{GWY-Novikov} holds for any coefficients by a Mayer--Vietoris argument and the K\"unneth formula, just like the proof of Lemma \ref{Lem-torsion-to-free}. Thus, if $G$ is a group acting properly and isometrically on an admissible Hilbert-Hadamard space and $N$ is a group satisfying the rational Baum--Connes conjecture (for an example, $N=N_{AT}=\Z/2\Z \wr_{G'} H'$ constructed by Arzhantseva and Tessera in \cite{AT-NotCoarseEmbed}), then any extension of $G$ by $N$ satisfies the rational analytic Novikov conjecture by Corollary \ref{main-cor2}.
\end{example}

\begin{example}\label{EX-Meyer}
	In \cite{Meyer-BC-counterex}, R. Meyer constructed a counterexample of the Baum--Connes conjecture with certain coefficients for group direct products. Let us recall this counterexample. Let $N$ be a countable discrete group and $A$ be an $N$-$C^{\ast}$-algebra such that the Baum--Connes conjecture with coefficients in $A$ does not hold for $N$ (such examples exist by \cite{HLS-2002}). In addition, let $\Z_{p}$ be the cyclic group of order $p$ for a prime number $p$ and $B=\mathcal{O}_2 \otimes \mathcal{K}(\ell^2(\N))$ be the stabilized Cuntz algebra equipped with a $\Z_{p}$-action. Then Meyer in \cite{Meyer-BC-counterex} proved that $N\times \Z_{p}$ does not satisfy the Baum--Connes conjecture with coefficients in $A\otimes B$, although $N$ satisfies the Baum--Connes conjecture with coefficients in $A\otimes B$. However, by Theorem \ref{Thm-rational-Novikov-finite extension}, $N\times \Z_{p}$ satisfies the rational analytic Novikov conjecture with coefficients in $A\otimes B$ when $A$ satisfies the K\"unneth formula (for examples, $A$ is a commutative $C^{\ast}$-algebra).
\end{example}

\section*{Acknowledgments}
The author would like to thank Ralf Meyer and Shintaro Nishikawa for vital discussions on the Baum-Connes conjecture for extensions. The author would also like to thank Kang Li, Jianchao Wu and Jiawen Zhang for valuable suggestions and comments. This work was supported by NSFC 12271165, 12301154.

\bibliographystyle{plain}
\bibliography{BCMKproducts-ref}

\end{document}